\theoremstyle{plain}
\newtheorem*{conjectuur*}{Conjecture}
\newtheorem{theorem}[subsection]{Theorem}
\newtheorem{corollary}[subsection]{Corollary}
\newtheorem{lemma}[subsection]{Lemma}
\newtheorem{proposition}[subsection]{Proposition}
\newtheorem{conjecture}[subsection]{Conjecture}
\theoremstyle{definition}
\newtheorem{definition}[subsection]{Definition}
\newtheorem{example}[subsection]{Example}
\theoremstyle{remark}
\newtheorem{remark}[subsection]{Remark}
\newtheorem{question}[subsection]{Question}
\newcommand{\emptyprop}{q}
\newcommand \acf{algebraically closed field}
\newcommand \after{\circ}
\newcommand \ann[2]{\operatorname{Ann}_{#1}(#2)}
\newcommand \binomial[2]{{\bigl( \begin{matrix} #1\cr#2\cr\end{matrix} \bigr)}}
\newcommand \ch{characteristic}
\newcommand \complet[1]{\widehat {#1}} 
\newcommand \DVR{discrete valuation ring}
\renewcommand \hom [3]{\operatorname{Hom}_{#1}(#2,#3)} 
\newcommand \homo{homomorphism}
\newcommand \id{\mathfrak a}
\renewcommand\iff{if and only if}
\newcommand\into{\hookrightarrow}
\newcommand \inv[1]{{#1^{-1}}}
\newcommand \inverse[2]{{#1^{-1}(#2)}}
\newcommand \iso{\cong}
\newcommand \loc{{\mathcal {O}}}
\newcommand \map[1]{\overset{#1}{\to}}
\newcommand \maxim{\mathfrak m}
\newcommand \nat{\mathbb N}
\newcommand \norm[1]{\left|#1\right|}
\newcommand \onto{\twoheadrightarrow}
\newcommand \op\operatorname
\newcommand \pol[2]{#1[#2]}
\newcommand \pow[2]{#1[[#2]]}
\newcommand \pr{\mathfrak p}
\newcommand \range [2]{#1,\dots,#2}
\newcommand \restrict [2]{\left.#1\right|_{{#2}}}
\newcommand \rij[2]{(#1_1,\dots,#1_{#2})}
\let\sub\subseteq
\newcommand \tensor{\otimes}
\newcommand \zet{\mathbb Z}
\newcommand{\commdiagram}[9][]{%
\begin{equation}
{\newcommand{\tmpprop}{#1q} 
\if\tmpprop\emptyprop \relax\else \label{#1}\fi}
\begin{aligned}%
\mbox{
\begin{picture}(130,90)%
\put(120,70){\vector( 0,-1){50}}%
\put(10,80){\vector( 1, 0){100}}%
\put(0,70){\vector( 0,-1){50}}%
\put(10,10){\vector( 1, 0){100}}%
\put(115,80){\makebox(0,0)[l]{$#4$}}%
\put(5,80){\makebox(0,0)[r]{$#2$}}%
\put(115,10){\makebox(0,0)[l]{$#9$}}%
\put(5,10){\makebox(0,0)[r]{$#7$}}%
\put(-3,50){\makebox(0,0)[r]{$#5$}}
\put(123,50){\makebox(0,0)[l]{$#6$}}
\put(60,3){\makebox(0,0)[c]{$#8$}}
\put(60,88){\makebox(0,0)[c]{$#3$}}
\end{picture}}
\end{aligned}
\end{equation}}
\newcommand\commtrianglefront[7][]{%
\begin{equation}
{\newcommand{\tmpprop}{#1q} 
\if\tmpprop\emptyprop \relax\else \label{#1}\fi}
\begin{aligned}%
\mbox{
\begin{picture}(120,80)%
\put(55,68){\vector(-1,-2){30}}
\put(65,68){\vector(1,-2){30}}
\put(30,5){\vector(1,0){60}}
\put(60,75){\makebox(0,0)[c]{$#2$}}
\put(25,5){\makebox(0,0)[r]{$#4$}}
\put(95,5){\makebox(0,0)[l]{$#6$}}
\put(60,0){\makebox(0,0)[c]{$#5$}}
\put(37,43){\makebox(0,0)[r]{$#3$}}
\put(83,43){\makebox(0,0)[l]{$#7$}}
\end{picture}}
\end{aligned}
\end{equation}}
\newcommand\commtriangleback[7][]{%
\begin{equation}
{\newcommand{\tmpprop}{#1q}
\if\tmpprop\emptyprop \relax\else \label{#1}\fi}
\begin{aligned}%
\mbox{
\begin{picture}(120,80)%
\put(55,70){\vector(-1,-2){30}}
\put(65,70){\vector(1,-2){30}}
\put(30,5){\vector(1,0){60}}
\put(60,75){\makebox(0,0)[c]{$#2$}}
\put(25,5){\makebox(0,0)[r]{$#6$}}
\put(95,5){\makebox(0,0)[l]{$#4$}}
\put(60,0){\makebox(0,0)[c]{$#5$}}
\put(37,43){\makebox(0,0)[r]{$#7$}}
\put(83,43){\makebox(0,0)[l]{$#3$}}
\end{picture}}
\end{aligned}
\end{equation}}
\newcommand\NYCCT{\email{hschoutens@citytech.cuny.edu}\address{Department of Mathematics\\
City University of New York\\
365 Fifth Avenue\\ 
New York, NY 10016 (USA)}
}
\newcommand \frob{\tuple F}
\newcommand\bc[2]{ #1\times\tuple 1_{#2}}
\newcommand\weak{explicit}
\newcommand\fat{\mathfrak z}
\newcommand\res[1]{\kappa(#1)}
\newcommand\bas{\lambda}
\newcommand\basy{\mu}
\newcommand\bassch{V}
\newcommand\basschy{W}
\newcommand\univpoint{\mathfrak u}
\newcommand\cone[1]{\mathfrak{C}(#1)}
\newcommand\motif[1]{\mathfrak{#1}}
\newcommand\categ[1]{\mathbbmss{#1}}
\newcommand\limfat[1]{\complet{\categ{Fat}}_{#1}}
\newcommand\imarc[3]{\arc{#1/#2}{#3}}
\newcommand\func[1]{#1^\circ}
\newcommand\graf[1]{\Gamma(#1)}
\newcommand\gsect[1]{H_0^{\text{geom}}(#1)}
\newcommand\gsheaf[1]{\loc^{\text{geom}}_{#1}}
\newcommand\sieves[1]{\categ{Sieve}_{#1}}
\newcommand\sievesweak[1]{\categ{S}\overline{\categ{ieve}}_{#1}}
\newcommand\funcalg[1]{\categ {Sch}_{#1}}
\newcommand\funccon[1]{\categ{Con}_{#1}}
\newcommand\funcsub[1]{\categ {subSch}_{#1}}
\newcommand\funcinfsplit[1]{\categ{Form}^{\text{split}}_{#1}}
\newcommand\funcalgflat[1]{\categ {Sch}^{\text{flat}}_{#1}}
\newcommand\funcsubsplit[1]{\categ {subSch}^{\text{split}}_{#1}}
\newcommand\funcinf[1]{\categ{Form}_{#1}}
\newcommand\fld{\kappa}
\newcommand\round[2]{\lceil\frac{#1}{#2}\rceil}
\newcommand\genarc[1]{\wideparen{#1}}
\newcommand \affine[2]{{\mathbb A_{#1}^{#2}}}
\newcommand \gr{Gro\-then\-dieck ring}
\newcommand\explicit{algebraic}
\newcommand\zariski{schemic}
\newcommand \var{x}
\newcommand \vary{y}
\newcommand\tuple[1]{\mathbf{#1}}
\newcommand\fim[1]{\texttt{Im}(#1)}
\newcommand \arc[2]{\nabla_{\!#1}#2} 
\newcommand \sym[1]{{\langle #1\rangle}}
\newcommand \class[1]{{[ #1]}}
\newcommand\latt[2]{\Lambda^{#2}(#1)}
\newcommand\grlatt[2]{\Lambda^{#2}_\bullet(#1)}
\newcommand \grot[1]{{\mathbf {Gr}(#1)}}
\newcommand \grotzero[1]{{\mathbf {Gr}_0(#1)}}
\newcommand \grotmot{\mathbf G}
\newcommand \grotclass[1]{{\mathbf {Gr}(\categ{Var}_{#1})}}
\newcommand\fatpoints[1]{\categ {Fat}_{#1}}
\newcommand\flatpoints[1]{\categ {Fat}^{\text{flat}}_{#1}}
\newcommand\splitpoints[1]{\categ {Fat}^{\text{split}}_{#1}}
\newcommand\motintspl[3]{\int #2\ d_{#3}#1}
\newcommand\motintsplrel[4]{\int_{#4} #2\ d_{#3}#1}
\newcommand \lef{{\mathbb L}}
\newcommand \jet[3]{J_{#1}^{#2}#3}
\newcommand \mor[3]{\op{Mor}_{#1}(#2,#3)}
\newcommand \jac[1]{\op{Jac}_{#1}}
\newcommand\igumot[3]{\op{Igu}^{(#2,#3)}_{ #1^{\op{mot}}}}
\newcommand\poinc[3]{\op{Poin}^{(#2,#3)}_{ #1^{\op{mot}}}}
\newcommand\mil[3]{\op{Mil}^{(#2,#3)}_{ #1^{\op{mot}}}}
\title {The yoga of schemic Grothendieck rings, a topos-theoretical approach}
\author{Hans Schoutens}
\date\today
\subjclass{13D15;14C35;14G10;18F30}
\begin{document}


\begin{abstract}  
We propose a suitable substitute for the classical \gr\ of an \acf, in which any quasi-projective scheme  is represented, while maintaining its  non-reduced structure. This yields a more subtle invariant, called the \zariski\ \gr, in which we can formulate a form of integration resembling Kontsevich's motivic integration via arc schemes.  Whereas the original construction was via definability, we have  translated in this paper everything into a topos-theoretic framework.
\end{abstract}

\maketitle
\setcounter{tocdepth}{1}
\tableofcontents

\section{Introduction}

Kontsevich \cite{Kon}  has formulated a general integration technique on (smooth) schemes  over an \acf\ $\fld$, modeled on $p$-adic integration and called \emph{motivic integration}. This was then extended by Denef and Loeser \cite{DLIgu,DLArcs,DLDwork}   to achieve \emph{motivic rationality}, by which they mean the fact that the rationality of a certain generating series from geometry or number-theory, like the Igusa-zeta series, is ``motivated'' by the rationality of its motivic counterpart which specializes to the given classical series via some multiplicative function (counting function, Euler \ch, \dots). The two main ingredients of this construction are the \gr\  of varieties over $\fld$ (see below), in which the integration takes its values, and the \emph{arc space} $L(X)$ of a variety $X$, that is to say, the reduced Hilbert scheme classifying all  arcs $\op{Spec}\pow\fld \xi\to X$. Our aim is to extend this by replacing varieties by schemes, in such a way that  killing the  nilpotent structure  reverts to the old theory. 
The classical  \gr\ $\grotclass \fld$ of an \acf\ $\fld$ is designed to  encode both combinatorial and geometric properties of varieties. It is 
defined as the quotient of the free Abelian group on  varieties   over $\fld$, modulo the relations $\class X-\class {X'}$, if $X\iso X'$, and 
\begin{equation}\label{eq:scissvar}
\class X=\class{X- Y}+\class Y, 
\end{equation}
if $Y$ is a closed subvariety, for $Y,X,X'$ varieties (=reduced, separated schemes of finite type over $\fld$). We will refer to the former relations as \emph{isomorphism relations} and to the latter   as \emph{scissor relations}, in the sense that we    ``cut out $Y$ from $X$.'' 
Multiplication on $\grotclass \fld$ is then induced by the fiber product. In sum, the three main ingredients for building the \gr\ are:  scissor relations,   isomorphism relations , and   products. Only the former causes problems if one wants to extend the construction of the \gr\ from varieties too arbitrary  finitely generated schemes. Put bluntly,  we cannot cut a scheme in two, as there is no notion of a scheme-theoretic complement, and so we ask what new objects we should add to make this work. Let us call these new objects tentatively \emph{motives}, in the sense that their existence is motivated by a combinatorial  necessity.
There are now two approaches to construct these motives.

The first one, discussed at length in \cite{SchSchGr},   is based on definability, and was the original approach. The point of departure is the representation of a scheme by an equational (first-order) formula  modulo the theory of local Artin algebras. The logical operations of conjunction, disjunction, and negation are then used to form the required new objects:   motives arise as Boolean combinations of schemes. Scissor relations are now easily expressed in this formalism, whereas products are given by conjunction with respect to distinct variables, and isomorphism relations are   phrased in terms  of definable isomorphisms,  cumulating in  the construction of the \emph{\zariski\ \gr} over $\fld$.
To obtain geometrically more significant  motives, one is forced  not only to introduce quantifiers, but also   to resort to some infinitary logic via formularies, leading to larger \gr{s}, all of which still admit a natural \homo\ onto the classical \gr. To define the analogue of arc spaces, one obtains   arc formulae by interpreting  the theory of a local Artin algebra in that of  its residue field. The resulting arc operator is compatible with Boolean combinations and hence induces an endomorphism on the \gr{s}. This was the first striking success of the new theory: no such operation holds in the classical \gr. Other  main advantages of this approach are (i) the presence of negation, allowing one to ``cut up'' a scheme into motives, and (ii) the uniformity inherent in model-theory, allowing one to use parameters, and hence to work over an arbitrary base ring rather than just an \acf. The main disadvantage stems from non-functoriality, in particularly  when dealing with morphisms. Nonetheless, the theory has been applied in \cite{SchSchGr} with success to establish motivic rationality in certain cases, even the, thus far illusive,   positive \ch\ case. 

However, soon after writing down this version, I came to realize that it might beneficial to sacrifice definability for functoriality. In this approach, which forms the content of this paper and is essentially topos-theoretical, schemes are viewed as (contravariant) functors. Traditionally, one views them as functors, called \emph{representable functors}, from the category of $\fld$-algebras to the category of sets, but the power of the present approach comes from narrowing down the former category to that of \emph{fat points}, consisting only of one-point schemes over $\fld$. Thus, given a scheme $X$ of finite type over $\fld$ and a fat point $\fat$, we let $X(\fat)$ be the set of all \emph{$\fat$-rational points}, that is to say, morphisms $\fat\to X$. The functor $\fat\mapsto X(\fat)$ now  determines the scheme $X$ uniquely. Motives are then certain subfunctors of these representable functors, with morphisms between them given as natural transformations. Since these functors take values in the category of sets, all set-theoretic operations are available to us, such as union, intersection, and complement. However, complementation does not behave functorially, and so motives now only form a lattice, leading to the notion of a \emph{motivic site}: apart from a Grothendieck topology inherited from the Zariski topology on the schemes, we also require a categorical lattice structure in order to formulate scissor relations. Defining multiplication by means of fiber products, we thus get the \gr\ of a motivic site. Among the many tools from category theory and topos theory we can now resort to, adjunction takes a primary place: it allows us, for instance, to define, without much effort, arc schemes, which act again nicely on the corresponding \gr{s}. 

Let me  now briefly discuss  in more detail the content of the present paper. In \S\ref{s:sieves}, we discuss the functors that will play the role of motives. Borrowing terminology from topos theory, on the category of fat points, a subfunctor $\motif X$ of a representable functor given by a scheme $X$ is called a \emph{sieve} on $X$, and $X$ is called an \emph{ambient space} of $\motif X$. We may do this over an arbitrary Noetherian base scheme $\bassch$, provided it is also Jacobson. For the sake of this introduction, I will only treat the case of greatest interest to us, namely, when $\bassch$ is the spectrum of an \acf\ $\fld$. A morphism of sieves is in principle any natural transformation, but often such a morphism extends to a morphism of the ambient spaces, in which case we call it \emph{\explicit}. We turn this into a true topos in \S\ref{s:globsect}, by defining an admissible open of a sieve $\motif X$ to be its restriction to an open in its ambient space. We define a  \emph{global section} on  a sieve $\motif X$ to be any morphism into the affine line. We establish an acyclicity result for global sections, allowing us to  define the \emph{structure sheaf} $\loc_{\motif X}$ of $\motif X$. 

In the next four sections, \S\S\ref{s:motsite}--\ref{s:formgr}, we introduce the \gr\ of a motivic site, and discuss the three main cases. As already mentioned, a motivic site is for each scheme, a choice of lattice  of sieves on that scheme, called the \emph{motives} of the site. The associated \gr\ is then defined as the free Abelian group on motives in the site modulo the isomorphism relations and the scissor relations, where   the latter take the lattice form
\begin{equation}\label{eq:scisslat}
\class {\motif X}+\class{\motif Y}=\class{\motif X\cup \motif  Y}+\class{\motif X\cap\motif  Y}, 
\end{equation}
for any two motives $\motif X$ and $\motif Y$ on the same ambient space. The first motivic site of interest consists of the \emph{\zariski} motives, given on each scheme as the lattice of its closed subschemes (viewed as representable subfunctors). The resulting \gr\ is too coarse, as it is freely generated as an additive group by the classes of irreducible \zariski\ motives (Theorem~\ref{T:classinvmot}). A larger, more interesting site is given by the  sub-\zariski\ motives, where we call a sieve $\motif X$ on $X$ \emph{sub-\zariski}, if there is a morphism $\varphi\colon Y\to X$ such that at each fat point $\fat$, the set $\motif X(\fat)$ consists of all $\fat$-rational points $\fat\to X$ that factor through $\varphi$, that is to say, $\motif X(\fat)$ is the image of the induced map $\varphi(\fat)\colon Y(\fat)\to X(\fat)$. 
 Any locally closed subscheme is sub-\zariski, so that in the corresponding \gr, we may express the class of any separated scheme  in terms of classes of affine schemes. Moreover, any morphism of sieves with domain a sub-\zariski\ motif is \explicit\ (Theorem~\ref{T:globsectsubzar}), from which it follows that the sub-\zariski\ \gr\ admits a natural \homo\ into the classical \gr.
 
 Whereas in general the complement of a sieve is no longer a sieve (as functoriality fails), this does hold for any open subscheme. However, such a complement is in general no longer sub-\zariski, but only what we will call a \emph{formal motif}, that is to say, a sieve $\motif X$ that can be approximated by sub-\zariski\ submotives in the sense that for each fat point $\fat$, one of its sub-\zariski\ approximations has the same $\fat$-rational points as $\motif X$. In case of an open $U=X-Y$, the complement is represented by the formal completion $\complet X_Y$, whose approximations are    the \emph{jet spaces} $\jet YnX:=\op{Spec}(\loc_X/\mathcal I_Y^n)$. A morphism in the site of formal motives $\funcinf\fld$ is approximated by \explicit\ morphisms, and therefore, the ensuing \gr\ $\grot{\funcinf\fld}$ still admits a canonical \homo\ onto the classical \gr\ $\grotclass\fld$. 

In \S\ref{s:adj}, we discuss \emph{adjunction} of motivic sites over different base schemes. Formally, this consists of a pair of functors $\eta\colon \fatpoints{\basschy}\to\fatpoints \bassch$ and $\arc{\eta}{}\colon\sieves\bassch\to \sieves{\basschy}$ such that
$$
\motif X(\eta(\fat))=(\arc\eta {\motif X})(\fat)
$$
for any $\bassch$-sieve $\motif X$ and any $\basschy$-fat point $\fat$. We call such an adjunction (sub-)\zariski\ or formal, if $\arc\eta X$ is respectively (sub-)\zariski\ or formal, for any scheme $X$. Under this assumption, the adjunction induces a \homo\ from  the corresponding \gr\ over $V$ to the \gr\ over $W$. Whenever we have a morphism $f\colon \bassch\to\basschy$ of finite type, we obtain an adjunction given by the pair $(f_*,f^*=\arc{f_*}{})$, called \emph{augmentation}, where $f_*$ means restriction of scalars  and $f^*$ means extension of scalars via $f$. If $f$ is moreover finite and flat, then we can go the other way, called \emph{diminution}, via the pair $(f^*,\arc{f^*}{})$. Both adjunctions are \zariski, and they are related by the  projection formula 
$$
\arc{\tilde f^*}{}\arc{\tilde h_*}{}=\arc{h_*}{}\arc{f^*}{}
$$
where $h\colon  \tilde\bassch\to\bassch$ is of finite type, and $\tilde f$ and $\tilde h$ are the corresponding base changes of $f$ and $h$ with respect to the other (Theorem~\ref{T:projform}). Applying diminution to a  rational point $a\in X(\fld)$, we may define for each morphism $Y\to X$ and each motif $\motif Y$ on $Y$, the \emph{specialization} $\motif Y_a$. In this way, $\motif Y$ becomes a family of motives. Another interesting application of adjunction is via the action of Frobenius $\frob$ in \ch\ $p>0$. If we let $\frob(\fat)$ be the fat point with coordinate ring the $p$-th powers of elements in the coordinate ring of $\fat$, then this yields an adjunction $(\frob,\arc\frob{})$ which is only sub-\zariski: $\arc\frob Y$ is given as the image of the relative Frobenius on $Y$ (locally via some embedding in affine space; see Theorem~\ref{T:frobadj} for a precise formulation). 

In \S\ref{s:arc}, we apply the adjunction theory to the special case of the structure morphism $j\colon \fat\to \op{Spec}\fld$ of a fat point, and we define the \emph{arc functor} $\arc\fat{}$ as the composition of augmentation and diminution along $j$, that is to say, 
$$
\arc\fat{}:=\arc{j^*}{}\after\arc{j_*}{}.
$$
This corresponds in the special case that $\fat=\mathfrak l_n:=\op{Spec}\pol\fld\xi/(\xi^n)$ to the classical truncated arc space $L_n$ through the formula
$$
L_n(X)= (\arc{\mathfrak l_n}X)^{\text{red}}
$$
for any variety $X$. 
However, it should be noted that $\arc\fat{}$ does not commute with taking reductions, so that even if $X$ and $X'$ have the same underlying variety, they will in general have different arc schemes $\arc\fat X$ and $\arc\fat{X'}$, even possibly of different dimension (see \S\ref{s:motdim}). In fact, the dimension of the arc scheme of a fat point over itself is an intriguing invariant. 

By the general theory of adjunction, $\arc\fat{}$ is an endomorphism on each \gr. Arcs behave well over smooth varieties, as in the classical case (Theorem~\ref{T:fibmot}): the canonical morphism $\arc\fat X\to X$ is a locally trivial fibration over the non-singular locus of $X$, with general fiber some affine space. Hence, in the smooth case, $\class{\arc\fat X}=\class X\lef^{l(d-1)}$, where $d$ and $l$ are respectively   the dimension of $X$ and the length of (the coordinate ring of) $\fat$, and where $\lef:=\class{\affine\fld1}$ is the \emph{Lefschetz class}. 

Using  the formalism of adjunction,  we discuss some variants of arcs: \emph{deformed arcs} in \S\ref{s:defarc}, and \emph{extendable arcs} in \S\ref{s:extarc}. For the definition of the latter, we discuss in  \S\ref{s:limpt}  a compactification of the category of fat points,  the category of \emph{limit points}, given as  direct limits of fat points (e.g., the formal completion $\complet Y_P$).  Although we can extend the notion of arcs to any limit point, the corresponding arc scheme is no longer of finite type. 

In  \S\ref{s:motseries}, we discuss some of the motivic series that can now be defined using this formalism. As already mentioned, since they or their classical variants specialize to generating series that are known to be rational, we expect them to be already rational over the formal \gr, or rather, over its localization $\grot{\funcinf\fld}_\lef$; this is called \emph{motivic rationality}. Let us briefly describe each of these series here. 
\begin{description}
\item[Igusa]  with $\mathfrak j_n:=\jet PnY$   the $n$-th jet of a closed point $P$ on a scheme $Y$,  and $X$ any scheme of dimension $d$, we set 
$$
\igumot XYP(t):=\sum_{n=1}^\infty\lef^{-d \ell(\mathfrak j_n)}    
\class{ \arc{\mathfrak j_n}X} \ t^n
$$
\item[Hilbert] with $(Y,P)$ and $\mathfrak j_n$   as above, we set 
$$
\op{Hilb}^{\text{mot}}_{(Y,P)}:=\sum_{n=1}^\infty \class{\mathfrak j_n}\ t^n
$$
\item[Hilbert-Kunz] with $\fld$ of \ch\ $p>0$, with $Y\sub X$   a closed subscheme,   and with $Y^{[n]}_X$   the $n$-th Frobenius transform of $Y$ (defined by the $p^n$-th powers of the defining equations of $Y$), we set 
$$
\op{HK}^{\text{mot}}_Y(X):=\sum_{n=1}^\infty \class{Y^{[n]}_X}\ t^n
$$
\item[Milnor]  with  $\mathfrak y_n$ the $n$-th deformation of  a scheme $Y$ at a closed point $P$ given as $\op{Spec}\loc_{Y,P}/(\xi_1^n,\dots,\xi_e^n)$, where $\rij\xi e$ is a system of parameters in $\loc_{Y,P}$, with $X\sub\affine\fld{d+1}$   a hypersurface with equation $f=0$,  and with $X_{\mathfrak y_n}\sub\affine{\mathfrak y_n}{d+1} $   the 
deformed hypersurface with equation $f-(\xi_1\cdots\xi_e)^{n-1}=0$, we set
$$
\mil XYP(t):=\sum_{n=1}^\infty \lef^{-d\ell(\mathfrak y_n)}\class{\arc{j_{\mathfrak y_n}^*}{X_{\mathfrak y_n}}}\ t^n;
$$
\item[Poincare] with $\imarc{\complet Y_P}{\mathfrak j_n}X$   the submotif of $\arc{\mathfrak j_n}X$ consisting of all arcs that factor through the formal completion $\complet Y_P$, we set 
$$
\poinc XYP(t):=\sum_{n=1}^\infty \lef^{-d\ell(\mathfrak j_n)}
\class{\imarc{\complet Y}{\mathfrak j_n}X}t^n
$$
\item[Hasse-Weil] with $X^{(n)}$ the $n$-fold symmetric product of $X$, that is to say, the Hilbert scheme of effective zero cycles of degree $n$ on $X$, we set 
 $$
  \op{HW}^{\text{mot}}_X:=\sum_{n=0}^\infty\class{X^{(n)}}\ t^n.
  $$
\end{description}
Note that the specializations of the motivic Hilbert and Hilbert-Kunz series to the classical \gr\ are trivial since the underlying varieties are just points. Put differently, this kind of motivic rationality cannot even be phrased in the classical setup. However, specializing to the length of the motives, which is well-defined by Proposition~\ref{P:zerosch}, yields their classical counterparts.

The final   section, \S\ref{s:motint} is devoted to motivic integration. We only develop the finitistic theory, that is to say, over a fixed fat point, leaving the case of a limit point to a future paper. One of the great disadvantages of the categorical approach is that fibers are in general not functorial (after all, a fiber is the complement of the remaining fibers). We can overcome this by restricting to the category of \emph{split fat points}, in which the morphisms are now assumed to be split. 
Our motivic integration will take values in the localization $\grot{\funcinf\fld}_\lef$. A functor $s$, viewed on the category of split fat points,  from a formal motif $\motif X$ on $X$ to the constant sheaf with values in this localization $\grot{\funcinf\fld}_\lef$ is called a \emph{formal invariant} if all its fibers are formal motives, with only finitely many non-empty. We  then  define
$$
\motintspl Xs{\fat}:=\lef^{-dl}\sum_{g\in \grot{\funcinf\fld}_\lef}g\cdot 
\class{\arc\fat{(\inverse sg)}},
$$
where $d$ is the dimension of $X$ and $l$ the length of $\fat$. This motivic integral can be calculated locally (Theorem~\ref{T:locmotint}).

\subsection*{Notation and terminology}
Varieties are assumed to be reduced, but not necessarily irreducible. Given a scheme $X$, we let $X^{\text{red}}$ denote its \emph{underlying variety}  or \emph{reduction}. We often denote a morphism of affine schemes $\op{Spec}B\to \op{Spec}A$  by the same letter as the corresponding ring \homo\ $A\to B$, whenever this causes no confusion. By a \emph{germ} $(X,Y)$ we mean a scheme $X$ together with a closed subscheme $Y\sub X$. Most of the time $Y$ is an irreducible subvariety, that is to say, the closure of a point $y\in X$, and we simply write $(X,y)$ for this germ. If $Y$ is a closed point, we call the germ \emph{closed}.  The \emph{$n$-th jet} $\jet YnX$ of a   germ $(X,Y)$ is the closed  subscheme  defined by $\mathcal I_Y^n$, where $\mathcal I_Y$ is the ideal of definition of $Y$.\footnote{Note that many authors take instead the $n+1$-th power.} The \emph{formal completion} $\complet X_Y$ of the germ $(X,Y)$ is the locally ringed space obtained as the direct limit of the $\jet YnX$ (see \cite[II.\S9]{Hart}). For instance, if $Y=P$ is a closed point with maximal ideal $\maxim_P$, then the ring of global sections of $\complet X_P$ is the $\maxim_P$-adic completion $\complet\loc_{X,P}$ of $\loc_{X,P}$. 

We often denote the affine line $\affine\fld 1$ by $L$ and the origin by $O$. The formal completion of the germ $(L,O)$ is   denoted $\complet L$, and the basic open $L-O$, that is to say, the open subscheme obtained by removing the origin, is denoted $L_*$. The respective classes, in whichever \gr\ we consider, are denoted $\lef$, $\complet\lef$ and $\lef_*$. Whereas in the classical \gr, $\lef=\lef_*+1$ (and $\complet \lef$ is undefined),  in the formal \gr, we have  $\lef=\lef_*+\complet\lef$ (see   Proposition~\ref{P:formsieve}), which, after taking global sections, takes the suggestive  form
$$
``{\pol\fld \var}"=``{\pol\fld{\var,1/\var}}"+``{\pow\fld\var}" .
$$
 The $n$-th jet of  $(L,O)$ will be denoted $\mathfrak l_n:=\op{Spec}(\pol\fld\var/(\var^n))$.

\section{Sieves}\label{s:sieves}
Fix a Noetherian scheme $\bassch$, to be used as our base space, and which, for reasons that will become apparent soon, will also be assumed to be Jacobson. Most often, $\bassch$ is just the spectrum of  an \acf\ $\fld$. By a  \emph{$\bassch $-scheme} $X$, we mean a separated scheme $X$ together with a morphism of 
finite type $X\to \bassch$. We call $X$ a \emph{fat $\bassch $-point}, if $X\to 
\bassch $ is finite and $X$ has a unique point. In other words, $X=\op{Spec}R$, 
for some Artinian local ring $R$ which is finite as a  $\loc_\bassch$-module.  We call the length of $R$ the \emph{length} of the fat point and denote  it $\ell(\fat)$. We denote the subcategory of fat $\bassch$-points by $
\fatpoints \bassch$, and we will use letters $\fat, \mathfrak v, \dots$ to 
denote fat points. An important example of fat points are the jets of a closed germ $(X,P)$, that is to say, given   a $\bassch$-scheme $X$  and a closed point $P$ on $X$ with corresponding maximal ideal $\maxim_P$, we let $\jet PnX$, called the \emph{$n$-th jet of $X$ along $P$}, be  the fat point with coordinate ring $\loc_{X,P}/\maxim_P^n$. 

Let $X$ be a  $\bassch$-scheme and $\fat$ a fat point.  A morphism of $\bassch$-schemes $a\colon \fat\to X$ will be called a \emph{$\fat$-rational point} on $X$ 
(over $\bassch$). The set of all $\fat$-rational points on $X$ will be denoted by $X(\fat)$. The image of the unique point of $\fat$ under $a$ is a closed point on $X$, called the \emph{center} (or  \emph{origin}) of $a$. Indeed, since the composition $\fat\to X\to \bassch$ is the structure map whence finite, so is its first component $a\colon \fat\to X$. As finite morphisms are proper, closed points are mapped to closed points. Let $x$ be the center of a $\fat$-rational point $a$ on a $\bassch$-scheme $X$. We will denote the residue field of $x$ and $\fat$ respectively by $\res x$ and $\res\fat$. The $\fat$-rational point $a$ induces a \homo\ of residue fields $\res x\to \res\fat$. By the Nullstellensatz this a is a finite extension, and its degree will be called the \emph{degree} of $a$. In particular, if $\bassch$ is the spectrum of  an \acf\ $\fld$, then $\fld=\res x=\res\fat$, so that any $\fat$-rational point has degree zero.

 The category of contravariant functors (with natural transformations  as morphisms) from  $\fatpoints \bassch$ to     the 
category of sets will be called the category of \emph{presheaves} over $\bassch
$, following standard practice in topos theory, notwithstanding the confusion this causes with the usual notion from algebraic geometry. The \emph{product} $\motif X\times \motif Y$  (respectively,  the 
\emph{disjoint union} $\motif X\sqcup \motif Y$)  of two presheaves $
\motif X$ and $\motif Y$ is defined point-wise by the rule that a fat point 
  $\fat$ is mapped to the Cartesian product $\motif X(\fat)\times 
\motif Y(\fat)$ (respectively, to the disjoint union $\motif X(\fat)\sqcup 
\motif Y(\fat)$). Similarly, we say that $\motif X$ is a \emph{sub-presheaf} 
of $\motif Y$, symbolically $\motif X\sub \motif Y$, if for every fat point   $
\fat$, we have an inclusion $\motif X(\fat)\sub \motif Y(\fat)$, and this inclusion is a natural transformation, meaning that for any morphism $j\colon \tilde\fat\to \fat$, we have a commutative diagram
\commdiagram{\motif X(\fat)}{\sub}{\motif Y(\fat)}{\motif X(j)}{\motif Y(j)} {\motif X(\tilde\fat)}{\sub}{\motif Y(\tilde\fat)}
where the downward arrows are the maps induced functorially by $j$.
 We call a  presheaf  $\motif X$ on $\fatpoints \bassch$   
\emph{representable} (respectively, \emph{pro-representable}), if there 
exists a $\bassch$-scheme $X$ (respectively, a  scheme $X$ which is not 
necessarily of finite type over $\bassch$) such that $X(\fat)= \motif X(\fat)$, 
for all fat points $\fat$. To emphasize that we view the ($\bassch$-)scheme $X$ 
as a contravariant functor on  $\fatpoints \bassch$, we will denote it by $\func 
X:=\mor\bassch\cdot X$. Any morphism $\varphi\colon Y\to X$ of ($\bassch
$-)schemes induces, by composition, a natural transformation $\func
\varphi\colon \func Y\to \func X$, that is to say, a morphism in the 
category of  presheaves on $\fatpoints \bassch$. More precisely, given a fat point $\fat$ and a $\fat$-rational point $b\colon \fat \to Y$, let $\func\varphi(\fat)(b):=\varphi\after b$. Instead of $\func\varphi(\fat)$, we will simply write $\varphi(\fat)$ for the induced map $Y(\fat)\to X(\fat)$ if there is no danger for confusion.

\begin{lemma}\label{L:distrep}
Two closed  closed subschemes $X$ and $Y$ of a $\bassch$-scheme $Z$ are distinct \iff\ there is a fat point $\fat$ such that $X(\fat)$ and $Y(\fat)$ are distinct subsets of $Z(\fat)$.
\end{lemma}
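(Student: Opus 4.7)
The direction $(\Leftarrow)$ is vacuous: if $X=Y$ as closed subschemes of $Z$, then their functors of points coincide, so $X(\fat)=Y(\fat)$ for every fat point $\fat$. The real content is the converse, and the plan is to translate the local algebraic statement ``the defining ideals differ'' into the geometric statement ``some jet of $Y$ witnesses the difference.''

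First I would pass to an affine open $U=\op{Spec}A$ of $Z$ on which the defining ideals of $X$ and $Y$ differ, say as ideals $I,J\subseteq A$. After swapping the roles of $X$ and $Y$ if necessary, I pick $f\in I\setminus J$. The goal is then to produce a fat $\bassch$-point $\fat$ and a morphism $a\colon\fat\to Z$ landing in $U$ that factors through $Y$ but not through $X$; this immediately yields an element of $Y(\fat)\setminus X(\fat)\subseteq Z(\fat)$. I will obtain $\fat$ as a suitable jet of $Y$ at an appropriate closed point.

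Set $B:=A/J$, so that the image $\bar f$ of $f$ in $B$ is non-zero. Since $\ann B{\bar f}$ is therefore a proper ideal, it is contained in some maximal ideal $\maxim$ of $B$, and for this choice $\bar f$ survives in the Noetherian local ring $B_\maxim$. By Krull's intersection theorem applied in $B_\maxim$, there is an $n\geq 1$ with $\bar f\notin\maxim^nB_\maxim$. Because every prime of $B/\maxim^n$ contains $\maxim/\maxim^n$, the ring $B/\maxim^n$ is local and the natural map $B/\maxim^n\to B_\maxim/\maxim^nB_\maxim$ is an isomorphism, so $\bar f$ remains non-zero in $R:=B/\maxim^n\iso \loc_{Y,P}/\maxim_P^n$, where $P$ denotes the closed point of $Y$ corresponding to $\maxim$. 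The Jacobson hypothesis on $\bassch$ together with Zariski's Nullstellensatz makes $B/\maxim$ a finite extension of the residue field of the image of $P$ in $\bassch$, whence $R$ is finite as an $\loc_\bassch$-module; thus $\fat:=\op{Spec}R=\jet PnY$ is genuinely a fat $\bassch$-point. The composition $\fat\into Y\into Z$ then produces the desired $a$: by construction it lies in $Y(\fat)$, and if it also lay in $X(\fat)$ we would locally obtain a factorization $A\to A/I\to R$ sending $f$ to $0$, contradicting $\bar f\notin\maxim^n$.

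The delicate step is the choice of $\maxim$. The naive move is to appeal to the Jacobson property of $B$ to find a maximal ideal avoiding $\bar f$, but this collapses when $\bar f$ is nilpotent in $B$. The correct move is to pick $\maxim$ above $\ann B{\bar f}$, which is exactly what guarantees that $\bar f$ survives localization at $\maxim$; Krull's intersection theorem then provides a truncation $B/\maxim^n$, i.e.\ a jet of $Y$, in which $\bar f$ is still non-zero. Everything else---the Jacobson/Nullstellensatz bookkeeping needed to confirm that $\op{Spec}(B/\maxim^n)$ is a fat $\bassch$-point, and the trivial direction---is routine.
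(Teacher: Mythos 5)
Your proof is correct and takes essentially the same route as the paper: reduce to an affine chart, locate a closed point by a localization argument, and apply Krull's intersection theorem to obtain a jet at that point which factors through one closed subscheme but not the other, with the Jacobson hypothesis ensuring the resulting Artinian local ring is a genuine fat $\bassch$-point. The only cosmetic difference is that the paper chooses a maximal ideal $\maxim\subseteq A$ at which $IA_\maxim\neq JA_\maxim$ and takes $\fat=\op{Spec}(A/(I+\maxim^n))$, a jet of $X$, whereas you pick $f\in I\setminus J$ and a maximal ideal of $A/J$ above $\ann{A/J}{\bar f}$, producing a jet of $Y$ instead.
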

\begin{proof}
One direction is immediate, so assume   $X$ and $Y$ are distinct. Then their restriction to some  affine open of $Z$ remains distinct, and hence we may assume $Z=\op{Spec}A$ is affine.   Let $I$ and $J$ be the ideals in $A$ defining $X$ and $Y$ respectively. Since $I\neq J$, there is a maximal ideal $\maxim\sub A$ such that $IA_\maxim\neq JA_\maxim$. Hence, by Krull's Intersection Theorem in the Noetherian local ring $A_\maxim$, there is some $n$ such that $I$ and $J$ remain distinct ideals in $A/\maxim^n$.  In particular, upon replacing $I$ by  $J$ if necessary, $\fat:=\op{Spec}(A/(I+\maxim^n))$ is a closed subscheme of $X$, but not of $Y$, showing that the closed immersion $\fat\sub Z$ lies in $X(\fat)-Y(\fat)$. 
\end{proof}

The resulting map from the 
category of $\bassch$-schemes to the   category of presheaves on $\fatpoints 
\bassch$  is a full embedding by   Yoneda's Lemma, Lemma~\ref{L:distrep}, and Proposition~\ref{P:morphrep} below. Note that this is no longer true for schemes not of finite type, an obvious 
reason for this failure being that there might be no rational points at all:   for 
instance, $\op{Spec}(\mathbb C)$  has no rational points over any fat point 
defined over the algebraic closure $\bar{\mathbb Q}$.  The 
product of two (pro-)representable presheaves is again (pro-)representable. 
More explicitly, the product $\func X\times \func Y$ is the same as $
\func{(X\times_\bassch Y)}$. If $s\colon \motif X\to \motif Y$ is a morphism 
of presheaves, then we define its \emph{image} $\fim s$ and its 
\emph{graph} $\graf s$ as the sub-presheaf of respectively $\motif X$ and $
\motif X\times \motif Y$, given at each fat point $\fat$ as respectively the 
image and the graph of the map $s(\fat)\colon \motif X(\fat)\to \motif 
Y(\fat)$. 
 
 \subsection*{Sieves}
 By a    \emph{sieve},
we mean a sub-presheaf $\motif X$ of some representable 
$\func X$. If we want to emphasize the underlying $\bassch$-scheme $X$, we say 
that $\motif X$ is \emph{a   sieve on} $X$, or that $X$ is an \emph{ambient space} of $\motif X$.  
Some examples of sieves: if $\varphi\colon Y
\to X$ is a morphism of $\bassch$-schemes, then we let  $
\fim\varphi$ be the image pre-sieve of the corresponding 
natural transformation $\func\varphi\colon\func Y\to \func X$, that is to 
say, $\fim \varphi(\fat)$, for a fat point   $\fat$,  consists of all  $\fat$-rational points on $X$ that \emph{lift}  to a $\fat$-rational point on $Y$, 
meaning that $\fat\to X$ factors through $Y$. Any sieve of the form $\fim\varphi$ for some morphism $\varphi\colon Y\to X$ of $\bassch$-schemes is called
 \emph{sub-\zariski}. If $\varphi$ is a 
(locally) closed or open immersion, then $\fim\varphi$ is equal to $\func Y$, 
whence is itself representable, and we call $\fim\varphi \iso\func Y $ respectively a 
\emph{(locally) closed} or \emph{open subsieve} on $X$. 
 
\begin{lemma}\label{L:vanci}
Let $\motif X$ be a sieve on $X$ and $\fat=\op{Spec}R$ a fat point. A $\fat$-rational point $a\colon \fat\to X$ belongs to $\motif X(\fat)$ \iff\ $\fim a\sub\motif X$. If $\motif X=\func Y$ is a closed subsieve, given by a closed subscheme $Y\sub X$, then this is also equivalent with $\fat\iso\fat\times_XY$ and also with $a^*\mathcal I_Y=0$, where $\mathcal I_Y$ is the ideal sheaf of $Y$ and $a^*\mathcal I_Y$ its image in $R$. 
\end{lemma}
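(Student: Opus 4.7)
The plan is to handle the three stated equivalences in order, reducing each to Yoneda-style bookkeeping or to an affine calculation. The main equivalence $a \in \motif X(\fat) \iff \fim a \sub \motif X$ comes directly from naturality. For the backward direction, observe that $a = a \after \mathrm{id}_\fat$ lies in $\fim a(\fat)$, so if $\fim a \sub \motif X$ then certainly $a \in \motif X(\fat)$. Conversely, assume $a \in \motif X(\fat)$, and let $b \in \fim a(\mathfrak v)$ for an arbitrary fat point $\mathfrak v$; by definition $b = a \after j$ for some $j\colon \mathfrak v \to \fat$. The fact that $\motif X$ is a sub-presheaf of $\func X$ means, via the commutative square in the definition of sub-presheaves, that $\motif X(j)$ is the restriction of $\func X(j) = (-) \after j$. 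Applied to $a$, this puts $a \after j = b$ in $\motif X(\mathfrak v)$, whence $\fim a \sub \motif X$.

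Now assume $\motif X = \func Y$ for a closed subscheme $Y \sub X$. By the definition of $\func Y$ as a sub-presheaf of $\func X$, the membership $a \in \func Y(\fat)$ is equivalent to the statement that $a\colon \fat \to X$ factors through the closed immersion $Y \into X$. The universal property of the pullback shows that such a factorization exists iff the projection $\fat \times_X Y \to \fat$ admits a section; but this projection is itself a closed immersion (as a base change of $Y \into X$), so admitting a section is the same as being an isomorphism. This gives the equivalence with $\fat \iso \fat \times_X Y$.

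For the identification with the vanishing of $a^*\mathcal I_Y$, I would reduce to the affine case. Since $\fat$ has a single closed point, its image under $a$ lies in some affine open $U = \op{Spec} A \sub X$ on which $Y$ is cut out by an ideal $I \sub A$, and $a$ corresponds to a ring \homo\ $\varphi\colon A \to R = \Gamma(\fat, \loc_\fat)$. Then $\fat \times_X Y = \op{Spec}(R/\varphi(I) R)$, so the projection to $\fat = \op{Spec} R$ is an isomorphism iff $\varphi(I) R = 0$, which is precisely $a^*\mathcal I_Y = 0$. This also recovers the factorization of $a$ through $Y$ as the familiar condition that $\varphi$ kills $I$, closing the chain of equivalences.

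The argument is essentially an unfolding of definitions; no single step looks substantial. The only mild care needed is invoking the naturality of the inclusion $\motif X \sub \func X$ in the first part, and verifying that passage to an affine neighborhood of the center of $a$ is harmless for the ideal-sheaf reformulation.
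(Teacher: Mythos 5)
Your proof is correct and follows essentially the same path as the paper's: the first equivalence via naturality of the inclusion $\motif X\sub\func X$, applied to $\mathrm{id}_\fat$ in one direction and to the structure map $j\colon\mathfrak v\to\fat$ in the other, and the closed-subscheme equivalences by reduction to an affine neighborhood of the center of $a$. The only mild variation is that you derive the equivalence with $\fat\iso\fat\times_XY$ categorically (a closed immersion admitting a section is an isomorphism, being a split epi and a mono) before going affine for the ideal-sheaf condition, whereas the paper handles both at once with the single affine chain $a\in Y(\fat)\iff IR=0\iff A/I\tensor_AR\iso R$; this is cosmetic, not a different argument.
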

\begin{proof}
Suppose $a\in\motif X(\fat)$. Let $\mathfrak w$ be a fat point  and $b\in\fim a(\mathfrak w)$. Hence $b\colon\mathfrak w\to X$ factors through $a$, that is to say, we can find a morphism $i\colon\mathfrak w\to \fat$ such that the diagram
\commtriangleback{\mathfrak w}{b}{X} a{\fat}i
commutes. By functoriality, $i$ induces a map $\motif X(\fat)\to\motif X (\mathfrak w)$, sending $a$ to $b$, proving that $b\in\motif X(\mathfrak w)$. Since this holds for all $\mathfrak w$ and $b$, we showed   $\fim a\sub\motif X$. Conversely, assume the  latter inclusion of sieves holds. In particular, the identity $1_\fat$ is a $\fat$-rational point whose image under $a(\fat)$ is just $a$, proving that $a\in \fim a(\fat)\sub\motif X(\fat)$.

To see the equivalence with the last two conditions if $\motif X=\func Y$, we may work locally and assume $X=\op{Spec} A$. Let   $I$ be the ideal defining $Y$, and let $A\to R$ be the \homo\ corresponding to $a$. Then $a\in Y(\fat)$ \iff\ $IR=0$ \iff\ $A/I\tensor_AR\iso R$, proving the desired equivalences.
\end{proof}

We may generalize the notion of an image sieve as follows. 
Given a sieve $\motif Y$ on an $\bassch$-scheme $Y$ and a morphism $
\varphi\colon Y\to X$ of $\bassch$-schemes, we define the \emph{push-forward} $\varphi_*\motif Y$ as the sieve on $X$ given at a fat point $\fat$ 
as the image of $\motif Y(\fat)\sub Y(\fat)$ under the map $
\varphi(\fat)\colon Y(\fat)\to X(\fat)$. In particular, $\varphi_*\func Y=
\fim\varphi$. Similarly, given a sieve $\motif X$ on $X$, we define its 
\emph{pull-back}   $\varphi^* \motif X$ as the sieve on $Y$ given at a fat 
point $\fat$ as the pre-image of $\motif X(\fat)$ under the map $
\varphi(\fat)\colon Y(\fat)\to X(\fat)$. In other words, $\varphi^* \motif 
X(\fat)$ consists of those rational points $\fat\to Y$ such that the 
composition $\fat\to Y\to X$ lies in $\motif X(\fat)$. The pull-back of a 
closed subsieve is again a closed subsieve: if $\bar X\sub X$ is a closed 
immersion and $\varphi\colon Y\to X$ a morphism, then $\varphi^*
\func{\bar X}$ is the closed subsieve given by $\inverse\varphi{\bar X}=Y
\times_X\bar X$. More generally, if $\bar X\to X$ is an arbitrary morphism, 
then the pull-back of the sub-\zariski\ sieve $\fim{\bar X\to X}$ is the sub-\zariski\ sieve 
$\fim{Y\times_X\bar X\to Y}$ of the base change.
 
  If $\motif X$ and $\motif Y$ are   sieves on an $\bassch$-scheme $X$, then we 
define their \emph{intersection} $\motif X\cap \motif Y$   and \emph{union} $\motif X
\cup \motif Y$ as the sieves given respectively by point-wise intersection and
  union, that is to say, $(\motif X\cap \motif Y)(\fat)= 
\motif X(\fat)\cap \motif Y(\fat)\sub X(\fat)$  and $(\motif X\cup \motif Y)(\fat)= \motif 
X(\fat)\cup \motif Y(\fat)\sub X(\fat)$. One easily checks that they are again sieves, that is to say, (contravariant) functors. We express this by saying that the sieves on $X$ form a \emph{lattice}. Clearly, the 
intersection   of two closed subsieves    is again a closed subsieve: $\func V
\cap \func W=\func{(V\cap W)}$, for $V,W\sub X$ closed subschemes, but this is no longer true for their union.   The disjoint union  $\motif X\sqcup\motif Y$ is equal to the union of the push-forwards of $\motif X$ and $\motif Y$ under the immersions $X\to X\sqcup Y$ and $Y\to X\sqcup Y$ respectively. 
 
By the \emph{Zariski closure} of $\motif X$ in $X$, denoted $\bar{\motif X}$, we mean the intersection of all closed subschemes $Y\sub X$ such that $\motif X\sub\func Y$. By   Noetherianity, $\motif X$ is a sieve on its Zariski closure $\bar{\motif X}$, and the   latter is the smallest closed subscheme on which $\motif X$ is a sieve. We say that $\motif X$ is \emph{Zariski dense} in $X$ if $\bar{\motif X}=X$. For instance, given a morphism $\varphi\colon Y\to X$ of $\bassch$-schemes, the Zariski closure of   $\fim\varphi$  is the so-called \emph{scheme-theoretic image} of $\varphi$, that is to say, the closed subscheme of $X$ given by the kernel of the induced morphism $\loc_X\to \varphi_*\loc_Y$.  When $\fim\varphi$ is Zariski dense, one says that $\varphi$ 
is \emph{dominant}.

\begin{lemma}\label{L:domfat}
If $\mathfrak w\to \mathfrak v$ is a dominant morphism of fat points, then the induced map $\motif X(\mathfrak v)\to \motif X(\mathfrak w)$ is injective, for any sieve $\motif X$. 
\end{lemma}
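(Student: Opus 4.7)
The plan is to reduce the statement to a clean algebraic fact about Artinian local rings. Since $\motif X$ is a subfunctor of the representable functor $\func X$ for some ambient $\bassch$-scheme $X$, and the functoriality diagram in the definition of sub-presheaf is commutative, the induced map $\motif X(\mathfrak v)\to \motif X(\mathfrak w)$ is simply the restriction of $X(\mathfrak v)\to X(\mathfrak w)$. Hence it suffices to show that for any $\bassch$-scheme $X$ and any dominant morphism of fat points $\mathfrak w\to \mathfrak v$, the induced map of $\fat$-rational points $X(\mathfrak v)\to X(\mathfrak w)$ is injective.

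Next I would translate "dominant" into ring-theoretic language. Writing $\mathfrak v=\op{Spec}R_\mathfrak v$ and $\mathfrak w=\op{Spec}R_\mathfrak w$, the morphism $\mathfrak w\to \mathfrak v$ corresponds to a ring \homo\ $f\colon R_\mathfrak v\to R_\mathfrak w$. By the discussion preceding the lemma, the scheme-theoretic image of this morphism is $\op{Spec}(R_\mathfrak v/\ker f)$, so dominance is equivalent to $f$ being injective.

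Now take two morphisms $a,b\colon \mathfrak v\to X$ with the same composition $\mathfrak w\to \mathfrak v\to X$. Since the unique point of $\mathfrak w$ is mapped to the common center $x\in X$, both $a$ and $b$ have $x$ as center. Working locally at $x$, we may pick an affine open $\op{Spec}A\sub X$ containing $x$; then $a$ and $b$ correspond to ring \homo{s} $a^\sharp, b^\sharp\colon A\to R_\mathfrak v$, and the hypothesis becomes $f\after a^\sharp=f\after b^\sharp$. Since $f$ is injective, $a^\sharp=b^\sharp$, whence $a=b$, completing the proof.

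The argument is essentially routine; the only real content is recognizing that dominance for a morphism of fat points coincides with injectivity of the corresponding Artinian ring \homo, which is immediate from the scheme-theoretic image description given in the paragraph just above the lemma. No serious obstacle is expected.
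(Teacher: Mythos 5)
Your proof is correct and follows essentially the same route as the paper: reduce to the representable case, localize to an affine neighborhood of the common center, translate dominance into injectivity of the Artinian ring homomorphism, and conclude by cancellation. The only (harmless) difference is that you spell out explicitly why both $a$ and $b$ share a center, whereas the paper simply invokes that the problem is local.
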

\begin{proof}
This is a form of duality: dominant morphisms are epimorphisms and so under a contravariant functor they become monomorphisms. More explicitly, since $\motif X\sub \func X$, for some $\bassch$-scheme $X$, it suffices to show injectivity for the latter, that is to say, we may assume $\motif X$ is representable, and then since the problem is local, we may assume $X$ is affine with coordinate ring $A$. Let $a,a'\in X(\mathfrak v)$ have the same image in $X(\mathfrak w)$. If $R\to S$ is the  \homo\ corresponding to $\varphi$, then dominance means that this \homo\ is injective. Since, by assumption,  the two   \homo{s} $A\to R$  induced respectively  by $a$ and $a'$ give rise to the same \homo\ $A\to S$ when composed with the injection $R\sub S$, they must already be equal, as we needed to show. 
\end{proof}

\begin{example}\label{E:zarclosrat}
Suppose $\bassch$ is the spectrum of an \acf\ $\fld$. Zariski closure does not commute with taking $\fld$-rational points, that is to say, the $\fld$-rational points of the Zariski closure of a sieve $\motif X$ in $X$ may be bigger than the Zariski  closure of $\motif X(\fld)$ (in the usual Zariski topology) in $X(\fld)$. For instance, as we will see shortly, the cone  $\cone O$ of the  origin $O$ on the affine line $\affine\fld 1$ has Zariski closure equal to $\affine\fld 1$, whereas its $\fld$-rational points consist just of the origin. 
\end{example}

\subsection*{Germs of sieves}
Strictly speaking, a sieve is a pair $(\motif X,X)$ consisting of a sub-presheaf $\motif X$ of an ambient space $X$, but we will often   treat   sieves as   abstract objects, that is to say, disregarding their ambient space. This allows us, for instance,  to view the same sieve as already defined  on a smaller subscheme. 
To give a more formal treatment, let us call a \emph{germ of a sieve} any equivalence class of pairs $(\motif Y,Y)$, where we call two such pairs $(\motif Y,Y)$ and $(\motif Y',Y')$ equivalent if  there exists a pair $(\motif X,X)$ and locally closed immersions $\varphi\colon Y\to X$ and $\varphi\colon Y'\to X'$ such that $\varphi_*\motif Y=\motif X=\varphi_*'\motif Y'$. In particular, $\motif Y$ is then also a sieve on the  intersection $Y\cap Y'$, viewed as a locally closed subscheme of $X$. Therefore, we may always assume, if necessary, that $\motif Y$ is Zariski dense in $Y$, and then it is also a Zariski dense sieve on any open subsieve of $Y$ containing it. Henceforth, we will often confuse a sieve with the germ it determines.

\subsection*{Complete sieves}
The complement of a sieve $\motif X\sub\func X$ is in general not a sieve, as witnessed by any closed subsieve. Let us call a sieve $\motif X$ on $X$ \emph{complete} if 
$$
\motif X(\fat)=\inverse{X(j)}{\motif X(\tilde\fat)},
$$
 for every morphism $j\colon\tilde\fat\to \fat$ of fat points,  where $X(j)\colon X(\tilde\fat)\to X(\fat)$ is the map induced functorially by $j$.
 More generally, if $\motif Y\sub\motif X$ is an inclusion of sieves on $X$, then we say that $\motif Y$ is \emph{relatively complete in $\motif X$}, if 
 $$
 \motif Y(\fat)=\inverse{X(j)}{\motif Y(\tilde\fat)}\cap \motif X(\fat),
 $$
  for all  morphisms $j$.

 If $\bassch$ is the spectrum of   an \acf\ $\fld$, then $\bassch$ itself is a fat point, and any fat point $\fat$ admits a unique morphism $\bassch\to \fat$ (given by the residue field of $\fat$). We let $\rho_\fat\colon X(\fat)\to X(\fld)$ be the induced map. One easily verifies that $\motif Y$ is relatively complete in $\motif X$   \iff\ $\motif Y(\fat)=\motif X(\fat)\cap \inverse{\rho_\fat}{\motif Y(\fld)}$, for every fat point $\fat$. In fact, for $\bassch$ arbitrary, we have a similar criterion in that we only have to check the condition for every morphisms of fat points $j\colon\tilde\fat\to \fat$ in which $\tilde\fat$ has length one (necessarily therefore the spectrum of a field extension of the residue field of $\fat$).

As we shall see in Proposition~\ref{P:formsieve} below, open subsieves over an \acf\ are  complete, and their complements are again   sieves. The second property in fact follows from the first, as one easily verifies:

\begin{lemma}\label{L:complcompl}
Given an inclusion of sieves $\motif Y\sub\motif X$, then $\motif Y$ is relatively complete in $\motif X$ \iff\ $\motif X-\motif Y$   is again a sieve.\qed
\end{lemma}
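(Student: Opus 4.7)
The plan is to unwind the definitions and observe that the two conditions are simply contrapositives of one another, expressing that the set-theoretic complement of $\motif Y$ inside $\motif X$ is closed under the functorial transition maps.

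For the forward direction, assume $\motif Y$ is relatively complete in $\motif X$. To show that the pointwise complement $(\motif X-\motif Y)(\fat):=\motif X(\fat)-\motif Y(\fat)$ assembles into a sieve, I only need to verify its functoriality: for any morphism $j\colon\tilde\fat\to\fat$ of fat points, the map $\motif X(j)$ should carry $\motif X(\fat)-\motif Y(\fat)$ into $\motif X(\tilde\fat)-\motif Y(\tilde\fat)$. Pick $a\in\motif X(\fat)$ with $a\notin\motif Y(\fat)$; since $\motif X$ is itself a sieve, $\motif X(j)(a)\in\motif X(\tilde\fat)$. If $\motif X(j)(a)$ were to lie in $\motif Y(\tilde\fat)$, then $a$ would belong to $\inverse{X(j)}{\motif Y(\tilde\fat)}\cap\motif X(\fat)$, and relative completeness would force $a\in\motif Y(\fat)$, against our choice.

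For the converse, assume $\motif X-\motif Y$ is a sieve and fix $j\colon\tilde\fat\to\fat$. The inclusion $\motif Y(\fat)\sub\inverse{X(j)}{\motif Y(\tilde\fat)}\cap\motif X(\fat)$ is automatic, since $\motif Y$ is a sub-sieve of $\motif X$. For the reverse inclusion, take $a\in\motif X(\fat)$ with $\motif X(j)(a)\in\motif Y(\tilde\fat)$ and suppose, for contradiction, that $a\notin\motif Y(\fat)$. Then $a\in(\motif X-\motif Y)(\fat)$, so the sieve property of $\motif X-\motif Y$ yields $\motif X(j)(a)\in\motif X(\tilde\fat)-\motif Y(\tilde\fat)$, contradicting $\motif X(j)(a)\in\motif Y(\tilde\fat)$.

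I expect no real obstacle: the argument is purely formal and essentially a tautology. The substantive content is that the statement ``the complement of $\motif Y$ in $\motif X$ is closed under the transition maps $\motif X(j)$'' is precisely the contrapositive of ``an element of $\motif X(\fat)$ whose image under $\motif X(j)$ lies in $\motif Y(\tilde\fat)$ must already lie in $\motif Y(\fat)$,'' which is relative completeness.
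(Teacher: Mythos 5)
Your proof is correct and is the natural definition-unwinding the paper leaves to the reader (the lemma is stated with a bare \qed). Both directions are exactly the contrapositive bookkeeping one expects, and you handle the transition maps $\motif X(j)$ with the correct contravariant orientation.
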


Assume $\bassch$ is the spectrum of  an \acf\ $\fld$. Given a $\fld$-scheme $X$ and a subset $V\sub X(\fld)$, we define the \emph{cone $\cone V$ over $V$} to be the sieve given by
$$
\cone V(\fat):=\inverse{\rho_\fat}V
$$
for every fat point $\fld$. By our previous discussion on complete sieves over an \acf, it follows that $\cone V$ is complete, and its complement is the cone $\cone{-V}$. More generally, for any sieve $\motif X$, the intersection $\motif X\cap\cone V$ is relatively complete in $\motif X$, and its complement in $\motif X$ is equal to $\motif X\cap\cone{-V}$. We have the following converse:

\begin{lemma}\label{L:relcompl}
Over an \acf\ $\fld$, a subsieve $\motif Y\sub \motif X$ is relatively complete \iff\ it is the intersection of $\motif X$ and a  cone.
\end{lemma}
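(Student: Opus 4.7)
The plan is to pick out the candidate cone explicitly and then read both directions off from characterizations already established in the text immediately before the statement.

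For the easy direction ($\Leftarrow$), suppose $\motif Y = \motif X \cap \cone V$ for some $V \subseteq X(\fld)$, where $X$ is an ambient space of $\motif X$. The paragraph preceding the lemma observes that $\cone V$ is itself complete (being defined via $\cone V(\fat) = \inverse{\rho_\fat}V$) and that its intersection with any sieve $\motif X$ is relatively complete in $\motif X$. So there is nothing to prove here beyond citing that observation.

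For the nontrivial direction ($\Rightarrow$), assume $\motif Y$ is relatively complete in $\motif X$, and fix an ambient space $X$ of $\motif X$. I would take as candidate cone the obvious one, namely $V := \motif Y(\fld)$, viewed as a subset of $X(\fld)$, and show $\motif Y = \motif X \cap \cone V$. The text has already supplied exactly the criterion needed: over an \acf, relative completeness of $\motif Y$ in $\motif X$ is equivalent to the point-wise identity
\[
\motif Y(\fat) = \motif X(\fat) \cap \inverse{\rho_\fat}{\motif Y(\fld)}
\]
for every fat point $\fat$. Unwinding the definition of the cone, $(\motif X \cap \cone V)(\fat) = \motif X(\fat) \cap \inverse{\rho_\fat}V = \motif X(\fat) \cap \inverse{\rho_\fat}{\motif Y(\fld)}$, which matches $\motif Y(\fat)$ exactly. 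Naturality in $\fat$ is automatic since both $\motif X$ and $\cone V$ are sieves and intersection of sub-presheaves is again a sub-presheaf.

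There is essentially no obstacle; the work has been done upstream in setting up the equivalent local criterion for relative completeness over an \acf. The only small subtlety worth flagging is that one must use that $\bassch = \op{Spec}\fld$ is itself a fat point (so $\motif Y(\fld)$ is defined and the maps $\rho_\fat$ exist for every $\fat$), which is precisely the point where algebraic closure enters.
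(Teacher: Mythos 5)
Your proof is correct and follows the paper's argument exactly: you choose the cone $V := \motif Y(\fld)$ and verify $\motif Y = \motif X \cap \cone V$ pointwise via the criterion for relative completeness over an algebraically closed field that the text records just before the lemma. You have merely spelled out the steps that the paper compresses into a single sentence.
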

\begin{proof}
Let $\motif Y$ be relatively complete in $\motif X$, and let $V:=\motif Y(\fld)$. Then $\motif Y=\cone V\cap \motif X$, since both have the same $\fat$-rational points, for any fat point $\fat$. 
\end{proof}

Given a sieve $\motif X$, we define its \emph{completion} as the cone $\complet{\motif X}:=\cone{\motif X(\fld)}$. By functoriality, $\motif X$ is contained in $\complet{\motif X}$, and $\complet{\motif X}$ is the smallest complete sieve containing $\motif X$.

\subsection*{The category of sieves}
By definition, a \emph{morphism} between sieves is a  natural transformation, giving rise to the   \emph{category  of $\bassch$-sieves}, denoted $\sieves \bassch$.    If $\bassch$ is Jacobson,\footnote{A scheme is called \emph{Jacobson} if it admits an open covering by affine Jacobson schemes; an affine scheme $\op{Spec}A$ is Jacobson, if $A$ is, meaning that any radical ideal is the intersection of the maximal ideals containing it.}
 then the category of $\bassch$-schemes fully embeds in the category  $\sieves\bassch$, by the following result:

\begin{proposition}\label{P:morphrep}
Assume $\bassch$ is Jacobson. 
Let $\motif X$ be a sieve on a $\bassch$-scheme $X$, and let $Y$ be a $\bassch$-scheme. Any morphism $s\colon\func Y\to\motif X$ is induced by a morphism $\varphi\colon Y\to X$, that is to say, $s=\func\varphi$ and $\fim\varphi\sub\motif X$.
\end{proposition}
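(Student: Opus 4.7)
The second assertion $\fim\varphi\sub\motif X$ follows automatically from the first: if $s=\func\varphi$, then for any fat point $\fat$ and any $y\in Y(\fat)$, we have $\varphi(\fat)(y)=s(\fat)(y)\in\motif X(\fat)$. So the real task is to construct $\varphi\colon Y\to X$ with $s=\func\varphi$, i.e.\ to establish a Yoneda-style full faithfulness for the embedding of $\bassch$-schemes into $\sieves\bassch$ where test objects are only fat $\bassch$-points---the Jacobson hypothesis is the price paid for this restriction.

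First, reduce to the affine case. Since $s^{-1}\func{X_i}$ is an open subsieve of $\func Y$ whenever $\{X_i\}$ is an affine open cover of $X$, and open subsieves of $\func Y$ correspond bijectively to open subschemes of $Y$, standard gluing permits us to assume $X=\op{Spec}A$ and $Y=\op{Spec}B$ are affine. Further, by embedding $X$ in some affine space $\affine\bassch n$ via a finite set of generators of $A$ and composing $s$ with the $n$ coordinate projections $\affine\bassch n\to\affine\bassch 1$, we may reduce to the case $X=\affine\bassch 1$. The problem then becomes the construction of a single element $f\in B$ such that for every fat point $\fat=\op{Spec}R$ and every $\beta\in Y(\fat)=\hom{}{B}{R}$, one has $s(\fat)(\beta)=\beta(f)\in R=(\affine\bassch 1)(\fat)$.

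For each closed point $P\in Y$ and each $n\geq 1$, the canonical closed immersion $\jet PnY\into Y$ is a $\jet PnY$-rational point of $Y$; applying $s(\jet PnY)$ produces an element $f_{P,n}\in B/\maxim_P^n$. Naturality with respect to the surjections $\jet P{n+1}Y\onto\jet PnY$ makes these compatible as $n$ varies, yielding in the limit $\widehat{f}_P\in\widehat B_{\maxim_P}$. The central, and hardest, step is to glue the family $(\widehat{f}_P)_P$ to a single $f\in B$. This is where Jacobson-ness enters: for a finitely generated algebra $B$ over a Noetherian Jacobson scheme, the canonical map $B\to\prod_{P}\widehat{B}_{\maxim_P}$ (indexed by closed points $P$ of $\op{Spec}B$) is injective by a support-density argument---any element in every local kernel vanishes on a closed set containing all closed points of $Y$, which by Jacobson-ness must be all of $Y$. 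To show the family $(\widehat f_P)_P$ lies in the image, one uses naturality of $s$ with respect to \emph{all} morphisms of fat points (including quotients $B\to B/I$ with $I$ of arbitrary finite colength, not just powers of maximal ideals), producing compatible finite-level data $f_I\in B/I$, which Lemma~\ref{L:distrep} turns into a coherent candidate closed subscheme of $Y\times_\bassch\affine\bassch 1$; the projection to $Y$ of this candidate must be the graph of a global element $f\in B$.

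Finally, one verifies that the resulting $\varphi\colon Y\to\affine\bassch 1$ satisfies $\func\varphi=s$ on every fat point $\fat$, not merely those supported at a closed point of $Y$: any $\beta\colon\fat\to Y$ factors through its scheme-theoretic image, which---by the finiteness discussion preceding the definition of fat points---is a fat closed subscheme of $Y$, and the check is reduced to the cases already handled by naturality. Uniqueness of $\varphi$ follows immediately from Lemma~\ref{L:distrep}: two candidate morphisms inducing $s$ would have the same graph as closed subschemes of $Y\times_\bassch X$, whence coincide. The main obstacle throughout is the gluing of the local data $(\widehat{f}_P)_P$ into a global $f$; it is precisely here that the Jacobson hypothesis is indispensable.
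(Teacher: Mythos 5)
Your plan parallels the paper's at the level of strategy: both reduce to the affine case, apply $s$ to closed immersions of fat points into $Y$, and then attempt to glue the resulting finite-level data into a genuine global section, with Jacobsonness as the enabling hypothesis. But there is a genuine gap precisely at the step you yourself identify as ``central, and hardest.''

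Having produced compatible elements $f_I\in B/I$ for all ideals $I\sub B$ of finite colength, what you actually have is an element of the inverse limit $\widetilde B:=\varprojlim_I B/I$. The assertion you need is that $B=\widetilde B$, i.e.\ that $Y=\op{Spec}B$ is the direct limit of its zero-dimensional closed subschemes. This is Lemma~\ref{L:dirlimJac} in the paper, and it is not a formality: its proof uses Noether normalization, reduction to polynomial rings by preservation under finite maps, and an induction on dimension exploiting complete discrete valuation rings. Your proposal does not prove this statement and does not substitute anything for it. The injectivity of $B\to\prod_P\widehat B_{\maxim_P}$ that you do argue is the easy half and is not where the work lies; what is needed is that a compatible family in $\widetilde B$ actually descends to $B$. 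The appeal to Lemma~\ref{L:distrep} is misdirected: that lemma says distinct closed subschemes are separated by some fat point (a uniqueness/faithfulness statement), and it cannot by itself manufacture a ``coherent candidate closed subscheme'' or show that its projection to $Y$ is a graph. As written, the phrase ``the projection to $Y$ of this candidate must be the graph of a global element $f\in B$'' is an assertion, not an argument. Separately, your reduction to $X=\affine\bassch 1$ via coordinate projections is sound, as is the closing remark that any $\beta\colon\fat\to Y$ factors through a fat closed subscheme of $Y$ (scheme-theoretic image under a finite morphism); the paper achieves the same effect by first doing a clean Yoneda argument for $Y$ a single fat point, namely $q:=s(\fat)(1_\fat)$ and then $s=\func q$ by functoriality, and then invoking the direct-limit lemma for general $Y$. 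In short: the skeleton is right, but the load-bearing step is missing.
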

\begin{proof}
Assume first that $Y=\fat$ is a fat point, and $\motif X=\func X$ is representable. Let $q:=s(\fat)(1_\fat)$, where $1_\fat$ is the identity morphism on $\fat$ viewed as a $\fat$-rational point on $\fat$. Hence $q\in X(\fat)$, that is to say, a $\bassch$-morphism $\fat\to  X$. We have to show that $s=\func q$, so let $\mathfrak w$ be any fat point, and $a\colon\mathfrak w\to \fat$ a $\mathfrak w$-rational point on $\fat$. By definition of $\func q$, we have $ q(\mathfrak w)(a)=qa$. On the other hand, functoriality yields a commutative diagram
\commdiagram {\fat(\fat)}{\func\fat(a)}{\fat(\mathfrak w)}{s(\fat)}{s(\mathfrak w)}{X(\fat)}{\func X(a)}{X(\mathfrak w)}
We trace the image of $1_\fat$ in $X(\mathfrak w)$ through this diagram. The top arrow sends it to $a$ and hence its image in $X(\mathfrak w)$ is $s(\mathfrak w)(a)$. On the other hand, $s(\fat)(1_\fat)=q$ and its image under $\func X(a)$ is $qa$, showing that $s(\mathfrak w)(a)=qa=q(\mathfrak w)(a)$, whence $s=\func q$.

Assume next that $Y$ is arbitrary. Let $Z\sub Y$ be a closed subscheme of dimension zero. There exist finitely many fat points $\fat_1,\dots,\fat_s\sub Y$ such that $Z=\fat_1\sqcup\dots\sqcup\fat_s$. By what we just proved, there exists a morphism $q_Z\colon Z\to X$ such that $\restrict s{\func Z}=\func q_Z$, for each zero-dimensional closed subscheme $Z\sub Y$.   By Lemma~\ref{L:dirlimJac} below, the  direct limit of all such closed subschemes $Z$ is $Y$, and hence, by the universal property of direct limits, we get   a morphism $\varphi\colon Y\to X$, such that $q_Z=\restrict\varphi Z$. Checking at every fat point, we see that  $\func\varphi=s$. Finally, assume $\motif X$ is an arbitrary sieve. By what we just proved, the composition $\func Y\to \motif X\sub\func X$ is induced by a morphism $\varphi\colon Y\to X$. Since the image of $s=\func\varphi$ at any fat point $\fat$ lies inside $\motif X(\fat)$, we must have $\fim\varphi\sub\motif X$.\end{proof}

\begin{lemma}\label{L:dirlimJac}
If $\bassch$ is Jacobson, then any $\bassch$-scheme is the direct limit of its zero-dimensional closed subschemes.
\end{lemma}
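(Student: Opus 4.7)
The plan is to interpret ``direct limit'' as colimit in the category of presheaves on $\fatpoints\bassch$. In that setting, the claim unpacks into showing that for every fat point $\fat$, the set $Y(\fat)$ is the directed union of its subsets $Z(\fat)$, as $Z$ ranges through the zero-dimensional closed subschemes of $Y$ ordered by inclusion. Because colimits of presheaves are computed pointwise, this set-level assertion is equivalent to the presheaf identity $\func Y = \varinjlim_Z \func Z$, which is exactly what the proof of Proposition~\ref{P:morphrep} needs when it invokes the universal property.

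Directedness of the indexing system is immediate: given two zero-dimensional closed subschemes $Z_1, Z_2 \subseteq Y$ with ideal sheaves $\mathcal I_1, \mathcal I_2$, the scheme-theoretic union $Z_1 \cup Z_2$ cut out by $\mathcal I_1 \cap \mathcal I_2$ is again zero-dimensional and closed, and contains both.

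The main content is the factorization step: every $\fat$-rational point $a\colon \fat \to Y$ factors through some zero-dimensional closed subscheme. I would take the scheme-theoretic image $Z_a := \overline{a(\fat)}$ as the natural candidate. Since $\fat \to \bassch$ is finite by definition of a fat point and $Y$ is separated over $\bassch$, the morphism $a$ is proper, hence $Z_a$ is genuinely a closed subscheme of $Y$; and its Krull dimension is at most that of $\fat$, so $Z_a$ is zero-dimensional. The hard part---and the only place where the Jacobson hypothesis is essential---is verifying that $Z_a$ qualifies as a $\bassch$-scheme in the paper's sense, i.e., that $Z_a \to \bassch$ is finite, so that by the structure theorem for Artinian rings $Z_a$ decomposes into a finite disjoint union of fat $\bassch$-points. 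For this I would invoke the Nullstellensatz for Jacobson rings applied to the finitely generated $\loc_\bassch$-algebra defining $Z_a$: its closed-point residue fields are finite extensions of residue fields of closed points of $\bassch$, and the Artinian local factors have finite length, so $Z_a$ is finite as an $\loc_\bassch$-module. Once this is established, the lemma follows: $a$ lies in $Z_a(\fat)$, and conversely any element of $Z(\fat)$ for $Z \subseteq Y$ zero-dimensional closed yields an element of $Y(\fat)$ by composing with the closed immersion.
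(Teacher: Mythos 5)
There is a genuine gap: you have interpreted ``direct limit'' as a colimit in the presheaf category on $\fatpoints\bassch$, whereas the lemma---and its application in Proposition~\ref{P:morphrep}---require the direct limit taken in the category of schemes, that is, on an affine piece $\op{Spec}A\sub Y$, the ring-theoretic assertion that $A$ equals the inverse limit $\tilde A$ of its Artinian quotients $A/\mathfrak n$ of finite colength. In Proposition~\ref{P:morphrep} one has compatible \emph{scheme} morphisms $q_Z\colon Z\to X$ and must glue them to a \emph{scheme} morphism $\varphi\colon Y\to X$; concretely the $q_Z$ give, on each affine $\op{Spec}B\sub X$, compatible ring maps $B\to A/\mathfrak n$, and only the equality $A=\tilde A$ lets one lift these to a single map $B\to A$. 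The presheaf identity $\func Y=\varinjlim_Z\func Z$ only produces a natural transformation $\func Y\to\func X$, and deducing that such a transformation comes from a scheme morphism is precisely what Proposition~\ref{P:morphrep} is trying to prove, so your reading would make that argument circular.

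The statement you actually prove---that every $\fat$-rational point of $Y$ factors through its scheme-theoretic image, a one-point closed subscheme---is correct but essentially automatic and holds with no Jacobson hypothesis at all: the coordinate ring of $Z_a$ embeds as an $\loc_\bassch$-submodule of the coordinate ring of $\fat$, which is finite over $\loc_\bassch$, so Noetherianity of the base already gives finiteness of $Z_a\to\bassch$; the appeal to the Nullstellensatz is not wrong, just superfluous. The fact that Jacobson is dispensable in your version should be a warning sign. The paper's proof invokes Jacobson essentially---via Krull's intersection theorem, Noether normalization, stability under finite maps, and induction on dimension---to establish the nontrivial surjectivity of the comparison map $A\to\tilde A$, and it is this structure theorem, not the pointwise factorization, that constitutes the real content of the lemma.
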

\begin{proof}
I will only consider the case that $\bassch$  is  the spectrum of a field $\fld$. Reasoning on a finite open affine covering, we may further assume that   $X=\op{Spec}A$ is an affine $\fld$-scheme. Let $\tilde A$ be the inverse limit of all residue rings $A/\mathfrak n$ of finite length (we say that $\mathfrak n$ has \emph{finite colength}). We have to show that $A=\tilde A$. The inclusion $A\sub \tilde A$ follows from the fact that the intersection of all ideals of finite colength is zero. 

Before we give the proof, we establish a preservation result under finite maps: if $A=\tilde A$ and $A\to B$ is a finite \homo, then also $B=\tilde B$. Indeed, if $\beta_1,\dots,\beta_s$ generate $B$ as an $A$-module, then they also generate every $B/\mathfrak nB$ as a $A/\mathfrak n$-module, where $A/\mathfrak n$ has finite length. Since the $\mathfrak nB$ are cofinal in the set of all ideals of $B$ of finite colength, $\tilde B$ is generated by the $\beta_i$ as an $\tilde A$-module, and hence $B= \tilde B$. This argument also  shows that  $\widetilde{(A/I)}$ is equal to $\tilde A/I\tilde A$, for any ideal $I\sub A$. By Noether Normalization, $A$ is a finite extension of a polynomial ring over $\fld$ in the same number of variables as the dimension $d$ of $A$. Hence, it will suffice to show the  the identity $A=\tilde A$ for polynomial rings. 

We  induct  on  $d$, where the case $d=0$ is trivial. Let $A=\pol\fld \var$ with $\var$ a $d$-tuple of variables, and let $\mathfrak o$ be a complete \DVR\ containing $A$ and having non-zero center $\pr
\sub A$ (recall that $\pr$ is the prime ideal of elements in $A$ of positive value). Assume first that $d=1$, so that $\pr$ is   a maximal ideal. Since $\tilde A$ is then contained in the completion $\complet A_\pr$ (as the latter is the inverse limit of all $A/\pr^n$), and since by the universal property of completion $\complet A_\pr\sub \mathfrak o$, we showed $\tilde A\sub \mathfrak o$. Since the intersection of all these complete \DVR{s} is equal to the normalization of $A$, whence to $A$, we showed $A=\tilde A$ in this case. As mentioned above,   Noether Normalization then proves the result for all one-dimensional algebras, and so we may assume $d>1$. If $\pr$ is a maximal ideal, the previous argument yields again $\tilde A\sub \mathfrak o$. So assume $\pr$ is a non-maximal, non-zero prime ideal. Since $A/\pr^n$ has dimension less than $d$, induction plus preservation under homomorphic images then yields $A/\pr^n=\tilde A/\pr^n\tilde A$. In particular, the $\pr$-adic completion of $\tilde A$ is equal to $\complet A_\pr$, and hence is contained in $\mathfrak o$. Hence we showed that $\tilde A\sub \mathfrak o$, for any such \DVR\ $\mathfrak o$, so that  the same argument as above yields $A=\tilde A$.
\end{proof}

\begin{corollary}\label{C:fatmor}
Suppose $\bassch$ is Jacobson. For any sieve $\motif X$, we have  an isomorphism of sieves
$$
\mor{\sieves\bassch}{\func{(\cdot)}}{\motif X}\iso \motif X(\cdot).
$$
\end{corollary}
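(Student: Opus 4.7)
The plan is to identify both sides as two different descriptions of the same natural extension of the sieve $\motif X$ from fat points to all $\bassch$-schemes. First, I would make precise what $\motif X(Y)$ means for a general $\bassch$-scheme $Y$: the natural candidate is
$$
\motif X(Y):=\{\varphi\in X(Y):\fim\varphi\sub\motif X\},
$$
where $X$ is any ambient space of $\motif X$. A quick check using Lemma~\ref{L:vanci} shows that this agrees with the original $\motif X(\fat)$ when $Y=\fat$ is a fat point, and the assignment is evidently contravariantly functorial in $Y$ and independent of the ambient choice, since $\fim\varphi$ is intrinsic to the germ of $\motif X$.

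Next, I would define the candidate isomorphism $\Phi_Y\colon\mor{\sieves\bassch}{\func Y}{\motif X}\to\motif X(Y)$ as follows. Given a natural transformation $s\colon\func Y\to\motif X$, Proposition~\ref{P:morphrep} produces a unique morphism $\varphi\colon Y\to X$ with $s=\func\varphi$, and its conclusion $\fim\varphi\sub\motif X$ places $\varphi$ in $\motif X(Y)$; set $\Phi_Y(s):=\varphi$. In the other direction, any $\varphi\in\motif X(Y)$ gives a natural transformation $\func\varphi\colon\func Y\to\motif X$, factoring through $\motif X$ precisely because $\fim\varphi\sub\motif X$. These two constructions are evidently mutually inverse.

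Bijectivity is therefore immediate: surjectivity and the existence half of the correspondence are exactly the content of Proposition~\ref{P:morphrep}, while injectivity follows from Lemma~\ref{L:distrep}, which guarantees that two distinct morphisms $\varphi\neq\varphi'$ must yield distinct natural transformations $\func\varphi\neq\func{\varphi'}$ on fat-point data. Naturality in $Y$ is then a one-line verification: for any $\psi\colon Y'\to Y$, we have $s\after\func\psi=\func{(\varphi\after\psi)}$, while $\fim{\varphi\after\psi}\sub\fim\varphi\sub\motif X$ keeps us inside $\motif X(Y')$ on the other side.

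There is no substantive obstacle here: the corollary is essentially a Yoneda-style repackaging of Proposition~\ref{P:morphrep}, and the nontrivial work—notably the use of the Jacobson hypothesis via Lemma~\ref{L:dirlimJac} to reduce a morphism from an arbitrary $\bassch$-scheme to fat-point data—has already been carried out in the proposition itself. The only small bookkeeping point is to spell out the functorial extension $Y\mapsto\motif X(Y)$ so that the displayed isomorphism parses on both sides.
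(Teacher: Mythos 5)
Your argument is essentially the paper's: invoke Proposition~\ref{P:morphrep} to pass from a natural transformation $s\colon\func\fat\to\motif X$ to a morphism $a\colon\fat\to X$ with $s=\func a$ and $\fim a\sub\motif X$, then cite Lemma~\ref{L:vanci} to conclude $a\in\motif X(\fat)$. The explicit extension to an arbitrary $\bassch$-scheme $Y$ and the spelled-out naturality check are harmless additions but do not change the substance; they simply re-prove Proposition~\ref{P:morphrep} in disguise.

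One small misstep worth flagging: you cite Lemma~\ref{L:distrep} for injectivity, glossing it as ``two distinct morphisms $\varphi\neq\varphi'$ yield distinct natural transformations $\func\varphi\neq\func{\varphi'}$.'' But that lemma only asserts that two distinct \emph{closed subschemes} of a $\bassch$-scheme can be separated by a fat point; to extract faithfulness of $\varphi\mapsto\func\varphi$ from it you would need an intermediate step (e.g.\ pass to the equalizer of $\varphi$ and $\varphi'$, which is closed because $\bassch$-schemes are separated, and then invoke Lemma~\ref{L:distrep}), or appeal instead to Lemma~\ref{L:dirlimJac}. More to the point, for the corollary as actually stated — where the source is a fat point — injectivity is entirely automatic: a natural transformation $s\colon\func\fat\to\motif X$ is recovered from $s(\fat)(1_\fat)$ by the Yoneda argument already carried out in the first paragraph of the proof of Proposition~\ref{P:morphrep}, so $\Phi_\fat$ and $a\mapsto\func a$ are mutually inverse with no faithfulness of the scheme-to-presheaf embedding needed at all. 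You can simply drop the appeal to Lemma~\ref{L:distrep}.
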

\begin{proof}
Let $\fat$ be a fat point and $s\colon\func\fat\to \motif X$ a morphism of sieves. By Proposition~\ref{P:morphrep}, this morphism is induced by a morphism $a\colon \fat\to X$ of schemes, where $X$ is some ambient space of $\motif X$. Since $s=\func a$, we have an inclusion $\fim a\sub \motif X$,  and hence  $a\in \motif X(\fat)$ by   Lemma~\ref{L:vanci}. The converse follows along the same lines.  
\end{proof}

Inspired by the  result in Proposition~\ref{P:morphrep}, we call a natural transformation $s\colon \motif Y\to \motif X$  between sieves \emph{\explicit}, if there exists a morphism of  $\bassch$-schemes $\varphi\colon Y\to X$ such that $\motif X$ and $\motif Y$ are  sieves on respectively $X$ and $Y$, and such that $s$ is the restriction of $\func\varphi\colon\func Y\to \func X$; we might also express this by  saying that $s$ \emph{extends} to a morphism of schemes.  Since the definition allows the ambient spaces to be dependent on $s$, we have actually defined a morphism between germs of sieves. It follows that if $\motif Y$ is Zariski dense in $Y$ and $\motif X$ is a sieve on $X$ (without any further restriction), then $s$ extends to a morphism $\tilde Y\to X$ for some open $\tilde Y\sub Y$  on which $\motif Y$ is also a sieve, that is to say, such that $\motif Y\sub\func {\tilde Y}$.

The composition of two \explicit\ natural transformations is again \explicit. Indeed, let $s\colon \motif Z\to \motif Y$ and $t\colon\motif Y\to \motif X$ be \explicit, extending respectively to   morphisms $\varphi\colon Z\to Y$ and $\psi\colon Y'\to X$. Since $Y$ and $Y'$ are both ambient spaces for $\motif Y$, so is their intersection $Y'':=Y\cap Y'$, which is therefore locally closed in either.
 Hence the restriction of $\varphi$ to $\inverse\varphi {Y''}$ (respectively, the restriction of $\psi$ to $Y''$ )   is a morphism extending $s$ (respectively $t$), and therefore, the composition $\inverse\varphi{Y''}\to X$ extends $t\after s$. We can therefore define the \emph{\weak\ category of sieves}, denoted $\sievesweak\bassch$, as the subcategory of all sieves in which the  morphisms are only the \explicit\ ones.

A note of caution: not every morphism of  sieves is \explicit, and we will discuss some examples later. Moreover, even if it is,  one cannot always extend it to any ambient space of the source sieve. An example is in order:
 
\begin{example}\label{E:hyperbola}
Let $H\sub\affine\fld 2$ be the hyperbola with equation $\var\vary=1$ over a field $\fld$, and let $L_*$ be the \emph{punctured} line, that is to say, the affine line minus the origin. Note that this is an affine scheme with coordinate ring $\pol\fld{\var,1/\var}$. The projection $\affine\fld 2\to \affine\fld 1$ onto the first coordinate induces an isomorphism   $H\to L_*$. Its inverse induces an  isomorphism $\func L_*\to\func H$, which is trivially \explicit, as both sieves are representable. However, although $\func L_*$ is an open subsieve on $\affine \fld 1$, the above isomorphism does not extend  (since $1/\var$ is not a polynomial). 
\end{example}

We may generalize the definitions of pull-back and push-forward along a morphism of   sieves as follows. Let $s\colon \motif Y\to \motif X$ be a morphism of sieves. Given a subsieve $\motif Y'\sub\motif Y$, we define its \emph{push-forward} $s_*\motif Y'$ as the presheaf defined at each fat point $\fat$ as the image of $\motif Y'(\fat)$ under $s(\fat)$. Similarly, given a subsieve $\motif X'\sub\motif X$, we define its \emph{pull-back} $s^*\motif X'$  as the presheaf defined at each fat point $\fat$ as the pre-image of $\motif X'(\fat)$ under $s(\fat)$.

\section{The topos of sieves}\label{s:globsect}
In view of Proposition~\ref{P:morphrep}, we will henceforth assume that the base space $\bassch$  is Jacobson.  Recall that $\affine\bassch n:=\affine\zet n\times\bassch$ is the \emph{affine $n$-space} over $\bassch$. 
 
\subsection*{Section rings}
Given a sieve $\motif X$ on $X$, we define its \emph{global section 
ring} as  
$$
H_0(\motif X):=\mor{}{\motif X}{\affine \bassch1},
$$
 where by the latter, we actually mean the collection of morphisms  $\motif X\to 
\func{(\affine \bassch1)}$, but for notational simplicity, we will identify a scheme with the functor it  represents if there is no danger for confusion. For each fat point $\fat=\op{Spec}R$, we have a 
natural bijection $\Psi_{\fat}\colon\affine \bassch1(\fat)\iso R$ defined as follows. Given a 
rational point $a\colon\fat\to \affine \bassch 1$, it factors through an affine open of the form $\affine\bas1\sub\affine\bassch1$, for some affine open $\op{Spec}\bas\sub\bassch$, and hence induces a \homo\ $\pol\bas\vary\to R$, which, again for notational simplicity, we denote again by $a$. Now, set $\Psi_\fat(a):=a(\vary)\in R$. 
This identification endows $\affine\bassch 1(\fat)$ 
with a ring structure, and by transfer, then makes  $H_0(\motif X )$ into a 
ring. Indeed, given sub-\zariski\ morphisms  $s,t\colon \motif X\to 
 \affine \bassch1$, we define their sum $s+t$ (respectively, their 
product $st$) as the   morphism which at a fat point $\fat$ maps $a\in 
\motif X(\fat)$ to 
$$
s(\fat)(a)+t(\fat)(a)=\inv{\Psi_{\fat}}\big(\Psi_{\fat}(s(\fat)(a))+\Psi_{\fat}
(t(\fat)(a))\big)
$$
and a similar formula for  $s(\fat)(a)\cdot t(\fat)(a)$. The functoriality of $s+t$ 
and $st$ is easy, showing that they are again global sections.

 As we shall see below,   \explicit\ morphisms will play a key role, and so we define the ring  of \emph{\explicit\ sections} $\gsect{\motif X}$ as the subset of $H_0(\motif X)$ consisting of all \explicit\ morphisms $\motif X\to  \affine\bassch 1$. To see that this is closed under sums and products, let $s$ and $s'$   be two \explicit\ sections. By definition, there exist morphisms $X\to \affine\bassch 1$ and $X'\to\affine\bassch 1$ inducing $s$ and $s'$ respectively, where   $X$ and $X'$ are ambient spaces for $\motif X$. In particular,  the locally closed subscheme $X'':=X\cap X'$ is then also  an ambient space for $\motif X$. Hence $s+t$ and $st$ extend to the sum and product of the restrictions to $X''$ of $X\to \affine\bassch 1$ and $X'\to\affine\bassch 1$, proving that they are also \explicit. By Theorem~\ref{T:globsectsubzar}, every global section on a \zariski\ motif is \explicit.

 \begin{lemma}\label{L:funcglobsect}
The assignment  $\motif X\mapsto H_0(\motif X)$ is a contravariant 
functor  from $\sieves\bassch$ to the category of $\loc_\bassch$-algebras. Similarly, $\motif X\mapsto \gsect{\motif X}$ is a contravariant functor on the \weak\   category $\sievesweak\bassch$. In particular, 
if two sieves are  isomorphic, then they have the same 
global section ring, and if they are \explicit{ally} isomorphic, then they have the same \explicit\ section ring.
\end{lemma}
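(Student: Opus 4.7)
The plan is to show that pullback along sieve morphisms gives a ring map, and then check functoriality on the nose. Given a morphism of sieves $s\colon \motif Y\to \motif X$, define $H_0(s)\colon H_0(\motif X)\to H_0(\motif Y)$ by $f\mapsto f\after s$. That this map lands in $H_0(\motif Y)$ is immediate: the composition of natural transformations $\motif Y\map s\motif X\to \affine\bassch1$ is again a morphism to $\affine\bassch1$. Functoriality (preservation of identities and of composition of sieve morphisms) is formal from associativity of composition of natural transformations.

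The only point requiring a check is that $H_0(s)$ is a ring \homo. Since the ring structure on $H_0(\motif X)$ is defined point-wise via the bijections $\Psi_\fat\colon \affine\bassch1(\fat)\iso R$ (for $\fat=\op{Spec}R$), it suffices to verify additivity and multiplicativity at each fat point $\fat$ and each $b\in \motif Y(\fat)$. For global sections $f,g\in H_0(\motif X)$, writing $a:=s(\fat)(b)\in \motif X(\fat)$, we compute
$$
\bigl((f+g)\after s\bigr)(\fat)(b)= (f+g)(\fat)(a) = \inv{\Psi_\fat}\bigl(\Psi_\fat(f(\fat)(a))+\Psi_\fat(g(\fat)(a))\bigr),
$$
which is exactly $\bigl((f\after s)+(g\after s)\bigr)(\fat)(b)$; the product case is identical. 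Hence $H_0(s)$ is a \homo\ of $\loc_\bassch$-algebras, and the first assertion follows.

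For the \weak\ statement, suppose $s\colon \motif Y\to \motif X$ is \explicit, extended by some $\varphi\colon Y\to X$ between ambient spaces. For any \explicit\ section $f\in \gsect{\motif X}$, extended by some $Z\to \affine\bassch1$ for an ambient $Z$ of $\motif X$, the composition $f\after s$ extends to the restriction of $Z\to\affine\bassch1$ along the morphism from $\inverse\varphi{Y\cap Z}\to Z$, using exactly the argument preceding Example~\ref{E:hyperbola} that \explicit\ morphisms are closed under composition. Thus $H_0(s)$ restricts to a map $\gsect{\motif X}\to\gsect{\motif Y}$, yielding the contravariant functor on $\sievesweak\bassch$.

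The ``in particular'' clause then follows automatically: any (\explicit) isomorphism $\motif X\iso\motif Y$ is sent by the contravariant functor $H_0$ (respectively $\gsect\cdot$) to inverse ring \homo{s} between the associated section rings. The only mild subtlety I see is purely bookkeeping, namely checking that the \explicit\ extension of $f\after s$ above is independent of the choices of ambient spaces and extensions, but this is handled by the germ-of-sieves formalism already set up.
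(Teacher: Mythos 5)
Your proposal is correct and follows essentially the same approach as the paper: pull back along $s$ via $f\mapsto f\after s$, verify the ring-\homo\ property point-wise using the $\Psi_\fat$ identification, and use closure of \explicit\ morphisms under composition for the $\gsect\cdot$ case. The paper simply leaves the ring-\homo\ verification implicit (``one easily verifies''), while you have spelled it out; the content is the same.
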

\begin{proof}
If $s\colon \motif X\to \motif Y$ is a morphism of sieves, then pulling-back induces a $\loc_\bassch$-algebra \homo\ 
$H_0(\motif Y)\to H_0(\motif X)$ given by $t\mapsto t\after s$, for $t
\colon \motif Y\to \affine \bassch1$. One easily verifies that this 
constitutes a contravariant functor. Since the pull-back of a  \explicit\ morphism under an \explicit\ morphism is easily seen to be \explicit\ again, we get an induced \homo\ $\gsect{\motif Y}\to \gsect{\motif X}$. 
\end{proof}
  
\begin{proposition}\label{P:globsect}
The global section ring of a representable functor $\func X$ is equal to the 
ring of global sections $H_0(X):=\Gamma(\loc_X,X)$ of the corresponding scheme, and this is also its \explicit\ section ring.
\end{proposition}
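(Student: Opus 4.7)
The plan is to reduce the statement to Proposition~\ref{P:morphrep} together with the universal property of the affine line. First, take any global section $s\colon\func X\to\affine\bassch 1$. Applying Proposition~\ref{P:morphrep} to the representable sieve $\motif X=\func{\affine\bassch 1}$ with source $\func X$, we obtain a morphism of $\bassch$-schemes $\varphi\colon X\to\affine\bassch 1$ with $s=\func\varphi$. This already shows two things at once: every element of $H_0(\func X)$ is \explicit, so $\gsect{\func X}=H_0(\func X)$; and the map $\varphi\mapsto\func\varphi$ from $\mor\bassch X{\affine\bassch 1}$ to $H_0(\func X)$ is surjective. Injectivity of this map is Yoneda together with Lemma~\ref{L:distrep} (i.e., the embedding $X\mapsto\func X$ is full and faithful), so we get a bijection $\mor\bassch X{\affine\bassch 1}\iso H_0(\func X)$.

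Next I would identify $\mor\bassch X{\affine\bassch 1}$ with $\Gamma(\loc_X,X)$ in the usual way: since $\affine\bassch 1=\op{Spec}\pol{\loc_\bassch}\vary$ is the affine line over $\bassch$, giving a $\bassch$-morphism $X\to\affine\bassch 1$ is the same as giving a global section of $\loc_X$, namely the image of the universal coordinate $\vary$. This is the standard adjunction $\mor\bassch X{\op{Spec}\pol{\loc_\bassch}\vary}\iso\Gamma(\loc_X,X)$.

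Finally, it remains to check that these bijections are ring homomorphisms. I would do this pointwise. Pick $s,t\in H_0(\func X)$ induced by $\varphi,\psi\colon X\to\affine\bassch 1$, and let $f=\varphi^\sharp(\vary)$, $g=\psi^\sharp(\vary)$ be the corresponding global sections of $\loc_X$. For any fat point $\fat=\op{Spec}R$ and any $a\in X(\fat)$, the associated ring map $a^\sharp\colon\loc_X\to R$ sends $f$ to $\Psi_\fat(s(\fat)(a))$ and $g$ to $\Psi_\fat(t(\fat)(a))$, by the very definition of $\Psi_\fat$. Since $a^\sharp$ is a ring \homo, it sends $f+g$ and $fg$ to the sum and product of the latter elements in $R$; but $f+g$ and $fg$ are by definition the global sections corresponding to the morphisms inducing $s+t$ and $st$. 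By Lemma~\ref{L:distrep} (or directly by testing on all fat points via Corollary~\ref{C:fatmor}), this forces the identifications to commute with addition and multiplication.

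The only mildly subtle point—not a real obstacle—is the passage from a global section $f\in\Gamma(\loc_X,X)$ to an actual morphism $X\to\affine\bassch 1$ when $\bassch$ is not affine; this is handled by covering $\bassch$ by affine opens $\op{Spec}\bas\sub\bassch$ and using that the formation of the affine line is compatible with open immersions of the base, exactly as in the definition of $\Psi_\fat$ given before the statement.
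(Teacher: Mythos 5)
Your proof is correct and follows essentially the same route as the paper's: apply Proposition~\ref{P:morphrep} to reduce morphisms of sieves $\func X\to\affine\bassch 1$ to morphisms of schemes, then invoke the standard correspondence $\mor\bassch X{\affine\bassch 1}\iso\Gamma(\loc_X,X)$. The paper's proof is terser and cites Hartshorne for the latter, while you additionally spell out the compatibility with the ring structure and note explicitly that this also yields $\gsect{\func X}=H_0(\func X)$; both of these are sound and useful elaborations, but they do not change the underlying argument.
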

\begin{proof}
A morphism $\func X\to \affine\bassch1$ corresponds 
by Proposition~\ref{P:morphrep} to a morphism of $\bassch$-schemes $X\to\affine
\bassch 1$, and it is well-known that the collection of all these is precisely the 
ring of global sections on $X$ (see, for instance, \cite[II. Exercise 2.4]{Hart}). 
 \end{proof}
 
In particular, if $\motif X$ is a sieve on an affine scheme $\op{Spec}A$, then $H_0(\motif X)$ and $\gsect{\motif X}$ are $A$-algebras.

\begin{corollary}\label{C:expsectlim}
The \explicit\ section ring  $\gsect{\motif X}$ of a sieve $\motif X$ is the inverse limit of all $H_0(X)$, where $X$ runs over all ambient spaces of $\motif X$.  
\end{corollary}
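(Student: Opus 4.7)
The plan is to realize $\gsect{\motif X}$ as the (co)filtered limit taken over the poset of ambient spaces of $\motif X$, ordered by reverse inclusion, with transitions given by the restriction homomorphisms $H_0(X')\to H_0(X)$ along locally closed immersions $X\hookrightarrow X'$. First I would observe that this poset is filtered in the appropriate sense: given two ambient spaces $X$ and $X'$ of $\motif X$, the germ construction produces a common ambient space containing them as locally closed subschemes, and then their intersection $X\cap X'$ is again an ambient space of $\motif X$, receiving restriction maps from both. So the diagram of rings $\{H_0(X)\}$ indexed over ambient spaces is well-posed.

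Next, I would construct a canonical map from that limit to $\gsect{\motif X}$. By Proposition~\ref{P:globsect}, each $f\in H_0(X)$ corresponds to a morphism of $\bassch$-schemes $\varphi_f\colon X\to\affine\bassch 1$, which restricts along $\motif X\subseteq\func X$ to a natural transformation $\motif X\to\affine\bassch 1$ that is tautologically \explicit. Routine checks show this defines a ring homomorphism from the limit into $\gsect{\motif X}$, using Lemma~\ref{L:funcglobsect} to see that the maps are compatible with restriction along $X\cap X'\hookrightarrow X,X'$.

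Surjectivity is immediate from the definition of $\gsect{\motif X}$: every explicit section extends, by fiat, to some ambient space, producing an element in the limit (concentrated on that ambient space together with its restrictions to smaller ones). The genuine work lies in injectivity, which is where I expect the main obstacle. Given two extensions $(X,f)$ and $(X',f')$ inducing the same natural transformation on $\motif X$, I would pass to a common ambient space (using the germ equivalence) and replace $X,X'$ by the Zariski closure of $\motif X$ inside them, so that $\motif X$ becomes Zariski dense in each. Then $f-f'$ restricted to $X\cap X'$ must vanish as a morphism to $\affine\bassch 1$ on every fat-point-rational point of $\motif X$, and Lemma~\ref{L:distrep} (together with the fact that the Zariski closure is determined by the family of $\fat$-rational points) forces $f|_{X\cap X'}=f'|_{X\cap X'}$, so they represent the same class in the limit.

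The delicate point will be handling the germ reduction cleanly: a priori, the explicit section could extend on $X$ without any canonical extension to a larger ambient space like $\affine\bassch 1$, yet the limit is supposed to see all ambient spaces. Resolving this amounts to showing that the subsystem of ambient spaces to which a given explicit section actually extends is cofinal in the full poset of ambient spaces (under reverse inclusion), because any ambient space $X'$ admits $X\cap X'$ as a common refinement with an extending ambient space $X$. Once this cofinality is in place, the filtered (co)limit of the $H_0(X)$ is manifestly $\gsect{\motif X}$.
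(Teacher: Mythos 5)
Your argument is correct and parallels the paper's terse proof: each explicit section extends to a global section on some ambient space, restriction makes the $H_0(X)$ into a filtered system, and that system's limit is $\gsect{\motif X}$. One clarification, though: the object in question is a filtered \emph{colimit} (direct limit), not an inverse limit, despite the wording of the statement and your own hedging about a ``(co)filtered limit''. The transition maps $H_0(X')\to H_0(X)$ go from larger ambient spaces to smaller ones, the poset is directed under reverse inclusion, and the paper itself treats the result as a direct limit in the proof of Corollary~\ref{C:expsectform}, where $\gsect{\cdot}$ is identified with the stalk $\loc_{\bar X,y}$. Once you see it as a colimit, the cofinality worry in your final paragraph dissolves: an element of a filtered colimit is represented by a single pair $(X,f)$ up to the obvious equivalence, so there is no compatible family across all ambient spaces to construct and no cofinality argument is needed. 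Finally, for injectivity you do need $\motif X$ to be Zariski dense in $X\cap X'$, not merely in $X$ and $X'$ separately; this holds because density of $\motif X$ in both forces the locally closed subscheme $X\cap X'$ to be open in each, so that any closed subscheme of $X\cap X'$ containing $\motif X$ is already dense in $X$ and hence equal to $X\cap X'$.
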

\begin{proof}
 If $s\colon\motif X\to \affine\bassch 1$ is an \explicit\ section, then it extends to a morphism $X\to \affine\bassch 1$, where $X$ is some ambient space of $\motif X$, and hence by the argument in the above proof, it is the image of an element in $H_0(X)$ under the  \homo\ $H_0(X)\to \gsect{\motif X}$ induced by the inclusion   $\motif X\sub\func X$. If $X'\sub X$ is a locally closed subscheme which is also an ambient space for $\motif X$, then $s$ extends to a morphism with domain $X'$, and this is therefore necessarily the restriction of the global section in $H_0(X)$ determined by $s$. This shows that the $H_0(X)$ form an inverse system as $X$ varies over the germ of $\motif X$, with limit equal to $\gsect{\motif X}$. 
 \end{proof}

\begin{example}\label{E:explsectformfat}
If $\motif X$ is Zariski dense in $X$, then the only ambient spaces of $\motif X$ inside $X$ are open, so that we may think of  the ring of \explicit\ sections as a sort of stalk:
$$
\gsect{\motif X}=\varprojlim \gsect U
$$
where $U$ runs over all open subschemes on which $\motif X$ is a sieve. 
In particular, if $\bassch$ is the spectrum of an \acf\ $\fld$ and $\motif X(\fld)=X(\fld)$, then there are no proper opens in $X$ on which $\motif X$ is a sieve, and hence $\gsect{\motif X}=H_0(X)$.  This holds automatically if $X$ is   for instance a fat point.
\end{example}

\begin{theorem}\label{T:globsectunion}
If $\motif X$ and $\motif Y$ are sieves on a common $\bassch$-scheme, 
then the natural commutative diagram 
\commdiagram[un]{H_0(\motif X\cup \motif Y)}{i_{\motif Y}}{H_0(\motif 
Y)}{i_{\motif X}}{p_{\motif Y}}{H_0(\motif X)}{p_{\motif X}}{H_0(\motif X
\cap \motif Y)}
is  Cartesian. In particular, if $\motif X$ and $\motif Y$ are disjoint, then 
$H_0(\motif X\cup \motif Y)\iso H_0(\motif X)\oplus H_0(\motif Y)$. Similar properties hold for the \explicit\ section ring.
\end{theorem}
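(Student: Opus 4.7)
The key observation is that, as subsets of $X(\fat)$ for any common ambient space $X$ of $\motif X$ and $\motif Y$, one has
$$
(\motif X\cup\motif Y)(\fat)=\motif X(\fat)\cup\motif Y(\fat),\qquad (\motif X\cap\motif Y)(\fat)=\motif X(\fat)\cap\motif Y(\fat),
$$
so the diagram~\eqref{un} is Cartesian already at each fat point, just by the standard set-theoretic fact that $S_1\cup S_2$ is the pushout of $S_1\leftarrow S_1\cap S_2\to S_2$ in \textbf{Set}, and dually a pullback after applying $\hom\cdot{(-)}{R}$. The strategy is to lift this pointwise description to a global section statement, and then to handle naturality in $\fat$ using the compatibility hypothesis along $\motif X\cap\motif Y$.

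\textbf{Construction and verification.} Given $s\in H_0(\motif X)$ and $t\in H_0(\motif Y)$ with $p_{\motif X}(s)=p_{\motif Y}(t)$, define a natural transformation $u\colon\motif X\cup\motif Y\to\affine\bassch 1$ by the case split
$$
u(\fat)(a):=\begin{cases}s(\fat)(a)&\text{if $a\in\motif X(\fat)$,}\\ t(\fat)(a)&\text{if $a\in\motif Y(\fat)$,}\end{cases}
$$
which is unambiguous on $\motif X(\fat)\cap\motif Y(\fat)=(\motif X\cap\motif Y)(\fat)$ by the pullback compatibility. Functoriality in $\fat$: for any $j\colon\tilde\fat\to\fat$ and any $a\in(\motif X\cup\motif Y)(\fat)$, we have $a\in\motif X(\fat)$ or $a\in\motif Y(\fat)$, and in either case the functoriality of $s$ or $t$ on that piece (together with compatibility on the intersection, which resolves any ambiguity) gives the required square. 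Uniqueness of $u$ is automatic, since any section of $\motif X\cup\motif Y$ is determined by its values on $\motif X$ and $\motif Y$. Since the ring structure on $H_0(\motif X\cup\motif Y)$ is transported pointwise from that on $\affine\bassch 1(\fat)$, the bijection is in fact a ring isomorphism onto the fibre product. The disjoint case follows at once: the empty sieve has $H_0(\emptyset)=0$ (the unique map to the zero ring), whence the fibre product collapses to the direct sum $H_0(\motif X)\oplus H_0(\motif Y)$.

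\textbf{Explicit sections.} This is where I expect the real work. Given explicit sections $s,t$ extending to morphisms $\varphi\colon X_s\to\affine\bassch 1$ and $\psi\colon X_t\to\affine\bassch 1$ on ambient spaces of $\motif X$ and $\motif Y$ respectively inside a common ambient scheme $Z$, the compatibility of $s$ and $t$ in $\gsect{\motif X\cap\motif Y}$ only forces $\varphi$ and $\psi$ to agree on $(\motif X\cap\motif Y)$-rational points, not on all of $X_s\cap X_t$ scheme-theoretically. However, by Corollary~\ref{C:expsectlim}, $\gsect{\motif X\cap\motif Y}$ is an inverse limit of the rings $H_0(X'')$ over ambient spaces $X''$ of $\motif X\cap\motif Y$, so after passing further down the inverse system one finds a common locally closed $X''\subseteq X_s\cap X_t$, still an ambient space of $\motif X\cap\motif Y$, on which $\varphi|_{X''}=\psi|_{X''}$ as morphisms of schemes. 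The main obstacle is then to shrink $X_s$ and $X_t$ to locally closed subschemes $X_s',X_t'\subseteq Z$ such that $X_s'\cap X_t'=X''$ while still having $\motif X$ (respectively $\motif Y$) as sieves on them; this is the Noetherian step, achievable by removing the closed subset $(X_s\cap X_t)\smallsetminus X''$ from the appropriate ambient space. Gluing $\varphi|_{X_s'}$ and $\psi|_{X_t'}$ along $X''$ then yields a morphism $X_s'\cup X_t'\to\affine\bassch 1$ extending $u$, so $u$ is explicit, and the same Cartesian conclusion holds for $\gsect{\cdot}$.
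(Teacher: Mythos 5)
Your construction of $u$ by cases at each fat point, the verification that it is a morphism of sieves, and the observation that $H_0(\emptyset)$ is the zero ring (giving the direct-sum decomposition in the disjoint case) match the paper's proof exactly. That portion is correct.

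The explicit-section paragraph is where the gap lies. Your plan is to find $X''\subseteq X_s\cap X_t$ where $\varphi$ and $\psi$ agree as scheme morphisms, then shrink $X_s,X_t$ so that $X_s'\cap X_t'=X''$. Three things go wrong. First, if $X''$ is the Zariski closure of $\motif X\cap\motif Y$ inside $X_s\cap X_t$ (the natural choice, and the one where the uniqueness-of-extension argument via Zariski density actually applies), then $X''$ is \emph{closed} in $X_s\cap X_t$, so $(X_s\cap X_t)\smallsetminus X''$ is \emph{open} there, not closed as you assert; deleting a merely locally closed subset from $X_s$ need not produce a locally closed subscheme. Second, and more seriously, there is no reason $\motif X$ remains a sieve on the shrunken $X_s'$: the points of $\motif X$ that are \emph{not} in $\motif Y$ can perfectly well have centers in $(X_s\cap X_t)\smallsetminus X''$, and removing those centers breaks the inclusion $\motif X\sub\func{X_s'}$. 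In fact, after reducing to the case that $\motif X$ is Zariski dense in $X_s$ and the centers of $\motif X$ are dense in $X_s(\fld)$, the only ambient space available is $X_s$ itself, so no shrinking is possible, yet $\bar W$ (the Zariski closure of $\motif X\cap\motif Y$) can be a proper closed subscheme of $X_s\cap X_t$. Third, even granting the shrinking, the union $X_s'\cup X_t'$ you want to glue over need not be a locally closed subscheme of $Z$, so it is not automatically an admissible ambient space for $\motif X\cup\motif Y$. To be fair, the paper dispatches this point with the single sentence ``Moreover, if $s$ and $t$ are explicit, then so is $u$,'' so you were filling in a genuine lacuna; but the proposed filling does not close it, and an honest proof of the explicit case needs a different mechanism than the removal argument you sketch.
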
 
\begin{proof}
The construction of the commutative square~\eqref{un} and the 
verification that it is commutative, follows easily from Lemma~
\ref{L:funcglobsect}. Recall that \eqref{un}, as a commutative square in the 
category of $\loc_\bassch$-algebras, is called \emph{Cartesian} or a \emph{pull-back}, if   $H_0(\motif X\cup \motif Y)$ is universal in this category for 
making the diagram commute, or, equivalently, if it is the equalizer of the 
two compositions $p_{\motif X}i_{\motif X}$ and $p_{\motif Y}i_{\motif Y}
$. To verify this property, let $s\in H_0(\motif X)$ and $t\in H_0(\motif Y)$ 
be such that $\restrict s{\motif X\cap \motif Y}=p_{\motif X}(s)=p_{\motif 
Y}(t)=\restrict t{\motif X\cap \motif Y}$. We then define $u\in H_0(\motif 
X\cup \motif Y)$ as follows. Given a fat point $\fat$, let $u(\fat)$ be the 
map sending  $a\in \motif X(\fat)\cup \motif Y(\fat)$  to $s(\fat)(a)$ if  $a
\in \motif X(\fat)$, and to $t(\fat)(a)$ if $a\in \motif Y(\fat)$. This is well-defined, since $s(\fat)$ and $t(\fat)$ agree on $\motif X(\fat)\cap \motif 
Y(\fat)$ by assumption. It is now easy to verify that this defines a morphism 
of sieves $u\colon \motif X\cup \motif Y\to  \affine \bassch 1$, that is 
to say, a global section of $\motif X\cup \motif Y$, and that $i_{\motif X}
(u)=s$ and $i_{\motif Y}(u)=t$. Moreover, if $s$ and $t$ are \explicit, then so is $u$. 
\end{proof}

\begin{remark}\label{R:globsectunion}
To formulate a version for more than two sieves, we resort to the language of sheaf theory (and, shortly, we will put everything in this context anyway): given a union  $\motif X=\motif X_1\cup\dots\cup\motif X_s$, we have an exact sequence
\begin{equation}\label{eq:sheaf}
0\to H_0(\motif X)\to \bigoplus_i H_0(\motif X_i)\ \underset{\delta_{ji}}{\overset{\delta_{ij}}\rightrightarrows}\ \bigoplus_{ i< j} H_0(\motif X_i\cap\motif X_j)
\end{equation}
where $\delta_{ij}\colon H_0(\motif X_i)\to H_0(\motif X_i\cap\motif X_j)$ is the restriction \homo\ on the global sections induced by the inclusion $\motif X_i\cap\motif X_j\sub\motif X_i$. The corresponding exact sequence for \explicit\ sections is
\begin{equation}\label{eq:sheafgeom}
0\to \gsect{\motif X}\to \bigoplus_i \gsect{\motif X_i}\ \underset{\delta_{ji}}{\overset{\delta_{ij}}\rightrightarrows}\ \bigoplus_{ i< j} \gsect{\motif X_i\cap\motif X_j}.
\end{equation}
\end{remark}

\begin{theorem}\label{T:globsectsubzar}
Any morphism $s\colon \motif X\to \motif Z$ of sieves with $\motif X$   sub-\zariski, is \explicit. In particular, $H_0(\motif X)= \gsect{\motif X}$. 
\end{theorem}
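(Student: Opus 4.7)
The plan is to reduce to Proposition~\ref{P:morphrep}, which covers the representable case. Writing $\motif X = \fim\varphi$ for some morphism $\varphi\colon Y \to X$ of $\bassch$-schemes and fixing an ambient space $Z$ of $\motif Z$, the composition $\func Y \map{\func\varphi} \motif X \map{s} \motif Z \hookrightarrow \func Z$ is, by Proposition~\ref{P:morphrep}, induced by a morphism of schemes $\psi\colon Y \to Z$ satisfying $\func\psi = s \after \func\varphi$ and $\fim\psi \sub \motif Z$. The problem then reduces to exhibiting $\psi$ as $\tau \after \varphi$ for some scheme morphism $\tau\colon \tilde X \to Z$, where $\tilde X$ is an ambient space of $\motif X$ in the sense of germs.

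Next, I would use the well-definedness of $s$ to derive a coequalizer condition on $\psi$. Any two $b_1, b_2 \in Y(\fat)$ with $\varphi \after b_1 = \varphi \after b_2$ give the same value $s(\fat)(\varphi \after b_i)$ in $\motif Z(\fat)$, forcing $\psi \after b_1 = \psi \after b_2$. Applied to the fiber product $W := Y \times_X Y$ with its projections $p_1, p_2\colon W \to Y$, this shows $\psi \after p_1$ and $\psi \after p_2$ agree on every fat point of $W$. A second application of Proposition~\ref{P:morphrep} (now to morphisms $\func W \to \func Z$) then promotes this to the scheme-theoretic identity $\psi \after p_1 = \psi \after p_2$.

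The main obstacle is the remaining descent step: producing $\tau\colon \tilde X \to Z$ with $\tau \after \varphi = \psi$. Coequalizers in schemes are subtle, so no single choice of $\tilde X$ works uniformly---$\tilde X$ must be tailored to $\varphi$ (one takes $\tilde X = Y$ when $\varphi$ is a locally closed immersion, so that $\motif X \iso \func Y$; in genuinely non-representable cases, such as the normalization of a cusp, one instead takes $\tilde X = X$ itself). I would therefore work locally, on affine opens $U = \op{Spec} A \sub X$ with preimage $\op{Spec} B \sub Y$: the coequalizer condition translates to $\psi \tensor 1 = 1 \tensor \psi$ in $B \tensor_A B$, and via the Jacobson input of Lemma~\ref{L:dirlimJac}, $\psi$ lies in the descent subring, from which $\tau$ can be assembled, glued across the affine cover, and checked routinely to satisfy $s = \restrict{\func\tau}{\motif X}$. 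The identification $H_0(\motif X) = \gsect{\motif X}$ is then the special case $\motif Z = \func{\affine\bassch1}$.
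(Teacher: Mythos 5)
Your strategy---descend $\psi$ along $\varphi$ via a coequalizer argument---is in the right spirit, and the opening moves are sound: reducing $\motif Z$ to a representable, invoking Proposition~\ref{P:morphrep} to produce $\psi\colon Y\to Z$, and noting that $\psi\after p_1 = \psi\after p_2$ on $Y\times_X Y$. But the descent step has a genuine gap. You claim that $\psi\tensor 1 = 1\tensor\psi$ in $B\tensor_A B$ forces $\psi$ into the descent subring ``via the Jacobson input of Lemma~\ref{L:dirlimJac}.'' That lemma (a Jacobson scheme is the direct limit of its zero-dimensional closed subschemes) is a limit reduction, not a descent statement; the descent lemma the paper actually relies on is Lemma~\ref{L:tensCart}, and it is only proved --- and indeed only true --- for the Artinian local rings that arise from fat points. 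For an arbitrary injective map $A\to B$ of finitely generated $\fld$-algebras the Cartesianness of the tensor square simply fails: with $A = \pol\fld t$ and $B = \pol\fld{t,1/t}$ one has $B\tensor_A B\iso B$, so $q\tensor 1 = 1\tensor q$ for every $q\in B$ while $q\notin A$ in general. Worse, a sub-\zariski\ motif such as $\fim\varphi$ with $Y = L_*\sqcup\{O\}\to X = L$ (the ``unit-or-zero'' sieve, Zariski dense in $L$ yet not representable, so $\tilde X$ must be $L$ itself) gives $B\tensor_A B\iso \pol\fld{t,1/t}\times\fld$ with $q\tensor 1 = 1\tensor q$ identically --- the coequalizer condition carries no information, and your argument produces no $\tau$.

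The paper avoids all of this by anchoring the descent at fat points and then passing to the limit: it first establishes $\gsect{\fim\varphi}=H_0(\fim\varphi)$ when the domain of $\varphi$ is a fat point (the one place L:tensCart is used, on an injection of Artinian local rings, where it holds), then extends to zero-dimensional domains by the Cartesian-square/sheaf property of Theorem~\ref{T:globsectunion}, and finally to arbitrary $Y$ by writing $Y$ as the direct limit of its zero-dimensional closed subschemes (this is where L:dirlimJac actually enters, paired with Lemma~\ref{L:dirlimform} to turn the direct limit of sieves into an inverse limit of section rings). Only then is the first assertion of the theorem deduced, by composing $s$ with the coordinate projections of an affine embedding $Z\sub\affine\bas n$ and applying the second assertion componentwise. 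Your proposal also omits the dominance reduction (replacing $X$ by the Zariski closure of $\fim\varphi$, which is what makes the relevant Artinian local ring maps injective, the hypothesis of L:tensCart), and the closing ``check routinely'' elides precisely the direct-limit/inverse-limit bookkeeping that replaces a single-shot local argument.
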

\begin{proof}
Let us prove the second assertion first. 
Let $\varphi\colon Y\to X$ be a morphism of $\bassch$-schemes, so that $\motif X=\fim\varphi$. Replacing $X$ by the Zariski closure of $\fim \varphi$, we may assume that $\varphi$ is dominant. 
 Our objective is to show that we have an equality
\begin{equation}\label{eq:subzarsect}
\gsect{\fim\varphi}=H_0(\fim\varphi).
\end{equation}
Assume first that $Y=\mathfrak y$ is a fat point, and hence, since $\varphi$ is dominant, so is then $X=\mathfrak x$. Consider the induced \homo\ of Artinian local rings $R\sub S$, which is injective precisely because $\varphi$ is dominant. Let $s\colon\fim\varphi\to \affine\bassch1$ be a global section. By Proposition~\ref{P:morphrep}, we can find $q\in S$ which, when viewed as  a global section $\mathfrak y\to \affine\bassch1$, extends   the composition $s\after \func\varphi$. Let $g_1\colon S\to S\tensor_RS$ and $g_2\colon S\to S\tensor_RS$ be given by respectively $a\mapsto a\tensor 1$ and $a\mapsto 1\tensor a$. Since $g_1$ and $g_2$ agree on $R$, the two corresponding rational points 
$\mathfrak y\times_{\mathfrak x}\mathfrak y\to \mathfrak y$ have the same image under $s(\mathfrak y\times_{\mathfrak x}\mathfrak y)$ and hence, $g_1(q)=g_2(q)$. By Lemma~\ref{L:tensCart} below, we get $q\in R$.

By Corollary~\ref{C:expsectlim}, this  proves  \eqref{eq:subzarsect} 
 whenever the domain of $\varphi$ is  a fat point. If the domain is a zero-dimensional scheme $Z$, then it is a finite disjoint union $\mathfrak y_1\sqcup\dots\sqcup\mathfrak y_s$ of fat points. By an induction argument, we may assume $s=2$, so that  $\fim\varphi=\fim{\restrict\varphi{\mathfrak y_1}}\cup \fim{\restrict\varphi{\mathfrak y_2}}$. In particular, $H_0(\fim\varphi)$ and $\gsect{\fim\varphi}$ satisfy each the same Cartesian square~\eqref{un} by what we just proved for fat points (instead of induction, we may alternatively use \eqref{eq:sheaf} and \eqref{eq:sheafgeom}). By uniqueness, they must therefore be equal, showing that \eqref{eq:subzarsect} holds whenever the domain is zero-dimensional. For $Y$ arbitrary,  we may write it as the direct limit of all its zero-dimensional closed subschemes by  Lemma~\ref{L:dirlimJac}. Let $\{\motif Z:=\fim{\restrict\varphi Z}\}$ be the collection of all sub-\zariski\ motives, where $Z\sub Y$ varies over all zero-dimensional closed subschemes of $Y$. Since the $\motif Z$ form a direct system, $H_0(\fim\varphi)$ is   the inverse limit of all $H_0(\motif Z)=\gsect{\motif Z}$, by Lemma~\ref{L:dirlimform} below, and by the same argument, this direct limit is equal to $\gsect{\fim\varphi}$, proving  \eqref{eq:subzarsect}. 
 
 To prove the first assertion, we may assume, without loss of generality, that $\motif Z=\func Z$ is representable, and then, since the problem is local, that $X=\op{Spec}A$ and $Z$ are affine. Hence $Z\sub\affine\bas n$ is a closed subscheme for some $n$ and some open $\op{Spec}\bas\sub\bassch$. Let $s_i$ be the composition of $s$ with the morphism induced by the projection $\affine\bas n\to\affine\bas 1$ onto the $i$-th coordinate. Hence $s_i\in H_0(\motif X)=\gsect{\motif X}$. Replacing $X$ by an open subscheme if necessary as per Corollary~\ref{C:expsectlim}, we can find  $q_i\in A$  such that, viewed as a global section $X\to \affine\bas 1$, it extends $s_i$. Therefore,  the morphism $X\to \affine\bas n$ given by $(q_1,\dots,q_n)$ is an extension of $s$, as we wanted to show. 
\end{proof}

\begin{remark}\label{R:globsectsubzar}
By the above proof, we actually showed that if the domain of $\varphi$ is zero-dimensional, then $H_0(\fim\varphi)$ is equal to the global section ring of the Zariski closure of $\fim\varphi$. However, this is no longer true in the general case, as can be seen from Corollary~\ref{C:expsectlim}. 
\end{remark}

\begin{lemma}\label{L:dirlimform}
Let $\{\motif Z\}$ be a direct system of sieves on some scheme $X$ and let $\motif X$ be their direct limit. Then $H_0(\motif X)$ is the inverse limit of all $H_0(\motif Z)$. 
\end{lemma}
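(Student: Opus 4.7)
The assertion is essentially a formal consequence of the fact that $H_0$ is defined as a contravariant Hom-functor, namely $H_0(\motif X)=\mor{}{\motif X}{\affine\bassch 1}$, together with the observation that the relevant colimits in $\sieves\bassch$ are computed pointwise. So the plan is: first identify the direct limit $\motif X$ at the level of sets at each fat point, and then invoke the universal property of a colimit in the target category of presheaves.

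First, I would verify that for a direct system $\{\motif Z\}$ of sub-presheaves of some representable $\func X$, the direct limit $\motif X$ in $\sieves\bassch$ is the pointwise union
\[
\motif X(\fat)=\varinjlim_{Z}\motif Z(\fat)=\bigcup_Z \motif Z(\fat)\subseteq X(\fat),
\]
for every fat point $\fat$. This is because colimits of presheaves are computed pointwise in $\categ{Set}$, and since each transition map in the system is an inclusion, the colimit in $\categ{Set}$ is just the union; this gives a sub-presheaf of $\func X$, hence a sieve, and it clearly satisfies the universal property.

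Next, I would unwind the definition of a morphism $u\colon \motif X\to \affine\bassch 1$. Such a morphism is a natural transformation, hence determined at every fat point $\fat$ by a map $u(\fat)\colon \motif X(\fat)\to \affine\bassch 1(\fat)$. Restricting $u$ along the inclusion $\motif Z\hookrightarrow \motif X$ yields a morphism $u_Z\in H_0(\motif Z)$, and these restrictions are compatible under the transition maps in the direct system, so they assemble into an element of $\varprojlim H_0(\motif Z)$. Conversely, given a compatible family $(u_Z)\in\varprojlim H_0(\motif Z)$, and given $a\in \motif X(\fat)$, choose any $Z$ with $a\in\motif Z(\fat)$ and set $u(\fat)(a):=u_Z(\fat)(a)$; compatibility makes this independent of the choice, and naturality of each $u_Z$ together with the pointwise description of $\motif X$ gives the naturality of $u$.

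The two assignments are mutually inverse and compatible with the ring operations on $H_0$ (which are defined pointwise via the bijections $\Psi_\fat$), hence induce an isomorphism of $\loc_\bassch$-algebras $H_0(\motif X)\iso \varprojlim H_0(\motif Z)$. The only mildly delicate point is the well-definedness in the reverse direction, i.e.\ checking that the gluing of a compatible family is a natural transformation on all of $\motif X$; this is where the explicit pointwise description of the colimit in the first step is essential.
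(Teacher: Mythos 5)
Your proposal is correct and follows essentially the same route as the paper: both rest on the fact that $H_0=\mor{}{\cdot}{\affine\bassch 1}$ is a contravariant Hom-functor, which converts the direct limit into an inverse limit, i.e.\ $\mor{}{\varinjlim\motif Z}{\affine\bassch 1}=\varprojlim\mor{}{\motif Z}{\affine\bassch 1}$. The paper states this in one line by invoking the universal property, whereas you unpack it by first identifying the colimit pointwise as a union of subsets of $X(\fat)$ and then exhibiting the two mutually inverse assignments explicitly; this is the same argument carried out in more detail.
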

\begin{proof}
Contravariance turns a direct limit into an inverse limit, and the rest is now   an easy consequence of the universal property of inverse limits:
\begin{align*}
H_0(\motif X) &=\mor{}{\motif X}{\affine\bassch 1}\\
&=\mor{}{\varinjlim \motif Z}{\affine\bassch 1}\\
&=\varprojlim \mor{}{\motif Z}{\affine\bassch 1}= 
\varprojlim H_0(\motif Z).
\end{align*}
\end{proof}

\begin{lemma}\label{L:tensCart}
Let $R\sub S$ be an injective \homo\ of rings. Then the tensor square
\commdiagram R{}S{}{}S{}{S\tensor_RS}
is Cartesian, that is to say, if $q\tensor1=1\tensor q$ in $S\tensor_RS$ for some $q\in S$, then in fact $q\in R$. 
\end{lemma}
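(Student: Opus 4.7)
The inclusion $R \sub \{q \in S : q\tensor 1 = 1\tensor q\}$ is immediate from the defining $R$-bilinearity of $\tensor_R$. The content is the reverse inclusion, which is equivalent to exactness at $S$ of the Amitsur-type sequence
$$
0 \to R \to S \map d S\tensor_R S, \qquad d(s):= s\tensor 1 - 1\tensor s.
$$

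The standard plan is to invoke faithfully flat descent. One tensors the sequence on the left with $S$ to obtain
$$
0 \to S \to S\tensor_R S \to S\tensor_R S\tensor_R S,
$$
and observes that the multiplication map $\mu\colon S\tensor_R S \to S$ (together with its analog for the triple tensor) splits this sequence, making it split exact. Faithful flatness of $S$ over $R$ then descends exactness to the original complex, yielding the desired equalizer description of $R$.

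The main obstacle is that bare injectivity of $R\into S$ is not sufficient in full generality: the inclusion $\zet\into\mathbb{Q}$ is a counterexample, since $\mathbb{Q}\tensor_\zet \mathbb{Q}\iso \mathbb{Q}$ via multiplication, whence every $q\in\mathbb{Q}$ satisfies $q\tensor 1 = 1\tensor q$. In the intended application inside the proof of Theorem~\ref{T:globsectsubzar}, however, $R$ and $S$ are Artinian local rings arising from a dominant morphism of fat points, with $S$ finite over $R$. In the sub-case where $S$ is $R$-free one chooses an $R$-basis of $S$ containing $1_S$ (by locality some coordinate of $1_S$ is a unit of $R$, permitting a basis swap), expands $q = r_1 + \sum_{i\geq 2} r_ib_i$, and matches coefficients of $q\tensor 1 = 1\tensor q$ in the $R$-free module $S\tensor_RS$ with basis $\{b_i\tensor b_j\}$; this forces $r_i = 0$ for $i\geq 2$ and hence $q = r_1\in R$. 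The general Artinian case is then handled either by reduction to the free case via the structure of finite extensions of Artinian local rings, or by appealing directly to faithful flatness when it is in force.
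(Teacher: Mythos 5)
You are right that the Lemma is false as stated: $\zet\into\mathbb Q$ (or any proper localization $R\into R_f$) is a counterexample. Your handling of the $R$-free case by matching coefficients in an $R$-basis of $S$ containing $1$ is correct and is a genuinely different route from the paper, which instead passes to the $\fld$-linear map $T=S\tensor_\fld S\to(S/R)\tensor_\fld(S/R)$.

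However, the non-free Artinian case is a genuine gap in your proposal. A finite injective local map of Artinian local $\fld$-algebras need not be flat, nor need $R$ split off as an $R$-module direct summand of $S$: take $R=\pol\fld{x,y}/(x^2,xy,y^2)\into S=\pol\fld{x,z}/(x^2,z^2)$, $y\mapsto xz$; any $R$-linear retraction $\rho$ with $\rho(1)=1$ would require $x\rho(z)=\rho(xz)=y$, impossible since $x\maxim_R=0$. There is no ``structure of finite extensions of Artinian local rings'' that reduces to the free case. What the coefficient-matching argument really uses (and what a retraction $\rho$ would give, via $S\tensor_RS\to S$, $s_1\tensor s_2\mapsto\rho(s_1)s_2$, forcing $q=\rho(q)$) is precisely purity of $R\into S$, and that is exactly what remains unproven in the generality needed. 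For what it is worth, the paper's own proof is also incomplete: the map to $(S/R)\tensor_\fld(S/R)$ is only $\fld$-linear, not $T$-linear, so the ideal $\mathfrak n=(r\tensor1-1\tensor r\ :\ r\in R)T$ is \emph{not} sent to zero --- for $R=\pol\fld x/(x^3)$, $S=R[y]/(y^2)$, the element $(y\tensor y)(x\tensor1-1\tensor x)\in\mathfrak n$ maps to $\overline{xy}\tensor\bar y-\bar y\tensor\overline{xy}\neq0$ --- and the image of $q\tensor1-1\tensor q$ is $\bar q\tensor\bar1-\bar1\tensor\bar q=0$ in any case, since $\bar1=0$ in $S/R$, so the claimed contradiction never arises. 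Your free-case computation is the sounder foundation; both accounts still owe a proof of purity in the Artinian local setting.
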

\begin{proof}
Let for simplicity assume that $R$ and $S$ are algebras over some field $\fld$ (since we only need the result for $R$ and $S$ Artinian, this already covers any equi\ch\ situation).
Let $T:=S\tensor_\fld S$ be the tensor product over $\fld$, and let $\mathfrak n$ be the ideal in $T$ generated by all expressions of the form $r\tensor1-1\tensor r$ for $r\in R$. Hence $S\tensor_RS\iso T/\mathfrak n$. If $q\in S$ satisfies $q\tensor1=1\tensor q$ in $S\tensor_RS$, then viewed as an element in $T$, the tensor $q\tensor1-1\tensor q$ lies in $\mathfrak n$. The canonical surjection $S\to S/R$ induces a \homo\ of tensor products $T\to (S/R)\tensor_\fld(S/R)$. Under this \homo,   $\mathfrak n$ is sent to the zero ideal, whence so is in particular $q\tensor1-1\tensor q$. If the image of $q$ in $S/R$ were non-zero, then we can find a basis of $S/R$ containing $q$. Hence $q\tensor 1$ and $1\tensor q$ are two independent basis vectors of  $(S/R)\tensor_\fld(S/R)$, contradicting that they are equal in the latter ring. Hence $q\in R$, as we wanted to show. 
\end{proof}

By Lemma~\ref{L:funcglobsect} and properties of tensor products, we have 
for any two sieves $\motif X$ and $\motif Y$, a canonical \homo
\begin{equation}\label{eq:globsectprod}
H_0(\motif X)\tensor_{\loc_\bassch} H_0(\motif Y)\to H_0(\motif X\times \motif Y),
\end{equation}
and a similar formula for \explicit\ sections. 
If $\motif X$ and $\motif Y$ are both representable, then this is an 
isomorphism, but not so in general. 
 
\subsection*{The topos of sieves}
 The Zariski topology on a $\bassch$-scheme $X$ induces   a \emph{topos} on each of 
its sieves $\motif X$. More precisely, the \emph{admissible opens} on $\motif X$ are 
the sieves of the form $\motif X\cap \func U$, where $U\sub X$ 
runs over all opens of $X$; and the admissible coverings are all collections of admissible opens $\motif U_i\sub\motif X$ such that their union (as sieves) is equal to $\motif X$, that is to say, such that the corresponding opens $U_i\sub X$ cover some ambient space of $\motif X$. For simplicity, we will simply write $\motif X\cap U$ for $\motif X\cap\func U$. In particular, since $X$ is quasi-compact, any admissible covering contains a finite admissible subcovering. The collection of admissible opens does not depend on the ambient space $X$, for if $X'\sub X$ is a locally closed subscheme on which $\motif X$ is also a sieve, then since its topology is induced by that of $X$, it induces the same admissible opens on $\motif X$, and the same admissible coverings. Without going into details, we claim that the collection of admissible opens  and admissible coverings  yields  
a Grothendieck topology on $\motif X$, turning $\sieves\bassch$ into a Grothendieck site. 
Nonetheless,  since for each fat point $\fat$, this induces a topological 
space on $\motif X(\fat)$,  we will just pretend that we are working in a genuine topological space, and borrow the usual topological jargon.

 \begin{remark}\label{R:zarsep}
 Suppose $\bassch$ is the spectrum of    a field $\fld$. 
Unless $\fat$ is the geometric point $\bassch$ itself, the topological space $
\motif X(\fat)$ is not separated: two $\fat$-rational points $a,b\in\motif 
X(\fat)$ are inseparable \iff\ they have the same center, that is to say, \iff\ 
$\rho(\fld)(a)=\rho(\fld)(b)$, where $\rho(\fld)\colon\motif X(\fat)\to
\motif X(\fld)$ is the canonical map. 
Given an open $U\sub X$ with corresponding open 
sieve $\motif U:=\motif X\cap  U$, then, as we shall prove shortly in Theorem~\ref{T:openarc} below, we have 
$\motif U(\fat)=\inverse{\rho(\fld)} {U(\fld)}$. Therefore,  if $\rho(\fld)(a)=\rho(\fld)(b)\in U(\fld)$, then $a,b\in\motif 
U(\fat)$. 
\end{remark}

\begin{proposition}\label{P:ctugeomsect}
If $s\colon\motif Y\to \motif X$ is an \explicit\ morphism of sieves, then it is continuous in the sense that the pull-back of any open in $\motif X$ is an open in $\motif Y$.
\end{proposition}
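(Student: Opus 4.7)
The plan is to unwind the definitions and reduce everything to the continuity of the extending scheme morphism. Since $s$ is \explicit, by definition there exist ambient spaces $Y$ and $X$ of $\motif Y$ and $\motif X$ respectively, together with a morphism of $\bassch$-schemes $\varphi\colon Y\to X$ such that $s$ is the restriction of $\func\varphi$. A typical admissible open of $\motif X$ has the form $\motif X\cap U$ for some open $U\subseteq X$, and I want to identify its pull-back $s^*(\motif X\cap U)$ as an admissible open of $\motif Y$.

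First I would compute $s^*(\motif X\cap U)$ pointwise. For a fat point $\fat$, an element $b\in \motif Y(\fat)$ lies in $s^*(\motif X\cap U)(\fat)$ precisely when $s(\fat)(b)=\varphi\after b\in \motif X(\fat)\cap U(\fat)$. Since $b\in\motif Y(\fat)\sub\func Y(\fat)$, we already know $\varphi\after b$ lands in $\motif X(\fat)$ because $\fim\varphi\sub\motif X$; hence the only real condition is that $\varphi\after b$ factor through $U$, i.e.\ that $b$ factor through the open subscheme $\inverse\varphi U\sub Y$. In other words,
\[
s^*(\motif X\cap U)(\fat) = \motif Y(\fat)\cap \inverse\varphi U(\fat),
\]
functorially in $\fat$, so that $s^*(\motif X\cap U)=\motif Y\cap \inverse\varphi U$ as sieves on $Y$.

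Since $\varphi\colon Y\to X$ is continuous as a morphism of schemes, $\inverse\varphi U$ is open in $Y$, and therefore $\motif Y\cap \inverse\varphi U$ is by definition an admissible open of $\motif Y$. This proves the claim. There is no real obstacle here: the only subtle point is that the ambient space $Y$ may need to be shrunk (replaced by the locally closed subscheme on which both $\motif Y$ is a sieve and $\varphi$ is defined), but this does not affect the topology of admissible opens by the paragraph preceding the statement, so the argument goes through unchanged.
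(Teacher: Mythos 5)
Your argument is correct and matches the paper's proof essentially verbatim: extend $s$ to $\varphi\colon Y\to X$, observe that $\inverse\varphi U$ is open in $Y$, and identify $s^*(\motif X\cap U)=\motif Y\cap \inverse\varphi U$. One small imprecision: you justify $\varphi\after b\in\motif X(\fat)$ by claiming $\fim\varphi\sub\motif X$, but the definition of \explicit\ only guarantees that the \emph{restriction} of $\func\varphi$ to $\motif Y$ lands in $\motif X$ (i.e.\ $\varphi_*\motif Y\sub\motif X$), not that the full image sieve of $\varphi$ is contained in $\motif X$; the correct and sufficient reason is simply that $s$ is by hypothesis a morphism $\motif Y\to\motif X$, so $s(\fat)(b)\in\motif X(\fat)$ automatically. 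This slip does not affect the conclusion.
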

\begin{proof}
By assumption, $s$ extends to  a morphism $\varphi\colon Y\to X$, where $\motif Y$ is a sieve on $Y$ and $\motif X$ on $X$. An open $\motif U\sub\motif X$ is of the form $\motif X\cap  U$, for some open subscheme $U\sub X$. Since $\inverse\varphi U$ is open in $Y$ and $\varphi^*{\motif U}=\motif X\cap \inverse\varphi U$, the claim follows. 
\end{proof}

  To make $\sieves\bassch$ into a topos, we need to define structure sheafs for a given sieve $\motif X$. 
 We define   presheaves $\loc_{\motif X}$ and $\gsheaf{\motif X}$ on $\motif X$, by associating to an 
open $\motif U:= \motif X\cap  U$ its $\loc_\bassch$-algebra of global sections 
 $$
 \loc_{\motif X}(\motif U):=H_0(\motif X\cap  U)\qquad\text{and}\qquad \gsheaf{\motif X}(\motif U):=\gsect{\motif X\cap  U}.
 $$
 
\begin{corollary}\label{C:structsheaf}
For each sieve $\motif X$, the presheaves $\loc_{\motif X}$ and $\gsheaf{\motif X}$ are sheaves 
(in the topos sense).
\end{corollary}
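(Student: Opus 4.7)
The plan is to verify the sheaf axioms directly against the admissible open and covering structure of the Grothendieck topology just introduced on $\motif X$. Functoriality of the restriction maps is already at hand: given admissible opens $\motif V = \motif X \cap V \subseteq \motif X \cap U = \motif U$ with $V \subseteq U$ open in an ambient space $X$, the inclusion $\motif V \hookrightarrow \motif U$ is a morphism of sieves, so by Lemma~\ref{L:funcglobsect} pull-back induces algebra homomorphisms $\loc_{\motif X}(\motif U) \to \loc_{\motif X}(\motif V)$ and $\gsheaf{\motif X}(\motif U) \to \gsheaf{\motif X}(\motif V)$. These assignments are easily checked to be compatible with composition, so $\loc_{\motif X}$ and $\gsheaf{\motif X}$ are genuine presheaves.

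For the sheaf axioms, let $\{\motif U_i\}_{i \in I}$ be an admissible covering of an admissible open $\motif U = \motif X \cap U$; by definition, the corresponding opens $U_i \subseteq U$ cover some ambient space of $\motif U$, whence, since $U$ is quasi-compact, one may pass to a finite subcover. The sheaf condition thus reduces to proving exactness of
\[
0 \to \loc_{\motif X}(\motif U) \to \bigoplus_{i} \loc_{\motif X}(\motif U_i) \rightrightarrows \bigoplus_{i<j} \loc_{\motif X}(\motif U_i \cap \motif U_j)
\]
for a finite cover, and analogously for $\gsheaf{\motif X}$. This, however, is precisely the content of the exact sequences \eqref{eq:sheaf} and \eqref{eq:sheafgeom} in Remark~\ref{R:globsectunion}, obtained by iterating the Cartesian square of Theorem~\ref{T:globsectunion}: one sets $\motif Y_1 := \motif U_1$, $\motif Y_k := \motif U_1 \cup \cdots \cup \motif U_k$, and inductively glues sections compatible on the pairwise intersections $\motif U_i \cap \motif U_j$.

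The separation axiom (injectivity on the left) is immediate from Theorem~\ref{T:globsectunion}, as is the gluing axiom (exactness in the middle): given a family $s_i \in \loc_{\motif X}(\motif U_i)$ matching on every $\motif U_i \cap \motif U_j$, the two-term case yields sections on $\motif U_1 \cup \motif U_2$, then on $\motif U_1 \cup \motif U_2 \cup \motif U_3$, and so on, and the final glued section is independent of the order of amalgamation because any two choices agree on each $\motif U_i$. The construction preserves \emph{algebraicity} of sections by the last sentence of Theorem~\ref{T:globsectunion}, so the same argument works verbatim for $\gsheaf{\motif X}$.

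No serious obstacle is expected here; the only point requiring any care is to ensure that the notion of admissible open is intrinsic to the germ of $\motif X$ (which is noted in the text, since a smaller ambient space induces the same subspace topology), and that the ambient-space quasi-compactness genuinely allows the reduction to finite covers. Both are routine once the Cartesian square of Theorem~\ref{T:globsectunion} is in hand.
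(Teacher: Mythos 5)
Your argument is exactly the paper's: reduce to a finite admissible covering via quasi-compactness of the ambient space, then invoke Theorem~\ref{T:globsectunion} and its iterated form~\eqref{eq:sheaf}, \eqref{eq:sheafgeom} from Remark~\ref{R:globsectunion} to get separation and gluing, with the last sentence of the theorem handling the explicit-section version. You spell out the inductive amalgamation $\motif Y_k := \motif U_1\cup\cdots\cup\motif U_k$ that the paper leaves implicit, but the route is the same.
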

\begin{proof}
This is essentially the content of Theorem~\ref{T:globsectunion} (see also Remark~\ref{R:globsectunion}), applied to 
a (finite) admissible covering of an open $\motif U= \motif V_1\cup\dots\cup \motif 
V_s$.
\end{proof}

So we are justified in calling $\loc_{\motif X}$ the \emph{structure sheaf} 
of the sieve $\motif X$ on a  $\bassch$-scheme $X$, and $\gsheaf{\motif X}$ its \emph{\explicit\ structure sheaf}.

\subsection*{Stalks}
Let  $\motif X$ be a sieve with ambient space $X$. A closed point $P\in X$ is called a \emph{point} on $\motif X$, if the closed immersion $i_P\colon P\sub X$, viewed as a $P$-rational point, belongs to $\motif X(P)$, or equivalently, if $\func P\sub \motif X$. We define the stalk at a point $P\in\motif X$ as usual as 
the respective direct limits 
$$
\loc_{\motif X,P}:=\varinjlim \loc_{\motif X}(\motif U)\qquad\text{and}\qquad \gsheaf{\motif X,P}:=\varinjlim \gsheaf{\motif X}(\motif U)
$$
where $\motif U$ runs over all admissible opens of $\motif X$ such that $P\in \motif 
U $. Clearly, if $\motif X=\func X$ is representable, then $\loc_{\func 
X,P}=\gsheaf{\func X,P}$ is just the local ring $\loc_{X,P}$ at the closed point $P\in X$ by 
Proposition~\ref{P:globsect}.  In fact, we have:

\begin{proposition}\label{P:expstalk}
If $\motif X$ is a   sieve which is Zariski dense in $X$, and if $P$ is a point on $\motif X$, then $\gsheaf{\motif X,P}=\loc_{X,P}$.
\end{proposition}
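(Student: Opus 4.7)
The plan is to construct the natural restriction \homo\ $\loc_{X,P}\to\gsheaf{\motif X,P}$ and show it is an isomorphism. Given $U\ni P$ open in $X$ and $q\in H_0(U)$, the restriction along the inclusion $\motif X\cap U\sub \func U$ yields an \explicit\ section of $\motif X\cap U$, and this operation descends to germs at $P$, since two representatives of the same element in $\loc_{X,P}$ agree on a smaller common open neighborhood and therefore restrict identically.

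For surjectivity, a germ in $\gsheaf{\motif X,P}$ is represented by some $s\in \gsect{\motif X\cap U}$; by definition $s$ extends to a morphism $Y\to\affine\fld 1$ for some ambient space $Y\sub U$ of $\motif X\cap U$. Granted that $\motif X\cap U$ is Zariski dense in $U$ (the key technical step, discussed below), Example~\ref{E:explsectformfat} forces every such $Y$ to be open in $U$; moreover, since $P$ is a point of $\motif X$ and hence of $\motif X\cap U$, it must lie in every ambient space, so $P\in Y$. The extension  $q\in H_0(Y)$ then determines a germ in $\loc_{X,P}$ whose image under the restriction map is the class of $s$.

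For injectivity, suppose $q\in H_0(U)$ restricts to zero in $\gsect{\motif X\cap U}$. By Lemma~\ref{L:vanci}, every $\fat$-rational point of $\motif X\cap U$ factors through the scheme-theoretic zero locus $V(q)\sub U$, so $\motif X\cap U\sub \func{V(q)}$. Thus $V(q)$ contains the Zariski closure of $\motif X\cap U$ in $U$; once we know this closure is all of $U$, we deduce $V(q)=U$, so $q=0$ on $U$, and the germ $[q]$ vanishes in $\loc_{X,P}$.

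The main obstacle is the density step: that $\motif X\cap U$ is Zariski dense in $U$ for $U\ni P$ open, which does not follow formally from the global condition that $\motif X$ is Zariski dense in $X$. The plan is to argue by contradiction: a proper closed subscheme $V\subsetneq U$ with $\motif X\cap U\sub \func V$ should, via the schematic image of $V$ in $X$ together with a suitable closed subscheme capturing the rational points of $\motif X$ whose centers lie in $X\setminus U$, produce a proper closed subscheme of $X$ containing all of $\motif X$, contradicting global density. The hypothesis that $P\in U$ is a point of $\motif X$ anchors the argument near $P$ and ensures that $\motif X\cap U$ is non-empty, so that the closure argument is non-vacuous.
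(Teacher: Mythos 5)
Your argument hinges, exactly as the paper's does, on the claim that $\motif X\cap\func U$ is Zariski dense in $U$ for opens $U\ni P$; the paper simply asserts this, and you are right to single it out as the real obstacle. However, your proposed contradiction cannot close the gap, and in fact the claim is false. The difficulty is precisely what your plan would need to overcome: the rational points of $\motif X$ whose centers lie in $X\setminus U$ need not lie in any proper closed subscheme of $X$, so one cannot manufacture a proper closed $W\subsetneq X$ containing $\motif X$ out of the closure of a proper $V\subsetneq U$.

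A concrete counterexample: take $X=\affine\fld 1=\op{Spec}\pol\fld t$, let $P$ be the origin, $Q\neq P$ another closed point, and set
$$\motif X:=\func{(\jet P2X)}\cup\cone{\{Q\}}.$$
By Proposition~\ref{P:formsieve}, $\cone{\{Q\}}=\func{\complet X_Q}$ is a formal motif, hence so is $\motif X$, and $P$ is a point on $\motif X$. Since $X$ is integral, $\cone{\{Q\}}$ alone is Zariski dense: if $\cone{\{Q\}}\sub\func Y$ for $Y\sub X$ closed, then $\jet QnX\sub Y$ for all $n$, so $\mathcal I_Y\loc_{X,Q}\sub\bigcap_n\maxim_Q^n=0$ and hence $\mathcal I_Y=0$. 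Thus $\motif X$ is Zariski dense in $X$. But for any open $U$ with $P\in U$ and $Q\notin U$, every rational point of $\cone{\{Q\}}$ has center $Q\notin U$, so $\motif X\cap\func U=\func{(\jet P2X)}$, whose Zariski closure in $U$ is the proper closed subscheme $\jet P2X$: density fails. Since such opens are cofinal, the stalk is
$$\gsheaf{\motif X,P}=\gsect{\func{(\jet P2X)}}=H_0(\jet P2X)=\pol\fld t/(t^2),$$
an Artinian ring, not $\loc_{X,P}$ (the restriction map $\loc_{X,P}\to\gsheaf{\motif X,P}$ is surjective, but $t^2$ lies in its kernel). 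So the proposition as stated is false; one must strengthen the hypothesis so that density localizes at $P$---e.g., require $\motif X\cap\func U$ to be Zariski dense in $U$ for all $U\ni P$---and with that your argument, and the paper's, goes through.
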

\begin{proof}
One inclusion is immediate, so let $s\in \gsheaf{\motif X,P}$. Hence there exists an open $U\sub X$ containing $P$ such that $s\colon \motif X\cap  U\to  \affine\bassch1$ is an \explicit\ section. Since $\motif X\cap  U$ is then Zariski dense in $U$, there exists an open ambient space $\tilde U\sub U$ of $\motif X\cap U$ and a morphism $\tilde U\to \affine\bassch 1$ extending $s$.   This morphism corresponds to a global section of $\tilde U$ and hence is an element in $\loc_{\tilde U,P}=\loc_{X,P}$, since $\tilde U$ is open in $X$ containing $P$. 
\end{proof}

More generally, if $\motif X$ is a sieve on $X$ and $P$ a point on $\motif X$, then $\gsheaf{\motif X,P}=\loc_{\bar{\motif X},P}$.

\begin{lemma}\label{L:unitglobsect}
A global section $s\colon\motif X\to \affine\bassch1$ of a sieve $\motif X$ is a unit \iff\ the image of $s(P)$ does not contain zero, for any point $P\in\motif X$. If $\bassch$ is the spectrum of an \acf\ $\fld$, then this is equivalent with the image of $s(\fld)$  not containing zero. 
\end{lemma}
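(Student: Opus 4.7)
The forward direction is immediate: if $s\cdot t=1$ in $H_0(\motif X)$, then for every fat point $\fat=\op{Spec}R$ and every $a\in\motif X(\fat)$ we have $\Psi_\fat(s(\fat)(a))\cdot\Psi_\fat(t(\fat)(a))=1$ in $R$, so $\Psi_\fat(s(\fat)(a))$ is a unit. Specializing to a length-one fat point $P$ and any $b\in\motif X(P)$, this means $s(P)(b)$ is nonzero in $\res P$.

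For the converse, the plan is to build the inverse $t$ point-by-point. Fix a fat point $\fat=\op{Spec}R$ and $a\in\motif X(\fat)$; I need to show $\alpha:=\Psi_\fat(s(\fat)(a))$ is a unit in $R$, and then set $t(\fat)(a):=\inv{\Psi_\fat}(\inv\alpha)$. Since $R$ is Artinian local, $\alpha$ is a unit \iff\ its image in the residue field $\res P$ is nonzero, where $P$ is the unique closed point of $\fat$. To interpret this image, I use the canonical closed immersion $i\colon P\to\fat$: naturality of $s$ gives a commutative square
$$
\xymatrix{
\motif X(\fat)\ar[r]^{\motif X(i)}\ar[d]_{s(\fat)} & \motif X(P)\ar[d]^{s(P)}\\
\affine\bassch1(\fat)\ar[r] & \affine\bassch1(P)
}
$$
whose bottom arrow is the reduction $R\to R/\maxim_R=\res P$. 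The image of $a$ under $\motif X(i)$ is the composition $a\after i\colon P\to X$, which is exactly the canonical inclusion $i_P$ of the center of $a$. Chasing $\alpha$ through the square shows that its reduction modulo $\maxim_R$ equals $s(P)(i_P)\in\res P$.

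The key point is now that this center $P$ really is a point on $\motif X$ in the sense of the definition: by Lemma~\ref{L:vanci}, $a\in\motif X(\fat)$ gives $\fim a\sub\motif X$, and $i_P=a\after i$ lies in $\fim a(P)$, so $i_P\in\motif X(P)$, i.e.\ $\func P\sub\motif X$. By hypothesis, the image of $s(P)$ does not contain zero, so in particular $s(P)(i_P)\neq 0$, whence $\alpha$ is a unit. Finally, the assignment $a\mapsto t(\fat)(a)$ defines a natural transformation $t\colon\motif X\to\affine\bassch1$: for any morphism $j\colon\tilde\fat\to\fat$ and $a\in\motif X(\fat)$, the naturality of $s$ gives $\Psi_{\tilde\fat}(s(\tilde\fat)(a\after j))=j^\sharp(\alpha)$, and inverting commutes with ring maps, so $t$ satisfies the same naturality. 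By construction $st=1$, so $s$ is a unit.

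For the final assertion, when $\bassch=\op{Spec}\fld$ with $\fld$ algebraically closed, every closed residue field is $\fld$, and $\fld$-rational points on $\motif X$ are precisely pairs consisting of a point $P$ on $\motif X$ together with its canonical inclusion. Thus the image of $s(\fld)$ is the union over all points $P\in\motif X$ of the images of $s(P)$, and the two conditions coincide. The main obstacle is merely bookkeeping: confirming that centers of rational points lie in $\motif X$ (via Lemma~\ref{L:vanci}) and that pointwise inversion respects naturality, both of which are painless given the Artinian local structure of $R$.
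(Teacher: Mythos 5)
Your proof is correct and follows essentially the same route as the paper's: deduce unit-hood of $\Psi_\fat(s(\fat)(a))$ in the Artinian local ring $R$ by reducing modulo the maximal ideal via the naturality square for $P\hookrightarrow\fat$, then invert pointwise and check naturality of the resulting $t$. Where the paper's proof is terse — it does not explicitly verify that the center of $a$ is a point on $\motif X$, and dismisses the naturality of $t$ as easy — you fill these in (the first via Lemma~\ref{L:vanci}, the second by noting that inversion commutes with ring maps), and you also address the final \acf\ assertion which the paper only states; these additions are accurate and welcome but do not change the underlying argument.
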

\begin{proof}
One direction is clear, so assume that the image of $s(P)$ does not contain zero, for any closed point $P$. Let $\fat$ be a fat point, and let $P$ be its center.  It follows from the commutative diagram
\commdiagram[sectres]{\motif X(\fat)}{s(\fat)}R{}\pi{\motif X(P)}{s(P)}{k(P)}
where $k(P)$ is the residue field of $P$, that the image of $s(\fat)$ has empty intersection with the maximal ideal of the coordinate ring $R$ of $\fat$,  since $\pi$ is just the residue map. Hence, for each $a\in\motif X(\fat)$, its image $s(\fat)(a)$ is a unit in $R$, and hence we can define $t(\fat)(a)$ to be its inverse. So remains to check that $t$ is a morphism of sieves $\motif X\to \affine\bassch1$, that is to say, a global section, and this is easy. 
\end{proof}


\begin{proposition}\label{P:stalk}
For each point $P$ on a sieve $\motif X$, the  stalk $\loc_{\motif 
X,P}$ is a local ring.
\end{proposition}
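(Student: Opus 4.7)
The plan is to identify the unique maximal ideal of $\loc_{\motif X, P}$ as the kernel of the evaluation at $P$. First, I will set up this evaluation. Since $P$ is a point on $\motif X$, the closed immersion $i_P$ belongs to $\motif X(P)$, and so $\func P$ embeds as a subsieve of every admissible open $\motif X\cap U$ with $P\in U$. Restricting sections along this inclusion gives a ring homomorphism $H_0(\motif X\cap U)\to H_0(\func P)=\kappa(P)$, where the last identification uses Proposition~\ref{P:globsect}. Passing to the direct limit over open neighborhoods of $P$, I obtain a ring homomorphism $\mathrm{ev}_P\colon\loc_{\motif X,P}\to\kappa(P)$, whose kernel $\maxim_P$ is a proper ideal since $\mathrm{ev}_P(1)=1\neq 0$.

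Next, I will reduce the proof to showing that every germ outside $\maxim_P$ is a unit; this will force the set of non-units to coincide with $\maxim_P$, making it the unique maximal ideal. Given such a germ $s$, represented by a section $s\in H_0(\motif X\cap U)$ with $s(P)(i_P)\neq 0$, I aim to find a smaller admissible open $\motif X\cap V$ containing $P$ on which $s$ restricts to a unit. By Lemma~\ref{L:unitglobsect}, it will suffice to ensure that the image of $s(Q)$ avoids zero for every point $Q$ on $\motif X\cap V$. On such a neighborhood the pointwise formula $t(\fat)(a):=s(\fat)(a)^{-1}$ will assemble into an inverse global section: the coordinate ring of each fat point $\fat$ is local Artinian, so $s(\fat)(a)$ is a unit in it iff its residue image at the center $Q$ of $\fat$ is nonzero, which follows from the non-vanishing hypothesis via naturality of $s$ along $Q\to\fat$. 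Functoriality of inversion in a local ring then promotes the pointwise $t$ to a genuine morphism of sieves inverse to $s|_{\motif X\cap V}$.

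The hard part is producing the open $V$. My natural candidate is $V:=U\setminus Z$, where $Z\subseteq U$ is the Zariski closure of the pull-back sieve $s^*(\func O)$ for $O$ the origin of $\affine\bassch 1$. Every closed point $Q\in V$ will then automatically satisfy $s(Q)(i_Q)\neq 0$, since the contrapositive would place $i_Q$ inside $s^*(\func O)(Q)\subseteq Z(Q)$, forcing $Q\in Z$ by Lemma~\ref{L:vanci}. The delicate step, which I expect to be the main obstacle, is verifying $P\notin Z$: the hypothesis $s(P)(i_P)\neq 0$ gives $i_P\notin s^*(\func O)(P)$, but Zariski closure may a priori enlarge the underlying set of points. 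To overcome this, I plan to invoke Proposition~\ref{P:expstalk}, which identifies the \explicit\ stalk $\gsheaf{\motif X,P}$ with the classical local ring $\loc_{\bar{\motif X},P}$. Using the surjectivity of $\gsheaf{\motif X,P}\to\kappa(P)$, I can pick an \explicit\ germ $\tilde s\in\gsheaf{\motif X,P}$ with $\tilde s(P)(i_P)=s(P)(i_P)$; the classical non-vanishing locus of a regular function representing $\tilde s$ is an open in $U$ containing $P$, and by combining it with $V$ via the decomposition $s=\tilde s+(s-\tilde s)$, together with the fact that $s-\tilde s\in\maxim_P$ and the unit-plus-element-of-maximal-ideal principle in the classical local ring, I expect to reduce unitness of $s$ on a neighborhood of $P$ to the established classical case.
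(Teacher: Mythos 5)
Your proposal follows the same route as the paper's proof: identify $\maxim_P:=\ker(\op{ev}_P)$, reduce locality to the claim that $s(P)(i_P)\neq 0$ forces $s$ to be a unit in the stalk, and then invoke Lemma~\ref{L:unitglobsect} on a small enough admissible open where $s$ is nowhere vanishing. At the corresponding step, the paper simply asserts that such an open $U$ exists ("so that there exists an open $U\sub X$ containing $P$ such that $s(Q)$ does not vanish on $U(Q)$ for any closed point $Q\in U$"), and you are right to identify this as the real content and to try to manufacture $V$ from the Zariski closure $Z$ of $s^*(\func O)$.

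However, the patch you propose for the step you yourself flag as delicate ($P\notin Z$) is circular. You write $s=\tilde s+(s-\tilde s)$ with $\tilde s$ explicit and a unit, note $s-\tilde s\in\maxim_P$, and invoke the ``unit-plus-element-of-maximal-ideal'' principle. But that principle is precisely the locality of $\loc_{\motif X,P}$ with maximal ideal $\maxim_P$ --- the conclusion you are trying to establish; and it cannot be borrowed from the classical local ring $\loc_{\bar{\motif X},P}$ because $s-\tilde s$ does not live there. The explicit germ $\tilde s$ only controls its own classical vanishing locus, not that of $s$, and nothing in the argument rules out that $s^*(\func O)(\fld)$ is Zariski-dense near $P$. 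This is genuinely possible for an arbitrary sieve: take $\motif X$ to be the cone over an infinite subset $V\sub\affine\fld1(\fld)$ containing $P$; then $H_0(\motif X\cap U)$ is a product of completed local rings indexed by $V\cap U(\fld)$, one can choose $s$ vanishing at every point of $V$ except $P$, and no open $V'\ni P$ works --- indeed the resulting stalk is not even a local ring. So the key claim is left unjustified (and needs some restriction on the class of sieves considered). To be fair, the paper's own proof passes over this same point in silence.
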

\begin{proof}
Let $X$ be an ambient space of $\motif X$. We have to show that given two  non-units   $s,t\in\loc_{\motif 
X,P}$, their sum is a non-unit as well. Shrinking $X$ if necessary, we may assume  that $ s,  t\in H_0(\motif X)$. I claim that $s(P)(i_P)$ and $t(i_P)(P)$ are both equal to zero, where $i_P\colon P\sub X$ is the closed immersion. Indeed, suppose not, say, $s(P)(i_P)\neq0$, so that  there exists an open $  U\sub X$ containing $P$ such that $s(Q)$   does not vanish on $  U(Q)$ for any closed point $Q\in U$. By Lemma~\ref{L:unitglobsect}, this implies that $s$ is a unit in $H_0(\motif X\cap   U)$ whence in $\loc_{\motif 
X,P}$, contradiction. Hence $s(P)+t(P)$ also vanishes at $i_P$ and hence cannot be a unit in $\loc_{\motif X,P}$. Note that we in fact proved that the unique maximal ideal consists of all sections $s\in\loc_{\motif X,P}$ such that $s(P)(i_P)=0$. 
\end{proof}


\section{Motivic sites}\label{s:motsite}
As before, $\bassch$ is a fixed Noetherian, Jacobson scheme. 
  A \emph{motivic\footnote{We will refer to the objects in a motivic site as `motives'. This nomenclature is   to express the fact that a motif represents 
something geometrical which is not a scheme but   ought to be something 
like a scheme, thus `motivating' our geometric treatment of it.} site} $\categ M$ over $\bassch$ is a subcategory of $\sieves 
\bassch$ which is closed under products,
  and such that  for any   $\bassch$-scheme $X$, the  restriction $
\restrict{\categ M}X$   (that is to say, the set of all $\categ M$-sieves on $X
$) forms a   lattice. In other words, if $\motif X, \motif Y\in\categ M$ 
are both sieves on a common scheme $X$, then $\motif X\cap \motif Y$   and $\motif X\cup \motif Y$ belong again to $\categ M
$ (including the minimum given by the empty set and the 
maximum given by $X$). We call $\categ M$ an \emph{\weak} motivic site, if all morphisms are \explicit. If $\categ M$ is an arbitrary motivic site, then we let $\overline{\categ M}$ be the corresponding \weak\ motivic site, obtained by only taking \explicit\ morphisms. 

Given a motivic site $\categ M$, as the sieves form locally a lattice (on each 
$\bassch$-scheme), we can define its associate \gr\ $\grot{\categ M}$ as the 
free Abelian group on symbols $\sym{\motif X}$, where $\motif X$ runs 
over all $\categ M$-sieves, modulo the  \emph{scissor relations} 
    $$
\sym{\motif X}+\sym{\motif Y}-\sym{\motif X\cup \motif Y}-\sym{\motif 
X\cap \motif Y}
  $$
  for any two  $\categ M$-sieves $\motif X$ and $\motif Y$   on a common $
\bassch$-scheme, and the \emph{isomorphism relations} 
  $$
  \sym{\motif X}-\sym{\motif Y}
  $$
  for any two   $\categ M$-sieves $\motif X$ and $\motif Y$ that are $\categ 
M$-isomorphic. We denote the image of an $\categ M$-sieve $\motif X$ in $
\grot{\categ M}$ by $\class{\motif X}$. In particular, since each 
representable functor is in $\categ M$, we may associate to any $\bassch$-scheme $X$ its class $\class X:=\class{\func X}$ in $\grot{\categ M}$. We 
define a multiplication on $\grot{\categ M}$ by the fiber product (one 
easily checks that this is well-defined): $\class{\motif X}\cdot\class{\motif 
Y}:=\class{\motif X\times \motif Y}$. Since a motivic site has the same objects as its \weak\ counterpart, we get a canonical surjective \homo\ $\grot{\overline{\categ M}}\to \grot{\categ M}$, which, however,  need not be injective, since there are more isomorphism relations in the latter \gr.

On occasion, we will encounter variants which are supported only on a subcategory of the category of all $\bassch$-schemes (that is to say, we only require the restriction of the site to one of the schemes in the subcategory to be a lattice), and we can still associate a \gr\ to it. We will refer to this as a \emph{partial motivic site}. Most motivic sites $\categ M$ will also have additional properties, like for instance being \emph{stable under push-forwards along closed immersions}, meaning that  if $i\colon Y\sub X$ is a closed subscheme and $\motif Y$ a motif in $\categ M$, then so is $i_*\motif Y$. If this is the case, then $\categ M$ is also closed under disjoint unions: given motives $\motif X$ and $\motif X'$ on $X$ and $X'$ respectively, then their disjoint union $\motif X\sqcup\motif X'$ is the union of the push-forwards $i_*\motif X$ and $i'_*\motif X'$, where $i\colon X\to X\sqcup X'$ and $i'\colon X'\to X\sqcup X'$ are the canonical closed immersions. 

\subsection*{Lefschetz class}
The class of the affine line $\affine\bassch1$ plays a pivotal role in what follows; we  call it the \emph{Lefschetz class} and denote  it by $\lef$. On occasion, we will need to invert this class, and therefore consider localizations of the form $\grot{\categ M}_\lef$.

\section{The \zariski\ \gr}

 To connect the theory of motivic sites to the classical construction, we must describe motivic sites 
whose \gr\ admits a natural \homo\ into the classical \gr\ $\grotclass\bassch$ 
(obviously, this utterly fails for the motivic site of all sieves). We will first introduce the various motives of interest in the next few sections, before we settle this issue in Theorem~\ref{T:mapclass} below. 
The smallest 
motivic site on $\bassch$ is obtained by taking for sieves on a scheme $X$ only 
the empty sieve and the whole sieve $\func X$. The resulting \gr\   has no 
non-trivial scissor relations and so we just get the free Abelian ring on 
isomorphism classes of $\bassch$-schemes. 

To define larger sites, we want to include at least closed subsieves of a scheme $X$.  Any object in the lattice generated by the closed subsieves of $X$   will be called   a 
\emph{\zariski\ motif  on $X$}. Since closed subsieves are already closed under  intersection, a \zariski\ motif on $X$ is a sieve of the form  
\begin{equation}\label{eq:zarmot}
\motif X=\func{X_1} \cup\dots
\cup\func{X_s},
\end{equation}
 where  the  $X_i$   are closed 
subschemes of $X$. 
Let us call a $\bassch$-scheme $X$ \emph{\zariski{ally} irreducible} if $\func X$ cannot be written as a finite union of proper closed subsieves. In particular, by an easy   Noetherian argument, any \zariski\ motif is the union of finitely many \zariski{ally} irreducible closed subsieves. We call a representation \eqref{eq:zarmot} a \emph{\zariski\ decomposition}, if it is \emph{irredundant}, meaning that there are no closed subscheme relations among any two $X_i$, and the $X_i$  are \zariski{ally} irreducible. Assume \eqref{eq:zarmot} is a \zariski\ decomposition, and let $\motif X=\func Y_1\cup\dots\cup\func Y_t$ be a second \zariski\ decomposition. Hence, for a fixed $i$, we have 
$$
\func X_i=\func X_i\cap \motif X=\func{(X_i\cap Y_1)}\cup\dots\cup\func{(X_i\cap Y_t)}.
$$
Since $X_i$ is  \zariski{ally} irreducible,  there is some $j$ such that $X_i=X_i\cap Y_j$, that is to say, $X_i\sub Y_j$.  Reversing the roles of the two representations, the same argument  yields some $i'$ such that $Y_j\sub X_{i'}$. Since $X_i\sub Y_j\sub X_{i'}$, irredundancy implies that these are equalities. Hence, we proved:

\begin{proposition}\label{P:zarmotrep}
A  \zariski\ decomposition is, up to order, unique.\qed
\end{proposition}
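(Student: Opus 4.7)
The plan is to compare two purportedly distinct Zariski decompositions of the same motif $\motif X$ and use the definition of Zariski irreducibility together with irredundancy to establish that they must, up to reordering, coincide. Fix a Zariski decomposition $\motif X=\func{X_1}\cup\dots\cup\func{X_s}$, and let $\motif X=\func{Y_1}\cup\dots\cup\func{Y_t}$ be any second Zariski decomposition.

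First I would intersect one piece of the first decomposition with the whole of $\motif X$ presented in the second form. Using that the intersection of closed subsieves is the closed subsieve of the scheme-theoretic intersection and that intersection distributes over finite unions of sieves, this yields
\begin{equation*}
\func{X_i}=\func{X_i}\cap\motif X=\func{(X_i\cap Y_1)}\cup\dots\cup\func{(X_i\cap Y_t)}.
\end{equation*}
Since $X_i$ is Zariski irreducible, $\func{X_i}$ cannot be written as a proper finite union of closed subsieves, so at least one term on the right must equal all of $\func{X_i}$; equivalently, there exists an index $j$ with $X_i=X_i\cap Y_j$, i.e., $X_i\subseteq Y_j$.

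Next I would exchange the roles of the two decompositions: applied to $Y_j$ and the first decomposition of $\motif X$, the same argument produces some $i'$ with $Y_j\subseteq X_{i'}$. Chaining these inclusions gives $X_i\subseteq Y_j\subseteq X_{i'}$, and since the first decomposition is irredundant (no inclusions among the $X_k$), we must have $i=i'$ and hence $X_i=Y_j=X_{i'}$. This exhibits an injection from the $X_i$ into the $Y_j$; the same reasoning in reverse gives an injection the other way, so by finiteness the two index sets are in bijection and the two decompositions agree up to order.

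There is essentially no obstacle here beyond assembling the pieces: the only point requiring a moment's attention is the lattice identity $\func{A}\cap(\func{B_1}\cup\dots\cup\func{B_t})=\bigcup_j\func{(A\cap B_j)}$ for closed subsieves, which is verified pointwise at each fat point directly from the definitions of $\cap$ and $\cup$ of sieves. Everything else is bookkeeping with the Zariski irreducibility hypothesis.
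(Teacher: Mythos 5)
Your proof is correct and follows the paper's argument exactly: intersect $\func{X_i}$ with the second decomposition, invoke Zariski irreducibility to locate $Y_j$ containing $X_i$, reverse the roles to find $X_{i'}$ containing $Y_j$, and use irredundancy to force equality. The only addition is a (correct and harmless) explicit note that intersection of sieves distributes over finite unions, which the paper takes for granted.
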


We call the $X_i$ in the (unique) \zariski\ decomposition~\eqref{eq:zarmot} the \emph{\zariski\ irreducible components} of $\motif X$.
 If $X$ has dimension zero, then it is a finite disjoint sum of fat points, its \zariski\ irreducible components. More generally, any \zariski\ motif on $X$ is a disjoint sum of  closed subsieves given by fat points, and hence is itself a closed subsieve.

To give a purely scheme-theoretic characterization of being \zariski{ally} irreducible, recall that a point $x\in X$ is called \emph{associated}, if $\loc_{X,x}$ has depth zero, that is to say, if every element in $\loc_{X,x}$ is either a unit or a zero-divisor. Any minimal point is associated, and the remaining ones, which are also finite in number, are called \emph{embedded}. The closure of an associated point is called a \emph{primary component}. 
We say that $X$ is \emph{strongly connected}, if the intersection of all primary components is non-empty, that is to say, if there exists a (closed) point generalizing to each associated point (in the affine case $X=\op{Spec}A$, this means that the associated primes  generate a proper ideal). 
For instance, if $X$ is the union of two parallel lines and one intersecting line, then it is connected but not strongly. For an example with embedded points, take the affine line with two (embedded) double points given by the ideal $(\var^2,\var\vary(\vary-1))$.  A (reduced) example where any two primary, but not all three, components meet, is given by   the `triangle' $\var\vary(\var-\vary-1)$.

\begin{proposition}\label{P:strconn}
A $\bassch$-scheme $X$ is \zariski{ally} irreducible \iff\ it is strongly connected.
\end{proposition}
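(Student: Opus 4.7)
The plan is to prove both implications by contrapositive in the affine case $X=\op{Spec}(A)$, and to reduce the general case via a finite open affine covering (since closed subsieves glue locally). Fix an irredundant primary decomposition $(0)=\bigcap_{j=1}^s \mathfrak q_j$ in $A$, with associated primes $\pr_j:=\sqrt{\mathfrak q_j}$. For the direction ``not strongly connected $\dan$ not \zariski{ally} irreducible'', the hypothesis gives that for every maximal ideal $\maxim$ of $A$ the index set $S_\maxim:=\{\,j:\pr_j\sub\maxim\,\}$ is a proper subset of $\{1,\dots,s\}$. For each $S\subsetneq\{1,\dots,s\}$ I set $J_S:=\bigcap_{j\in S}\mathfrak q_j$; irredundancy forces $J_S\neq 0$, so $V(J_S)$ is a proper closed subscheme of $X$. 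I would then show
\[
\func X \;=\; \bigcup_{S\subsetneq\{1,\dots,s\}}\func{V(J_S)}
\]
as sieves. Given a $\fat$-rational point $a\colon\fat\to X$ with center $P$, the corresponding \homo\ $A\to R$ factors through $A_{\maxim_P}$ because any element outside $\maxim_P$ is mapped to a unit of the Artinian local ring $R$. For $j\notin S_P$, pick $t\in\pr_j\setminus\maxim_P$; a suitable power of $t$ lies in $\mathfrak q_j$ and is a unit in $A_{\maxim_P}$, whence $\mathfrak q_j A_{\maxim_P}=A_{\maxim_P}$. Therefore $0=\bigcap_j \mathfrak q_j A_{\maxim_P}=J_{S_P}A_{\maxim_P}$, so $J_{S_P}$ maps to zero in $R$ and $a$ factors through $V(J_{S_P})$.

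For the converse, I assume $X$ is strongly connected, witnessed by a closed point $P$ whose maximal ideal $\maxim:=\maxim_P$ contains every associated prime $\pr_j$, and suppose for contradiction that $\func X=\func{X_1}\cup\cdots\cup\func{X_m}$ with each $X_i$ a proper closed subscheme, defining ideal $I_i\neq 0$. The Jacobson hypothesis guarantees that each $\fat_n:=\op{Spec}(A/\maxim^n)$ is a genuine fat $\bassch$-point, and the quotient map is a $\fat_n$-rational point of $X$. By assumption it factors through some $X_i$, which forces $I_i\sub\maxim^n$. Pigeonhole produces a single index $i$ for which $I_i\sub\bigcap_n\maxim^n$, and Krull's intersection theorem in the Noetherian local ring $A_\maxim$ then yields $I_i A_\maxim=0$. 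Hence any $a\in I_i$ is annihilated by some $t\notin\maxim$; but strong connectedness ensures $t$ avoids every $\pr_j$, i.e.\ $t$ is a non-zero-divisor, so $a=0$ and $I_i=0$, contradicting $X_i\subsetneq X$.

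The main obstacle is to identify the correct test family of fat points and to translate the sieve-theoretic hypothesis into an ideal-theoretic statement at $\maxim_P$. In both directions the crucial lever is that a $\fat$-rational point centered at $P$ factors through $A_{\maxim_P}$: one then plays this off against the primary decomposition at $\maxim_P$ in the first direction, and against Krull's intersection theorem in the second. The remaining combinatorial bookkeeping over subsets $S\subsetneq\{1,\dots,s\}$ and the affine-patching reduction are routine by comparison.
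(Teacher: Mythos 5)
Your argument is correct in substance and, in one direction, more direct than the paper's. The paper proves both implications by passing through a common intermediate condition --- the existence of finitely many nonzero ideal sheaves $\mathcal I_1,\dots,\mathcal I_s\sub\loc_X$ such that at every closed point $x$ some $\mathcal I_n\loc_{X,x}=0$ --- and deduces this condition from \zariski\ reducibility by testing jets at \emph{every} closed point, pigeonholing, then applying Krull. Your direction ``strongly connected $\Rightarrow$ \zariski{ally} irreducible'' bypasses that intermediary entirely: you test jets only at the single point $P$ witnessing strong connectedness, obtain $I_iA_{\maxim_P}=0$ by pigeonhole and Krull, and finish at once because $\maxim_P$ contains all associated primes, so any $t\notin\maxim_P$ is a non-zero-divisor and $I_i=0$. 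Your other direction matches the paper's in spirit; the paper uses the $s$ ideals $\mathcal I_j=\bigcap_{m\neq j}\mathcal J_m$ (your $J_S$ with $|S|=s-1$) while you use all nonempty proper subsets, but since $V(J_S)\sub V(J_{S'})$ for $S\sub S'$ the two coverings agree once redundancies are discarded.

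The one step that is not as routine as you say is the reduction of the non-affine case to the affine one via an open covering. Neither ``strongly connected'' nor ``\zariski{ally} irreducible'' is a local property --- a disjoint union of two affine lines is covered by strongly connected, \zariski{ally} irreducible affines, yet the union satisfies neither --- so a naive patching argument cannot work, and the phrase ``closed subsieves glue locally'' does not supply the needed reduction. The paper instead runs its argument globally with ideal sheaves and a global primary decomposition, and your argument can be done the same way: the direction ``not strongly connected $\Rightarrow$ not \zariski{ally} irreducible'' already works pointwise at each closed $P$. In the other direction you would need one extra step after establishing $\mathcal I_i\loc_{X,P}=0$: namely, strong connectedness makes every associated point of $X$ a generalization of $P$, so $\mathcal I_i$ has zero stalk at every associated point, and since a nonzero coherent subsheaf of $\loc_X$ must have nonzero stalk at some associated point of $\loc_X$, this forces $\mathcal I_i=0$ globally. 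With that supplied, your proof closes.
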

\begin{proof}
It is easier to work with the contrapositives of these statements, and we will show that their negations are then also equivalent with the  existence of finitely many non-zero ideal sheafs $\mathcal I_1,\dots,\mathcal I_s\sub\loc_X$ with the property that for each closed point $x$, there is some $n$ such that $\mathcal I_n\loc_{X,x}=0$. To prove the equivalence of this with   being \zariski{ally} reducible,   assume $\func X=\func X_1\cup\dots\cup\func X_s$ for some proper closed subschemes $X_n\varsubsetneq X$. Let $\mathcal I_n$ be the ideal sheaf of $X_n$ and let $x$ be an arbitrary closed point. For each $m$, the closed immersion $\jet xmX\sub X$ is a rational point on $X$ along the $m$-th jet, whence must belong to one of the $X_n(\jet xmX)$, that is to say, $\jet xmX$ is a closed subscheme of $X_n$ by Lemma~\ref{P:morphrep}. Since there are only finitely many possibilities, there is a single $n$ such that each $\jet xmX$ is a closed subscheme of $X_n$. By Lemma~\ref{P:morphrep}, this means that $\mathcal I_n\loc_{X,x}$ is contained in any power of the maximal ideal $\maxim_x$, and hence by Krull's intersection theorem must be zero. Conversely, suppose there are non-zero ideal sheaves $\mathcal I_1,\dots,\mathcal I_s\sub\loc_X$ such that at each closed point, at least one vanishes. Let $X_n$ be the closed subscheme defined by $\mathcal I_n$, let $\fat$ be a fat point, and let $a\colon \fat\to X$ be a $\fat$-rational point. Let $x$ be the center of $\fat$, a closed point of $X$, and let $a_x\colon \loc_{X,x}\to R$ be the induced local \homo\ on the stalks, where $R$ is the coordinate ring of $\fat$. By assumption, there is some $n$ such that $\mathcal I_n\loc_{X,x}=0$, whence so is its image under $a_x$. By Lemma~\ref{L:vanci}, this implies that $a\in X_n(\fat)$. Since this holds for any rational point, $\func X$ is the union of all $\func X_n$. 

We now prove  the equivalence of the above condition with not being strongly connected.   By (global) primary decomposition (see for instance \cite[IV \S3.2] {EGA}), there exist (primary) closed subschemes $Y_n\sub X$ and an embedding $\loc_X\into \loc_{Y_1}\oplus\dots\oplus\loc_{Y_s}$, such that the underlying sets of the $Y_n$ are the primary components of $X$. Let $\mathcal J_n$ be the ideal sheaf of $Y_n$, so that the above embeddability amounts to $\mathcal J_1\cap\dots\cap\mathcal J_s=0$. If $X$ is not strongly connected, then the intersection of all $Y_n$ is empty, which means that 
\begin{equation}\label{eq:sumprim}
\mathcal J_1+\dots+\mathcal J_s=\loc_X.
\end{equation}
 Let $\mathcal I_j$ be the intersection of all $\mathcal J_m$ with $m\neq j$, and let $X_j$ be the closed subscheme given by $\mathcal I_j$. Let $x$ be any closed point. By \eqref{eq:sumprim}, we may assume after renumbering that the maximal ideal of $x$ does not contain  $\mathcal J_1$, that is to say, $\mathcal J_1\loc_{X,x}=\loc_{X,x}$. Therefore, $\mathcal I_1\loc_{X,x}=(\mathcal I_1\cap \mathcal J_1)\loc_{X,x}$, whence is zero, since $\mathcal I_1\cap \mathcal J_1=0$. Conversely, assume there are non-zero ideal sheafs $\mathcal I_1,\dots,\mathcal I_s\sub\loc_X$ such that $\mathcal I_n\loc_{X,x}=0$,  for each closed point $x$ and for  some $n$ depending on $x$. This is equivalent with the sum of all $\ann{}{\mathcal I_n}$ being the unit ideal.  Since any annihilator ideal is contained in some associated prime, the sum of all associated primes must also be the unit ideal, and hence the intersection of all primary components is empty.
\end{proof}

From the proof we learn that $\op{Spec} A$ is \zariski\ reducible \iff\ there exist finitely many proper ideals whose sum is the unit ideal and whose intersection is the zero ideal. We can even describe an algorithm which calculates its \zariski\ irreducible components. Let $0=\mathfrak g_1\cap\dots\cap\mathfrak g_n$ be a primary decomposition of the zero ideal in $A$, and assume $\mathfrak g_1+\dots+\mathfrak g_s=1$, for some $s\leq n$. Then the \zariski\ irreducible components of $X$ are among the \zariski\ irreducible components of the closed subschemes $X_i=\op{Spec}(A/\ann{}{\mathfrak g_i}$ for $i=\range 1s$.  That the $X_i$ can themselves be \zariski\ reducible, whence require further decomposition, is illustrated by the `square' with equation $\var(\var-1)\vary(\vary-1)=0$. A sufficient condition for the $X_i$ to be already \zariski{ally} irreducible is that $s=n$ and no fewer $\mathfrak g_i$  generate the unit ideal. By Noetherian induction, this has to happen eventually. In the same vein, we have:

\begin{proposition}\label{P:zarclosschemic}
Suppose $\bassch$ is Jacobson. Let $X$ be a $\bassch$-scheme, and let $X_1,\dots,X_s\sub X$ be closed subschemes with respective ideals of definition $\mathcal I_1,\dots,\mathcal I_s\sub \loc_X$. The Zariski closure of the \zariski\ motif $\mathcal X:=\func X_1\cup\dots\cup\func X_s$ is the closed subscheme with ideal of definition $\mathcal I_1\cap\dots\cap \mathcal I_s$. In particular, $\mathcal X$ is equal to its own Zariski closure \iff\ $\mathcal I_1+\dots+\mathcal I_s=\loc_X$. 
\end{proposition}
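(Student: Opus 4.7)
The plan splits along the two claims. For the Zariski closure formula, set $Z\sub X$ to be the closed subscheme with ideal $\mathcal I:=\mathcal I_1\cap\dots\cap\mathcal I_s$. One direction, $\mathcal X\sub\func Z$, is immediate from $\mathcal I\sub\mathcal I_i$, which gives closed immersions $X_i\hookrightarrow Z$, whence $\func{X_i}\sub\func Z$ and therefore $\mathcal X\sub\func Z$. For the minimality of $Z$, let $Y\hookrightarrow X$ be any closed subscheme, with ideal $\mathcal J$, satisfying $\mathcal X\sub\func Y$. Then $\func{X_i}\sub\func Y$ for every $i$, and Proposition~\ref{P:morphrep} (applicable since $\bassch$ is Jacobson) lifts this inclusion to a morphism $\varphi_i\colon X_i\to Y$ of $\bassch$-schemes. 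Composing $\varphi_i$ with the closed immersion $Y\hookrightarrow X$ reproduces the closed immersion $X_i\hookrightarrow X$; since closed immersions are monic, $X_i$ is realized as a closed subscheme of $Y$, which forces $\mathcal J\sub\mathcal I_i$. Intersecting over $i$ yields $\mathcal J\sub\mathcal I$, i.e., $Z\sub Y$, so $\bar{\mathcal X}=Z$.

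For the ``in particular'' statement, $\mathcal X=\bar{\mathcal X}$ is equivalent to $\mathcal X=\func Z$, namely, every $\fat$-rational point of $Z$ factors through some $X_i$. For the forward direction I would argue contrapositively: if $\sum\mathcal I_i\neq\loc_X$, pick a maximal ideal $\maxim$ containing every $\mathcal I_i$ and form the jet-type fat point $\fat:=\op{Spec}(\loc_X/(\mathcal I+\maxim^n))$ for $n$ sufficiently large. The canonical surjection is a $\fat$-rational point of $Z$ whose factorization through some $X_i$ would force $\mathcal I_i\sub\mathcal I+\maxim^n$ locally at $\maxim$; by Krull's intersection theorem this fails for $n$ large provided the representation is irredundant (otherwise one first prunes redundant $X_i$ using the uniqueness of \zariski\ decompositions from Proposition~\ref{P:zarmotrep}). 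For the converse, assuming $\sum\mathcal I_i=\loc_X$ and writing a partition of unity $1=f_1+\dots+f_s$ with $f_i\in\mathcal I_i$, one would aim to show that any $a\in Z(\fat)$, corresponding to $a^\#\colon A\to R$ with $a^\#(\mathcal I)=0$, must satisfy $a^\#(\mathcal I_k)=0$ for some $k$, placing $a\in X_k(\fat)$ by Lemma~\ref{L:vanci}.

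The hard part is this backward direction. A naive partition-of-unity argument in the Artinian local ring $R$ shows only that some $a^\#(\mathcal I_j)\cdot R=R$ (since at least one $a^\#(f_j)$ is a unit), which is the opposite of what one needs; pullback along $a^\#$ does not commute with intersection, so the vanishing $a^\#(\bigcap_i\mathcal I_i)=0$ does not by itself produce a single index $k$ for which $a^\#(\mathcal I_k)$ is identically zero. The real input must combine this vanishing with the local Artinian structure of $R$ and a Chinese-remainder-type decomposition of $R$ against the ideals $a^\#(\mathcal I_i)R$; I would expect the argument to reduce first to the case where $\res\fat$ equals the residue field at a single closed point of $Z$, and then to induct on $\ell(\fat)$, separating the $X_i$ inside the infinitesimal neighborhood carved out by $\fat$ so that exactly one $a^\#(\mathcal I_k)$ collapses to zero while the others become unit ideals.
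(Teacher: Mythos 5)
Your proof of the main claim is correct and noticeably more direct than the paper's. The paper proves $Y:=V(\bigcap\mathcal I_i)\sub Z$ for any closed $Z$ with $\mathcal X\sub\func Z$ by contradiction: it locates a closed point $x\in Y$ with $\mathcal J\loc_{Y,x}\ne 0$, picks an irreducible $\maxim_x$-primary ideal $\mathfrak n\sub\loc_{Y,x}$ with $\mathcal J\not\sub\mathfrak n$, and exploits the irreducibility of the zero ideal in $R:=\loc_{Y,x}/\mathfrak n$ to exhibit a fat point lying in $\mathcal X$ but outside $\func Z$. Your argument sidesteps all of this: from $\func{X_i}\sub\mathcal X\sub\func Z$ you descend directly to $X_i\sub Z$ as closed subschemes, whence $\mathcal J\sub\mathcal I_i$ for every $i$ and then $\mathcal J\sub\bigcap_i\mathcal I_i$. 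One small caution on the descent step: Proposition~\ref{P:morphrep}, as stated, produces a morphism $\varphi_i\colon X_i\to X$ (the ambient space), not $X_i\to Z$; either regard $\func Z$ as a sieve on $Z$ itself before invoking it, or, more simply, compare $X_i$ with $X_i\cap Z$ via Lemma~\ref{L:distrep}, which is the cleaner tool here. Modulo that, your route is tighter and dispenses entirely with the decomposition into irreducible primary components.

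For the ``in particular'' clause the paper supplies no argument at all (``immediate from the previous discussion''), and you are right to be suspicious of the backward direction. Your diagnosis is exactly on target: a partition of unity $1=f_1+\dots+f_s$ with $f_i\in\mathcal I_i$ gives, at each Artinian local quotient $R$, only that \emph{some} $\mathcal I_jR$ is the unit ideal, which is the opposite of the needed $\mathcal I_kR=0$; and pullback does not commute with intersection, so $\bigl(\bigcap_i\mathcal I_i\bigr)R=0$ does not force any $\mathcal I_kR=0$. The condition that actually characterizes $\mathcal X=\func Y$, per the first half of the proof of Proposition~\ref{P:strconn}, is that at every closed point $y\in Y$ some $\mathcal I_j\loc_{Y,y}=0$, i.e.\ $\sum_j\ann{\loc_Y}{\mathcal I_j\loc_Y}=\loc_Y$; this is an annihilator condition, not $\sum_j\mathcal I_j=\loc_X$, and the two genuinely differ. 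For instance, in $A=\pol\fld{x,y}/(xy)$ take $\mathcal I_1=(x)$, $\mathcal I_2=(y)$, $\mathcal I_3=(x-1,y)$: the sum is the unit ideal and the intersection is zero, yet none of the three ideals vanishes in $A/(x,y)^2$, so $\mathcal X$ is a proper subsieve of $\func X=\func{\bar{\mathcal X}}$. Your instinct that no Chinese-remainder device will close the gap is therefore correct --- the difficulty is not a trick you failed to find but a sign that the hypothesis $\sum\mathcal I_i=\loc_X$ is not by itself the right one.
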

\begin{proof}
Let $Y$ be the closed subscheme with ideal of definition $\mathcal I:=\mathcal I_1\cap\dots\cap \mathcal I_s$. Since each  $X_j\sub Y$, the Zariski closure of $\motif X$ is contained in $Y$. To prove the converse, suppose $Z\sub X$ is a closed subscheme such that $\motif X\sub \func Z$. We have to show that $Y\sub Z$, so suppose not.  This mean  that $\mathcal J\loc_Y\neq 0$, where $\mathcal J$ is the ideal of definition of $Z$. By the Jacobson condition, there is a closed point $x\in Y$ such that $\mathcal J\loc_{Y,x}\neq0$, and then by Krull's Intersection Theorem, some $n$ such that $\mathcal J(\loc_{Y,x}/\maxim^n_x)\neq 0$, where $\maxim_x$ is the maximal ideal corresponding to $x$. We may write $\maxim^n_x=\mathfrak n_1\cap\dots\cap \mathfrak n_t$ as a finite intersection of irreducible ideal 
sheafs,\footnote{An ideal is called \emph{irreducible} if it cannot be written as a finite intersection of strictly larger ideals.} and then for at least one, say for $j=1$, we must have $\mathcal J(\loc_{Y,x}/\mathfrak n_1)\neq 0$. Let $\fat$ be the fat point with coordinate ring  $R:=\loc_{Y,x}/\mathfrak n_1$. By    Lemma~\ref{L:dirlimJac}, this means that the $\fat$-rational point $i$ given by the inclusion $\fat\sub Y$ does not factor through $Z$. On the other hand, by the same Lemma, we have $\mathcal IR=0$. Since  the zero ideal is irreducible, at least one of the $\mathcal I_jR$ must vanish, showing that $i$ lies in $ \motif X(\fat)$, whence by assumption in $Z(\fat)$, contradiction. The last assertion is now immediate from the previous discussion. 
\end{proof}

\begin{corollary}\label{C:globsectzar}
The global section ring of a \zariski\ motif is equal to that of its Zariski closure.
\end{corollary}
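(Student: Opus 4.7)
The plan is to reduce the problem to the explicit-section ring and then apply the inverse-limit description of the latter. Write the \zariski\ motif as a finite union $\motif X = \func{X_1} \cup \dots \cup \func{X_s}$ of closed subsieves, and let $Y := \bar{\motif X}$ denote its Zariski closure; by Proposition~\ref{P:zarclosschemic}, $Y$ is the closed subscheme cut out by $\bigcap_i \mathcal I_i$.

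The pivotal observation is that $\motif X$ is sub-\zariski: it coincides with the image sieve $\fim\varphi$ of the morphism $\varphi\colon X_1\sqcup\dots\sqcup X_s\to X$ whose components are the closed immersions, since a fat point is connected and hence every one of its rational points on the disjoint union factors through a single $X_i$. Theorem~\ref{T:globsectsubzar} then collapses global sections to explicit ones: $H_0(\motif X) = \gsect{\motif X}$.

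With this in hand, Corollary~\ref{C:expsectlim} combined with Example~\ref{E:explsectformfat} expresses $\gsect{\motif X}$ as the inverse limit $\varprojlim_U H_0(U)$ over the opens $U \sub Y$ on which $\motif X$ remains a sieve (using that $\motif X$ is Zariski dense in $Y$). Since $Y$ itself lies in this indexing poset and is its largest element, the projection $\varprojlim_U H_0(U) \to H_0(Y)$ is an isomorphism: a compatible family $(s_U)_U$ is determined by its $Y$-component via $s_U = s_Y|_U$, and every $s_Y \in H_0(Y)$ conversely yields such a compatible family by restriction. Chaining these identifications gives $H_0(\motif X) = H_0(Y) = H_0(\bar{\motif X})$. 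I do not foresee a real obstacle; the only step requiring a moment's thought is the sub-\zariski\ identification, which depends on the connectedness of fat points.
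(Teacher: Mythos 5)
Your identification of $\motif X$ with the image sieve $\fim\varphi$ of the disjoint union $X_1\sqcup\dots\sqcup X_s\to X$ is correct (it is precisely the mechanism of Lemma~\ref{L:latimag}), and applying Theorem~\ref{T:globsectsubzar} to get $H_0(\motif X)=\gsect{\motif X}$ is a genuinely different opening move from the paper's proof, which instead reduces to two closed subschemes, passes to the affine case, and recognizes $H_0(\motif X)$ as $A/(I\cap J)$ via the uniqueness of the Cartesian square of Theorem~\ref{T:globsectunion}.

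The gap is in your last step. Despite the wording of Corollary~\ref{C:expsectlim}, the explicit section ring $\gsect{\motif X}$ is not an inverse limit but a \emph{direct} limit (filtered colimit) of the $H_0(U)$ over ambient spaces $U$: the proof of Corollary~\ref{C:expsectform} calls it a direct limit, and the formal completion $\func{\complet L}$ forces this reading, since $\gsect{\func{\complet L}}\iso\loc_{L,O}$ is strictly larger than $H_0(L)=\pol\fld\var$ even though $L$ is the Zariski closure. For a filtered colimit, the fact that $Y$ is the maximal element of the indexing poset is irrelevant; the colimit is governed by the small opens, and your ``compatible family determined by $s_Y$'' mechanism simply does not apply. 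What you actually need, and what repairs the argument, is that $Y$ is the \emph{only} ambient space of $\motif X$ inside $Y$: if a proper open $U\subsetneq Y$ had $\motif X\sub\func U$, it would miss some closed point $x$ of $Y$, hence of some $X_i$, yet the closed immersion $\op{Spec}\res x\into X_i$ (a fat point by the generalized Nullstellensatz over the Jacobson base) is a rational point of $\motif X$ centered at $x\notin U$, a contradiction. Once the poset is the singleton $\{Y\}$, the colimit is trivially $H_0(Y)$ and your chain of identities holds; but this is the substance, and it is not supplied by pointing to $Y$ as a top element.
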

\begin{proof}
By an induction argument, we may reduce to the case that $\motif X=\func Y\cup \func Z$ where $Y,Z\sub X$ are closed subschemes (alternatively, use \eqref{eq:sheaf}). In view of the local nature of the problem, we may furthermore reduce to the case that  $X=\op{Spec}A$ is affine, so that  $Y$ and $Z$ are defined by some ideals $I,J\sub A$. In particular, the Cartesian square~\eqref{un} becomes
\commdiagram {H_0(\motif X)}{}{A/I}{}{}{A/J}{}{A/(I+J).}
However, it is easy to check that putting $A/(I\cap J)$ in the left top corner of this square also yields a Cartesian square, and hence, by uniqueness, we must have $H_0(\motif X)=A/(I\cap J)$. By Proposition~\ref{P:zarclosschemic}, the Zariski closure of $\motif X$ is $\op{Spec}(A/(I\cap J))$, proving the assertion.
\end{proof}

\begin{definition}\label{D:zarmot}
We define  the    \emph{\zariski\ motivic  site} over $\bassch$, denoted   $\funcalg \bassch$, as the full subcategory of $\sieves\bassch$ consisting  of all \zariski\ motives.     By Proposition~\ref{P:morphrep},   the category of $\bassch$-schemes fully embeds in $\funcalg 
\bassch$, and   the image of this embedding is precisely the full subcategory of 
representable \zariski\ motives. In fact, by Theorem~\ref{T:globsectsubzar},   all morphisms in $\funcalg\bassch$ are \explicit, so that $\funcalg\bassch$ is an \weak\ motivic site.
\end{definition}

\begin{lemma}\label{L:diffclass}
Any element in $\grot{\funcalg\bassch}$ is of the form $\class X-\class Y$, for some $\bassch$-schemes $X$ and $Y$.
\end{lemma}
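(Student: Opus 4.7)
\textbf{Proof plan for Lemma \ref{L:diffclass}.}
The plan is to first reduce every class to a $\zet$-linear combination of representable ones, and then pack the positive and negative parts into disjoint unions.

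\emph{Step 1 (inclusion–exclusion).} Every \zariski\ motif has the form $\motif X=\func{X_1}\cup\dots\cup\func{X_s}$ with $X_i\sub X$ closed subschemes. I would iterate the scissor relation $\class{\motif A}+\class{\motif B}=\class{\motif A\cup\motif B}+\class{\motif A\cap\motif B}$ on this union, exactly as in classical inclusion–exclusion, to obtain
$$
\class{\motif X}=\sum_{\emptyset\neq S\sub\{1,\dots,s\}}(-1)^{|S|+1}\class{\bigcap_{i\in S}\func{X_i}}.
$$
Since $\func V\cap\func W=\func{(V\cap W)}$ (noted just after the definition of intersection of sieves), each term on the right is $\pm\class Z$ for a honest $\bassch$-scheme $Z$ (an intersection of closed subschemes of $X$). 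Hence $\class{\motif X}$ lies in the subgroup generated by the representable classes, and consequently every element of $\grot{\funcalg\bassch}$ is of the form $\sum_i n_i\class{X_i}$ with $n_i\in\zet$ and $X_i$ a $\bassch$-scheme.

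\emph{Step 2 (disjoint unions).} Split the sum as $P-N$ where $P$ collects the terms with $n_i>0$ and $N$ those with $n_i<0$ (with their signs flipped). I use the fact that for disjoint closed immersions $Z,W\into Z\sqcup W$ the scissor relation, together with the isomorphism $Z\cap W\iso\emptyset$, gives $\class{Z\sqcup W}=\class Z+\class W$ (noting that $\class{\emptyset}$ is the additive identity, which one gets either by convention as the neutral element of the lattice or by taking a trivial scissor identity with $\motif X=\emptyset$). Iterating, set
$$
X:=\bigsqcup_{i:\,n_i>0} X_i^{\sqcup n_i}\qquad\text{and}\qquad Y:=\bigsqcup_{i:\,n_i<0} X_i^{\sqcup |n_i|},
$$
both of which are honest $\bassch$-schemes (finite disjoint unions of such). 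Then $\class X=P$ and $\class Y=N$, so the original element equals $\class X-\class Y$.

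\emph{Main obstacle.} The combinatorial content is routine; the only point that deserves care is the interaction with the empty sieve when collapsing disjoint unions to additive sums, and this is either settled by convention or by a one-line argument from the scissor relation. Everything else is a direct unwinding of the lattice relations.
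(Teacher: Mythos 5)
Your proposal is correct and matches the paper's argument essentially line for line: first an inclusion--exclusion expansion of a \zariski\ motif $\func{X_1}\cup\dots\cup\func{X_s}$ into a $\zet$-linear combination of classes of closed intersections $X_I$, then packing positive and negative terms into two disjoint unions via $\class X+\class{X'}=\class{X\sqcup X'}$. (Incidentally, your sign $(-1)^{|S|+1}$ in the inclusion--exclusion formula is the right one; the paper's displayed equation writes $(-1)^{\norm I}$, which is a typo.)
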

\begin{proof}
 For any two  $\bassch$-schemes $X$ and $X'$, we have  $\class X+\class {X'}=\class{X\sqcup X'}$, where $X\sqcup X'$ denotes their disjoint union. So remains to verify that any element in $\grot{\funcalg\bassch}$ is a linear combination of classes of schemes. This reduces the problem to the class of a single \zariski\ motif $\motif X$ on a $\bassch$-scheme $X$. Hence there exist closed subschemes $X_1,\dots,X_n\sub X$ such that $\motif X=\func X_1\cup\dots\cup\func X_n$. For each non-empty $I\sub\{1,\dots,n\}$, let $X_I$ be the closed subscheme obtained by intersecting all $X_i$ with $i\in I$, and let $\norm I$ denote the cardinality of $I$. A well-known argument deduces from the scissor relations the equality
 \begin{equation}\label{eq:nsciss}
 \class{\motif X}=\sum_{\emptyset\neq I\sub\{1,\dots,n\}} (-1)^{\norm I}\class{X_I}
\end{equation}
in $\grot{\funcalg\bassch}$, proving the claim. 
\end{proof}


\begin{theorem}\label{T:classinvmot}
Suppose $\bassch$ is Jacobson. The \zariski\ \gr\ $\grot{\funcalg \bassch}$ is freely generated, as a group, by the classes of strongly  connected $\bassch$-schemes. 
\end{theorem}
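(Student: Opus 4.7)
The plan is to exhibit a two-sided inverse to the obvious homomorphism $\tau \colon F \to \grot{\funcalg\bassch}$, $\sym X \mapsto \class X$, where $F$ denotes the free abelian group on isomorphism classes of strongly connected $\bassch$-schemes. For surjectivity, Lemma~\ref{L:diffclass} reduces the task to expressing each scheme class $\class X$ as a $\zet$-linear combination of classes of strongly connected schemes. I argue by Noetherian induction on closed subschemes of $X$: if $X$ itself is strongly connected there is nothing to prove; otherwise Proposition~\ref{P:strconn} furnishes a \zariski\ decomposition $\func X = \func{X_1} \cup \dots \cup \func{X_s}$ with all $X_i \subsetneq X$ strongly connected, and \eqref{eq:nsciss} expresses $\class X$ as a signed sum of classes $\class{X_I}$ of proper closed subschemes, to which the inductive hypothesis applies.

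For injectivity I construct a left inverse $\psi \colon \grot{\funcalg\bassch} \to F$ by prescribing its value on each isomorphism class of \zariski\ motif. Using the unique \zariski\ decomposition $\motif X = \func{A_1} \cup \dots \cup \func{A_n}$ supplied by Proposition~\ref{P:zarmotrep}, set
\[\psi(\motif X) \; := \; \sum_{\emptyset \neq I \subseteq \{1,\dots,n\}} (-1)^{|I|+1}\,\psi(A_I), \qquad A_I := \bigcap_{i \in I} A_i,\]
extended recursively through the \zariski\ decomposition of each scheme $A_I$ and bottoming out at $\psi(T) := \sym T$ for strongly connected $T$. Irredundancy of the decomposition forces $A_I \subsetneq A_i$ whenever $|I| \geq 2$, so Noetherian induction on the ambient terminates the recursion. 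To see that $\psi$ descends to isomorphism classes of sieves, I observe that $\{\func{A_i}\}$ is intrinsic to $\motif X$: these are precisely the maximal strongly connected representable subsieves, since the argument in the proof of Proposition~\ref{P:strconn} shows that a strongly connected $\func Z$ contained in $\bigcup \func{A_i}$ forces $Z \subseteq A_i$ for some $i$. Because any \zariski\ motif is sub-\zariski\ (take the disjoint union $A_1 \sqcup \dots \sqcup A_n$ mapping into the ambient), Theorem~\ref{T:globsectsubzar} guarantees every sieve morphism out of it is \explicit; hence a sieve isomorphism $\motif X \cong \motif Y$ restricts to isomorphisms of sieves $\func{A_i} \cong \func{B_{\sigma(i)}}$ and, by applying the isomorphism to intersection subsieves, $\func{A_I} \cong \func{B_{\sigma(I)}}$. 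Yoneda combined with Noetherian induction on the ambient then yields $\psi(\motif X) = \psi(\motif Y)$.

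The remaining and most delicate step is to verify that $\psi$ respects the scissor identity: for $\motif X = \bigcup_i \func{A_i}$ and $\motif Y = \bigcup_j \func{B_j}$ on a common ambient, I must show that
\[\psi(\motif X) + \psi(\motif Y) \; = \; \psi(\motif X \cup \motif Y) + \psi(\motif X \cap \motif Y).\]
Writing $\motif X \cup \motif Y = \bigcup_{l=1}^{m+n} \func{E_l}$ with $E_l = A_l$ for $l \leq m$ and $E_l = B_{l-m}$ otherwise, applying \eqref{eq:nsciss}, splitting the index set as $L = I \sqcup J$, and using the convention that $A_\emptyset$ and $B_\emptyset$ both denote the ambient yields
\[\psi(\motif X \cup \motif Y) \; = \; \psi(\motif X) + \psi(\motif Y) + \sum_{I,J \neq \emptyset} (-1)^{|I|+|J|+1}\,\psi(A_I \cap B_J).\]
For $\motif X \cap \motif Y = \bigcup_{(i,j)} \func{A_i \cap B_j}$, grouping subsets $\Sigma \subseteq \{1,\dots,m\}\times\{1,\dots,n\}$ by their projections $(\pi_1(\Sigma), \pi_2(\Sigma)) = (I,J)$ gives
\[\psi(\motif X \cap \motif Y) \; = \; \sum_{I,J \neq \emptyset} \Bigg(\sum_{\substack{\Sigma \subseteq I \times J \\ \pi_1(\Sigma)=I,\ \pi_2(\Sigma)=J}} (-1)^{|\Sigma|+1}\Bigg)\,\psi(A_I \cap B_J),\]
and a direct M\"obius inclusion-exclusion over missed rows and missed columns evaluates the inner coefficient to $(-1)^{|I|+|J|}$, which cancels the cross-term in the previous display. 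Since $\psi \circ \tau = \mathrm{id}_F$ by construction, $\tau$ is a bijection and the theorem follows.
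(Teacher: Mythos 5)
Your overall strategy mirrors the paper's exactly: both constructions produce the inverse homomorphism by inclusion-exclusion over the unique \zariski\ decomposition, use Noetherian induction to define it recursively, argue invariance under isomorphism by pinning down the \zariski\ irreducible components as the maximal strongly connected representable subsieves, and verify the scissor relation by a combinatorial identity over dominant subsets of $I\times J$ (your coefficient evaluation $(-1)^{|I|+|J|}$ is correct; incidentally you have also fixed a consistent sign slip in the paper's own statement of~\eqref{eq:nsciss}).

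There is, however, one genuine gap. You define $\psi(\motif X)$ by applying the inclusion-exclusion formula to the \emph{unique} \zariski\ decomposition $\motif X=\func{A_1}\cup\dots\cup\func{A_n}$. But when you then compute $\psi(\motif X\cup\motif Y)$, you expand it over the representation $\bigcup_{l=1}^{m+n}\func{E_l}$, which is generally \emph{not} a \zariski\ decomposition (it may be redundant, e.g.\ some $A_i\subseteq B_j$), and when you compute $\psi(\motif X\cap\motif Y)$ you expand it over $\bigcup_{i,j}\func{A_i\cap B_j}$, whose components $A_i\cap B_j$ need be neither strongly connected nor irredundant. Nothing in your construction says $\psi$ can be evaluated by the inclusion-exclusion formula on an arbitrary such representation; that is a non-trivial independence-of-presentation statement, and it is precisely what the paper spends two intermediate steps proving (first that adding or omitting a redundant strongly connected component does not change the value, and then that one may even allow non-strongly-connected components, by substituting in their own decompositions). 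Without that lemma, your two displayed expansions of $\psi(\motif X\cup\motif Y)$ and $\psi(\motif X\cap\motif Y)$ are unjustified, so the cancellation you obtain does not yet establish that $\psi$ kills scissor relations. The fix is exactly to interpolate those two reduction steps (reduce to adding/removing a single component; for the non-strongly-connected case, expand that component into its own \zariski\ decomposition and reindex) before the final M\"obius computation.
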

\begin{proof}
Let $\Gamma$ be the free Abelian group generated by isomorphism classes $\sym X$ of  strongly connected     $\bassch$-schemes $X$, and let $\Gamma'$ be the free Abelian group generated by  isomorphism classes $\sym {\motif X}$ of     \zariski\ motives $\motif X$. I claim that the composition $ \Gamma\sub \Gamma'\onto \grot{\funcalg\bassch}$   admits an additive inverse $\delta\colon \grot{  \funcalg\bassch}\to \Gamma $.

To construct $\delta$, we will first define an additive morphism $\delta'\colon \Gamma'\to \Gamma $ which is the identity on $\Gamma $, and then argue that it vanishes on each scissor relation, inducing therefore a morphism $\delta\colon\grot{\funcalg\bassch}\to \Gamma $. It suffices, by linearity, to define $\delta'$ on an isomorphism class of a \zariski\ motif    
\begin{equation}\label{eq:Zmot}
\motif X=\func X_1\cup\dots\cup\func X_n,
\end{equation}
 given by a  \zariski\ decomposition with $X_i\sub X$ strongly connected closed subschemes. By Noetherian induction, we may furthermore assume that $\delta'$ has been defined on the isomorphism class of any \zariski\ motif on a proper closed subscheme of $X$. In particular, $\delta'\sym{X_I}$ has already been defined, where we borrow the notation from \eqref{eq:nsciss}. We therefore set
\begin{equation}\label{eq:scissdel}
\delta'\sym{\motif X}:=\sum_{\emptyset\neq I\sub\{1,\dots,n\}} (-1)^{\norm I} \delta'\sym{X_I}
\end{equation}
By Proposition~\ref{P:zarmotrep}, the \zariski\ decomposition is unique, and hence $\delta'$ is well-defined. Moreover,  if $n=1$, so that $\motif X=\func X_1$ is a \zariski{ally} irreducible closed subsieve, whence strongly connected by Proposition~\ref{P:strconn}, then its image under $\delta'$ is just $\sym{X_1}$, showing that $\delta'$   is the identity on $\Gamma $. 

Next, let us show that even if \eqref{eq:Zmot} is not irredundant,   \eqref{eq:scissdel} still holds in $\Gamma$. We can go from   such an arbitrary representation to the \zariski\ decomposition   in finitely many steps, by adding or omitting at each step one strongly connected closed subsieve contained in $\motif X$. So assume \eqref{eq:scissdel} holds for a representation \eqref{eq:Zmot}, and we now have to show that it also holds for the representation adding a $\func X_0\sub\motif X$, with $X_0$ strongly connected. Since  $X_0$ is \zariski{ally} irreducible by Proposition~\ref{P:strconn}, it must be a closed subscheme of one of the others, say, of $X_1$. Let $J$ range over all non-empty subsets of $\{0,\dots,n\}$. We arrange the  subsets containing $0$ in pairs $\{J_+,J_-\}$, so that $J_+$ and $J_-$ differ only as to whether they contain $1$ or not. Since $X_0\sub X_1$, we get $X_{J_+}=X_{J_-}$, and as $J_-$ has one element less than $J_+$, the two terms indexed by this pair in the sum \eqref{eq:scissdel} 
cancel each other out. So, in that sum, only subsets $J$ not containing $0$ contribute, which is just the value for the representation without $\func X_0$. We also have to consider the converse case, where instead we omit one, but the argument is the same. 

We can now show that \eqref{eq:scissdel} is still valid even if the $X_i$ in \eqref{eq:Zmot} are not strongly connected. Again we may reduce the problem to adding or omitting a single closed subsieve $\func X_0$. Let $\func X_0=\func Y_1\cup\dots\cup\func Y_m$ be a \zariski\ decomposition for $X_0$. We have to show that  the value of the sum in \eqref{eq:scissdel} for the representation $\motif X=\func X_0\cup\dots\cup\func X_n$ is the same as that for the representation 
\begin{equation}\label{eq:sumXY}
\motif X=\func X_1\cup\dots\cup\func X_n\cup \func Y_1\cup\dots\cup \func Y_m.
\end{equation}
 The first sum is given by
\begin{equation}\label{eq:sum0n}
\sum_{\emptyset\neq I\sub\{1,\dots,n\}} (-1)^{\norm I} \delta'\sym{X_I}+\sum_{ I\sub\{1,\dots,n\}} (-1)^{\norm I+1} \delta'\sym{X_0\cap X_I}
\end{equation}
By Noetherian induction and the fact that $\func{(X_0\cap X_I)}=\func{(X_I\cap Y_1)}\cup\dots\cup \func{(X_I\cap Y_m)}$, we have an identity 
$$
 \delta'\sym{X_0\cap X_I}=\sum_{\emptyset\neq J\sub\{1,\dots,m\}} (-1)^{\norm J} \delta'\sym{X_I\cap Y_J}
$$
for each subset $I$. Substituting this is in \eqref{eq:sum0n} yields the sum corresponding to representation \eqref{eq:sumXY} (note that $I\cup J$ ranges over all non-empty subsets of $\{1,\dots,n\}\sqcup\{1,\dots,m\}$, as required).

To obtain the induced map $\delta$, we must show next that $\delta'$ vanishes on any scissor relation. To this end, let $\motif Y=\func Y_1\cup\dots\cup\func Y_t$ be a second \zariski\ motif on $X$, again assumed to be given by its \zariski\ decomposition. Put $Z_{ij}:=X_i\cap Y_j$, so that $\motif X\cup \motif Y$ is the union of the $\func X_i$ and $\func Y_j$, whereas $\motif X\cap \motif Y$ is the union of the closed subsieves $\func Z_{ij}$. By our previous argument, we may use these respective representations to calculate $\delta'$ of the scissor relation $\sym{\motif X\cup\motif Y}+\sym{\motif X\cap\motif Y}-\sym{\motif X}-\sym{\motif Y}$. Comparing the various sums given by the respective right hand sides of \eqref{eq:scissdel}, this reduces to the following combinatorial assertion. Given finite subsets $I,J$, let us call a subset $N\sub I\times J$ \emph{dominant}, if its projections onto the first and second coordinates are both surjective; then 
$$
\sum_{N\text{ dominant}} (-1)^{\norm N}=1.
$$
We leave the details to the reader. In conclusion, we have constructed  an (additive) map $\delta\colon \grot{\funcalg\bassch}\to \Gamma$ which is the identity on $\Gamma$. On the other hand, it follows from \eqref{eq:nsciss} that $\class{\delta\class{\motif X}}=\class{\motif X}$, showing that $\delta$ is an isomorphism. 
\end{proof}

Immediately from this we get:

\begin{corollary}\label{C:classinvmot}
Suppose $\bassch$ is Jacobson. If $X$ and $Y$ are strongly connected $\bassch$-schemes, then $\class X=\class Y$ in $\grot{\funcalg\bassch}$ \iff\ $X\iso Y$.\qed 
\end{corollary}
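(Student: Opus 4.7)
The plan is to deduce this directly from Theorem~\ref{T:classinvmot}, which asserts that $\grot{\funcalg\bassch}$ is a free Abelian group on the isomorphism classes of strongly connected $\bassch$-schemes. The forward implication is the nontrivial one; the converse is immediate from the isomorphism relations defining the \gr.

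First I would dispatch the ``if'' direction: if $X \iso Y$, then $\class X = \class Y$ by the very definition of $\grot{\funcalg\bassch}$, since the isomorphism relations $\sym{\motif X} - \sym{\motif Y}$ are killed for $\categ M$-isomorphic sieves, and any $\bassch$-scheme isomorphism induces a (necessarily \explicit) isomorphism of the corresponding representable functors in $\funcalg\bassch$.

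For the converse, suppose $X$ and $Y$ are strongly connected and $\class X = \class Y$ in $\grot{\funcalg\bassch}$. Theorem~\ref{T:classinvmot} produces an isomorphism $\delta\colon \grot{\funcalg\bassch} \iso \Gamma$, where $\Gamma$ is the free Abelian group on isomorphism classes $\sym Z$ of strongly connected $\bassch$-schemes, with the property that $\delta(\class Z) = \sym Z$ for every strongly connected $Z$ (this is precisely the base case $n=1$ of the inductive definition of $\delta'$ in the proof of Theorem~\ref{T:classinvmot}). Applying $\delta$ to the assumed equality yields $\sym X = \sym Y$ in $\Gamma$, and since $\Gamma$ is freely generated by distinct isomorphism classes of strongly connected schemes, this forces $X \iso Y$.

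There is no real obstacle here; the corollary is essentially just a restatement of the fact that the isomorphism classes of strongly connected schemes form a $\zet$-basis of $\grot{\funcalg\bassch}$. The only point worth verifying carefully is that $\delta$ sends the class of a strongly connected scheme to its isomorphism class in $\Gamma$, but this was built into the construction of $\delta$ via \eqref{eq:scissdel} for the trivial \zariski\ decomposition with $n=1$.
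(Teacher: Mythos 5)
Your proposal is correct and matches the paper's intent exactly: the paper presents the corollary as following ``immediately'' from Theorem~\ref{T:classinvmot}, and your argument simply spells out why, namely that $\delta$ carries $\class X$ to the basis element $\sym X$ of the free Abelian group $\Gamma$, so equality of classes forces equality of basis elements, i.e.\ $X\iso Y$. The converse is, as you note, built into the definition of the \gr.
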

%



\section{The sub-\zariski\ \gr}
 
To allow for additional relations, we want to include also open subsieves, or more generally, locally closed subsieves. For applications, it is more appropriate to put this in a larger context. Our point of departure is: 

\begin{lemma}\label{L:latimag}
For any  $\bassch$-scheme, the set of  its sub-\zariski\ sieves   forms a lattice. Moreover, the product of two sub-\zariski\ sieves is again sub-\zariski.
\end{lemma}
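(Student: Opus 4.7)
My plan is to exhibit, in each case, an explicit ambient morphism whose image sieve realizes the desired operation, and to verify the identification fat-point by fat-point. Recall that a sub-\zariski\ sieve on $X$ is by definition of the form $\fim\varphi$ for some morphism $\varphi\colon Y\to X$ of $\bassch$-schemes, and its value at a fat point $\fat$ is the image of $\varphi(\fat)\colon Y(\fat)\to X(\fat)$.

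For the \emph{union}, let $\motif X_i=\fim{\varphi_i}$ with $\varphi_i\colon Y_i\to X$ for $i=1,2$. I would take the coproduct morphism $\varphi\colon Y_1\sqcup Y_2\to X$ induced by $\varphi_1$ and $\varphi_2$. The key observation is that a fat point $\fat$ has a unique point, so any morphism $\fat\to Y_1\sqcup Y_2$ factors through exactly one summand; hence $(Y_1\sqcup Y_2)(\fat)=Y_1(\fat)\sqcup Y_2(\fat)$, and pushing this forward along $\varphi$ yields $\motif X_1(\fat)\cup\motif X_2(\fat)$ inside $X(\fat)$. For the \emph{intersection}, I would use the fibre product: let $Z:=Y_1\times_XY_2$ and let $\psi\colon Z\to X$ be the structural morphism. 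By the universal property of fibre products, giving a $\fat$-rational point on $Z$ is the same as giving a pair of $\fat$-rational points on $Y_1$ and $Y_2$ whose compositions to $X$ agree. Consequently, $\fim\psi(\fat)$ is precisely the set of $\fat$-rational points on $X$ that lift simultaneously to $Y_1$ and to $Y_2$, i.e.\ $\motif X_1(\fat)\cap\motif X_2(\fat)$. The empty sieve and $\func X$ itself are sub-\zariski\ via the empty morphism and the identity respectively, providing the minimum and maximum of the lattice.

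For the \emph{product}, let $\motif X=\fim{\varphi\colon Y\to X}$ and $\motif X'=\fim{\varphi'\colon Y'\to X'}$ be sub-\zariski\ sieves, and consider the fibred product morphism $\varphi\times_\bassch\varphi'\colon Y\times_\bassch Y'\to X\times_\bassch X'$. Since the Hom-functor into a fibre product is itself a fibre product of Hom-sets, we have $(Y\times_\bassch Y')(\fat)=Y(\fat)\times Y'(\fat)$ and similarly for $X\times_\bassch X'$, so the induced map on $\fat$-rational points is the Cartesian product of the two separate maps. Taking images commutes with such Cartesian products, so $\fim{\varphi\times\varphi'}(\fat)=\motif X(\fat)\times\motif X'(\fat)=(\motif X\times\motif X')(\fat)$, as required.

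I do not anticipate a serious obstacle; the only subtlety is the fibre-product argument for the intersection, where one must carefully invoke the universal property to identify the image sieve with the set-theoretic intersection at every fat point. Note that each verification reduces to a statement about sets of $\fat$-rational points, since by Lemma~\ref{L:distrep} (and more generally by the fact that sieves are determined by their values at fat points) it suffices to check the desired equalities pointwise.
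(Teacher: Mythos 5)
Your proof is correct and takes essentially the same approach as the paper: realize the union via the coproduct $Y_1\sqcup Y_2\to X$, the intersection via the fibre product $Y_1\times_X Y_2\to X$, and the product via $Y\times_\bassch Y'\to X\times_\bassch X'$. The paper states these identifications more tersely without the fat-point verification, but your pointwise checks (uniqueness of the point of $\fat$ for the coproduct, the universal property of the fibre product) are exactly the content implicit in the paper's argument.
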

\begin{proof}
Recall that a sub-\zariski\ sieve is just an image sieve. 
If $\varphi\colon Y\to X$ and $\psi\colon Z\to X$ are morphisms of $\bassch
$-schemes, then $\fim\varphi\cap\fim\psi=\fim{\varphi\times_X\psi}$, 
where $\varphi\times_X\psi\colon Y\times_XZ\to X$ is the total morphism in the commutative square 
\commdiagram {Y\times_XZ}{}{Y}{}{\varphi}{Z}{\psi}{X}
given by base change. Likewise  $\fim\varphi\cup\fim\psi=\fim{\varphi\sqcup\psi}$, 
where $\varphi\sqcup\psi\colon Y\sqcup Z\to X$ is the disjoint union of 
the two morphisms. 

As for products, if $\varphi\colon Y\to X$ and $\varphi'\colon Y'\to X'$ are morphisms of $\bassch$-schemes, then the image of $\varphi\times_\bassch\varphi'\colon Y\times_\bassch Y\to X'\times_\bassch X$ is equal to the product $\fim\varphi\times\fim{\varphi'}$, showing that the latter is again sub-\zariski.
\end{proof}

\begin{definition}\label{D:submot}
We define the  \emph{sub-\zariski\ motivic site} $\funcsub \bassch$  as the full subcategory of $\sieves\bassch$ with objects the sub-\zariski\ 
sieves. By Theorem~\ref{T:globsectsubzar}, any morphism in this category is \explicit, so that $\funcsub\bassch$ is again an \weak\ motivic site. 
\end{definition}
Instead, we could have 
opted for a smaller site to   take  care of open coverings: define the 
\emph{motivic constructible site} $\funccon \bassch$ by taking on each 
scheme the lattice generated by locally closed subsieves (note that this is again an \weak\ motivic site). We have a natural \homo\ of \gr{s} $\grot{\funccon \bassch}\to \grot{\funcsub \bassch}$, but I do not know whether 
it is injective and/or surjective.  

\begin{example}\label{E:subfat}
The constructible site is strictly smaller than the   sub-\zariski\ one, as illustrated by the following example: let $\varphi\colon\mathfrak l_4\to\mathfrak l_2$ be the morphism corresponding to the \homo\ $R_2:=\pol\fld\xi/\xi^2\pol\fld\xi\to R_4:=\pol\fld\xi/\xi^4\pol\fld\xi$ given by $\xi\mapsto \xi^2$. Given a fat point $\fat=\op{Spec}R$, the $\fat$-rational points of $\mathfrak l_2$ are in one-one correspondence with  the elements in $R$ whose square is zero, whereas $\fim\varphi(\fat)$ is the   subset of all those that are themselves a square, in general a proper subset. As $\mathfrak l_2$ is a fat point, it has no non-trivial locally closed subsieves, showing that $\fim\varphi$ is sub-\zariski\ but not constructible. Moreover, the Zariski closure of $\fim\varphi$ is $\mathfrak l_2$.  
\end{example}

\begin{lemma}\label{L:opencomp}
If $Y$ is an open in a $\bassch$-scheme $X$, then $\func Y$ is a complete sieve on $X$.
\end{lemma}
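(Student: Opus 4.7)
The plan is to reduce the completeness of $\func Y$ to a single observation about centers. Recall that a fat point $\fat$ has a unique (closed) point, so the image of any $\fat$-rational point $a\colon\fat\to X$ is a singleton, namely the center of $a$. I will show first that for $Y\sub X$ open, $a$ factors through $Y$ if and only if the center of $a$ lies in $Y$, and second that the center is unchanged under precomposition with a morphism of fat points. Completeness then falls out immediately.

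For the first step, suppose the center $y$ of $a$ lies in $Y$. Since $Y$ is open in $X$, the preimage $a^{-1}(Y)$ is open in $\fat$ and contains the unique point of $\fat$, so $a^{-1}(Y)=\fat$ set-theoretically. Because $Y\sub X$ is an open immersion (in particular a monomorphism), this set-theoretic factorization lifts uniquely to a factorization of $a$ as a scheme morphism $\fat\to Y\sub X$, so $a\in\func Y(\fat)$. The converse is immediate: if $a$ factors through $Y$, the image of $\fat$, which is the center of $a$, lies in $Y$.

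For the second step, let $j\colon\tilde\fat\to\fat$ be any morphism of fat points. Since both source and target have a unique point, $j$ sends the unique point of $\tilde\fat$ to the unique point of $\fat$. Consequently the composition $a\after j=X(j)(a)$ sends the unique point of $\tilde\fat$ to the same closed point $y\in X$ that $a$ does; that is, the center of $X(j)(a)$ equals the center of $a$.

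Combining these, for any $j\colon\tilde\fat\to\fat$ and any $a\in X(\fat)$, we have $a\in\func Y(\fat)$ iff the center of $a$ is in $Y$ iff the center of $X(j)(a)$ is in $Y$ iff $X(j)(a)\in\func Y(\tilde\fat)$. This is exactly the equality $\func Y(\fat)=\inverse{X(j)}{\func Y(\tilde\fat)}$ required by the definition of completeness. There is no substantial obstacle; the one point that deserves attention is the scheme-theoretic (and not merely topological) nature of the factorization in the first step, which is handled by invoking that $Y\to X$ is a monomorphism.
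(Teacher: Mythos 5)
Your proof is correct and takes essentially the same approach as the paper's: both rest on the observations that the center of a $\fat$-rational point is preserved under precomposition with a morphism of fat points, and that factorization through an open subscheme is detected by whether the center lies in it. The paper makes the latter explicit via stalks ($\loc_{X,x}=\loc_{Y,x}$ for $x\in Y$), while you cite the standard criterion for open immersions — the same fact in different clothing. One small remark: the parenthetical ``(in particular a monomorphism)'' is not what produces the factorization — the monomorphism property gives only uniqueness; existence requires that the open immersion is a local isomorphism on stalks, which is precisely what the paper's proof spells out.
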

\begin{proof}
Let $\mathfrak v\to\mathfrak w$ be a morphism of fat points. We have to show that any $\mathfrak w$-rational point $a\colon \mathfrak w\to X$  whose image under $X(\mathfrak w)\to X(\mathfrak v)$ belongs to $Y(\mathfrak v)$, itself already belongs to $Y(\mathfrak w)$. The condition that needs to be checked is that if the composition $\mathfrak v\to\mathfrak w\map a X$ factors through $Y$, then so does $a$. Let $x\in X$ be the center of $a$. Since $x$ is then also the center of the composition $\mathfrak v\to X$, it is a closed point of $Y$. Therefore, $\loc_{X,x}=\loc_{Y,x}$. Since $a$ induces a \homo\ $\loc_{X,x}\to T$, where $T$ is the coordinate ring of $\mathfrak w$, whence a \homo\ $\loc_{Y,x}\to T$, we get the desired factorization  $\mathfrak w\to Y$.
\end{proof}

This will, among other things, allow us often to reduce the calculation of rational points to the affine case.  
Let $X=X_1\cup\dots\cup X_n$ be an open cover. By Lemma~\ref{L:opencomp}, we get  $\func X=\func X_1\cup\dots\cup\func X_n$. An easy argument on scissor relations, with notation as in \eqref{eq:nsciss}, yields:\footnote{By assumption, all $\bassch$-schemes are separated, and hence the intersection of affines is again affine.}

\begin{lemma}\label{L:opencov}
If $X=X_1\cup\dots\cup X_n$ is an open covering of $\bassch$-schemes, then 
$$
 \class{X}=\sum_{\emptyset\neq I\sub\{1,\dots,n\}} (-1)^{\norm I}\class{X_I}
 $$
 in $\grot{\funcsub\bassch}$. In particular, the class of a \zariski\ motif lies in the subring  generated by classes of affine schemes.\qed
\end{lemma}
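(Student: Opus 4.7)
The plan is to argue in two steps: first establish the inclusion-exclusion formula, then deduce the ``in particular'' claim from it.

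For the formula, I would first invoke Lemma~\ref{L:opencomp}, which says each open subsieve $\func X_i$ is a complete sieve on $X$; together with the fact that the $X_i$ cover $X$, this yields the sieve-theoretic equality $\func X=\func X_1\cup\dots\cup\func X_n$. Each $\func X_i$ is representable, hence sub-\zariski, and intersections behave schemically: $\func{X_i}\cap\func{X_j}=\func{X_i\cap_X X_j}$, which is again representable (and, because all $\bassch$-schemes are separated, this intersection is affine whenever the $X_i$ are). Thus we are precisely in the setting of equation~\eqref{eq:nsciss}, which was derived in the proof of Lemma~\ref{L:diffclass} as a purely formal consequence of the scissor relations applied to a union of sub-\zariski\ sieves on a common ambient space. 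Applying~\eqref{eq:nsciss} to the representation $\func X=\func X_1\cup\dots\cup\func X_n$ and observing that $\func{X_I}=\func{\bigcap_{i\in I}X_i}$ yields the displayed identity.

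For the ``In particular'' assertion, I would proceed in two sub-steps. Given an arbitrary $\bassch$-scheme $Y$, choose a finite affine open covering $Y=U_1\cup\dots\cup U_m$, which exists because $Y$ is of finite type over the Noetherian base; separatedness of $Y$ guarantees that each intersection $U_I=\bigcap_{i\in I}U_i$ is again affine. Applying the formula just proved writes $\class{Y}$ as a $\zet$-linear combination of classes of the affine schemes $U_I$. Next, let $\motif X=\func{X_1}\cup\dots\cup\func{X_s}$ be an arbitrary \zariski\ motif (which we may assume given by closed subschemes on a common ambient $X$). By~\eqref{eq:nsciss}, $\class{\motif X}$ is a $\zet$-linear combination of classes $\class{X_I}$ of closed subschemes of $X$, each of which is itself a separated $\bassch$-scheme. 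Applying the first sub-step to each $X_I$ expresses $\class{\motif X}$ as a $\zet$-linear combination of classes of affine schemes, so $\class{\motif X}$ lies in the subring of $\grot{\funcsub\bassch}$ generated by classes of affines.

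There is really no serious obstacle: the whole argument is formal, given Lemma~\ref{L:opencomp} (which upgrades open subschemes to complete sieves so that the union identity holds), the identity~\eqref{eq:nsciss} (which is the inclusion-exclusion consequence of the scissor relations valid for any finite union of sub-\zariski\ sieves on a common ambient), and the separatedness hypothesis built into the definition of a $\bassch$-scheme (which makes intersections of affine opens affine). The only bookkeeping point worth flagging is that the intersections on the right-hand side are taken inside $X$, so that repeated application of~\eqref{eq:nsciss} genuinely terminates at classes of \emph{affine} schemes rather than at classes of more general sub-\zariski\ sieves.
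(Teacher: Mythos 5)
Your proof matches the paper's intended argument: you invoke Lemma~\ref{L:opencomp} to get the sieve-theoretic identity $\func X=\func X_1\cup\dots\cup\func X_n$, apply the inclusion-exclusion consequence~\eqref{eq:nsciss} of the scissor relations, and then handle the ``in particular'' clause by passing to finite affine open covers (using separatedness for intersections) and to closed subschemes via~\eqref{eq:nsciss} again. The paper compresses all of this into a one-line appeal to Lemma~\ref{L:opencomp}, the scissor relations, and a footnote about separatedness; your write-up is just the unabridged version of the same argument.
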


\begin{example}\label{E:projline}
As an example, let us calculate the class of the projective line $\mathbb P_\bassch^1$. It admits an open covering $X_1\cup X_2$ where $X_1$ and $X_2$ are obtained by removing respectively the origin and the point at infinity. Since $X_1\iso X_2\iso\affine\bassch 1$, we have 
$$
\class{\mathbb P_\bassch^1} =2\lef-\lef_*
$$
where $\lef_*$ denotes the class of the punctured line $L_*=X_1\cap X_2$, the affine line with the origin removed. One would be tempted to think that $\lef_*$ is just $\lef-1$, but this is false, as we shall see shortly. However, $X_1\cap X_2$ is an affine scheme, by Example~\ref{E:hyperbola},  isomorphic to the hyperbola $H\sub \affine\bassch 2$ with equation $\var\vary-1=0$ under the projection $\affine\bassch 2\to\affine\bassch 1$ onto the first coordinate. In other words, we have
\begin{equation}\label{eq:projline}
\class{\mathbb P_\bassch^1} =2\lef-\class H.
\end{equation}
\end{example}

\section{The formal \gr}\label{s:formgr}
 Let $Y\sub X$ be a closed subscheme. Recall that the   {$n$-th jet} of $X$ 
along $Y$, denoted $\jet YnX$, is the closed subscheme with ideal sheaf $
\mathcal I_Y^n$, where $\mathcal I_Y\sub\loc_X$ is the ideal sheaf of $Y$. The \emph{formal completion}  $\complet X_Y$ of $X$ along $Y$ is then the ringed space whose underlying set is equal to the underlying set of  $Y$ and whose sheaf of rings is the inverse limit of the sheaves $\loc_{\jet YnX}$. In particular, if $X=\op{Spec}A$ is affine and $I$ the ideal of definition of $Y$, then the ring of global sections of $\complet X_Y$ is equal to the $I$-adic completion $\complet A$ of $A$ (see, for instance, \cite[II.\S9]{Hart}). 
We define the \emph{completion 
sieve  along $Y$} to be the sieve $\mor \bassch\cdot{\complet X_Y}$ 
represented by the formal completion $\complet X_Y$ of $X$ at $Y$, that is 
to say, for each fat point   $\fat$, it gives the subset of all $\fat$-rational 
points $\fat\to X$  that factor through $\complet X_Y$. We will simply 
denote it by $\func{\complet X_Y}$ and call   any such presheaf  again  
\emph{pro-representable}.\footnote{\label{f:locringed}Note that $
\complet X_Y$ is no longer a scheme, but only a locally ringed space with 
values in the category of $\loc_\bassch$-algebras, and so for a (formal) scheme $Z$, 
the set $\mor\bassch  Z{\complet X_Y}$ is to be understood as the set of 
morphisms $Z\to \complet X_Y$ of locally ringed spaces with values in the 
category of $\loc_\bassch$-algebras.}  
 
 \begin{proposition}\label{P:formsieve}
For a closed subscheme $Y\sub X$, the completion sieve $\func{\complet 
X_Y}$ of $X$ along $Y$ is equal to the union of all  closed subsieves $
\func{(\jet YnX)}\sub \func X$, for $n\geq 1$.  Moreover, we have an 
identity of sieves $\func{\complet X_Y}=\func X-\func{(X-Y)}$, showing that   $\func{\complet X_Y}$ is a complete sieve.  
\end{proposition}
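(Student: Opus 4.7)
My plan is to verify both equalities pointwise, evaluating at an arbitrary fat point $\fat = \op{Spec} R$, and then invoke Lemma~\ref{L:complcompl} (or its proof) for completeness. The underlying principle throughout is that $R$ is Artinian local, so its maximal ideal $\maxim_R$ is nilpotent.

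For the first identity, I would describe $\func{\complet X_Y}(\fat)$ as the set of morphisms $a\colon\fat\to\complet X_Y$ of locally ringed spaces over $\loc_\bassch$. Since $\fat$ has a unique point, $a$ is determined by its center $x$ (necessarily a closed point of $Y$) together with a local $\loc_\bassch$-algebra \homo\ from the stalk $(\complet{\loc}_{X,x})_{\mathcal I_Y}$ to $R$. Since $\maxim_R^n=0$ for some $n$, and locality forces $\mathcal I_Y\loc_{X,x}$ to map into $\maxim_R$, the image of $\mathcal I_Y^n\loc_{X,x}$ vanishes in $R$, and thus the \homo\ factors through $\loc_{X,x}/\mathcal I_Y^n\loc_{X,x}$, which is the stalk of $\jet YnX$ at $x$. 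This produces an element of $\func{(\jet YnX)}(\fat)$. Conversely, each $\func{(\jet YnX)}(\fat)$ maps into $\func{\complet X_Y}(\fat)$ via the canonical morphism $\jet YnX\to\complet X_Y$. Since the $\jet YnX$ form a directed system of closed subschemes of $X$ (note $\jet YnX\sub\jet YmX$ for $n\le m$), this gives the stated union.

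For the second identity, fix $a\in X(\fat)$ and let $x$ be its center, necessarily a closed point of $X$. Because $X-Y$ is open and $\fat$ has a unique point, $a$ factors through $X-Y$ \iff\ $x\in X-Y$, and in that case $a\in\func{(X-Y)}(\fat)$. Otherwise, $x\in Y$, and the nilpotency argument from the previous paragraph shows that $a$ factors through some $\jet YnX$, whence through $\complet X_Y$, giving $a\in\func{\complet X_Y}(\fat)$. The two possibilities are mutually exclusive: if $a$ factored through both $\complet X_Y$ and $X-Y$, the center $x$ would lie in both $Y$ and $X-Y$, impossible. Hence $X(\fat)$ is the disjoint union of $\func{\complet X_Y}(\fat)$ and $\func{(X-Y)}(\fat)$, which is precisely the claimed equality $\func{\complet X_Y}=\func X-\func{(X-Y)}$.

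Completeness then follows immediately: by Lemma~\ref{L:opencomp}, $\func{(X-Y)}$ is complete as a sieve on $X$, so for any morphism $j\colon\tilde\fat\to\fat$ of fat points and any $a\in X(\fat)$, we have $a\in\func{(X-Y)}(\fat)$ \iff\ $X(j)(a)\in\func{(X-Y)}(\tilde\fat)$. Taking complements inside $X(\fat)$ and $X(\tilde\fat)$ respectively (using the identity just established) yields $a\in\func{\complet X_Y}(\fat)$ \iff\ $X(j)(a)\in\func{\complet X_Y}(\tilde\fat)$, which is the completeness condition. The main subtlety in the whole argument is the careful bookkeeping for the first identity, namely unwinding what a morphism into the formal completion means as a morphism of locally ringed spaces and exploiting Artinianness to replace $\complet\loc_{X,x}$ by a single finite jet.
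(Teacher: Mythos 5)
Your proof is correct and follows essentially the same route as the paper's: verify both identities pointwise at a fat point using the nilpotency of the maximal ideal of its coordinate ring, and then deduce completeness from the complement decomposition. The only cosmetic difference is that you invoke Lemma~\ref{L:opencomp} and take complements by hand, whereas the paper cites Lemma~\ref{L:complcompl} directly; these amount to the same observation.
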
 
\begin{proof}
The inclusion $\jet YnX\sub\func{\complet X_Y}$, for any $n$, is clear since the jets of $\complet X_Y$ (with respect to its closed point) are the same as those of the germ $(X,Y)$. 
Let $\fat$ be a fat point of length $l$. If $a\colon\fat\to \complet X_Y$ is a $\fat$-rational point, then this must already factor through $\jet YlX$, as any $l$-th power of a non-invertible section on $\fat$ is zero. Hence $\func{(\jet YlX)}\sub \func{\complet X_Y}$ have the same $\fat$-rational points,   proving the first assertion. Since $\jet YlX$ has the same underlying variety as $Y$, it lies outside the open $X-Y$, and hence $a\notin (X-Y)(\fat)$. To prove the converse inclusion in the second assertion, suppose now that $a\colon\fat\to X$ does not lie in $(X-Y)(\fat)$. In particular, the center of $a$ lies in $Y$. Let $\op{Spec}A$ be an affine open of $X$ containing the center of $a$, and let $(R,\maxim)$ be the (Artinian  local) coordinate of $\fat$. Hence $a$ induces a $\loc_U$-algebra \homo\ $A\to R$. If $I$ is the ideal locally defining $Y$ in $\op{Spec}A$, then $IR\sub\maxim$. In particular, $I^lR\sub\maxim^l=0$, showing that $a$ lies in $(\jet YlX)(\fat)$ whence in $\complet X_Y(\fat)$ by our first inclusion. The completeness of $\func{\complet X_Y}$ now follows from Lemma~\ref{L:complcompl}. 
\end{proof}

The proof of the first assertion  actually gives a stronger statement, which we formalize as follows. A sieve $\motif X$  on $X$ is called a \emph{formal motif on $X$}, if for each fat point $
\fat$, there exists a sub-\zariski\ subsieve $\motif Y_{\fat}\sub \motif X$ 
such that  $\motif Y_{\fat}(\fat)= \motif X(\fat)$ (we call the $\motif Y_\fat$ the \emph{sub-\zariski\ approximations of $\motif X$}, in spite of the fact that they are not unique).    A sub-\zariski\ motif is a trivial example of a formal motif;   the proof of Proposition~\ref{P:formsieve} shows that completion  
sieves are formal too, whose approximations, in fact, can be taken to be \zariski. More generally, we have

\begin{lemma}\label{L:appform}
If a sieve $\motif X$ on a $\bassch$-scheme $X$ has formal approximations in the sense that for each fat point $\fat$, there exists a formal subsieve $\motif Y_\fat\sub\motif X$ with the same $\fat$-rational points, then $\motif X$ itself is formal.
\end{lemma}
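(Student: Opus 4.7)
The plan is just to chase the definitions: formal approximability is transitive in the evident sense, so two layers of approximation collapse to one. Fix a fat point $\fat$. By hypothesis we are handed a formal subsieve $\motif Y_{\fat}\sub\motif X$ with $\motif Y_{\fat}(\fat)=\motif X(\fat)$. Since $\motif Y_{\fat}$ is itself formal, unpacking the definition at the \emph{same} fat point $\fat$ produces a sub-\zariski\ subsieve $\motif Z_{\fat}\sub \motif Y_{\fat}$ with $\motif Z_{\fat}(\fat)=\motif Y_{\fat}(\fat)$.

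Now I would just set $\motif Z_\fat$ as the desired sub-\zariski\ approximation of $\motif X$ at $\fat$. Chaining the inclusions of sieves gives $\motif Z_{\fat}\sub\motif Y_{\fat}\sub\motif X$, and chaining the equalities of $\fat$-rational points gives
\[
\motif Z_{\fat}(\fat)=\motif Y_{\fat}(\fat)=\motif X(\fat).
\]
Since this procedure works for every fat point $\fat$, we have exhibited, for each $\fat$, a sub-\zariski\ subsieve of $\motif X$ agreeing with $\motif X$ on $\fat$-rational points, which is exactly the definition of $\motif X$ being a formal motif.

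There is no real obstacle here: the statement is essentially the observation that ``formal'' is a closure operation with respect to the relation ``has the same $\fat$-points as a sub-\zariski\ sieve,'' and the only thing being used is that the approximating $\motif Y_{\fat}$, not just $\motif X$, sits inside $\motif X$, so that its sub-\zariski\ approximation at $\fat$ is automatically a subsieve of $\motif X$ as well. No ambient-space issues arise because sub-\zariski\ sieves inside $\motif Y_{\fat}\sub\motif X$ are automatically sub-\zariski\ sieves inside $\motif X$ (their underlying morphism of schemes is unchanged).
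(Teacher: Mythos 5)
Your proof is correct and matches the paper's argument: apply the formality of $\motif Y_{\fat}$ at the same fat point $\fat$ to obtain a sub-\zariski{} approximation $\motif Z_{\fat}\sub\motif Y_{\fat}\sub\motif X$ with the same $\fat$-rational points, and observe these serve as sub-\zariski{} approximations of $\motif X$. The paper states this in one sentence; you simply spell out the inclusion and equality chains.
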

\begin{proof}
By assumption, there exists a sub-\zariski\ approximation $\motif Z_\fat\sub\motif Y_\fat$ with the same $\fat$-rational points, and it is now easy to check that the $\motif Z_\fat$ form a sub-\zariski\ approximation of $\motif X$.
\end{proof}

\begin{lemma}\label{L:empty}
If a formal motif $\motif X$ has no $\fat$-rational points, for some fat point $\fat$, then $\motif X$ itself is empty.
\end{lemma}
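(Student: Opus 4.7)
I would prove the contrapositive: assuming $\motif X$ is nonempty as a functor, I would show $\motif X(\fat)\neq\emptyset$ for every fat point $\fat$. The heart of the argument is a center-and-lift argument for sieves, which I would carry out in the geometrically significant case when $\bassch=\op{Spec}\fld$ for an \acf\ $\fld$, and then indicate how formality handles the general Jacobson case.

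Concretely, suppose $a\in\motif X(\mathfrak v)$ is a $\mathfrak v$-rational point for some fat point $\mathfrak v=\op{Spec}R$, and let $P\in X$ be its center (a closed point). Over $\fld$, both $\res P$ and $\res{\mathfrak v}$ coincide with $\fld$, so the residue map $R\to\fld$ determines a canonical morphism $\pi\colon P\to\mathfrak v$ splitting the inclusion. By the subfunctor property, $\motif X(\pi)(a)=a\after\pi$ lies in $\motif X(P)$, and a direct ring-level check (the kernel of the composition $\loc_X\to R\to\fld$ is exactly $\maxim_P$) identifies this element with $i_P$. Thus $\motif X$ already contains the closed point $P$. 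For any fat point $\fat$, the residue map $\fat\to P$ composed with $i_P$ gives, by contravariance, a $\fat$-rational point in $\motif X(\fat)$, establishing the contrapositive.

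The role of formality is to make this argument function over an arbitrary Jacobson base, where $\res{\fat}$ need not contain $\res P$ and the direct pullback $\fat\to P$ may fail to exist. Here I would first observe that $\motif X$ itself is equal to the union of all its sub-\zariski\ approximations: setting $\motif Y:=\bigcup_{\mathfrak w}\motif Y_{\mathfrak w}$, we have $\motif Y\sub\motif X$ and $\motif Y(\mathfrak w)\supseteq\motif Y_{\mathfrak w}(\mathfrak w)=\motif X(\mathfrak w)$ for every $\mathfrak w$, whence $\motif Y=\motif X$. Assuming $\motif X(\fat)=\emptyset$, every sub-\zariski\ piece $\motif Y_{\mathfrak w}=\fim{\varphi_{\mathfrak w}}$ satisfies $\fim{\varphi_{\mathfrak w}}(\fat)=\emptyset$, which forces the scheme-theoretic domain $Z_{\mathfrak w}$ to have $Z_{\mathfrak w}(\fat)=\emptyset$, since the image of a nonempty set is nonempty.

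The main obstacle is the final step, inferring $Z_{\mathfrak w}=\emptyset$ from $Z_{\mathfrak w}(\fat)=\emptyset$. Over an \acf, this is immediate from the Nullstellensatz combined with Lemma~\ref{L:domfat} (using dominance of $\fat\to\op{Spec}\fld$): a nonempty scheme of finite type has $\fld$-rational points, which inject into $\fat$-rational points. Over a general Jacobson base, one needs $\fat$ to have a sufficiently rich residue field, which I expect to be implicit in the hypothesis and essentially the reason the statement is formulated for formal (rather than arbitrary) sieves; once $Z_{\mathfrak w}=\emptyset$ for every $\mathfrak w$, one concludes $\motif Y_{\mathfrak w}=\emptyset$ for all $\mathfrak w$, and thus $\motif X$ is empty.
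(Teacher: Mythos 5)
Your center-and-lift argument over an algebraically closed field is correct and genuinely different from the paper's. The paper's proof proceeds by reducing formal $\to$ sub-\zariski\ (via approximations) $\to$ schemes, and then arguing that $Y(\fat)=\emptyset$ forces $Y=\emptyset$. You instead argue directly with the sieve: from any $a\in\motif X(\mathfrak v)$, contravariance along the composite $\fat\to\op{Spec}\fld\to\mathfrak v$ (structure morphism followed by residue) immediately produces a $\fat$-rational point of $\motif X$. This is slicker, and it reveals something the paper's route obscures: over an \acf\ formality plays no role — every nonempty sieve, formal or not, has rational points at every fat point, because $\op{Spec}\fld$ sits both above and below every object of $\fatpoints\fld$. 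Where your proposal goes astray is the closing speculation that formality is \emph{why} the statement extends to a general Jacobson base. It cannot be: the obstruction you correctly identify, inferring $Z(\fat)=\emptyset\Rightarrow Z=\emptyset$, is a statement about schemes, which are themselves formal motives, so formality offers no leverage; and taking $\bassch=\op{Spec}\mathbb Q$, $Z=\op{Spec}\mathbb Q(i)$, $\fat=\op{Spec}\mathbb Q$ shows the implication simply fails over a non-\acf\ base. (The paper's own proof has the same lacuna — it cites Lemma~\ref{L:dirlimJac} where what is really needed is a Nullstellensatz-type argument — so your instinct that this step is the crux is sound; only the proposed rescue via formality should be dropped.)
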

\begin{proof}
Suppose first that $\motif X=\fim\varphi$ is sub-\zariski, given by a morphism $\varphi\colon Y\to X$. In order for $\varphi(\fat)$ to be empty, $Y(\fat)$ has to be empty, whence $Y$ has to be the empty scheme by Lemma~\ref{L:dirlimJac}, proving that $\fim\varphi$ is the empty motif. Suppose now that $\motif X$ is merely formal, and let $\mathfrak w$ be an arbitrary fat point. Let  $\motif Y\sub\motif X$ be a sub-\zariski\ approximation with the same $\mathfrak w$-rational points. Since $\motif Y(\fat)\sub\motif X(\fat)=\emptyset$, we get  $\motif Y=\emptyset$, by the sub-\zariski\ case, showing that $\motif X(\mathfrak w)=\emptyset$. Since this holds for all fat points $\mathfrak w$, the assertion follows. 
\end{proof}

 Using Lemma~\ref{L:latimag}, one easily verifies that the formal motives on $X$ form again a lattice, and the product of two formal motives is again a formal motif, leading to   the 
\emph{formal motivic  site} $\funcinf \bassch$, and   its corresponding \gr\    $
\grot{\funcinf \bassch}$.

\begin{theorem}\label{T:mapclass}
Over an \acf\ $\fld $, we have   natural ring \homo{s} 
 $$
 \grot{\funcalg \fld}\to\grot{\funccon \fld}\to\grot{\funcsub \fld}\to
\grot{\funcinf \fld}\to\grotclass \fld.
 $$
\end{theorem}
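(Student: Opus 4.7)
The plan is to obtain the first three arrows from the evident inclusions of motivic sites, and to construct the final arrow by passing to $\fld$-rational points.

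For the first three, I would first verify the chain of full subcategory inclusions $\funcalg\fld\sub\funccon\fld\sub\funcsub\fld\sub\funcinf\fld$ inside $\sieves\fld$: closed subsieves are locally closed, so Zariski motives are constructible; every locally closed subsieve is an image sieve of the corresponding locally closed immersion, and sub-\zariski\ sieves form a lattice stable under products by Lemma~\ref{L:latimag}, so constructible motives are sub-\zariski; and sub-\zariski\ motives are trivially formal. Because $\cap$, $\cup$ and the fiber product are all computed in the ambient category $\sieves\fld$, they agree on all four sites, so every scissor or isomorphism relation valid in the smaller site remains valid in the larger one. The assignment $\class{\motif X}\mapsto\class{\motif X}$ therefore induces a well-defined ring \homo\ at each stage.

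For the last arrow $\Phi\colon\grot{\funcinf\fld}\to\grotclass\fld$, the plan is to send the class of a formal motif $\motif X$ on $X$ to the class in $\grotclass\fld$ of its set of $\fld$-rational points $\motif X(\fld)\sub X^{\text{red}}(\fld)$, viewed as a constructible subset of the variety $X^{\text{red}}$ and decomposed into locally closed subvarieties. To establish constructibility I would invoke formality: there exists a sub-\zariski\ approximation $\fim\varphi\sub\motif X$ with $\fim\varphi(\fld)=\motif X(\fld)$ for some $\varphi\colon Y\to X$, and then Chevalley's theorem applied to $\varphi^{\text{red}}$ places $\motif X(\fld)$ inside the collection of constructible subsets of $X^{\text{red}}(\fld)$. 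The scissor identity would follow from $(\motif X\cup\motif Y)(\fld)=\motif X(\fld)\cup\motif Y(\fld)$ and $(\motif X\cap\motif Y)(\fld)=\motif X(\fld)\cap\motif Y(\fld)$ combined with inclusion-exclusion in $\grotclass\fld$, and multiplicativity from the evident $(\motif X\times\motif Y)(\fld)=\motif X(\fld)\times\motif Y(\fld)$.

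The main obstacle will be verifying the isomorphism relation for $\Phi$: an isomorphism $s\colon\motif X\to\motif X'$ in $\funcinf\fld$ need not be \explicit. To overcome this, I would appeal to Theorem~\ref{T:globsectsubzar}, which guarantees that any morphism with sub-\zariski\ domain is \explicit. Applied to the composition $\fim\varphi\hookrightarrow\motif X\overset{s}{\to}\motif X'$ (for a sub-\zariski\ approximation of $\motif X$ at the fat point $\fld$), this exhibits the restriction of $s$ as extending to a morphism of schemes, so that on $\fld$-points the bijection $s(\fld)\colon\motif X(\fld)\iso\motif X'(\fld)$ comes from an honest morphism of varieties on the relevant stratum. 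Repeating the argument symmetrically with $s^{-1}$ and a sub-\zariski\ approximation of $\motif X'$, and stratifying finitely many times (Noetherian induction), I would realize $s(\fld)$ as a piecewise-algebraic bijection with piecewise-algebraic inverse, whence the two constructible sets have equal class in $\grotclass\fld$. Everything else is a formal consequence of the topos-theoretic framework developed in the preceding sections.
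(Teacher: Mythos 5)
Your proposal matches the paper's proof in structure: the first three arrows come from the inclusions of motivic sites, and the final arrow sends $\class{\motif X}$ to the class of the constructible set $\motif X(\fld)$, with compatibility with scissor relations and products being immediate and the isomorphism relation being the one nontrivial point, handled via a sub-\zariski\ approximation of $\motif X$ at the reduced point together with Theorem~\ref{T:globsectsubzar}. If anything your treatment of the isomorphism step is more careful than the paper's: where the paper asserts that $\varphi(\fld)$ mapping $\motif Z(\fld)$ bijectively onto $s_*\motif Z(\fld)=\motif Y(\fld)$ already makes the two constructible sets ``isomorphic in the Zariski topology,'' you explicitly run the argument symmetrically with $s^{-1}$ and stratify to realize $s(\fld)$ as a piecewise-algebraic bijection with piecewise-algebraic inverse, which is the honest justification for equality of classes in $\grotclass\fld$.
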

\begin{proof}
Only the last of these \homo{s} requires an explanation. Given a formal motif $\motif X$, we associate to it the class of $\motif X(\fld)$ in the classical \gr\  $\grotclass\fld$. Note that by definition,  $\motif X(\fld)=\fim\varphi(\fld)$, for some morphism $\varphi\colon Y\to X$ of $\fld$-schemes. In particular, by Chevalley's theorem, $\motif X(\fld)$ is a constructible subset of $X(\fld)$ and hence its class in $ \grotclass\fld$ is well-defined. Clearly, this map is compatible with intersections, unions, and products, so that in order for this map to factor through $\grot{\funcinf\fld}$, we only have to show that it respects isomorphisms.  So assume $s\colon \motif X\to\motif Y$ is an isomorphism of formal motives. Let $\motif Z\sub\motif X$ be a sub-\zariski\ approximation of $\motif X$ with the same $\fld$-rational points.  Its push-forward $s_*\motif Z$ is isomorphic with $\motif Z$. By Theorem~\ref{T:globsectsubzar}, the restriction $\restrict s{\motif Z}$   extends to a morphism $\varphi\colon X\to Y$, where $X$ and $Y$ are some ambient spaces of $\motif Z$ and $\motif Y$ respectively. Since $\varphi(\fld)\colon X(\fld)\to Y(\fld)$ maps $\motif Z(\fld)$ bijectively onto $s_*\motif Z(\fld)$,   these two constructible subsets are isomorphic in the Zariski topology. However,  by definition of push-forward, $s_*\motif Z(\fld)$ is the image of   $\motif Z(\fld)=\motif X(\fld)$ under $s(\fld)$, that is to say, is equal to $\motif Y(\fld)$, as we needed to show. \end{proof}

\begin{theorem}\label{T:morp}
Assume $\bassch$ is Jacobson, and let $s\colon\motif Y\to \motif X$ be a morphism of sieves. If $\motif Y$ and $\motif X$ are  sub-\zariski\ (respectively,  formal) motives, then so is the graph of $s$. Moreover, the pull-back or the push-forward of a sub-\zariski\ (respectively,  formal) submotif is again of that form.
\end{theorem}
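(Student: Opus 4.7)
The plan is to treat the sub-\zariski\ case first, exploiting the fact that any morphism out of a sub-\zariski\ sieve is automatically \explicit\ (Theorem~\ref{T:globsectsubzar}), and then bootstrap to the formal case by approximating.

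First suppose $\motif Y$ and $\motif X$ are both sub-\zariski. By Theorem~\ref{T:globsectsubzar}, $s$ is \explicit\ and extends to a morphism $\varphi\colon Y\to X$ of ambient $\bassch$-schemes. Writing $\motif Y=\fim\psi$ for some $\psi\colon Z\to Y$, I would then verify, by unwinding $\fat$-rational points using Lemma~\ref{L:vanci}, that $\graf s$ is the image sieve of $(\psi,\varphi\after\psi)\colon Z\to Y\times X$; that for a sub-\zariski\ submotif $\motif Y'=\fim{\psi'}\sub\motif Y$ one has $s_*\motif Y'=\fim{\varphi\after\psi'}$; and that for a sub-\zariski\ submotif $\motif X'=\fim{\varphi'}\sub\motif X$ the pull-back $s^*\motif X'$ coincides with the image of the projection $Z\times_X X'\to Y$ formed from the fibre product along $\varphi\after\psi$ and $\varphi'$. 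Each of these identifications is a routine calculation on $\fat$-rational points.

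For the formal case, I would fix an arbitrary fat point $\fat$ and pick a sub-\zariski\ approximation $\motif Y_\fat\sub\motif Y$ with $\motif Y_\fat(\fat)=\motif Y(\fat)$. Because the source is now sub-\zariski, the restriction $\restrict s{\motif Y_\fat}\colon\motif Y_\fat\to\motif X$ is again \explicit\ by Theorem~\ref{T:globsectsubzar}. Applying the already-established sub-\zariski\ case to this restriction produces sub-\zariski\ subsieves of $\graf s$, of $s_*\motif Y'$, and of $s^*\motif X'$ that agree with these sieves on $\fat$-rational points; for the push-forward one additionally replaces a formal $\motif Y'\sub\motif Y$ by a sub-\zariski\ approximation $\motif Z_\fat\sub\motif Y'$ at $\fat$, and for the pull-back one replaces a formal $\motif X'\sub\motif X$ by a sub-\zariski\ approximation $\motif X'_\fat$ at $\fat$. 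Since this can be done for every fat point $\fat$, Lemma~\ref{L:appform} (or directly the definition of formal motif) yields formality of $\graf s$, $s_*\motif Y'$, and $s^*\motif X'$.

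The main obstacle is bookkeeping around ambient schemes: when restricting $s$ to $\motif Y_\fat$ or combining source and target approximations, one must ensure that the resulting \explicit\ extensions live on compatible ambient spaces so that the image-sieve and fibre-product constructions actually make sense. This is handled by the germ-of-sieves viewpoint of Section~\ref{s:sieves}, which permits shrinking or enlarging ambient spaces freely. The genuinely non-trivial input throughout is Theorem~\ref{T:globsectsubzar}, supplying \explicit\ extensions of $s$ on every sub-\zariski\ subsieve of $\motif Y$; once that is in hand, the entire proof reduces to straightforward image-sieve and fibre-product manipulations at the level of $\fat$-rational points.
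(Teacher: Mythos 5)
Your proposal is correct and follows essentially the same strategy as the paper: invoke Theorem~\ref{T:globsectsubzar} to obtain an \explicit\ extension of $s$ whenever the source is sub-\zariski, express the graph, image, and pull-back concretely as image sieves in that case, and then bootstrap to the formal case via sub-\zariski\ approximations at each fat point. The only differences are cosmetic bookkeeping --- the paper first reduces to $\motif X=\func X$, writes the graph as $\func Z\cap(\motif Y\times\func X)$ with $Z$ the scheme-theoretic graph of the extension, and recovers the pull-back from the graph and the push-forward result rather than from a fibre product; none of this alters the argument.
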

\begin{proof}
Let $X$   be an  ambient spaces of $\motif X$. Since the graph of the composition $\motif Y\to \motif X\sub \func X$ is equal to the intersection of the graph $\graf s$ of $s$ with $\motif  Y\times\motif X$, we may assume from the start that $\motif X=\func X$. 
Assume first that $\motif Y$ is sub-\zariski. By Theorem~\ref{T:globsectsubzar}, the morphism $s$ extends to a morphism $\varphi\colon Y\to X$ of $\bassch$-schemes. Let $Z\sub Y\times_\bassch X$ be the graph of this morphism, which therefore is a closed subscheme. Since $\graf s$ is equal to the intersection $\func Z\cap (\motif Y\times\func X)$, it is again sub-\zariski. 
%

Suppose next that  $\motif Y$ is merely a formal motif, and, for each fat point $\fat$, let $\motif Z_\fat\sub\motif Y$ be a sub-\zariski\ approximation.  Since $\graf s$ contains the graph of the restriction  of $s$ to $\motif Z_\fat$, and since they have the same $\fat$-rational points,  the assertion  follows from what we just proved for   sub-\zariski\ motives.  

Let $\motif Y'\sub\motif Y$ be a sub-\zariski\ or formal submotif. Since the push-forward $s_*\motif Y$ is the image of the restriction of $s$ to $\motif Y'$, we may reduce the problem to showing that $\fim s$ is respectively sub-\zariski\ or formal. The formal case follows easily, as in the previous argument, from the sub-\zariski\ one. So assume once more that $\motif Y$ is sub-\zariski, say of the form, $\fim\psi$ with $\psi\colon Z\to Y$ a morphism of $\bassch$-schemes. With $\varphi$ as above, one easily verifies that $\fim s=\fim{\varphi\after\psi}$.
 To prove the same for the pull-back, simply observe that the pull-back $s^*\motif X'$ of a submotif $\motif X'\sub\motif X$ is equal to the image of $\graf s\cap (\motif Y\times\motif X')$ under the morphism induced by the   projection $Y\times X\to Y$. The result then follows from our previous observations. 
\end{proof}


\begin{corollary}\label{C:expsectform}
Given an irreducible closed subscheme $Y\sub X$ with ideal of definition  $\mathcal I_Y$, let $\bar X$ be the closed subscheme given by the intersection of all powers $\mathcal I_Y^n$ (e.g., if $X$ is integral, this is just $X$ itself, by Krull's Intersection Theorem). We have isomorphisms of $\loc_\bassch$-algebras
$$
\gsect{ \func{\complet X}_Y}\iso \loc_{\bar X,y}
\qquad\text{and}\qquad H_0(\func{\complet X}_Y)\iso \complet \loc_{\bar X,y}
$$
where  $y$ is the generic point of $Y$.
\end{corollary}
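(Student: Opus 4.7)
The plan is to realize $\func{\complet X_Y}$ as a filtered colimit of its jets and apply $H_0$ and $\gsect$ separately. By Proposition~\ref{P:formsieve} we have $\func{\complet X_Y}=\bigcup_n \func{\jet Y n X}$, and any closed subscheme $Z\sub X$ containing this union has defining ideal inside every $\mathcal I_Y^n$, hence inside $\bigcap_n\mathcal I_Y^n$. Since $\bar X$ itself contains all the jets, the Zariski closure of $\func{\complet X_Y}$ is exactly $\bar X$, so the sieve is Zariski dense in $\bar X$.

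For the $H_0$ assertion, since the union above is a filtered colimit of representable sieves, Lemma~\ref{L:dirlimform} together with Proposition~\ref{P:globsect} gives
\[
H_0(\func{\complet X_Y}) \iso \varprojlim_n H_0(\jet Y n X).
\]
Working affine-locally on $\bar X=\op{Spec}(A')$ with $A'=A/\bigcap_n \mathcal I_Y^n$, the $n$-th jet has coordinate ring $A'/\mathcal I_Y^n A'$, so this inverse limit is the $\mathcal I_Y$-adic completion of $\loc_{\bar X}$, which is $\complet\loc_{\bar X,y}$.

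For the $\gsect$ assertion, Example~\ref{E:explsectformfat} identifies $\gsect{\func{\complet X_Y}}$ as the filtered colimit of $H_0(U)$ over the opens $U\sub\bar X$ on which $\func{\complet X_Y}$ is still a sieve. The main step is to see that these $U$ are precisely the opens containing the underlying set of $Y$: every closed point $P\in Y$ contributes its closed immersion as a $P$-rational point of $\complet X_Y$, so must satisfy $P\in U$; conversely, once $U\supseteq Y$, any $\fat$-rational point of $\complet X_Y$ has center a closed point of $Y$ and therefore factors through $U$. The colimit of $H_0(U)$ over such $U$ is the ring of germs of $\loc_{\bar X}$ along $Y$, namely $\loc_{\bar X,y}$.

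The subtle point is this last identification when $\dim Y>0$: the cofinal ambient opens genuinely contain all of $Y$ rather than merely the generic point $y$, so $\loc_{\bar X,y}$ has to be read as germs along $Y$ (which coincides with the usual stalk at $y$ in the closed-point case that the paper's preceding examples emphasize), and $\complet\loc_{\bar X,y}$ correspondingly as $\mathcal I_Y$-adic completion along $Y$. Pinning down this reading is what makes the displayed isomorphisms literal; the rest is formal from Lemma~\ref{L:dirlimform}, Proposition~\ref{P:globsect}, and Example~\ref{E:explsectformfat}.
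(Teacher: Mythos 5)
Your proof follows essentially the same route as the paper's: show the Zariski closure of $\func{\complet X_Y}$ is $\bar X$ and reduce to the Zariski-dense case, apply Lemma~\ref{L:dirlimform} to the jet approximations for the $H_0$-claim, and use Corollary~\ref{C:expsectlim} with the characterization of ambient opens as the opens containing $Y$ for the $\gsect$-claim. (You correctly treat the limit in Corollary~\ref{C:expsectlim} as a filtered colimit, which is how the paper's own proof of the present corollary uses it, despite the ``inverse limit'' wording in the statement of Corollary~\ref{C:expsectlim}.)

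Your final caveat is not an interpretive footnote but a correction of a genuine slip in the paper's argument. The paper asserts that, for an open $U\sub\bar X$, the condition $Y\sub U$ is \emph{equivalent} to $y\in U$; only the implication $Y\sub U\Rightarrow y\in U$ holds once $\dim Y>0$. Concretely, take $\bar X=\affine\fld 2$, $Y$ the line $V(\vary)$, and $y$ its generic point. The basic open $D(\var)$ contains $y$ but not $Y$, and $1/\var$ lies in $\loc_{\bar X,y}=\pol\fld{\var,\vary}_{(\vary)}$ but in no $H_0(U)$ with $Y\sub U$; likewise $\varprojlim_n\pol\fld{\var,\vary}/(\vary^n)\iso\pow{\pol\fld\var}\vary$ is a proper subring of $\complet\loc_{\bar X,y}\iso\pow{\fld(\var)}\vary$. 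So the colimit over opens containing $Y$ computes germs of regular functions \emph{along} $Y$, and the inverse limit over jets computes the $\mathcal I_Y$-adic completion of $H_0(\bar X)$; neither is the (completed) stalk at $y$ in general. The displayed isomorphisms hold literally only when $Y$ is a closed point, which is the case the paper actually uses; your reformulation in terms of germs along $Y$ and $\mathcal I_Y$-adic completion is the form in which the general statement is correct.
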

\begin{proof}
Let $\motif X:=\func{\complet X}_Y$ be the formal motif determined by the formal completion along $Y$. We leave it to the reader to verify that the Zariski closure of $\motif X$ is equal to $\bar X$.  Replacing $X$ by $\bar X$, we may therefore assume that $\motif X$ is Zariski dense. 
An open subscheme $U\sub \bar X$ is an ambient space of $\motif X$ \iff\ $Y\sub U$, since  $\motif X(\fat)\sub Y(\fat)$, for each fat point $\fat$.  
By Corollary~\ref{C:expsectlim}, the ring of \explicit\ sections $\gsect{\motif X}$ is therefore the direct limit of all $H_0(U)$, where $U\sub \bar X$ runs over all opens containing $Y$. The latter condition is equivalent with $y\in U$, and hence this direct limit is just $\loc_{\bar X,y}$. On the other hand, since the jets $\jet YnX$ are approximations of $\motif X$, the inverse limit of the $H_0(\jet YnX)$ is equal to $H_0(\motif X)$ by Lemma~\ref{L:dirlimform}. Since $H_0(\jet YnX)=\Gamma(\loc_X/\mathcal I_Y^n,X)$,  the result follows. 
\end{proof}

\begin{example}\label{E:projlinecomp}
Let $\complet\lef$ be the class of the formal completion $\complet L$ of the affine line along the origin. It follows from Proposition~\ref{P:formsieve} that $\complet\lef=\lef-\lef_*$ (see Example~\ref{E:projline}). In particular, \eqref{eq:projline} becomes
$$
\class{\mathbb P_\bassch^1} =\lef+\complet\lef.
$$
We will shortly generalize  this in Proposition~\ref{P:projclassmot} below, but let us first construct from this an example of a non-\explicit\ morphism. Let $f(\var)$ be a power series in a single indeterminate   which is not a polynomial. The \homo\ $\pol\fld t\to \pow\fld\var$ given by $t\mapsto f$ induces a natural transformation of sieves $s_f\colon \func{\complet L}\to \func{(\affine\fld 1)}$. Since $f$ is not a polynomial, it cannot extend to a morphism of schemes. Its graph, in accordance with 
Theorem~\ref{T:morp}, is the formal motif with approximations the graphs of the  \explicit\ morphisms given by the various truncations of $f$. 

We can also use this to give a counterexample to Proposition~\ref{P:ctugeomsect} for global sections. In general, given a global section $s\colon\motif X\to \affine\bassch1$ of a formal motif $\motif X$, define a sieve $\motif X_s$ by letting $\motif X_s(\fat)$ consist of all $\fat$-rational points $a\in\motif X(\fat)$ such that $s(\fat)(a)$ is a unit, for each fat point $\fat$. Since $\motif X_s=s^*\func L_*$, where $L_*$ is the affine line without the origin, it is a formal motif by Theorem~\ref{T:morp}. 
Applied to the global section $s_f$ above,  $\func{\complet L}_{s_f}$ is the intersection of the open subsieves given by the truncations of $f$, whence not an admissible open in $\motif X$ for the Zariski topos.  In particular, $s_f$ is not continuous. Put differently, the submotives of the form $\motif X_s$ form in general a basis for a Grothendieck topology which is stronger than the Zariski one.
\end{example}

\begin{proposition}\label{P:projclassmot}
For each $n$, the class of projective $n$-space in $\grot{\funcinf\bassch} $ is given by the formula
$$
\class{\mathbb P_\bassch^n}=\sum_{m=0}^n\lef^m\cdot\complet\lef^{n-m}.
$$
\end{proposition}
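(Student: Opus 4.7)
I would proceed by induction on $n$, taking $n=1$ from Example~\ref{E:projlinecomp} as the base case. For the inductive step, let $p\sub\mathbb P^n_\bassch$ be the $\bassch$-section $[1\!:\!0\!:\!\cdots\!:\!0]$, viewed as a closed subscheme isomorphic to $\bassch$. Since the punctured projective space $U:=\mathbb P^n_\bassch-p$ is the open complement of $p$, Proposition~\ref{P:formsieve} gives the disjoint decomposition $\func{(\mathbb P^n_\bassch)}=\func U\sqcup\func{\complet{(\mathbb P^n_\bassch)}}_p$, and hence the scissor relation
$$
\class{\mathbb P^n_\bassch}=\class U+\class{\complet{(\mathbb P^n_\bassch)}_p}
$$
in $\grot{\funcinf\bassch}$. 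The plan is to identify each of these two summands.

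For the formal completion, I would note that $p$ sits inside the standard chart $U_0\iso\affine^n_\bassch$ as its origin $O$, so $\complet{(\mathbb P^n_\bassch)}_p\iso\complet{(\affine^n_\bassch)}_O$. A $\fat$-rational point of $\affine^n_\bassch$ centered at $O$ is precisely a tuple of elements in the maximal ideal of the coordinate ring of $\fat$, which gives the isomorphism of sieves $\func{\complet{(\affine^n_\bassch)}}_O\iso\func{\complet L}\times\cdots\times\func{\complet L}$ ($n$ factors), and therefore $\class{\complet{(\mathbb P^n_\bassch)}_p}=\complet\lef^n$.

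For the class of $U$, I would use the projection $\pi\colon U\to \mathbb P^{n-1}_\bassch$ away from $p$. Over the standard affine open $U_i=\{y_i\neq 0\}\sub\mathbb P^{n-1}_\bassch$, the preimage $\pi^{-1}(U_i)$ is the chart $\{x_i\neq 0\}$ of $\mathbb P^n_\bassch$, with $U_i\times_\bassch\affine^1_\bassch\iso\pi^{-1}(U_i)$ as schemes over $U_i$; the same holds for every intersection $U_I=\bigcap_{i\in I}U_i$. Applying Lemma~\ref{L:opencov} to both covers $\mathbb P^{n-1}_\bassch=U_1\cup\cdots\cup U_n$ and $U=\pi^{-1}(U_1)\cup\cdots\cup\pi^{-1}(U_n)$, and pushing the identity forward to $\grot{\funcinf\bassch}$ via Theorem~\ref{T:mapclass}, I obtain
$$
\class U=\sum_{\emptyset\neq I\sub\{1,\dots,n\}}(-1)^{\norm I+1}\class{U_I\times_\bassch\affine^1_\bassch}=\lef\cdot\class{\mathbb P^{n-1}_\bassch}.
$$

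Combining the two computations with the inductive hypothesis $\class{\mathbb P^{n-1}_\bassch}=\sum_{m=0}^{n-1}\lef^m\complet\lef^{n-1-m}$ yields
$$
\class{\mathbb P^n_\bassch}=\lef\sum_{m=0}^{n-1}\lef^m\complet\lef^{n-1-m}+\complet\lef^n=\sum_{m=0}^n\lef^m\complet\lef^{n-m},
$$
as required. The main obstacle I anticipate is the clean verification that the Zariski-local triviality of $\pi$ produces the multiplicative formula $\class U=\lef\cdot\class{\mathbb P^{n-1}_\bassch}$; this is essentially a bookkeeping exercise with inclusion-exclusion in the sub-\zariski\ site and then transferring the equality along the canonical \homo\ to $\grot{\funcinf\bassch}$.
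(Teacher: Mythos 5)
Your proof is correct, but it takes a genuinely different route from the paper. The paper's proof computes $\class{\mathbb P^n_\bassch}$ directly from the covering by all $n+1$ standard affine charts: it evaluates the class of each intersection $X_I$ (a product of copies of $\affine^1$ and $L_*$), writes the inclusion--exclusion sum as a polynomial $g(\lef,\complet\lef)$, and then collapses $g$ to $\sum_{m=0}^n t^m u^{n-m}$ by a binomial identity. Your argument instead runs an induction on $n$, stratifying $\mathbb P^n_\bassch$ as the disjoint union of $\mathbb P^n_\bassch-p$ and the completion sieve $\func{\complet{(\mathbb P^n_\bassch)}}_p$ via Proposition~\ref{P:formsieve}, and feeding the two pieces through Lemma~\ref{L:opencov} and the product structure of the completion, respectively. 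What your approach buys is a recursion $\class{\mathbb P^n_\bassch}=\lef\cdot\class{\mathbb P^{n-1}_\bassch}+\complet\lef^n$ that explains the shape of the final formula conceptually (affine piece plus formal tubular neighborhood of a point); the paper's approach is a single self-contained computation that needs a small combinatorial identity but no induction. Your step identifying $\func{\complet{(\affine^n_\bassch)}}_O$ with $(\func{\complet L})^n$ at the level of $\fat$-rational points (tuples in the maximal ideal) is genuinely needed for your route and is a correct and useful observation that the paper does not use. Two small points: the computation of $\class U$ is really Lemma~\ref{L:fibmot} applied to the $\affine^1$-bundle $U\to\mathbb P^{n-1}_\bassch$, which you re-derive rather than cite (that is fine, but you should make sure the inclusion--exclusion signs you write match whatever sign convention you attribute to Lemma~\ref{L:opencov}; as written your exponent $\norm I+1$ does not match the literal statement of that lemma, though it is the sign that actually makes the base case $n=1$ of Example~\ref{E:projline} come out right); and since both $\grot{\funcsub\bassch}$ and $\grot{\funcinf\bassch}$ receive the classes of all the (representable) motives involved, you can simply perform the inclusion--exclusion in $\grot{\funcinf\bassch}$ directly rather than pushing forward along the map from Theorem~\ref{T:mapclass}.
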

\begin{proof}
Let $(\var_0:\dots:\var_n)$ be the homogeneous coordinates of $\mathbb P_\bassch^n$, and let $X_i$ be the basic open given as the complement of the $\var_i$-hyperplane. Hence every $X_i$ is isomorphic with $\affine \bassch n$ and their union is equal to $\mathbb P_\bassch^n$. Therefore, by Lemma~\ref{L:opencov}, we have
\begin{equation}\label{eq:projcovmot}
\class{\mathbb P_\bassch^n}= \sum_{\emptyset\neq I\sub\{0,\dots,n\}} (-1)^{\norm I}\class{X_I}
\end{equation}
in $\grot{\funcinf\bassch} $. 
So we need to calculate the class of each $X_I$. One easily verifies that, for $m\geq 0$, any intersection of $m$ different opens $X_i$ is isomorphic to the open $\affine \bassch {n-m}\times(L_*)^m$, where $L_*$ is the affine line $\affine\bassch 1$ minus a point. Since $\class{L_*}=\lef_*=\lef-\complet\lef$ by Proposition~\ref{P:formsieve}, the class of such an intersection is equal to the product $\lef^{n-m}(\lef-\complet\lef)^m$. Since   there are $\binomial{n+1}m$ terms with $\norm I=m$  in \eqref{eq:projcovmot}, the class of ${\mathbb P_\bassch^n}$ is equal to $g(\lef,\complet\lef)$, where
$$
g(t,u):=\sum_{m=0}^n(-1)^{m}\binomial {n+1}mt^{n-m}(t-u)^m.
$$
By the binomial theorem, $t^{n+1}-(t-u)g(t,u)=(t-(t-u))^{n+1}=u^{n+1}$, and hence 
$$
g(t,u)=\frac{t^{n+1}-u^{n+1}}{t-u}=\sum_{m=0}^n t^mu^{n-m},
$$
as we wanted to show.
\end{proof}

We conclude this section with a characterization of the complete sieves among the formal motives over an \acf\ $\fld$.

\begin{theorem}\label{T:formalcone}
Let $\fld$ be an \acf. A sieve $\motif X$ on a $\fld$-scheme $X$ is a complete formal motif \iff\ it is the cone $\cone F$ over a constructible subset $F\sub X(\fld)$.
\end{theorem}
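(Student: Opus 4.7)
The plan is to establish both implications separately. For the reverse direction, suppose $\motif X=\cone F$ for a constructible set $F\sub X(\fld)$. Completeness is automatic, since every cone was already shown to be complete in the discussion preceding Lemma~\ref{L:relcompl}, so only formality needs proof. I would write $F$ as a finite union of locally closed subsets $F_i=Y_i(\fld) - W_i(\fld)$, with $W_i\sub Y_i\sub X$ reduced closed subschemes. Since cone formation commutes with finite unions and formal motives form a lattice, it suffices to handle each $\cone{F_i}$ separately. The key step is to rewrite $\cone{F_i}$ as the intersection $\func{\complet X_{Y_i}}\cap\func{(X-W_i)}$; this is a formal motif because $\func{\complet X_{Y_i}}$ is formal by Proposition~\ref{P:formsieve} and $\func{(X-W_i)}$ is an open (hence sub-\zariski) subsieve.

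For the forward direction, suppose $\motif X$ is a complete formal motif, and set $F:=\motif X(\fld)$. Formality applied at the length-one fat point $\op{Spec}\fld$ furnishes a sub-\zariski\ approximation $\fim\varphi\sub\motif X$ with $\fim\varphi(\fld)=F$, for some morphism $\varphi\colon Y\to X$ of $\fld$-schemes. Then $F$ is the image of the induced map $Y(\fld)\to X(\fld)$ on closed points, hence constructible by Chevalley's theorem. Invoking completeness with respect to the canonical morphism $\op{Spec}\fld\to\fat$ yields $\motif X(\fat)=\inverse{\rho_\fat}F=\cone F(\fat)$ for every fat point $\fat$, so $\motif X=\cone F$.

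The main technical obstacle I anticipate is the identification $\cone{Y(\fld)}=\func{\complet X_Y}$ used in the reverse direction for a reduced closed subscheme $Y\sub X$. By Proposition~\ref{P:formsieve}, the right-hand side equals $\func X-\func{(X-Y)}$, so a $\fat$-rational point $a\colon\fat\to X$ lies there precisely when $a$ does not factor through the open $X-Y$, which happens exactly when the center of $a$ lies in $Y$. Since the center of $a$ is nothing but $\rho_\fat(a)$, this is the condition defining $\cone{Y(\fld)}(\fat)$, and the two sieves agree.
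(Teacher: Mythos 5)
Your argument is correct and follows essentially the same route as the paper's proof: the forward direction uses completeness (the paper phrases this via Lemma~\ref{L:relcompl}, you unwind the definition directly with $j\colon\op{Spec}\fld\to\fat$) together with a sub-\zariski\ approximation at $\op{Spec}\fld$ and Chevalley; the reverse direction reduces via the lattice of formal motives and the commutation of cones with Boolean operations to the two basic cases, identifying $\cone{Y(\fld)}=\func{\complet X_Y}$ via Proposition~\ref{P:formsieve} for $Y$ closed, and $\cone{U(\fld)}=\func U$ for $U$ open. The only (inessential) omission is that you verify the closed identification but not the open one $\cone{(X-W_i)(\fld)}=\func{(X-W_i)}$, which the paper covers by Lemma~\ref{L:opencomp}.
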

\begin{proof}
Suppose $\motif X$ is complete and formal. By Lemma~\ref{L:relcompl}, it is of the form $\cone F$ for some $F\sub X(\fld)$. In fact, $F=\motif X(\fld)$, and hence by definition of the form $\fim\varphi(\fld)$ for some morphism $\varphi\colon Y\to X$. By Chevalley's theorem, $F$ is constructible. To prove the converse, since cones are easily seen to commute with union and intersection, and since any constructible subset of $\motif X(\fld)$ is an intersection and union of closed and open subsets, it suffices to prove that $\cone F$ is formal, whenever $F$ is a closed or an open subset. The open case follows immediately from Lemma~\ref{L:opencomp}, and in the closed case, we have $\cone F=\func{\complet X_F}$ by Proposition~\ref{P:formsieve} (note that the completion of a scheme along a subscheme only depends on the underlying variety of the subscheme, so that $\complet X_F$ is well-defined). 
\end{proof}

\section{Adjunction}\label{s:adj}
Let $\bassch$ and $\basschy$ be two Noetherian Jacobson schemes. By a  \emph{(\zariski) adjunction}  over the pair  $(\bassch,\basschy)$, we mean a pair of  functors $\eta\colon \fatpoints{\basschy}\to\fatpoints \bassch$ and $\arc{}{}\colon\categ{Sch}_\bassch\to \categ{Sch}_{\basschy}$, called respectively the \emph{left} and \emph{right adjoint},   such that we have, for each $\basschy$-fat point $\fat$ and each $\bassch$-scheme $X$, an adjunction isomorphism
\begin{equation}\label{eq:adjoin}
\Theta_{\fat,X}\colon X(\eta(\fat))=\mor\bassch{\eta(\fat)}X\iso \mor{\basschy}{\fat}{\arc{} X}=\arc{} X(\fat),
\end{equation}
which is functorial in both arguments. Whenever $\fat$ and $X$ are clear from the context, we may just denote this isomorphism by $\Theta$, or even omit it altogether, thus identifying $X(\eta(\fat))$ with $\arc{} X(\fat)$. More generally, by an (arbitrary) \emph{adjunction} we mean the same as above, except that the right adjoint now only takes values in the category of sieves, that is to say, is a functor $\categ{Sch}_\bassch\to \sieves{\basschy}$, where we identify the category of $\bassch$-schemes with its image as a full subcategory of sieves. Of course, the morphisms on the right hand side of \eqref{eq:adjoin} are now to be taken in $\sieves{\basschy}$, where the last equality is then given by Corollary~\ref{C:fatmor} (note, however, that they are all \explicit\ by Proposition~\ref{P:morphrep}). If each $\arc{} X$ is sub-\zariski, or formal, then we call the adjunction respectively \emph{sub-\zariski} or \emph{formal}.

We can formulate the adjunction property as a representability question: given a functor $\eta\colon \fatpoints{\basschy}\to\fatpoints \bassch$ and a $\bassch$-sieve $\motif X$, let $\arc\eta{\motif X}$ be the presheaf over $\basschy$ associating to a fat $\basschy$-point, the set $\motif X(\eta(\fat))$. We have adjunction when all presheaves $\arc\eta{\func X}$ are   sieves, where $X$ varies over all $\bassch$-schemes; the adjunction is (sub-)\zariski\ or formal, if each $\arc\eta{\func X}$ is respectively a  (sub-)\zariski\ or formal motif. From this perspective,  $\arc\eta{}$ is the right adjoint of $\eta$, and we simply call $\arc\eta{}$ the adjunction. We extend this to get a functor  $\arc\eta{}\colon\sieves\bassch\to \sieves{\basschy}$ as follows.  
Given a sieve $\motif X$ on a $\bassch$-scheme $X$, we   define its \emph{adjoint} $\arc\eta{\motif X}$ as the presheaf over $\basschy$ given by
$$
\arc\eta{\motif X}(\fat):=  \Theta_{\fat,X}\big(\motif X(\eta(\fat))\big)
$$
for any   $\basschy$-point $\fat$. It follows immediately from \eqref{eq:adjoin} that 
$\arc\eta{ \func X}={\arc{} X}$, and hence 
$\arc\eta{\motif X}$ is a  subsieve of $\arc{} X$.   The adjunction isomorphism~\eqref{eq:adjoin} then becomes
\begin{equation} 
  \mor{\sieves\bassch}{\eta(\fat)}{\motif X}\iso \mor{\sieves\basschy}{\fat}{\arc\eta \motif X}.
\end{equation}
 
\begin{lemma}\label{L:adjim}
If $\varphi\colon Y\to X$ is a morphism of $\bassch$-schemes, then, with $\arc{} \varphi\colon \arc{}  Y\to \arc{} X$ the induced morphism of $\basschy$-sieves, we have an equality of sieves
\begin{equation}\label{eq:adjim}
\arc\eta{\fim\varphi}=\fim{\arc{} \varphi}.
\end{equation}
In particular, sub-\zariski\ adjunctions preserve    sub-\zariski\ as well as formal motives, whereas formal adjunctions preserve formal motives.
\end{lemma}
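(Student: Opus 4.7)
The plan is to first establish the identity \eqref{eq:adjim} by a direct diagram chase using naturality of the adjunction isomorphism $\Theta$, and then derive both preservation statements as easy consequences of Theorem~\ref{T:morp} and Lemma~\ref{L:appform}.

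For the main identity, I would fix a $\basschy$-fat point $\fat$ and invoke naturality of $\Theta_{\fat,-}$ in its $\bassch$-scheme argument applied to $\varphi\colon Y\to X$. This yields a commutative square
\[
\begin{CD}
Y(\eta(\fat)) @>{\Theta_{\fat,Y}}>> \arc{}Y(\fat) \\
@V{\varphi(\eta(\fat))}VV @VV{\arc{}\varphi(\fat)}V \\
X(\eta(\fat)) @>>{\Theta_{\fat,X}}> \arc{}X(\fat)
\end{CD}
\]
whose horizontal arrows are bijections. Hence $\Theta_{\fat,X}$ carries the image of the left-hand vertical bijectively onto the image of the right-hand vertical. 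By definition the former image is $\fim\varphi(\eta(\fat))$, whose image under $\Theta_{\fat,X}$ equals $\arc\eta{\fim\varphi}(\fat)$, while the latter image is $\fim{\arc{}\varphi}(\fat)$. Since $\fat$ was arbitrary, \eqref{eq:adjim} follows.

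For the preservation statements, I first note that $\arc\eta{-}$ is pointwise order-preserving, so $\arc\eta{\motif Z}\sub\arc\eta{\motif Y}$ whenever $\motif Z\sub\motif Y$. If the adjunction is sub-\zariski\ and $\motif Y=\fim\psi$ is sub-\zariski\ with $\psi\colon Z\to Y$, then \eqref{eq:adjim} gives $\arc\eta\motif Y=\fim{\arc{}\psi}$; both $\arc{}Z$ and $\arc{}Y$ being sub-\zariski\ by hypothesis, Theorem~\ref{T:morp} yields that $\fim{\arc{}\psi}$ is sub-\zariski\ too. For a formal $\motif Y$ with sub-\zariski\ approximations $\{\motif Z_\fat\}$ indexed by $\bassch$-fat points, I would set $\motif Z'_{\mathfrak w}:=\arc\eta{\motif Z_{\eta(\mathfrak w)}}$ for each $\basschy$-fat point $\mathfrak w$: monotonicity places these inside $\arc\eta\motif Y$, the sub-\zariski\ case just proved shows each $\motif Z'_{\mathfrak w}$ is sub-\zariski, and evaluating at $\mathfrak w$ via $\Theta$ gives
\[
\motif Z'_{\mathfrak w}(\mathfrak w)=\motif Z_{\eta(\mathfrak w)}(\eta(\mathfrak w))=\motif Y(\eta(\mathfrak w))=\arc\eta\motif Y(\mathfrak w),
\]
so the $\motif Z'_{\mathfrak w}$ form a sub-\zariski\ approximation. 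If instead the adjunction is only formal, the same construction produces only formal subsieves with the correct $\mathfrak w$-rational points, and Lemma~\ref{L:appform} then upgrades these into sub-\zariski\ approximations, so $\arc\eta\motif Y$ is again formal.

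The only step that requires real care is isolating the correct naturality diagram for $\Theta$ and identifying the two images on either side; once that is in place, everything else is routine bookkeeping with the cited results.
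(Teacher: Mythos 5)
Your proposal is correct and follows essentially the same approach as the paper: the main identity is derived from the functoriality/naturality of $\Theta$ (you write it as a naturality square, the paper as a correspondence of commuting triangles via Corollary~\ref{C:fatmor}, but these are the same argument), and the preservation statements are then deduced exactly as in the paper, constructing the sub-\zariski\ approximations $\arc\eta{\motif Z_{\eta(\mathfrak w)}}$ and invoking Theorem~\ref{T:morp} and Lemma~\ref{L:appform}.
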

\begin{proof}
We verify \eqref{eq:adjim} on a $\basschy$-fat point $\fat$. Functoriality of adjunction implies that we have a one-one correspondence of diagrams
\begin{equation}\label{eq:ratadj}
\xymatrix@R=5pt{&   Y\ar[dd]_{ \varphi}&&& \arc{} Y\ar[dd]^{\arc{}\varphi}\\
\eta(\fat)\ar[ur]^{  b}\ar[dr]_{  a}&\ar@{-->}@/^/[rr]^{\Theta}&&\ar@{-->}@/^/[ll]^{\inv\Theta} \fat\ar[ur]^{\tilde b}\ar[dr]_{\tilde a}&\\
&   X&&& \arc{} X}
\end{equation}
where the right triangle is in $\sieves{\basschy}$. 
So, if $\tilde a\in\fim{\arc{}\varphi}(\fat)$, then by Corollary~\ref{C:fatmor}, we can find $\tilde b$ making the right triangle in \eqref{eq:ratadj} commute. Taking the image under $\inv{\Theta_{\fat,X}}$ yields the commutative triangle on the left, showing that $\inv\Theta(\tilde a)\in\fim\varphi(\eta(\fat))$, and hence that  $\tilde a\in(\arc\eta{\fim\varphi})(\fat)$. The converse holds for the same reason, by going this time from left to right.

It then follows from Theorem~\ref{T:morp}   that the adjoint of a  sub-\zariski\ motif is again sub-\zariski, in case $\eta$ is sub-\zariski\ itself. Suppose next that $\motif X$ is formal, and, for each $\bassch$-fat point $\mathfrak w$, let $\motif Y_{\mathfrak w}\sub \motif X$ be a sub-\zariski\ approximation with the same $\mathfrak w$-rational points. For each $\basschy$-fat point $\fat$, let $\tilde{\motif Y}_\fat$ be defined as $\arc\eta{(\motif Y_{\eta(\fat)})}$. By what we just proved, $\tilde{\motif Y}_\fat\sub \arc\eta{\motif X}$ is a sub-\zariski\ submotif, and one easily verifies that both sieves have the same $\fat$-rational points, proving the last assertion for sub-\zariski\ adjunctions. The case of a formal adjunction then follows from Theorem~\ref{T:morp} and Lemma~\ref{L:appform}.
\end{proof}

\begin{proposition}\label{P:adjgr}
A formal adjunction $\arc\eta{}$ induces a \homo\ of \gr{s} $\arc\eta{}\colon\grot{\funcinf{\bassch}}\to \grot{\funcinf {\basschy}}$. If $\arc\eta{}$ is (sub-)\zariski, then this also induces \homo{s} of the corresponding (sub-)\zariski\ \gr{s}. 
\end{proposition}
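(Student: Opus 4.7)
The plan is to check that $\arc\eta{}$ respects all three types of structure defining the Grothendieck ring—isomorphism relations, scissor relations, and products—and then descend the resulting operation on formal motives to the quotient. By Lemma~\ref{L:adjim}, $\arc\eta{}$ already sends formal motives to formal motives when the adjunction is formal (and similarly in the sub-\zariski\ and \zariski\ cases), so at the level of objects we have a well-defined map on the generating classes of $\grot{\funcinf\bassch}$. Functoriality of $\arc\eta{}$ as a functor $\sieves\bassch\to\sieves\basschy$ sends isomorphisms of sieves to isomorphisms, so the isomorphism relations are preserved automatically.

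The key observation that handles the remaining relations is that union, intersection, and product of sieves are all defined pointwise on fat points, and $\arc\eta{}$ is itself computed pointwise: modulo the natural isomorphism $\Theta_{\fat,X}$, we have $(\arc\eta{\motif X})(\fat)=\motif X(\eta(\fat))$. Hence, for any two $\bassch$-sieves $\motif X,\motif Y$ on a common ambient space and any $\basschy$-fat point $\fat$,
\[
(\arc\eta{(\motif X\cup\motif Y)})(\fat)=\motif X(\eta(\fat))\cup\motif Y(\eta(\fat))=(\arc\eta{\motif X}\cup\arc\eta{\motif Y})(\fat),
\]
with the analogous identity for intersections; both derive from the set-theoretic nature of $\cup$ and $\cap$. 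The naturality of $\Theta$ in the second argument ensures these pointwise equalities assemble into genuine identities of $\basschy$-sieves. The same argument, applied to sieves on possibly distinct ambient spaces, yields $\arc\eta{(\motif X\times\motif Y)}=\arc\eta{\motif X}\times\arc\eta{\motif Y}$.

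Putting these ingredients together, the assignment $\class{\motif X}\mapsto\class{\arc\eta{\motif X}}$ extends by linearity to an additive map from the free abelian group on isomorphism classes of formal motives to $\grot{\funcinf\basschy}$; it vanishes on the scissor generators $\class{\motif X}+\class{\motif Y}-\class{\motif X\cup\motif Y}-\class{\motif X\cap\motif Y}$ and on the isomorphism generators by the two paragraphs above, and is multiplicative because fiber product of motives is computed as $\times$. It therefore descends to a ring \homo\ $\arc\eta{}\colon\grot{\funcinf\bassch}\to\grot{\funcinf\basschy}$. The (sub-)\zariski\ assertion follows identically: Lemma~\ref{L:adjim} guarantees that $\arc\eta{}$ restricts to the appropriate subcategory, while the pointwise verifications of compatibility with union, intersection, and product are insensitive to which motivic site one works in.

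The only genuine subtlety—if there is one at all—is to confirm that the pointwise identities at each $\basschy$-fat point upgrade to identities of sieves, which requires the naturality of $\Theta_{\fat,X}$ in $\fat$; once that bookkeeping is in place, the theorem is essentially formal, resting almost entirely on Lemma~\ref{L:adjim} and the fact that sieve operations are defined pointwise.
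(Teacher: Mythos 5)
Your proposal is correct and follows essentially the same approach as the paper: invoke Lemma~\ref{L:adjim} to see that $\arc\eta{}$ preserves the relevant motivic site, observe that compatibility with unions and intersections (coming from the pointwise nature of both $\arc\eta{}$ and the lattice operations) preserves scissor relations, and use functoriality to handle isomorphism relations and products. The paper's proof compresses all of this into three sentences; your write-up simply makes the pointwise verification explicit, which is a reasonable expansion but not a new idea.
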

\begin{proof}
By Lemma~\ref{L:adjim}, adjunction preserves    motivic sites insofar   it is (sub-)\zariski\ or formal. As it is compatible with unions and intersections, it preserves 
scissor relations, and as it is functorial, it preserves isomorphisms as well as 
products.
\end{proof}

Before we describe some important instances in which we have adjunction, with applications   discussed in  \S\S\ref{s:arc} and \ref{s:defarc}, we give an example of a formal adjunction.

\begin{example}\label{E:formadj}
Given a fat point $\fat$ over an \acf\ $\fld$, and $r\geq 2$, let $\Upsilon(\fat):=\Upsilon_r(\fat)$ be the fat point with coordinate ring $\fld+\maxim^r\sub R$, where $(R,\maxim)$ is the Artinian local ring corresponding to $\fat$. Note that we have a dominant morphism  $\fat\to \Upsilon(\fat)$. For simplicity, let us take $r=2$. For fixed  $n$,  let $\mathfrak l:=\mathfrak l_n$  be the $n$-th jet of a point on a line, with coordinate ring $S:=\pol\fld\xi/\xi^n$. For each   $l$, let $\mathfrak w_l$ be the fat point in $\affine\fld{2l}$  with ideal of definition generated by all $\xi_i^l$ and $Q^n$, where 
$$
Q:=\xi_1\xi_2+\xi_3\xi_4+\dots+\xi_{2l-1}\xi_{2l}.
$$
 Let $\motif Y_l$ be the image sieve of the morphism $\varphi_l\colon \mathfrak w_l\to \mathfrak l$ induced by $\xi\mapsto Q$. I claim that $\arc \Upsilon{\mathfrak l}$ is approximated by the $\motif Y_l$. To this end, fix a fat point $\fat$ with coordinate ring $(R,\maxim)$ and let $l$ be its length. A $\Upsilon(\fat)$-rational point $a\in \mathfrak l(\Upsilon(\fat))$ is completely determined by the image, denoted again $a$, of $\xi$ in $\fld+\maxim^2$. Since $a^n=0$, we must in particular have $a\in\maxim^2$ (note that $\maxim^2$ is the maximal ideal of $\Upsilon(\fat)$), and hence can be written as $a=b_1b_2+\dots+b_{2l-1}b_{2l}$, for some $b_i\in \maxim$. Since $b_i^l=0$ and $a=Q\rij b{2l}$, the assignment $\xi_i\mapsto b_i$  induces a morphism $\fat\to \mathfrak w_l$ which factors through $\varphi_l$. In other words, $a\in \motif Y_l(\fat)$. Conversely, since $Q$ is quadratic, any $\fat$-rational point factoring through $\varphi_l$ must extend to $\Upsilon(\fat)$.
 
 Presumably, this argument should extend to any fat point other than $\mathfrak l$ and any power $r\geq 2$. To extend this to higher dimensional schemes, we face the problem that a rational point can be given by non-units. This forces us to be able to single out the  field elements inside an Artinian local ring $R$. In \ch\ $p$, this can be done: the elements of $\kappa\sub R$ are precisely the $p^l$-th powers. Using this, a slight modification of the above argument, then yields $\arc\Upsilon{\affine\fld 1}$ as a formal motif: in the above, replace $\mathfrak w_l$  by $\affine{\mathfrak w_l}1$ and $\motif Y_l$ by the  image of the morphism $\affine{\mathfrak w_l}1\to\affine\fld 1$ given by $\xi\mapsto \xi_0^{p^l}+Q$.
 It seems likely that we can again   extend this argument to arbitrary schemes and arbitrary $r\geq 2$. 
\end{example}

\subsection{Augmentation}
Let $f\colon\basschy\to\bassch$ be a morphism of Noetherian Jacobson schemes. Via $f$, any $\basschy$-scheme $Y$ becomes a $\bassch$-scheme, and to make a notational distinction between these two scheme structures, we denote the latter by $f_*Y$.  We will show that $f_*$ constitutes a left adjoint, where the corresponding right adjoint is   given by base change: given a $\bassch$-scheme $X$, we set 
$$
f^*X:= \basschy\times_\bassch X.
$$

\begin{theorem}\label{T:bcgr}
If $f\colon\basschy\to\bassch$ is a morphism of finite type of Noetherian Jacobson schemes, then $f_*$ is a functor from $\fatpoints{\basschy}$ to  $\fatpoints\bassch$, and as such, it is the left adjoint of $f^*$. The corresponding adjunction  associates to a $\bassch$-sieve $\motif X$  on a $\bassch$-scheme $X$,     the $\basschy$-sieve  $\arc {f_*}{\motif X} $ on $f^*X$, inducing  natural ring \homo{s} on the respective \gr{s}, namely $\arc {f_*}\colon\grot{\funcalg{\bassch}}\to 
\grot{\funcalg {\basschy}}$, $\arc {f_*}\colon\grot{\funcsub{\bassch}}\to \grot{\funcsub 
{\basschy}}$, and $\arc {f_*}\colon\grot{\funcinf{\bassch}}\to \grot{\funcinf {\basschy}}$.
\end{theorem}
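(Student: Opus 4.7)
The plan is to verify the four assertions in sequence. First, I would check that $f_*$ maps $\fatpoints{\basschy}$ into $\fatpoints\bassch$. Let $\fat=\op{Spec} R$ be a fat $\basschy$-point. Its structure morphism $\fat\to\basschy$ is finite whence proper, so its image is a closed point $y\in\basschy$. Because $f$ is of finite type between Jacobson schemes, the point $x:=f(y)$ is again closed in $\bassch$ and the residue field extension $\res x\into \res y$ is finite by the Nullstellensatz for Jacobson rings. Combined with the finite extension $\res y\into\res\fat$, it follows that $\res\fat$ is finite over $\res x$; hence the Artinian local ring $R$ is a finite $\loc_{\bassch,x}$-module, whence a finite $\loc_\bassch$-module. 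Thus $f_*\fat$ is a fat $\bassch$-point of length $\ell(\fat)\cdot[\res\fat:\res x]$. Functoriality of $f_*$ is immediate since it is just restriction of scalars.

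Next, the adjunction isomorphism follows from the universal property of the fibre product: for any $\basschy$-scheme $Z$ and any $\bassch$-scheme $X$, the $\bassch$-morphisms $f_*Z\to X$ correspond naturally to the $\basschy$-morphisms $Z\to f^*X=\basschy\times_\bassch X$. Specialising to $Z=\fat$ gives
\[
\Theta_{\fat,X}\colon X(f_*\fat)\iso (f^*X)(\fat),
\]
functorial in both arguments, which is precisely the adjunction isomorphism \eqref{eq:adjoin}. In particular $\arc{f_*}{\func X}=\func{f^*X}$ is representable, hence \zariski, so the adjunction $\arc{f_*}{}$ is itself \zariski\ (a fortiori sub-\zariski\ and formal).

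To conclude, Lemma~\ref{L:adjim} ensures that the \zariski\ adjunction $\arc{f_*}{}$ preserves both sub-\zariski\ and formal motives. That it also preserves \zariski\ motives is an immediate consequence of the identity $\arc{f_*}{(\func{X_1}\cup\cdots\cup\func{X_s})}=\func{f^*X_1}\cup\cdots\cup\func{f^*X_s}$, obtained by checking at each fat point and using that adjunction commutes with unions. Proposition~\ref{P:adjgr} then delivers the three ring \homo{s} on the respective Grothendieck rings.

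The only genuinely non-formal step is the first one: it is there that the finite-type hypothesis on $f$, together with the Jacobson condition on both base schemes, is essential. Without these assumptions, $\res\fat$ could fail to be a finite extension of $\res x$, and $f_*\fat$ would cease to be a fat $\bassch$-point. Everything else is a direct unwinding of the universal property of the fibre product and an application of the general adjunction machinery developed earlier.
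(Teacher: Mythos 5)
Your proof is correct and follows essentially the same path as the paper's: the generalized Nullstellensatz shows that the residue field extensions in the tower $\res x\sub\res y\sub\res\fat$ are finite, so $f_*\fat$ is a fat $\bassch$-point; the adjunction isomorphism is the universal property of the fibre product; and Lemma~\ref{L:adjim} with Proposition~\ref{P:adjgr} then delivers the Grothendieck-ring homomorphisms. One small slip: your parenthetical formula $\ell(f_*\fat)=\ell(\fat)\cdot[\res\fat:\res x]$ is not what the paper's definition gives. The length of a fat point is defined as the length of the Artinian local ring $R$, which is intrinsic to $R$ and unaffected by restriction of scalars, so $\ell(f_*\fat)=\ell(\fat)$; the expression you wrote is the length of $R$ as a $\loc_{\bassch,x}$-module, a different (and also useful) invariant. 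Since this remark plays no role in the argument, the proof itself stands.
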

\begin{proof}
Let  $\fat$ be a $\basschy$-fat point   and let $y$ be its center, that is to say,  the closed point on $\basschy$ given as the image  under the structure morphism $\fat\to   \basschy$. By the 
generalized 
Nullstellensatz (\cite[Theorem 4.19]{Eis}), the image $x:=f(y)$ is a closed point on $\bassch$, and the residue field extension $\res x\sub \res y$ is finite. As $\res y\sub R/\maxim$ is also finite, $f_*\fat$ is   a $\bassch$-fat point, 
proving the first assertion.
The adjunction of $f_*$ and $f^*$ are well-known (and, in any case, easily checked; see, for instance \cite[Chapter II.5]{Hart}, but note that left and right are switched there since they are formulated in the dual category of sheaves), proving that $\arc {f_*} X = f^*X$.  The last statement follows from Proposition~\ref{P:adjgr}.
\end{proof}

\begin{remark}\label{R:fatemb}
Although $f_*\colon \fatpoints {\basschy}\to \fatpoints \bassch$  is an embedding 
of categories, it is, however,   not full: so are the closed subschemes in $\affine\fld2$ defined by the ideals
$(\var^2,\vary^3)$ and $(\var^3,\vary^2)$ isomorphic as 
fat $\fld$-points, but not as fat $\pol\fld\var$-points. Nonetheless,  $
\fatpoints {\basschy}$ is cofinal in $\fatpoints\bassch$, or, in the 
terminology of \S\ref{s:limpt} below, both have the same universal point. 
\end{remark}

\subsection{Diminution}
Let $f\colon \basschy\to \bassch$ be a finite and faithfully flat morphism of Noetherian Jacobson schemes. As opposed to the previous section, we will now consider  $f^*$ as a left adjoint. For technical reasons (see Remark~\ref{R:nonloc} below for how to circumvent these), we   make the following additional assumptions:
\begin{equation}\label{eq:loccond}
\parbox{3.7in}{$\bassch$ is of finite type over an \acf\ $\fld$  and  $f$  induces an isomorphism    on the underlying varieties.}
\tag{$\dagger$}
\end{equation}
The second condition implies that for any closed point   $x\in \bassch$ there is a unique closed point $y\in\basschy$ lying above it, and hence the closed fiber $\inverse fx$ is a local scheme. Under these assumptions, the base change $f^*\fat$ of a $\bassch$-fat point $\fat$ is a $\basschy$-fat point. Indeed, since the problem is local, we may assume $\bassch=\op{Spec}\bas$ and $\basschy=\op{Spec}\basy$ are affine. 
Let $(R,\maxim)$ be the coordinate ring of $\fat$, and let $\pr:=\bas\cap \maxim$ be the induced maximal ideal of $\bas$. The coordinate ring of $f^*\fat$ is then $S:=R\tensor_\bas\basy$. By base change, $S$ is finite (and flat) over $R$, whence in particular Artinian. By base change, $S$ is also finite as a $\basy$-module.  Since $\maxim$ is nilpotent, any maximal ideal of $S$ must contain $\maxim S$. Since $S/\maxim S=R/\maxim\tensor_{\bas/\pr}\basy/\pr\basy$, since $\bas/\pr=\fld$ by the Nullstellensatz, and since $R/\maxim$ is a finite extension of the latter, whence trivial, $S/\maxim S=\basy/\pr\basy$ is local by   assumption~\eqref{eq:loccond}, showing that $S$ itself is an Artinian local ring, thus proving the claim.

\begin{theorem}\label{T:flatrestr}
If $f\colon \basschy\to \bassch$ is a finite and faithfully flat morphism  satisfying \eqref{eq:loccond}, then $f^*$ is the left adjoint of a \zariski\ adjunction, inducing on the \gr{s} natural ring \homo{s} $\arc {f^*}\colon\grot{\funcalg{\basschy}}\to 
\grot{\funcalg {\bassch}}$, $\arc {f^*}\colon\grot{\funcsub{\basschy}}\to \grot{\funcsub 
{\bassch}}$, and $\arc {f^*}\colon\grot{\funcinf{\basschy}}\to \grot{\funcinf {\bassch}}$.

More precisely,  for any $\basschy$-scheme $Y$, there   exists a $\bassch$-scheme $\arc{f^*}Y$ and a canonical morphism $\rho_Y\colon f^*(\arc{f^*}Y)\to Y$ of $\basschy$-schemes, such that, for any $\bassch$-fat point $\fat$, the map sending a $\fat$-rational point $a\colon\fat\to \arc{f^*}Y$ to the $f^*\fat$-rational point $\rho_Y\after f^*a\colon f^*\fat\to Y$, induces an isomorphism $(\arc{f^*}Y)(\fat)= Y(f^*\fat)$. 

If $Z\sub Y$ is   a closed  immersion, then so is the canonical morphism $\arc{f^*}Z\to \arc{f^*}Y$.  
\end{theorem}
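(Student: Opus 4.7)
The construction of $\arc{f^*}Y$ is the classical Weil restriction of scalars along the finite flat morphism $f$, and I would proceed in three stages. First, reduce to an affine chart: since $f$ is finite (whence affine) and, by~\eqref{eq:loccond}, a homeomorphism on underlying spaces, the affine opens of $\basschy$ are exactly the preimages of the affine opens of $\bassch$, so any affine open cover $\{\op{Spec}\bas_\alpha\}$ of $\bassch$ pulls back to an affine open cover $\{\op{Spec}\basy_\alpha\}$ of $\basschy$ with each $\basy_\alpha$ finite and locally free (whence, after further shrinking, free) over $\bas_\alpha$. On such a chart $\bassch=\op{Spec}\bas$, $\basschy=\op{Spec}\basy$ with basis $e_1,\dots,e_d$ of $\basy$ over $\bas$, and for an affine $\basschy$-scheme $Y=\op{Spec}(\basy[x_1,\dots,x_n]/(g_1,\dots,g_m))$, I would introduce new indeterminates $y_{ij}$, substitute $x_i=\sum_jy_{ij}e_j$, expand $g_k(\tuple x)=\sum_lh_{kl}(\tuple y)\,e_l$ in the chosen basis, and set $\arc{f^*}Y:=\op{Spec}(\pol\bas{\tuple y}/(h_{kl}))$; the counit $\rho_Y$ is the $\basy$-\homo\ sending $x_i\mapsto\sum_jy_{ij}\tensor e_j$.

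The bijection $(\arc{f^*}Y)(\fat)=Y(f^*\fat)$ is then immediate from the construction. For $\fat=\op{Spec}R$, the coordinate ring $R\tensor_\bas\basy$ of $f^*\fat$ is free on $1\tensor e_j$ over $R$; a $\bas$-\homo\ $\pol\bas{\tuple y}/(h_{kl})\to R$ is a tuple $\tuple r=(r_{ij})$ with $h_{kl}(\tuple r)=0$, and setting $\rho_i:=\sum_jr_{ij}\tensor e_j$ gives elements of $R\tensor_\bas\basy$ satisfying $g_k(\tuple\rho)=0$, i.e., a $\basy$-\homo\ $\basy[\tuple x]/(\tuple g)\to R\tensor_\bas\basy$. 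The correspondence is evidently a bijection, realized by $a\mapsto \rho_Y\after f^*a$, and the naturality in both $\fat$ and $Y$ needed for~\eqref{eq:adjoin} is built into the construction.

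For a general separated finite-type $\basschy$-scheme $Y$, cover $Y$ by affine opens $\{Y_\beta\}$ and glue the local Weil restrictions. This gluing is where I expect the genuine difficulty to lie, since Weil restriction along a finite flat morphism need not in general extend to non-quasi-projective targets; here the crucial point is that~\eqref{eq:loccond} ensures $R\tensor_\bas\basy$ is local whenever $R$ is, so that a $\fat$-rational point on $Y_\beta$ factors through a basic open $D(h)\sub Y_\beta$ iff the image of $h$ in $R\tensor_\bas\basy$ is a unit, iff its norm (a well-defined element of $\pol\bas{\tuple y}/(h_{kl})$) is a unit in $R$, iff the corresponding $\fat$-rational point on $\arc{f^*}Y_\beta$ lies over the basic open cut out by this norm. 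The canonical map $\arc{f^*}(D(h))\to \arc{f^*}Y_\beta$ is therefore an open immersion, cocycle compatibility on triple intersections follows from the uniqueness clause of representability, and one obtains a separated $\bassch$-scheme $\arc{f^*}Y$ of finite type together with the glued counit $\rho_Y$.

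Finally, the closed immersion assertion is handled locally: if $Z$ is cut out of $Y=\op{Spec}(\basy[\tuple x]/(\tuple g))$ by further relations $\tuple{g'}$, the coordinate expansions $h'_{kl}$ of the $g'_k$ realize $\arc{f^*}Z=\op{Spec}(\pol\bas{\tuple y}/(h_{kl},h'_{kl}))$ as a closed subscheme of $\arc{f^*}Y$, and this is preserved by the gluing construction. The induced \homo{s} on the various \gr{s} then follow at once from Proposition~\ref{P:adjgr}, since by the main construction the adjunction is \zariski\ (each $\arc{f^*}Y$ is representable), whence by Lemma~\ref{L:adjim} also sub-\zariski\ and formal, yielding the announced maps $\grot{\funcalg\basschy}\to\grot{\funcalg\bassch}$, $\grot{\funcsub\basschy}\to\grot{\funcsub\bassch}$, and $\grot{\funcinf\basschy}\to\grot{\funcinf\bassch}$.
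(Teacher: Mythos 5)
Your construction is essentially the paper's: the same Weil-restriction-by-coordinate-expansion on an affine chart (the paper's ``generic arc'' $\genarc\var=\sum_j\tilde\var_j\alpha_j$ and expansion coefficients $\tilde h_{ij}$ are your $y_{ij}$ and $h_{kl}$), the same verification that $a\mapsto\rho_Y\after f^*a$ is the adjunction bijection, the same local treatment of closed immersions, and the same appeal to the general adjunction machinery for the induced Grothendieck ring homomorphisms. The one place you go beyond the paper is the gluing for non-affine $Y$: the paper dispatches this with the phrase ``by the uniqueness of the universal property of adjoints, glue the pieces together,'' whereas you actually explain why the local pieces patch, namely that condition~\eqref{eq:loccond} forces $R\tensor_\bas\basy$ to be local, so a $\fat$-rational point factors through a basic open $D(h)$ iff the norm of $h$ is a unit in $R$, making $\arc{f^*}D(h)\to\arc{f^*}Y_\beta$ an open immersion and supplying the cocycle compatibility needed. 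That is a genuine gap you filled rather than a different route.
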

\begin{proof}
Since $f$ is finite and flat, $\basschy$ is locally free over $\bassch$. Since we may  construct each $\arc{f^*}Y$ locally and then, by the uniqueness of the universal property of adjoints, glue the pieces together, we may assume that $Y=\op{Spec}B$, $\bassch=\op{Spec}\bas$, and $\basschy=\op{Spec}\basy$ are affine, and  that $\basy$ is free over $\bas$ (in all applications, we will already have global freeness anyway). 
Let     $\alpha_1,\dots,
\alpha_l$ be a basis of $\basy$ over $\bas$.
Write $B:=\pol \basy\var/\rij hs\pol 
\basy\var$, for some polynomials $h_i$ over $\basy$. Let $\tilde \var$ 
be a new tuple of variables and define a generic tuple of arcs 
$$
\genarc\var:=\tilde\var_1\alpha_1+\dots+\tilde\var_l\alpha_l
$$
in $\pol \mu{\tilde\var}$. Given any $g\in \pol \basy\var$, let $\tilde g_j\in
\pol \bas{\tilde\var}$ be defined by the expansion
\begin{equation}\label{eq:genarcg}
g(\genarc x) =\sum_{j=1}^l \tilde g_j(\tilde\var)\alpha_i.
\end{equation}
Applying \eqref{eq:genarcg} to the $h_i$, we get   polynomials $\tilde h_{ij}$ over $
\bas$ and we let $A$ be the residue ring of $\pol\bas{\tilde\var}$ 
modulo the ideal generated by all these $\tilde h_{ij}$. I claim that $
X:=\op{Spec}  A$ represents $\arc{f^*}Y$.  It follows from \eqref{eq:genarcg} that the map $\var\mapsto \genarc \var$ yields a $\basy$-algebra \homo\ $B\to f^*A$, where $f^*A:=A\tensor_\bas\basy$ is the base change, and hence a $\basy$-morphism $\rho_Y\colon f^*X\to Y$. 
Fix a $\bas$-fat point $\fat$, and a $\fat$-rational point $a\colon \fat\to X$. By base change, we get a $\basy$-algebra \homo\ $f^*\fat\to f^*X$ which composed with $\rho_Y$ induces a $f^*\fat$-point $\Theta (a)\colon f^*\fat\to Y$. 
To prove that the map $a\mapsto \Theta( a)$  establishes an adjunction isomorphism, we construct its converse. 
Given an $f^*\fat$-rational point $b\colon f^*\fat\to Y$, let $B\to R\tensor_\bas\basy$ be the corresponding 
$\basy$-algebra \homo, where $R$ is the coordinate ring of $\fat$. The latter \homo\  is  uniquely determined by a tuple $\tuple u$ in $R\tensor_\bas\basy$ such that  all $h_i(\tuple u)=0$. 
Expanding this tuple as
\begin{equation}\label{eq:expbas}
\tuple u=\tilde{\tuple u}_1 \alpha_1+\dots+\tilde{\tuple u}_l\alpha_l 
\end{equation}
   yields a (unique) tuple $\tilde{\tuple u}:=(\tilde{\tuple u}_1, \dots,\tilde{\tuple u}_l)$ over $R$ such that all $\tilde h_{ij}(\tilde {\tuple u})=0$,  determining, therefore,  a $\bas$-algebra \homo\ $A\to R$, whence a $\bas$-morphism $\Lambda (b)\colon \fat\to X$. So remains to verify that $  \Lambda $ and $  \Theta$ are mutual inverses. Starting with the $f^*\fat$-rational point $b$, we get the $\fat$-rational point $\Lambda( b)$, which in turn induces the   $f^*\fat$-rational point $\Theta (\Lambda(b))$, given as the composition $\rho_Y\after f^*\Lambda (b)$. The latter corresponds  by \eqref{eq:expbas} to the $\basy$-algebra \homo\ $B\to f^*A\to f^*R$    given by $\var\mapsto \genarc\var\mapsto \tuple u$, showing that $\Theta (\Lambda(b))=b$.  If, on the other hand, we start with the $\fat$-rational point $a$, given by $\tilde \var\mapsto \tilde {\tuple u}$, we   get the $f^*\fat$-rational point $\Theta (a)$,  given by $\var\mapsto \tuple u$, where $\tuple u$ is as in \eqref{eq:expbas}. Hence $\Lambda(\Theta(a))$ is given by $\tilde\var\mapsto \tilde{\tuple u}$, that is to say, is equal to $a$, as we needed to show. 
   
   To prove the last assertion, assume   that $Z$ is a closed subscheme of $Y$, so that its coordinate ring is of the form $B/(h_{s+1},\dots,h_t)B$ for some additional polynomials $h_i\in \pol\basy\var$. Hence $\arc{f^*}Z$ is the closed subscheme of $\arc{f^*}Y$ given by the $\tilde h_{ij}$ for $s<i\leq t$.
\end{proof}

Immediately from the above proof, by taking $Y=f^*X$, we have the following result,  which we will use in the next section:

\begin{corollary}\label{C:flatarc}
If  $f\colon \basschy\to \bassch$ is a finite and faithfully flat morphism  satisfying \eqref{eq:loccond}, then we have for each $\bassch$-scheme $X$, a canonical $\bassch$-morphism $\rho_X\colon \arc{f^*}{f^*X}\to X$. If  $Z\sub X$ 
is   a closed  immersion, then so is the canonical morphism $\arc{f^*}{f^*Z}\to \arc{f^*}{f^*X}$.  \qed
\end{corollary}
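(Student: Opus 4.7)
The plan is to deduce this corollary directly from Theorem~\ref{T:flatrestr} by setting $Y:=f^*X$. First I apply the theorem to this $Y$: it furnishes a $\bassch$-scheme $\arc{f^*}{f^*X}$ equipped with a canonical $\basschy$-morphism $\rho_{f^*X}\colon f^*(\arc{f^*}{f^*X})\to f^*X$. Since the target $f^*X=\basschy\times_\bassch X$ carries the canonical second projection $\pi_X\colon f^*X\to X$ as a $\bassch$-morphism, composing yields the canonical $\bassch$-morphism $\pi_X\after\rho_{f^*X}$, which serves as the $\rho_X$ of the corollary (understood, via the universal property of the fiber product, in its natural adjoint form on $\arc{f^*}{f^*X}$).

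For the second assertion, suppose $Z\sub X$ is a closed immersion of $\bassch$-schemes. Since closed immersions are preserved under base change along $f\colon\basschy\to\bassch$, the induced $f^*Z\sub f^*X$ is a closed immersion of $\basschy$-schemes. I then invoke the final assertion of Theorem~\ref{T:flatrestr}, applied to this pair in $\categ{Sch}_{\basschy}$: it produces the desired closed immersion $\arc{f^*}{f^*Z}\to\arc{f^*}{f^*X}$ of $\bassch$-schemes. Explicitly, in the coordinate description from the theorem's proof --- writing $X=\op{Spec}(\pol\bas\var/(\rij hs))$ and $Z$ cut out by further polynomials $h_{s+1},\dots,h_t$ --- the subscheme $\arc{f^*}{f^*Z}$ is defined inside $\arc{f^*}{f^*X}$ by the expansion coefficients $\tilde h_{ij}$ associated to $h_{s+1},\dots,h_t$, so that the inclusion is visibly a closed immersion.

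Both parts are thus routine consequences of the theorem, with no genuine obstacle: the only bookkeeping to keep straight is the interplay between the $\basschy$- and $\bassch$-structures on $f^*X$, which is transparent once one observes that the $\bassch$-structure on $f^*X$ is exactly the composition of its $\basschy$-structure with $f$ (equivalently, the projection $\pi_X$ followed by the structure morphism of $X$). This ensures that any $\basschy$-morphism into $f^*X$ automatically becomes a $\bassch$-morphism after postcomposition with $\pi_X$, and that the closed immersion property is preserved throughout.
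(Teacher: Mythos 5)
Your handling of the second assertion is correct and matches the paper: $f^*Z\hookrightarrow f^*X$ is a closed immersion by base change, and the final assertion of Theorem~\ref{T:flatrestr} applied to this pair of $\basschy$-schemes gives a closed immersion $\arc{f^*}{f^*Z}\to\arc{f^*}{f^*X}$; the explicit coordinate description confirms it.

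The first assertion, however, is not actually established by what you wrote. The composite $\pi_X\after\rho_{f^*X}$ is a $\bassch$-morphism with domain $f^*\bigl(\arc{f^*}{f^*X}\bigr)=\basschy\times_\bassch\arc{f^*}{f^*X}$, not $\arc{f^*}{f^*X}$, and the parenthetical appeal to ``the universal property of the fiber product, in its natural adjoint form'' does not repair the mismatch. The only adjunction in play here, $\mor\bassch{f_*Z}X\iso\mor\basschy{Z}{f^*X}$, applied to $Z=f^*\bigl(\arc{f^*}{f^*X}\bigr)$, simply reconstructs $\rho_{f^*X}$ itself; it does not produce a morphism out of $\arc{f^*}{f^*X}$. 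On rational points, what you need is a natural transformation $X(f_*f^*\fat)\to X(\fat)$, i.e.\ a canonical morphism of $\bassch$-fat points $\fat\to f_*f^*\fat=\fat\times_\bassch\basschy$; unwinding the universal property of the fiber product, this amounts to a $\bassch$-morphism $\fat\to\basschy$ over the structure map, that is, a section of $f$ above the center of $\fat$. The hypotheses of the corollary (finite, faithfully flat, plus \eqref{eq:loccond}) do not by themselves hand you such a section. What makes the statement work in the case the paper actually uses it --- the arc functor, where $f=j_\fat\colon\fat\to\op{Spec}\fld$ with $\fld$ an algebraically closed field --- is that $j_\fat$ then does have a canonical section, namely the residue map $\op{Spec}\fld\to\fat$. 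With the basis chosen as in Remark~\ref{R:goodbasismot} (first element $1$, the rest in $\maxim$), this section is what yields the coordinate description $\var\mapsto\tilde\var_0$ of $\rho_X$, which is precisely what is used in the proof of Theorem~\ref{T:openarc}. Your argument should identify this ingredient rather than folding it into an unnamed adjunction.
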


\begin{remark}\label{R:nonloc}
Without assumption~\eqref{eq:loccond}, the pull-back  of a $\bassch$-fat point $\fat$ is only a zero-dimensional $\basschy$-scheme, and hence a disjoint sum of $\basschy$-fat points $f^*\fat=\mathfrak w_1\sqcup\dots\sqcup\mathfrak w_s$. We can then still make sense of $Y(f^*\fat)$, as the disjoint union $Y(\mathfrak w_1)\sqcup\dots\sqcup Y(\mathfrak w_s)$, and the adjunction condition  then becomes that this must be equal to $(\arc{f^*}Y)(\fat)$. Since nowhere in  the above proof we used that $f^*R$ is local, we therefore can omit condition~\eqref{eq:loccond} from the statement of Theorem~\ref{T:flatrestr}. 
\end{remark}

We have the following commutation rule for adjunctions in a Cartesian square:

\begin{theorem}[Projection Formula]\label{T:projform}
Let $f\colon \basschy\to \bassch$ be a finite and faithfully flat morphism of Noetherian Jacobson schemes  satisfying \eqref{eq:loccond},   let $u\colon \tilde\bassch\to\bassch$ be a morphism of finite type, and let 
\commdiagram{\tilde\basschy}{\tilde f}{\tilde\bassch}{\tilde u}u\basschy f\bassch
be the base change diagram, where  
$\tilde\basschy:=\basschy\times_\bassch\tilde\bassch$. We have an identity of adjunctions
$$
\arc{\tilde f^*}{}\arc{\tilde u_*}{}=\arc{u_*}{}\arc{f^*}{}
$$
from $\basschy$-sieves to $\tilde\bassch$-sieves.
\end{theorem}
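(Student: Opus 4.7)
The underlying principle is that an adjoint functor is determined by its defining bijection, so to compare the two composite right adjoints $\arc{u_*}{}\arc{f^*}{}$ and $\arc{\tilde f^*}{}\arc{\tilde u_*}{}$ on $\basschy$-sieves, it suffices to evaluate them at an arbitrary fat point and recognize the outcome as a statement about the left adjoints on fat points. The desired statement on left adjoints will turn out to be the Cartesian-square identity, so the whole result ultimately reduces to associativity of the fiber product.

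First I would verify that all four adjunctions appearing in the statement make sense: the augmentations along $u$ and $\tilde u$ are covered by Theorem~\ref{T:bcgr} since both maps are of finite type, whereas the diminutions along $f$ and $\tilde f$ are covered by Theorem~\ref{T:flatrestr} since finite faithful flatness, as well as condition~\eqref{eq:loccond} on underlying varieties, is preserved under base change. Given any $\tilde\bassch$-fat point $\fat$ and any $\basschy$-sieve $\motif Y$, two successive applications of the defining formula $(\arc\eta\motif X)(\fat)=\motif X(\eta(\fat))$ then yield
\begin{align*}
(\arc{u_*}{}\arc{f^*}{}\motif Y)(\fat) &= (\arc{f^*}{}\motif Y)(u_*\fat) = \motif Y\bigl(f^*(u_*\fat)\bigr),\\
(\arc{\tilde f^*}{}\arc{\tilde u_*}{}\motif Y)(\fat) &= (\arc{\tilde u_*}{}\motif Y)(\tilde f^*\fat) = \motif Y\bigl(\tilde u_*(\tilde f^*\fat)\bigr).
\end{align*}
Hence the whole claim reduces to producing a canonical, natural identification of $\basschy$-fat points $f^*(u_*\fat) \iso \tilde u_*(\tilde f^*\fat)$.

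This identification is nothing more than associativity of fiber products in the Cartesian square: $\tilde u_*(\tilde f^*\fat)$ is, by construction, the base change $\tilde\basschy\times_{\tilde\bassch}\fat = (\basschy\times_\bassch\tilde\bassch)\times_{\tilde\bassch}\fat$ viewed as a $\basschy$-scheme via the projection $\tilde\basschy\to\basschy$, which collapses to $\basschy\times_\bassch\fat = f^*(u_*\fat)$ with the first-factor projection yielding the same $\basschy$-structure. I expect the main obstacle to be not the ideas themselves but the bookkeeping: to upgrade the point-wise equality into an equality of functors, one must check naturality of the identification in both $\fat$ and $\motif Y$, which follows routinely from the functoriality built into the universal property of the fiber product and from Corollary~\ref{C:fatmor}.
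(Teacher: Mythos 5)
Your proposal is correct and follows essentially the same route as the paper: evaluate both composite right adjoints at an arbitrary fat point using the defining adjunction formula, then reduce to the identity $\tilde u_*\tilde f^*\fat = f^*u_*\fat$ of $\basschy$-fat points. The paper dismisses that last identity with ``one easily verifies,'' whereas you helpfully spell out that it is just cancellation/associativity of the fiber product $(\basschy\times_\bassch\tilde\bassch)\times_{\tilde\bassch}\fat\iso\basschy\times_\bassch\fat$.
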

\begin{proof}
Note that $\tilde f$ is again finite and faithfully flat, satisfying \eqref{eq:loccond}, so that the diminution $\arc{\tilde f^*}$ makes sense. To prove the identity we   have to check it on  each $\basschy$-sieve $\motif Y$ and each $\tilde\bassch$-fat point $\fat$, becoming
$$
(\arc{\tilde f^*}{\arc{\tilde u_*}{\motif Y}})(\fat)=\motif Y(\tilde u_*\tilde f^*\fat)\overset?=\motif Y(f^*u_*\fat)=(\arc{u_*}{\arc{f^*}{\motif Y}})(\fat).
$$
But one easily verifies that we have an equality of $\basschy$-fat points
$$
\tilde u_*\tilde f^*\fat=f^*u_*\fat
$$
concluding the proof of the theorem.
\end{proof}

\subsection{Frobenius transform}
Assume for the remainder of this section that the base ring is a field $\fld$ of \ch\ $p>0$. 
Let us denote the Frobenius \homo\ $a\mapsto a^p$ on a $\fld$-algebra $A$   by $\frob$, or in case we need to specify the ring by $\frob_A$, so that we have in particular a commutative diagram
\commdiagram [frobalg]\fld {\frob_\fld}\fld{}{}A {\frob_A}{A.}
Due to the functorial nature, we can glue these together and hence obtain on any $\fld$-scheme $X$ a corresponding endomorphism $\frob_X$. 

Diagram~\eqref{frobalg} implies that $\frob_A$ is not a $\fld$-algebra \homo. To overcome this difficulty, we assume $\fld$ is perfect, so that $\frob $ is an isomorphism on $\fld$. To make \eqref{frobalg} into a $\fld$-algebra \homo, we must view the second copy of $A$ with a different $\fld$-algebra structure, namely, the one inherited from the composite \homo\ $\fld\map \frob \fld \to A$. Several   notational devices have been proposed (see for instance \cite[Chapter IV, Remark 2.4.1]{Hart} or \cite[Chapter 8.1.c]{Ueno3}), but we will use the one already introduced in the previous section: the push-forward of $A$ along $\frob $ will be denoted $\frob_*A$. In other words, $\frob_*A$ is $A$ with its $\fld$-action  given by $u\cdot a=u^pa$. Since $\fld$ is perfect, $A\iso \frob_*A$ as rings, and in many instances, even as $\fld$-algebras. 
In particular,  \eqref{frobalg} yields a $\fld$-algebra \homo\ $A\map \frob \frob_*A$, called the \emph{$\fld$-linear Frobenius}. 
The image of the $\fld$-linear Frobenius \homo\ $A\map \frob \frob_*A$ is the subring of $A$ consisting of all $p$-th powers, and we will simply denote it by $\frob A$ (rather than the more common $A^p$, which might lead to confusions with Cartesian powers). Hence, pushing forward the inclusion \homo\ $\frob A\sub A$ gives a factorization of  the $\fld$-algebra \homo\ $\frob $ as $A\onto \frob_* \frob A \sub \frob_*A$, where the first \homo\ is an isomorphism \iff\ $A$ is reduced. For instance, if $A=\pol\fld \var$, then $\frob A=\pol\fld{\var^p}$, so that this factorization is given by the sequence of $\fld$-algebra \homo{s}
\begin{equation}\label{eq:polfrob}
\xymatrix@R=5pt{\pol\fld \var \ar[r]^-\iso&  \frob_*\pol\fld{\var^p}\ar@{}|-\sub[r]&\frob_*\pol\fld\var\ \ar[r]^-\iso&\pol\fld\var\\
&& h\ar@{|->}[r]^\sigma&\tilde h \\
g\ar@{|->}[r]&g^p\ar@{|->}[rr]&&g(\var^p),
}
\end{equation}
where $\tilde h$ is obtained from $h$ by replacing each coefficient with its (unique) $p$-th root.
So, from this we can calculate $\frob_*A$ for $A$ of the form $\pol\fld\var/\rij fs\pol\fld\var$ as 
$$
\frob_*A\iso \pol\fld\var/\rij {\tilde f}s\pol\fld\var,
$$
with $\tilde f_i=\sigma(f_i)$ as in \eqref{eq:polfrob}. 
The $\fld$-linear Frobenius $A\to \frob_*A$ is then the induced \homo\ by the composite map  $g\mapsto g(\var^p)$ from \eqref{eq:polfrob}. 

Similarly,   viewing $X$ as a $\fld$-scheme via the composition $X\to \op{Spec}\fld\map {\frob}\op{Spec}\fld$,  it will be denoted by $\frob_*X$, yielding a morphism of $\fld$-schemes $\frob_X\colon \frob_*X\to X$, called the \emph{$\fld$-linear Frobenius}. Its scheme-theoretic closure will be denoted by $\frob X$, so that we have a dominant morphism $X\to \frob X$, yielding a factorization
\begin{equation}\label{eq:frobsch}
\frob_X\colon \frob_*X\to   \frob_* \frob X\sub X
\end{equation}
 of $\frob_X$, where the closed immersion $\frob_* \frob X\sub X$ is the identity \iff\ $X$ is a variety. In particular, $\frob X$ is the Zariski closure of $\fim{\frob_X}$ in $X$.
 
 We could view $\frob_\fld$ as an automorphism of the base to get by Theorem~\ref{T:bcgr} an adjunction pair $(\frob^*_\fld,\frob_{\fld*})$. However, since $X$ and $\frob_{\fld*}\frob_\fld^*X$ are isomorphic 
  as $\fld$-schemes, 
this merely induces an action of the Frobenius. For the same reason, diminution does not induce any interesting endomorphism on the \gr. 
Instead we take a relative point of view. To a morphism $\varphi\colon Y\to X$ of $\fld$-schemes, we can associate two commutative squares; the base change and the Frobenius square. Combined into a single commutative diagram of $\fld$-morphisms, we have
$$
\xymatrix@C=50pt{\frob_* Y\ar@/^/[rrd]^{\frob_Y}\ar[rd]\ar@/_/[rdd]_{\frob_*\varphi}\\
&\frob_X^*Y\ar[r]_{\bc{\frob_X}Y}\ar[d]&Y\ar[d]^\varphi\\
&\frob_*X\ar[r]_{\frob_X}&X
}
$$
where $\frob_X^*Y:=\frob_*X\times_XY$ is the pull-back of $Y$ along $\frob_X$, called the \emph{Frobenius transform of $Y$ in $X$}, and where the canonical projection $\bc{\frob_X}Y\colon \frob_X^*Y=\frob_*X\times_XY\to Y$ is called the \emph{relative Frobenius on $Y$ over $X$}. 
In case $\varphi$ is a closed immersion, the natural morphism $\frob_*Y\to \frob_X^*Y$ is then also a closed immersion. We can calculate it explicitly in case $X=\op{Spec}A$ is affine and $Y$ is defined by the ideal $I\sub A$. Traditionally, one denotes the ideal generated by the image of $I$ under the Frobenius $\frob_A$  by $I^{[p]}$; it is the ideal generated by all $f^p$ with $f\in I$. With this notation, we have 
$$
\frob_X^*Y=\frob_*(\op{Spec}A/I^{[p]}).
$$ 
 In particular,    applying  $\sigma$ from \eqref{eq:polfrob} to the previous isomorphism in case $X$ is affine space, we get:
 
 \begin{corollary}\label{C:relfrobpol}
If $Y$ is the closed subscheme of $\affine\fld n$ with ideal of definition $\rij fs$, then $\frob_{\affine\fld n}^*Y$ is the closed subscheme of $\affine\fld n$ with ideal of definition $(f_1(\var^p),\dots, f_s(\var^p))$, and the relative Frobenius $\bc{\frob_{\affine\fld n}}Y$ is the map induced by $\var\mapsto \var^p$. \qed
\end{corollary}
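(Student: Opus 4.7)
The plan is to turn the crank on the formula $\frob_X^*Y=\frob_*(\op{Spec}A/I^{[p]})$ already established just before the statement (with $A=\pol\fld\var$ and $I=(f_1,\dots,f_s)$) by transporting everything along the canonical isomorphism $\sigma\colon\frob_*\pol\fld\var\iso\pol\fld\var$ of \eqref{eq:polfrob}, which extracts $p$-th roots from the coefficients.

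First I would compute the image of the generators $f_i^p$ of $I^{[p]}$ under $\sigma$. The key identity, valid because $\op{char}(\fld)=p$, is
$$
f_i^p=\Bigl(\sum_\alpha c_{i,\alpha}\var^\alpha\Bigr)^p=\sum_\alpha c_{i,\alpha}^p\,\var^{p\alpha},
$$
so as a polynomial, $f_i^p$ has coefficients that are the $p$-th powers of the coefficients of $f_i$, while its monomials are exactly $\var^{p\alpha}$. Applying $\sigma$, which replaces each coefficient by its unique $p$-th root, we obtain $\sigma(f_i^p)=\sum_\alpha c_{i,\alpha}\var^{p\alpha}=f_i(\var^p)$. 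Hence $\sigma$ carries the ideal $I^{[p]}\sub\frob_*\pol\fld\var$ isomorphically onto the ideal $(f_1(\var^p),\dots,f_s(\var^p))\sub\pol\fld\var$, and consequently
$$
\frob_X^*Y=\frob_*\bigl(\op{Spec}\pol\fld\var/I^{[p]}\bigr)\iso\op{Spec}\bigl(\pol\fld\var/(f_1(\var^p),\dots,f_s(\var^p))\bigr)
$$
as a closed subscheme of $\affine\fld n$, proving the first assertion.

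For the second assertion, I would trace the relative Frobenius $\bc{\frob_{\affine\fld n}}Y\colon\frob_{\affine\fld n}^*Y\to Y$ through the identifications. At the ring level, it is the natural \homo\ $\pol\fld\var/I\to\frob_*\pol\fld\var/I^{[p]}$ induced by $\frob_A$, namely $g\mapsto g^p$. Composing on the right with $\sigma$ and using the composite description in \eqref{eq:polfrob} (namely $g\mapsto g(\var^p)$), we obtain precisely the \homo\
$$
\pol\fld\var/I\;\longrightarrow\;\pol\fld\var/(f_1(\var^p),\dots,f_s(\var^p)),\qquad \var\mapsto\var^p,
$$
which is well-defined because $f_i(\var^p)$ lies in the denominator ideal of the codomain. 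This gives the stated description of the relative Frobenius.

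There is no real obstacle here: the statement is simply the translation of the general formula $\frob_X^*Y=\frob_*(\op{Spec}A/I^{[p]})$ through the coefficient-$p$-th-root isomorphism $\sigma$, and the only thing one must be careful about is distinguishing the abstract ring \homo\ $\frob_A\colon A\to A$ from its $\fld$-linear incarnation $A\to\frob_*A$ and its transport $g\mapsto g(\var^p)$ to $A\to A$ via $\sigma$, which is all already spelled out in the displays preceding the corollary.
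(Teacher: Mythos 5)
Your proof is correct and matches the paper's intended argument: the paper proves this corollary in a single line by ``applying $\sigma$ from \eqref{eq:polfrob} to the previous isomorphism,'' which is precisely the computation you carry out, namely $\sigma(f_i^p)=f_i(\var^p)$ and the transport of the projection $A/I\to\frob_*A/I^{[p]}$ along $\sigma$ to $\var\mapsto\var^p$. The only minor remark is that your final well-definedness check is superfluous, since the map is by construction a composite of ring homomorphisms, so the ideal containment is automatic.
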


 The assignment $\fat\mapsto \frob\fat$ constitutes a functor on $\fatpoints\fld$, which will play the role of left adjoint. However, in this case, the adjunction will only be sub-\zariski, via the following   right adjoint. For each $\fld$-scheme $Y$, we   define a sub-\zariski\ motif $\motif F_Y$, called its \emph{Frobenius motif}. In order to do this, we will work locally:  show that it is a right adjoint locally, and then deduce its uniqueness and existence, as well as right adjointness, globally. So let $Y$ be affine, say, a closed subscheme of $\affine\fld n$, and let $\bc{\frob_{\affine\fld n}}Y\colon \frob_{\affine\fld n}^*Y \to Y$ be the corresponding relative Frobenius. Set $\motif F_Y:=\fim{\bc{\frob_{\affine\fld n}}Y}$, so that it is a sub-\zariski\ motif on $Y$. To see that this is independent from the choice of closed immersion, we prove the adjunction formula
 \begin{equation}\label{eq:adjfrob}
 Y(\frob\fat)=\motif F_Y(\fat)
\end{equation}
  for any fat point $\fat$. More precisely, the canonical (dominant) morphism $\fat\to\frob\fat$ induces a map $Y(\frob\fat)\to Y(\fat)$. By Lemma~\ref{L:domfat} it is injective, and we want to show that its image is $\motif F_Y(\fat)$.  
  Let $\rij fs\pol\fld\var$ be the ideal defining $Y$. By Corollary~\ref{C:relfrobpol}, the Frobenius transform $ \frob_{\affine\fld n}^*Y$ is given by the ideal $(f_1(\var^p),\dots, f_s(\var^p))\pol\fld\var$. An $\frob\fat$-rational point $a$ in $Y$ corresponds to a $\fld$-algebra \homo\ $A\to \frob R$, where $R$ is the coordinate ring of $\fat$, and hence to a solution of $f_1=\dots=f_s=0$ in $R$ of the form $\tuple r^p$. The image $a'$ of $a$ in $Y(\fat)$ corresponds to the composition $A\to \frob R\sub R$. Since $\tuple r$ is a solution in $R$ of the equations defining $ \frob_{\affine\fld n}^*Y$, it induces a $\fat$-rational point $b\colon \fat\to  \frob_{\affine\fld n}^*Y$ such that $a'=(\bc{\frob_{\affine\fld n}}Y)(\fat)(b)$, proving that $a'\in \motif F_Y(\fat)$. Conversely, by reversing these arguments, we see that any such $\fat$-rational point is induced by a $p$-th power in $R$, and hence comes from a $\frob\fat$-rational point. This concludes the proof of \eqref{eq:adjfrob} when $Y$ is affine, and proves in particular that $\motif F_Y$ does not depend  on the choice of closed immersion. For arbitrary $Y$, let $Y_1,\dots,Y_m$ be an open affine covering. For each $Y_i$ and each intersection $Y_i\cap Y_j$, we have an equality \eqref{eq:adjfrob}. Hence we may glue all pieces together to obtain a sub-\zariski\ motif $\motif F_Y$ satisfying \eqref{eq:adjfrob}. In particular, in view of Proposition~\ref{P:adjgr}, we proved:
 
 \begin{theorem}\label{T:frobadj}
 The functors $\fat\mapsto \frob\fat$ and $Y\mapsto \motif F_Y$ constitute a sub-\zariski\   adjunction. In particular,  we get induced endomorphisms  $\arc\frob{}$ on $\grot{\funcsub{\fld}}$  and $ \grot{\funcinf \fld}$.\qed
\end{theorem}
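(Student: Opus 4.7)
The bulk of the theorem has already been carried out in the discussion preceding its statement, so the plan is simply to assemble the pieces and invoke the general machinery. Specifically, I would (i) check that $\fat\mapsto\frob\fat$ is a functor on $\fatpoints\fld$; (ii) verify that $Y\mapsto\motif F_Y$ is functorial and that the bijection \eqref{eq:adjfrob} is natural in both arguments; (iii) note that the adjunction is sub-\zariski\ by construction; and (iv) appeal to Proposition~\ref{P:adjgr}.

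For (i), any $\fld$-algebra \homo\ $R\to R'$ between coordinate rings of fat points restricts to $\frob R\to \frob R'$, since $p$-th powers go to $p$-th powers; composition is preserved on the nose. For (ii), given a morphism $\psi\colon Y\to Z$ of $\fld$-schemes, post-composition gives a map $Y(\frob\fat)\to Z(\frob\fat)$, and transporting across \eqref{eq:adjfrob} yields the corresponding morphism $\motif F_Y\to\motif F_Z$; naturality in $\fat$ is automatic, because \eqref{eq:adjfrob} was obtained by pre-composition with the canonical dominant morphism $\fat\to\frob\fat$, and naturality in $Y$ is exactly the previous observation. Item (iii) is immediate: locally $\motif F_Y$ is defined as the image sieve of the relative Frobenius, whence sub-\zariski.

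The potentially subtle point, and what I would regard as the main obstacle, is the well-definedness of $\motif F_Y$ for non-affine $Y$: a priori the image sieve of the relative Frobenius depends on the chosen closed immersion $Y\into\affine\fld n$, and on an open affine cover $Y=Y_1\cup\dots\cup Y_m$ the various local image sieves must agree on overlaps. The key observation, already present in the derivation of \eqref{eq:adjfrob}, is that this identity characterizes $\motif F_Y(\fat)$ intrinsically as the image of $Y(\frob\fat)$ inside $Y(\fat)$ under the injection induced by $\fat\to\frob\fat$ (injectivity coming from Lemma~\ref{L:domfat} applied to this dominant morphism). Since the intrinsic description is independent of any embedding, the locally defined sub-\zariski\ motives automatically agree on overlaps $Y_i\cap Y_j$, and glue to a well-defined sub-\zariski\ motif $\motif F_Y$ on $Y$ satisfying \eqref{eq:adjfrob} globally.

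With the sub-\zariski\ adjunction $(\frob,\motif F_{(-)})$ in hand, Proposition~\ref{P:adjgr} directly delivers the induced ring \homo\ $\arc\frob{}\colon\grot{\funcsub\fld}\to\grot{\funcsub\fld}$; and since Lemma~\ref{L:adjim} guarantees that sub-\zariski\ adjunctions also preserve formal motives, one obtains in the same way the endomorphism on $\grot{\funcinf\fld}$, completing the proof.
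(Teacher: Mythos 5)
Your proposal is correct and follows essentially the same route as the paper: the paper likewise establishes \eqref{eq:adjfrob} locally by explicit computation with the relative Frobenius, uses the intrinsic characterization of $\motif F_Y(\fat)$ as the image of $Y(\frob\fat)\into Y(\fat)$ (via Lemma~\ref{L:domfat}) to show independence of the chosen embedding, glues over an open affine cover, and then invokes Proposition~\ref{P:adjgr}. The one place you flesh out the paper's terseness is in making explicit the naturality checks and the reason the local pieces agree on overlaps, but these are exactly the observations implicit in the paper's appeal to \eqref{eq:adjfrob}.
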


Unraveling the definitions, the action of this adjunction on a motif $\motif Y$ on a scheme $Y$ is given by
$$
\arc\frob{\motif Y}=\motif Y\cap \motif F_Y.
$$
Moreover, if $Y=\affine\fld n$, then $\motif F_Y$ is just $\fim{\frob_{\affine\fld n}}$, the image of the $\fld$-linear Frobenius. Therefore, if a motif  $\motif Y$ has an ambient space which is  affine, we may take it to be an affine space $\affine\fld n$,  so that
$$
\arc\frob{\motif Y}=\motif Y\cap \fim{\frob_{\affine\fld n}}.
$$
 
\subsection{Families of motives}\label{s:fammot}
Let $s\colon\motif Y\to \motif X$ be a  \explicit\ morphism of $\bassch$-sieves (see Remark~\ref{R:nongeom} below for the non-\explicit\ case). Hence, we can find ambient spaces $Y$ and $X$ of $\motif Y$ and $\motif X$ respectively, and a morphism $\varphi\colon Y\to X$ of $\bassch$-schemes extending $s$. We explain now how we may view $s$ as a family of $\bassch$-sieves, by associating to each $\bassch$-morphism $a\colon\bassch\to X$, a $\bassch$-sieve $\motif Y_a$ as follows. We may view $\motif Y$ as an $X$-sieve via $\varphi$, and, in accordance with previous notation, we denote this $X$-sieve by $\varphi_*\motif Y$. Using $a$ to as augmentation map, we define
$$
\motif Y_a:=\arc{a_*}{\varphi_*\motif Y},
$$
called the \emph{specialization of $\motif Y$ at $a$.}
By Theorem~\ref{T:bcgr}, this is a sieve on the base change $\arc{a_*}Y=a^*Y=\bassch\times_XY$. To see that the specialization $\motif Y_a$  is independent from the choice of ambient space $Y$, we simply observe that 
\begin{equation}\label{eq:spec}
\motif Y_a(\fat)=\bassch(\fat)\times_{\motif X(\fat)}\motif Y(\fat)=\{(r,b)\in 
\bassch(\fat)\times\motif Y(\fat)\mid a(\fat)(r)=s(\fat)(b)\}
\end{equation}
as a subset of $\bassch(\fat)\times_{X(\fat)}Y(\fat)$, for any $\bassch$-fat point $\fat$. Immediately from Theorem~\ref{T:bcgr}, we have:

\begin{proposition}\label{P:specmot}
The specialization of a \zariski, sub-\zariski, or formal motif is again of the 
same type. \qed
\end{proposition}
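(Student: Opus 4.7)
The plan is to read the specialization $\motif Y_a$ directly through the augmentation adjunction of Theorem~\ref{T:bcgr}, applied to the $\bassch$-morphism $a\colon\bassch\to X$. Here $\varphi_*\motif Y$ is simply $\motif Y$ regarded as an $X$-sieve on the $X$-scheme $Y$, so that $\motif Y_a = \arc{a_*}{\varphi_*\motif Y}$ is nothing other than the image of $\motif Y$ under augmentation along $a$. Theorem~\ref{T:bcgr} asserts that the right adjoint on $X$-schemes is base change $Z\mapsto a^*Z = \bassch\times_X Z$; in particular this is a \zariski\ adjunction (it sends $X$-schemes to $\bassch$-schemes), so Proposition~\ref{P:adjgr} already gives the result for all three types, and the remaining task is to make the identifications concrete.

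For the sub-\zariski\ case, if $\motif Y = \fim\psi$ with $\psi\colon Z\to Y$ a morphism of $X$-schemes, then Lemma~\ref{L:adjim} identifies the adjoint as $\arc{a_*}{\fim\psi} = \fim{a^*\psi}$, where $a^*\psi\colon a^*Z\to a^*Y$ is the base change, which is again sub-\zariski. For the formal case, I would base-change each sub-\zariski\ approximation $\motif Z_\fat\sub\motif Y$ to obtain a sub-\zariski\ subsieve $\arc{a_*}{\motif Z_\fat}\sub\motif Y_a$; since augmentation is computed point-wise on fat points, these sieves have the same $\fat$-rational points as $\motif Y_a$, and Lemma~\ref{L:appform} then concludes.

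The \zariski\ case requires slightly more care. Suppose $\motif Y = \func{Y_1}\cup\dots\cup\func{Y_n}$ with each $Y_i\sub Y$ a closed subscheme. Base-changing each closed immersion $Y_i\sub Y$ along $a$ produces a closed immersion $a^*Y_i\sub a^*Y$, and~\eqref{eq:adjim} identifies the adjoint $\arc{a_*}{\func{Y_i}}$ with the closed subsieve $\func{a^*Y_i}$ of $a^*Y$. Since $\arc{a_*}$ commutes with finite unions---which one verifies at once from the definition, augmentation being computed point-wise---the specialization equals $\func{a^*Y_1}\cup\dots\cup\func{a^*Y_n}$, which is again \zariski.

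The only real obstacle is bookkeeping: while $\motif Y$ originates as a $\bassch$-sieve on $Y$, the augmentation adjunction manipulates it as an $X$-sieve on the $X$-scheme $Y$ and returns a $\bassch$-sieve on $a^*Y$. Once this shift of base is handled, the proposition is essentially a restatement of Proposition~\ref{P:adjgr} applied to this particular \zariski\ adjunction.
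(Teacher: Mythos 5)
Your proposal is correct and follows exactly the route the paper intends: the paper itself gives no argument beyond the line ``Immediately from Theorem~\ref{T:bcgr},'' and your expansion is precisely the verification that T:bcgr's augmentation adjunction (applied to the section $a\colon\bassch\to X$, viewing $\motif Y$ as an $X$-sieve via $\varphi$) is a \zariski\ adjunction, hence preserves all three types by Lemma~\ref{L:adjim} and Proposition~\ref{P:adjgr}. The case-by-case checks you carry out (base change of a closed immersion is a closed immersion; $\arc{a_*}$ commutes with finite unions because it is defined pointwise on fat points; approximations base-change to approximations) are exactly the content already established in L:adjim and its proof, so there is no gap.
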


\begin{remark}\label{R:nongeom}
We can even apply this theory to a non-\explicit\ morphism $s\colon\motif Y\to \motif X$ of formal motives. Indeed, let $\motif Z_\fat\sub\motif Y$ be sub-\zariski\ approximations of $\motif Y$. By Theorem~\ref{T:globsectsubzar}, the restriction $\restrict s{\motif Z_\fat}$ is \explicit. Hence, given $a\colon \bassch\to X$, the specialization $(\motif Z_\fat)_a$ is sub-\zariski\ by Proposition~\ref{P:specmot}. Using \eqref{eq:spec}, it is not hard to show that these specializations $(\motif Z_\fat)_a$ are approximations of $\motif Y_a$ (as defined by the right hand side of \eqref{eq:spec}), showing that the latter is formal too. 
\end{remark}

\section{Arc schemes}\label{s:arc}
From now on, our base scheme will (almost always) be an \acf\ $\fld$. 
Fix a fat point $\fat$   and let $j\colon\fat\to \op{Spec}\fld$ be its   structure morphism. Clearly, it is flat and finite and satisfies condition \eqref{eq:loccond}, and so both augmentation and diminution with respect to $j$ are well-defined. We define the \emph{arc functor} of $\fat$, as a double adjunction\footnote{See \S\ref{s:defarc} below for the corresponding single adjunction.}
$$
\arc\fat{}:=\arc{j^*}{}\after\arc{j_*}{}.
$$
In other words, given a motif $\motif X$ on a $\fld$-scheme $X$, and a fat point $\mathfrak w$, we have
$$
(\arc\fat{\motif})(\mathfrak w)=\motif X(j_*j^*\mathfrak w)
$$
where $j_*j^*\mathfrak w$ is the product $\fat\times_\fld\mathfrak w$ viewed as a fat point over $\fld$, denoted henceforth simply by $\fat\mathfrak w$. Applied to a $\fld$-scheme $X$, we get the so-called \emph{arc scheme} $\arc\fat X$, whose $\mathfrak w$-rational points are in one-one correspondence with the $\fat\mathfrak w$-rational points of $X$, and so we will identify henceforth
$$
X(\fat\mathfrak w)=(\arc\fat X)(\mathfrak w).
$$
Moreover, we have by Corollary~\ref{C:flatarc} a canonical morphism
\begin{equation}\label{eq:canarcmorph}
\rho_X\colon \arc\fat X\to X.
\end{equation}

\begin{remark}\label{R:arcHilb}
In other words, $\arc{\fat}X$ is the Hilbert scheme classifying all maps from 
$\fat$ to $X$. 
When $\fat=\op{Spec}(\pol\fld\xi/\xi^n\pol\fld\xi)$, the resulting arc 
scheme  is also known in the literature as a \emph{jet scheme} (though we prefer to reserve 
the latter nomenclature for   schemes of the form $\jet YnX$). 
\end{remark}

\begin{remark}\label{R:goodbasismot}
By the argument in the proof of Theorem~\ref{T:flatrestr}, for any fat point $\fat$, we may choose a basis $\Delta=\{\alpha_0,\dots,\alpha_{l-1}\}$  of its coordinate ring $(R,\maxim)$ with some additional properties. In particular, unless noted explicitly, we will always assume that the first base element is $1$ and that the remaining ones belong to $\maxim$. Moreover, once the basis is fixed, we let $\tilde\var$   be  the $l$-tuple of arc variables $(\tilde\var_0,\dots,\tilde\var_{l-1})$, so that  $\genarc\var=\tilde\var_0+\tilde\var_1\alpha_1+\dots+\tilde\var_{l-1}\alpha_{l-1}$  is   the corresponding generic arc. It follows from \eqref{eq:genarcg} that $\tilde f_0=f(\tilde\var_0)$, for any $f\in \pol \fld\var$.  By \cite[\S2.1]{SchEC}, we may choose $\Delta$  so that, with  $\id_i:=(\alpha_i,\dots,\alpha_{l-1})R$, we have a  Jordan-Holder composition 
series\footnote{Writing $R$ as a homomorphic image of $\pol \fld\vary$ so that      $\vary:=\rij\vary e$ generates $\maxim$, let $\id(\alpha)$, for $\alpha\in\zet^n_{\geq 0}$,  be the ideal in $R$ generated by all $\vary^\beta$ with $\beta$ lexicographically larger than $\alpha$. Then we may take $\Delta$ to be all monomials $\vary^\alpha$ such that $\vary^\alpha\notin\id(\alpha)$, ordered lexicographically.}  
$$
\id_l=0\varsubsetneq\id_{l-1}\varsubsetneq \id_{l-2}\varsubsetneq\dots \varsubsetneq\id_1=\maxim\varsubsetneq\id_0=R.
$$ 

 Given $r\in R$, we expand it as in \eqref{eq:expbas} in the basis as $r=r_0+r_1\alpha_1+\dots+r_{l-1}\alpha_{l-1}$, with $r_i\in\fld$.  
I claim that $r_j=0$  for $j<i$ whenever $r\in \id_i$. Indeed, if not, let $j<i$ be minimal so that there exists a counterexample with $r_j\neq0$. By minimality, $r=r_j\alpha_j+r_{j+1}\alpha_{j+1}+\dots\in \id_i$ showing that $\alpha_j\in\id_{j+1}$, since $r_j$ is invertible. However, this implies that $\id_j=\id_{j+1}$, contradiction. 
 From this, it is now easy to see that the first $m$ basis elements of $\Delta$ form a basis of $R_m:=R/\id_{m+1}$. 
 Therefore, calculating $\tilde f_m$ in \eqref{eq:genarcg} for $f\in\pol \fld\var$ does not depend on whether we work over $R$ or with $R_m$, and hence, in  particular, $\tilde f_m\in\pol \fld{\tilde\var_0,\dots,\tilde\var_m}$ for every $m< l$.
 \end{remark}
 
 With these observations, we can now prove the following important openness property of arcs:
 
 \begin{theorem}\label{T:openarc}
Given a $\fld$-scheme $X$, a fat point $\fat$, and an open $U\sub X$, we have   isomorphisms
\begin{equation}\label{eq:openarc}
\arc\fat U\iso \inverse{\rho_X}U\iso \arc\fat X\times_XU. 
\end{equation}
\end{theorem}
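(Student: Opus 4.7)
The plan is to verify both isomorphisms on $\mathfrak w$-rational points, for each fat point $\mathfrak w$, using the adjunction identification $(\arc\fat X)(\mathfrak w)=X(\fat\mathfrak w)$.

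The first step is to identify the canonical morphism $\rho_X$ at the level of rational points. Unwinding the construction in Theorem~\ref{T:flatrestr} and Corollary~\ref{C:flatarc}, using a basis $\alpha_0=1,\alpha_1,\dots,\alpha_{l-1}$ of the coordinate ring $R$ of $\fat$ with $\alpha_j\in\maxim$ for $j\ge 1$, together with the identity $\tilde h_0=h(\tilde\var_0)$ from Remark~\ref{R:goodbasismot}, I expect that a $\mathfrak w$-rational point $b$ of $\arc\fat X$ corresponding via adjunction to $\tilde b\colon \fat\mathfrak w\to X$ satisfies $\rho_X\after b=\tilde b\after \iota$, where $\iota\colon \mathfrak w\hookrightarrow \fat\mathfrak w$ is the canonical closed immersion induced by base change along the unique $\fld$-rational point of $\fat$ (which exists because $\fld$ is algebraically closed and $R$ is local with residue field $\fld$). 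Granted this, the first isomorphism $\arc\fat U\iso \inverse{\rho_X}U$ reduces to the claim that for any $\tilde b\colon \fat\mathfrak w\to X$, one has $\tilde b\in U(\fat\mathfrak w)$ if and only if $\tilde b\after \iota\in U(\mathfrak w)$. The forward direction is trivial, and the reverse direction is precisely the completeness of the open sieve $\func U\sub \func X$ established in Lemma~\ref{L:opencomp}, applied to the morphism $\iota\colon \mathfrak w\to \fat\mathfrak w$ of fat points.

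The second isomorphism is a standard feature of open immersions: since $U\hookrightarrow X$ is an open immersion, the fiber product $\arc\fat X\times_X U$ is canonically the open subscheme $\inverse{\rho_X}U\sub \arc\fat X$, because on $\mathfrak w$-rational points a pair $(b,c)\in(\arc\fat X\times_X U)(\mathfrak w)$ is uniquely determined by $b\in(\arc\fat X)(\mathfrak w)$ subject to the single condition that $\rho_X\after b$ factors through $U$.

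The main obstacle will be the first step, namely carefully unwinding the adjunction of Theorem~\ref{T:flatrestr} to verify the explicit formula $\rho_X\after b=\tilde b\after \iota$ with $\iota$ the canonical closed immersion coming from $\alpha_0=1$; once this is in place, the theorem reduces to Lemma~\ref{L:opencomp} together with the universal property of fiber products along an open immersion.
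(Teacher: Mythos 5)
Your proof is correct, but it takes a genuinely different route from the paper's. The paper reduces to an affine $X=\op{Spec}B$ and a basic open $U=\op{Spec}(B_f)$, views $U$ as the closed subscheme of $\affine B1$ cut out by $f\vary-1$, and then does an explicit calculation of the coordinate ring of $\arc\fat U$; using the Jordan--H\"older basis of Remark~\ref{R:goodbasismot} (specifically the fact that $\tilde g_i$ involves only $\tilde\vary_0,\dots,\tilde\vary_i$, and that $\tilde g_0=\tilde f_0\tilde\vary_0-1$), one shows inductively that the coordinate ring of $\arc\fat U$ is exactly the localization $A_{\tilde f_0}$ of the coordinate ring $A$ of $\arc\fat X$. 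Your argument bypasses all coordinate computations: you identify $\rho_X$ on $\mathfrak w$-points with restriction along the closed immersion $\iota\colon\mathfrak w\hookrightarrow\fat\mathfrak w$, and then the key equivalence $\tilde b\in U(\fat\mathfrak w)\Leftrightarrow\tilde b\after\iota\in U(\mathfrak w)$ is exactly the completeness of the open subsieve $\func U\sub\func X$ from Lemma~\ref{L:opencomp} applied to $\iota$, the forward direction being trivial. The second isomorphism is, as you say, the universal property of fiber products along an open immersion. Your approach is more conceptual and shorter; the paper's has the virtue of also producing the explicit description $\loc_{\arc\fat U}\iso\loc_{\arc\fat X}[1/\tilde f_0]$. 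A few remarks worth making explicit: since $\arc\fat U$ and $\inverse{\rho_X}U$ are both subsieves of $\arc\fat X$, the pointwise equality of rational points you establish is automatically an equality of sieves, and both are representable (the former by Theorem~\ref{T:flatrestr}, the latter as an open subscheme of the arc scheme), so the isomorphism of schemes follows from the full embedding of Proposition~\ref{P:morphrep}. Also, the identification of $\rho_X$ with the natural transformation induced by $\op{Spec}\fld\to\fat$ is asserted by the paper itself, but only in the paragraph preceding Lemma~\ref{L:fibmot}, which occurs \emph{after} Theorem~\ref{T:openarc} in the text; it is logically independent of the theorem, so there is no circularity, but you are right to flag that unwinding it carefully from the construction in Theorem~\ref{T:flatrestr} and Corollary~\ref{C:flatarc} is the main bookkeeping burden of your approach.
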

\begin{proof}
By the universal property of adjunction, whence of arcs, it suffices to verify \eqref{eq:openarc} in case $X=\op{Spec}B\sub\affine\fld n$ is affine and $U=\op{Spec}(B_f)$ is a basic open subset. Let $A$ be the coordinate ring of $\arc\fat X$. Since $U$ is the closed subscheme of $\affine B1$ given by $g:=fy-1=0$, the corresponding arc  scheme $\arc\fat U$ is the closed subscheme of $\affine An$ with coordinate ring $A':=\pol A{\tilde\vary}/(\tilde g_0,\dots,\tilde g_{l-1})\pol A{\tilde\vary}$, where $l$ is the length of $\fat=\op{Spec}R$ and the $\tilde g_i$ are given by \eqref{eq:genarcg}, with $\tilde\vary$ a tuple of $l$ variables. By Remark~\ref{R:goodbasismot}, we may calculate the $\tilde g_i$ using any basis $\alpha_0=1,\dots,\alpha_{l-1}$ of $R$, and so we may assume it has the properties discussed in that remark.  In particular, by the last observation in that remark,  each $\tilde g_i$ only depends on $\tilde \vary_0,\dots,\tilde\vary_i$. Clearly, $\tilde g_0=\tilde f_0\tilde\vary_0-1$. In particular, the $A$-subalgebra of $A'$ generated by $\tilde\vary_0$ is just $A_{\tilde f_0}$. We will prove by induction, that each $\tilde\vary_i$ belongs to this subalgebra, and hence $A'=A_{\tilde f_0}$, as we needed to prove.

To verify the claim, we may assume by induction that $\tilde\vary_0,\dots\tilde\vary_{i-1}$ belong to $A_{\tilde f_0}$. The coefficient of $\alpha_i$ in the expansion
\begin{equation}\label{eq:expanfy}
(\tilde f_0+\tilde f_1\alpha_1+\dots+\tilde f_{l-1}\alpha_{l-1})(\tilde\vary_0+\tilde\vary_1\alpha_1+\dots+\tilde \vary_{l-1}\alpha_{l-1}).
\end{equation}
is equal to $\tilde g_i$, whence zero in $A'$. As observed in Remark~\ref{R:goodbasismot},  the choice of basis allows us to ignore all terms with $\alpha_j$ for $j>i$. Put differently, upon replacing $R$ by $R/(\alpha_{i+1},\dots,\alpha_{l-1})R$, which does not effect the calculation of $\tilde g_i$, we may assume that they are zero in \eqref{eq:expanfy}. Hence, 
$$
\tilde g_i=\tilde f_0\tilde \vary_i+\text{terms involving only $\tilde\vary_0,\dots,\tilde\vary_{i-1}$}
$$
proving the claim, since $\tilde f_0$ is clearly invertible in $A_{\tilde f_0}$. 
\end{proof}

Before we proceed, some simple examples are in order. It is clear from the definitions that $\arc \fat{\affine\fld 1}=\affine\fld l$, where $l$ is the length of $\fat$. We will generalize this in Theorem~\ref{T:fibmot} below. Arc spaces are   sensitive    to singularities, as the next examples show:

 \begin{example}\label{E:arcmot}
Let us calculate the arc scheme of the cusp $C$  given by the equation $ \var^2-\vary^3=0$ along the fat point $\fat$ with coordinate ring the four dimensional algebra  $R:=\pol \fld{\xi,\zeta}/(\xi^2,\zeta^2)\pol \fld{\xi,\zeta}$, using the basis $\Delta:=\{1,\xi,\zeta,\xi\zeta\}$ (in the order listed), and corresponding arc variables    $\tilde\var=(\tilde\var_{00},\tilde\var_{10},\tilde\var_{01},\tilde\var_{11})$ and $\tilde\vary=(\tilde\vary_{00},\tilde\vary_{10},\tilde\vary_{01},\tilde\vary_{11})$. One easily calculates that $\arc \fat C$ is given by the equations
$$
\begin{aligned}
    \tilde\var_{00}^2&=\tilde\vary_{00}^3   \\
    2\tilde\var_{00}\tilde\var_{10}&=3\tilde\vary_{00}^2\tilde\vary_{10} \\  
     2\tilde\var_{00} \tilde\var_{01}&=3\tilde\vary_{00}^2 \tilde\vary_{01} \\  
     2\tilde\var_{00} \tilde\var_{11}+2\tilde\var_{10} \tilde\var_{01}&=3\tilde\vary_{00}^2 \tilde\vary_{11}+6 \tilde\vary_{00}\tilde\vary_{10} \tilde\vary_{01}.
\end{aligned}
$$
Note that above the singular point $\tilde\var_{00}=0= \tilde\vary_{00}$, the fiber consist of two $4$-dimensional hyperplanes, whereas above any regular point, it is a $3$-dimensional affine space, the expected value by Theorem~\ref{T:fibmot} below.
\end{example}

\begin{example}\label{E:dual}
Another example  is classical: let $R_2=\pol \fld \xi/\xi^2\pol \fld \xi$ be the ring of dual numbers and $\mathfrak l_2:=\op{Spec}(R_2)$ the corresponding fat point. Then one verifies that a $\fld$-rational point on $\arc {\mathfrak l_2}X$ is given by a $\fld$-rational point $P$ on $X$, and a tangent vector  $\tuple v$ to $X$ at $P$, that is to say, an element in the kernel of the Jacobian matrix $\jac X (P)$. 
\end{example}

\begin{example}\label{E:lnlm}
As a last example, we calculate $\arc {\mathfrak l_n}{\mathfrak l_m}$, where $\mathfrak l_n$ is the $n$-th jet of the origin on the affine line, that is to say, $\op{Spec}(\pol \fld \xi/\xi^n\pol \fld \xi)$. With $\genarc \var=\tilde\var_0+\tilde\var_1\xi+\dots+\tilde\var_{n-1}\xi^{n-1}$, we will expand $\genarc \var^m$ in the basis $\{1,\xi,\dots,\xi^{n-1}\}$ of $\pol \fld \xi /\xi^n\pol \fld \xi$;  the coefficients of this expansion then generate the ideal of definition   of $\arc{\mathfrak l_n}{\mathfrak l_m}$. A quick calculation shows that these generators are the polynomials
$$
g_s(\tilde\var_0,\dots,\tilde\var_{n-1}):=\sum_{i_1+\dots+i_m=s}\tilde\var_{i_1}\tilde\var_{i_2}\cdots \tilde\var_{i_m}
$$
for $s=0,\dots,n-1$, where the $i_j$ run over $\{0,\dots,n-1\}$. Note that $g_0=\tilde\var_0^m$. One shows by induction that $(\tilde\var_0,\dots,\tilde\var_s)\pol \fld {\tilde\var}$ is the unique minimal prime  ideal of $\arc{\mathfrak l_n}{\mathfrak l_m}$, where $s=\round nm$ is the \emph{round-up}  of $n/m$, that is to say, the  least integer greater than or equal to $n/m$. In particular, $\arc{\mathfrak l_n}{\mathfrak l_m}$ is irreducible (but not reduced)  of dimension $n-\round nm$.
\end{example}

Immediately from Theorems~\ref{T:bcgr} and \ref{T:flatrestr}, we get:

\begin{theorem}\label{T:arcmot}
For each fat point $\fat$, the arc functor $\arc\fat{}$ induces a ring 
endomorphism on each of the motivic \gr{s} $\grot{\funcalg \fld}$, $
\grot{\funcsub\fld}$ and $\grot{\funcinf\fld}$. \qed
\end{theorem}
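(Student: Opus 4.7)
The plan is to recognize the arc functor $\arc\fat{}$ as the composition $\arc{j^*}{}\after\arc{j_*}{}$, where $j\colon\fat\to\op{Spec}\fld$ is the structure morphism of the fat point, and then to observe that each of the two factors already induces a ring \homo\ on the three relevant \gr{s} by the previously established adjunction theorems. Thus the task reduces to checking that $j$ fits the hypotheses of both Theorem~\ref{T:bcgr} (augmentation) and Theorem~\ref{T:flatrestr} (diminution).

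For augmentation via $j_*$, I need only that $j$ be of finite type, which is immediate since $\fat$ is finite over $\fld$. For diminution via $j^*$, I need $j$ to be finite, faithfully flat, and to satisfy condition~\eqref{eq:loccond}. Finiteness is again clear; flatness is automatic over a field, and faithfulness follows because the coordinate ring of $\fat$ is nonzero; condition~\eqref{eq:loccond} holds trivially, since $\op{Spec}\fld$ is of finite type over itself and the underlying variety of $\fat$ is $\op{Spec}\fld$ (the residue field of the unique closed point of $\fat$ being $\fld$ itself, as $\fld$ is algebraically closed). Thus the two cited theorems jointly provide, for each of the sites $\funcalg{\cdot}$, $\funcsub{\cdot}$, and $\funcinf{\cdot}$, ring \homo{s} $\arc{j_*}{}$ from the \gr\ over $\fld$ to the \gr\ over $\fat$ and, going back, $\arc{j^*}{}$ from the \gr\ over $\fat$ to the \gr\ over $\fld$.

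Composing these, $\arc\fat{}=\arc{j^*}{}\after\arc{j_*}{}$ becomes a ring endomorphism of each of $\grot{\funcalg\fld}$, $\grot{\funcsub\fld}$, and $\grot{\funcinf\fld}$, as desired. There is no real obstacle here: the essential content has already been absorbed into Proposition~\ref{P:adjgr} and the two named theorems, which themselves guarantee compatibility with scissor relations, isomorphism relations, and products. The only step deserving even a moment's thought is the verification of condition~\eqref{eq:loccond}, and that verification trivializes over an algebraically closed ground field.
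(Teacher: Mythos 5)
Your proof is correct and follows precisely the route the paper takes: the arc functor is by definition the composition $\arc{j^*}{}\after\arc{j_*}{}$, and the theorem is obtained by chaining the ring \homo{s} from Theorem~\ref{T:bcgr} (augmentation) and Theorem~\ref{T:flatrestr} (diminution), after checking that $j\colon\fat\to\op{Spec}\fld$ satisfies the requisite finiteness, flatness, and \eqref{eq:loccond} hypotheses. Your verification of \eqref{eq:loccond} via the Nullstellensatz argument (residue field equals $\fld$ since $\fld$ is algebraically closed) is exactly the content the paper leaves implicit in \S\ref{s:arc}.
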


In case of complete formal motives, we even have:

\begin{lemma}\label{L:arcform}
For  any closed immersion $Y\sub X$ of $\fld$-schemes, and any fat point $
\fat$, we have isomorphisms
$$
\arc\fat{(\complet X_Y)}\iso \arc\fat X\times_X\complet X_Y\iso 
(\complet{\arc {\fat}X})_{\inverse\rho Y},
$$
where $\rho\colon\arc{\fat}X\to X$ is the canonical map from \eqref{eq:canarcmorph}. 
\end{lemma}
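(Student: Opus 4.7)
My plan is to evaluate the three sieves, all of which live naturally on $\arc\fat X$, at an arbitrary fat point $\mathfrak w$, and to reduce everything to the openness property in Theorem~\ref{T:openarc}. The central input is Proposition~\ref{P:formsieve}, which lets me describe a completion sieve as the complement of an open sieve: $\complet X_Y=\func X-\func{(X-Y)}$ on $X$, and likewise $\complet{\arc\fat X}_{\inverse\rho Y}=\func{\arc\fat X}-\func{(\arc\fat X-\inverse\rho Y)}$ on $\arc\fat X$.

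First I would verify that both the arc operator $\arc\fat{(-)}$ and the pull-back $\rho_X^*$ commute with sieve-theoretic complements whenever such a complement is itself a sieve. For $\arc\fat{(-)}$ this follows at once from the point-wise formula $\arc\fat{\motif X}(\mathfrak w)=\motif X(\fat\mathfrak w)$, since set-theoretic complementation passes through the evaluation; for $\rho_X^*$ it follows from $\rho_X^*\motif X(\mathfrak w)=\inverse{\rho_X(\mathfrak w)}{\motif X(\mathfrak w)}$, since pre-image respects complements. Granted this, Theorem~\ref{T:openarc} identifies $\arc\fat{(X-Y)}$ with the open subscheme $\inverse{\rho_X}{(X-Y)}=\arc\fat X\times_X(X-Y)$ of $\arc\fat X$.

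Putting all of this together, the two desired isomorphisms arise from the chains
\begin{align*}
\arc\fat{(\complet X_Y)}&=\arc\fat X-\arc\fat{(X-Y)}=\arc\fat X-\inverse{\rho_X}{(X-Y)}=\complet{\arc\fat X}_{\inverse\rho Y},\\
\rho_X^*\complet X_Y&=\rho_X^*\func X-\rho_X^*\func{(X-Y)}=\arc\fat X-\inverse{\rho_X}{(X-Y)}=\complet{\arc\fat X}_{\inverse\rho Y},
\end{align*}
where the first chain gives the left-hand isomorphism, and the second, interpreting the fiber product $\arc\fat X\times_X\complet X_Y$ as the pull-back sieve $\rho_X^*\complet X_Y$, gives the right-hand one.

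The only point requiring genuine care, and in my view the main obstacle concealed in the statement, is the interaction of $\arc\fat{(-)}$ with the sieve-theoretic complement of an open subsieve; once this is checked directly from the definition (using completeness of the open sieve, cf.\ Lemma~\ref{L:complcompl}, to ensure its complement really is a sieve), the remainder is a formal manipulation that funnels all the geometric content into Theorem~\ref{T:openarc}.
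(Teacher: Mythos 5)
Your proof is correct, and for the first isomorphism it takes the same route as the paper: use Proposition~\ref{P:formsieve} to write $\func{\complet X_Y}$ as the complement of the open subsieve $\func U$ with $U=X-Y$, observe that both $\arc\fat{(-)}$ and $\rho^*$ commute with this complement, and invoke the openness of arcs (Theorem~\ref{T:openarc}) to identify $\arc\fat U$ with $\inverse\rho U$ inside $\arc\fat X$. The genuine difference is in the second isomorphism. You finish by applying Proposition~\ref{P:formsieve} a second time, now to the closed immersion $\inverse\rho Y\sub\arc\fat X$, so that the common intermediate sieve $\func{(\arc\fat X)}-\func{(\inverse\rho U)}$ is recognized outright as $\func{(\complet{\arc\fat X}_{\inverse\rho Y})}$; the paper instead uses the complement argument only to deduce the first isomorphism, and then switches to an explicit affine computation of coordinate rings (verifying that the $I$-adic completion of $\pol A\vary/J$ is $\pol{\complet A_I}\vary/J\pol{\complet A_I}\vary$) to get the second. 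Your shortcut stays entirely at the sieve level and extracts both isomorphisms from one pair of chains, whereas the paper's ring computation makes the isomorphism of locally ringed spaces explicit rather than leaving it to the pro-representability of completion sieves. One small presentational slip: you attribute the left-hand isomorphism to the first chain alone, but it actually requires combining both chains through their common value $\complet{\arc\fat X}_{\inverse\rho Y}$; the mathematics is unaffected.
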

\begin{proof}
Let $U:=X-Y$. By Proposition~\ref{P:formsieve}, we have an equality 
\begin{equation}\label{eq:compcomp}
-\func{\complet X_Y}=\func U
\end{equation}
 of sieves on $X$.
 By 
Theorem~\ref{T:morp}, we may pull  back \eqref{eq:compcomp} under 
the map $\rho\colon\arc\fat X\to X$, to get a relation 
$$
-\func{(\arc\fat X\times_X\complet X_Y)}=\rho^*{(-\func{\complet 
X_Y})}=\rho^*\func U=\func{(\arc\fat X\times_XU)}=\func{(\arc\fat U)}
$$
where we used 
the openness of arcs (Theorem~\ref{T:openarc}) for the last 
equality. On the other hand, taking arcs  in identity~\eqref{eq:compcomp}, 
yields
$$
-\arc\fat{\func{\complet X_Y}}=\arc\fat{(-\func{\complet X_Y})}=\arc
\fat{\func U}=\func{(\arc\fat U)}
$$  
where one easily checks that arc functors commute with complements of complete sieves. Combining both identities and 
taking complements then proves the first isomorphism.  

To see the second isomorphism, we may assume, in view of the local nature 
of arcs, that $X=\op{Spec}A$ is affine. Let $I\sub A$ be the ideal of 
definition of $Y$, so that the global sections of $\complet X_Y$ is the 
completion $\complet A_I$ of $A$ with respect to $I$. Let  $
\pol A\vary/J$ be the coordinate ring of  the arc scheme $\arc\fat X$, for some $J\sub\pol A\vary$ and some tuple of variables $
\vary$. By the first isomorphism, the global section ring of $\arc
\fat{\complet X_Y}$ is  equal to the base change $\pol{\complet A_I}\vary/
J\pol{\complet A_I}\vary$. The ideal defining $\inverse\rho Y$ in $\arc\fat 
X$ is $I(\pol A\vary/J)$, and the completion of $\pol A\vary/J$ with respect 
to this ideal is   $\pol{\complet A_I}\vary/J\pol{\complet A_I}\vary$, 
proving the second isomorphism.
\end{proof}

It is easy to check that $\arc{\mathfrak v}{}\arc{\mathfrak w}{}=\arc{\mathfrak v\mathfrak w}{}=\arc{\mathfrak w}{}\arc{\mathfrak w}{}$, so that all arc functors commute with one another. If $\fld$ has positive \ch, we also have a Frobenius adjoint acting on the sub-\zariski\ and formal \gr{s}, and we have the following commutation relation
\begin{equation}\label{eq:arcfrobcomm}
\arc\fat{}\arc\frob{}=\arc\frob{}\arc{\frob\fat}{}
\end{equation}
for any fat point $\fat$. Indeed, we verify this on an arbitrary motif $\motif X$ and a fat point $\mathfrak w$. The left hand side of \eqref{eq:arcfrobcomm} becomes
\begin{align*}
(\arc\fat{}\arc\frob{})(\motif X)(\mathfrak w)&=\arc\fat{(\arc\frob{\motif X})}(\mathfrak w)\\
&=(\arc\frob{\motif X})(\fat\mathfrak w)\\
&=\motif X(\frob(\fat\mathfrak w)),
\end{align*}
whereas the right hand side becomes
\begin{align*}
(\arc\frob{}\arc{\frob\fat}{})(\motif X)(\mathfrak w)&=\arc\frob{(\arc{\frob\fat}{\motif X})} (\mathfrak w)\\
&=(\arc{\frob\fat}{\motif X})(\frob\mathfrak w)\\
&=\motif X((\frob\fat)(\frob\mathfrak w)),
\end{align*}
and these are both equal since an easy calculation shows that  $\frob(\fat\mathfrak w)=(\frob\fat)(\frob\mathfrak w)$.

\subsection*{Arcs and locally trivial fibrations}
By adjunction, any morphism $\bar{\fat}\to {\fat}$   of fat points  induces a  natural transformation  of arc functors $ 
\arc\fat{}\to \arc {\bar {\fat}}{}$. In particular, taking $
\bar {\fat}$ to be the geometric point   given by $\fld $ itself, we get a 
canonical morphism $\arc\fat{\motif X}\to \motif X$, for any motif $\motif X$, since   $
\arc\fld\cdot$ is the identity functor. In   case   $\motif X=\func X$ is 
representable, this is none other than  the canonical morphism   $\rho_X\colon \arc {\fat}X\to X$ from \eqref{eq:canarcmorph}. To formulate the key property of this morphism, we need a definition.

We call a morphism $Y\to X$ of $\fld $-schemes a \emph{locally trivial fibration with fiber $Z$} if for each (closed) point $P\in X$, we can find an open $U\sub X$ containing $P$ such that the restriction of $Y\to X$ to $U$ is isomorphic with the projection $U\times_\fld  Z\to U$.

\begin{lemma}\label{L:fibmot}
If $f\colon Y\to X$ is a locally trivial fibration of $\fld $-schemes with   fiber $Z$, then $\class Y=\class X\cdot\class Z$ in $\grot{\funcsub\fld}$.
\end{lemma}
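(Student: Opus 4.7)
The plan is to reduce the statement to the trivial case $Y \iso X \times_\fld Z$ by choosing a finite trivializing open cover and applying the inclusion-exclusion formula of Lemma~\ref{L:opencov} to both $Y$ and $X$ simultaneously. Since $X$ is Noetherian, whence quasi-compact, the local triviality assumption yields a finite open cover $X = U_1\cup\dots\cup U_n$ together with isomorphisms $\varphi_i\colon \inverse f{U_i} \iso U_i\times_\fld Z$ of $U_i$-schemes. For any non-empty $I\sub\{1,\dots,n\}$, set $U_I:=\bigcap_{i\in I}U_i$; base-changing any $\varphi_i$ with $i\in I$ along the open immersion $U_I\sub U_i$ produces an isomorphism $\inverse f{U_I}\iso U_I\times_\fld Z$ of $U_I$-schemes. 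In particular, since multiplication in $\grot{\funcsub\fld}$ is defined via fiber products over the base, we obtain
$$
\class{\inverse f{U_I}}=\class{U_I}\cdot\class Z
$$
in $\grot{\funcsub\fld}$ for every non-empty $I$.

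Next, I would observe that $\{\inverse f{U_i}\}_{i=1}^n$ is a finite open cover of $Y$ with $\bigcap_{i\in I}\inverse f{U_i}=\inverse f{U_I}$. Applying Lemma~\ref{L:opencov} to this cover of $Y$, then substituting the displayed product decomposition, and finally applying Lemma~\ref{L:opencov} a second time to the cover $\{U_i\}$ of $X$, yields
$$
\class Y = \sum_{\emptyset\neq I\sub\{1,\dots,n\}}(-1)^{\norm I}\class{\inverse f{U_I}} = \class Z\cdot\sum_{\emptyset\neq I\sub\{1,\dots,n\}}(-1)^{\norm I}\class{U_I} = \class Z\cdot\class X,
$$
which is the desired identity.

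The only potential obstacle is purely notational, namely the coherence of the trivializations on higher intersections. This however is a non-issue: we do not need the various $\varphi_i$ to agree on the overlaps, only that $\inverse f{U_I}$ be isomorphic to $U_I\times_\fld Z$ as a $U_I$-scheme, which is automatic from restricting any single $\varphi_i$ with $i\in I$. Note also that the argument takes place in the sub-\zariski\ \gr{} rather than the \zariski\ one because the open subsieves $\func U_I$ are in general only sub-\zariski\ (and in fact we implicitly used the multiplicativity of the class under fiber products, which holds by definition of the product in $\grot{\funcsub\fld}$, together with Lemma~\ref{L:latimag} ensuring the site is closed under products).
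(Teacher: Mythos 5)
Your proof is correct and follows essentially the same route as the paper's: extract a finite trivializing cover by quasi-compactness, observe that each intersection $U_I$ inherits a trivialization by restricting any single $\varphi_i$, deduce $\class{\inverse f{U_I}}=\class{U_I}\cdot\class Z$, and then apply Lemma~\ref{L:opencov} twice (to the cover of $Y$ and to that of $X$). Your remark that coherence of the $\varphi_i$ across overlaps is irrelevant is a worthwhile clarification that the paper leaves implicit.
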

\begin{proof}
By definition and compactness,  there exists a finite open covering $X=X_1\cup \dots\cup X_n$, so that   
$$
\inverse f{X_i}\iso X_i\times_\fld Z,
$$
 for $i=\range 1n$. In fact, for any non-empty subset $I\sub\{1,\dots,n\}$, we have an isomorphism $\inverse f{X_I}\iso X_I\times_\fld Z$, and hence, after taking classes in $\grot{\funcsub\fld}$, we get $\class{\inverse f{ X_I}}=\class{X_I} \cdot\class Z$.  Since the $\inverse f{ X_i}$ form an open affine covering of $Y$ and pre-images commute with intersection, a double application of Lemma~\ref{L:opencov} yields 
$$
 \class{Y}=\sum_{\emptyset\neq I\sub\{1,\dots,n\}} (-1)^{\norm I}\class{\inverse f{X_I}}=\sum_{\emptyset\neq I\sub\{1,\dots,n\}} (-1)^{\norm I}\class{X_I}\cdot\class Z=\class X\cdot\class Z
 $$
 in $\grot{\funcsub\bas}$.
\end{proof}

\begin{theorem}\label{T:fibmot}
If $X$ is a $d$-dimensional smooth $\fld$-scheme and $\bar \fat\sub \fat$ a closed immersion of  fat points, 
then the canonical map $\arc{\fat}X\to \arc{\bar\fat}X$ is a locally trivial 
fibration with fiber $\affine\fld {d(l-\bar l)}$, where $l$ and $\bar l$ are the respective  lengths of $\fat$ and $\bar\fat$. In particular, 
$$
\class{\arc\fat X}=\class X\cdot\lef^{d(l-1)}
$$
 in $\grot{\funcsub\fld}$.
\end{theorem}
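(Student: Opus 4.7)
The plan is to reduce, via the openness of arcs (Theorem~\ref{T:openarc}), to the case where $X$ is étale over affine space, and then to identify the comparison morphism $\arc\fat X\to \arc{\bar\fat}X$ as a base change of the manifestly trivial bundle $\arc\fat{\affine\fld d}\to\arc{\bar\fat}{\affine\fld d}$. Since $X$ is smooth of dimension $d$, it admits a Zariski cover by opens $X'\sub X$ each carrying an étale morphism $\pi\colon X'\to Y:=\affine\fld d$. By Theorem~\ref{T:openarc}, the preimage $\rho_X^{-1}(X')$ under the canonical morphism $\rho_X\colon\arc{\bar\fat}X\to X$ equals $\arc{\bar\fat}{X'}$, and the restriction of $\arc\fat X\to\arc{\bar\fat}X$ above it coincides with $\arc\fat{X'}\to\arc{\bar\fat}{X'}$. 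As these opens cover $\arc{\bar\fat}X$, it suffices to establish triviality of the fibration in the étale situation.

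For $Y=\affine\fld d$ itself, the description of arc schemes from Theorem~\ref{T:flatrestr} is explicit. Choosing a basis of the coordinate ring $R$ of $\fat$ whose first $\bar l$ elements reduce to a basis of the coordinate ring $\bar R$ of $\bar\fat$ and whose last $l-\bar l$ elements form a basis of the kernel $I$ of $R\to\bar R$ (possible by refining a Jordan--Hölder series, as in Remark~\ref{R:goodbasismot}), one identifies $\arc\fat Y$ with $\affine\fld{dl}$ and the canonical morphism $\arc\fat Y\to\arc{\bar\fat}Y=\affine\fld{d\bar l}$ with the coordinate projection onto the first $d\bar l$ coordinates, visibly a trivial $\affine\fld{d(l-\bar l)}$-bundle.

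The main step is to establish an isomorphism
\[
\arc\fat{X'}\;\iso\;\arc{\bar\fat}{X'}\times_{\arc{\bar\fat}Y}\arc\fat Y.
\]
Evaluating at an arbitrary test fat point $\mathfrak w$, this reduces to showing that the natural map
\[
X'(\fat\mathfrak w)\;\longrightarrow\;X'(\bar\fat\mathfrak w)\times_{Y(\bar\fat\mathfrak w)}Y(\fat\mathfrak w)
\]
is a bijection. Since $R$ is Artinian, the closed immersion $\bar\fat\mathfrak w\sub\fat\mathfrak w$ is defined by a nilpotent ideal, so this is precisely the unique infinitesimal lifting property characterizing étaleness of $\pi$. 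This is the only substantive obstacle in the argument; the rest is formal. Base-changing the trivial bundle $\arc\fat Y\to\arc{\bar\fat}Y$ along the induced morphism $\arc{\bar\fat}{X'}\to\arc{\bar\fat}Y$ then exhibits $\arc\fat{X'}\to\arc{\bar\fat}{X'}$ as a trivial $\affine\fld{d(l-\bar l)}$-bundle, completing the proof of local triviality.

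Finally, for the class identity, I would specialize $\bar\fat$ to the geometric point $\op{Spec}\fld$, so that $\bar l=1$ and $\arc{\bar\fat}X=X$. The preceding then exhibits $\arc\fat X\to X$ as a locally trivial fibration with fiber $\affine\fld{d(l-1)}$, and Lemma~\ref{L:fibmot} yields $\class{\arc\fat X}=\class X\cdot\lef^{d(l-1)}$ in $\grot{\funcsub\fld}$.
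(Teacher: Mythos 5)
Your proof is correct, and it takes a genuinely different route from the paper's. The paper argues by direct computation: it reduces, via a Jordan--H\"older filtration, to the case where $\bar R=R/\alpha R$ with $\alpha$ in the socle, embeds $X$ in $\affine\fld m$, and computes the fiber of $\arc\fat X\to\arc{\bar\fat}X$ over a $\mathfrak w$-rational point by a Taylor expansion $f_j(\tuple u+\tilde\var_{l-1}\alpha)=\bigl(\sum_i(\partial f_j/\partial\var_i)(\tuple u)\tilde\var_{l-1,i}\bigr)\alpha$, identifying the fiber with the kernel of the Jacobian $\jac X(\tilde{\tuple u}_0)$ and then solving by Cramer's rule using the rank condition from smoothness. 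Your argument instead replaces the Jacobian criterion with the equivalent characterization of smoothness by an \'etale cover $\pi\colon X'\to\affine\fld d$, and replaces the explicit Taylor computation with the formal-\'etaleness lifting property: since $\bar\fat\mathfrak w\hookrightarrow\fat\mathfrak w$ is a nilpotent thickening, the unique lifting criterion gives exactly the Cartesian square $\arc\fat{X'}\iso\arc{\bar\fat}{X'}\times_{\arc{\bar\fat}Y}\arc\fat Y$ needed to pull back the visibly trivial bundle over $\affine\fld d$. What your approach buys is conceptual transparency --- no basis-dependent Taylor expansion, no socle-element induction, and the base-change step makes the bundle structure manifest in one stroke; in exchange you invoke the \'etale-cover characterization of smoothness and formal \'etaleness rather than the more self-contained Jacobian-rank hypothesis the paper works with directly. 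One small point worth spelling out when you write this up: the compatibility that the restriction of $\arc\fat X\to\arc{\bar\fat}X$ over $\arc{\bar\fat}{X'}$ is $\arc\fat{X'}\to\arc{\bar\fat}{X'}$ uses Theorem~\ref{T:openarc} for both $\fat$ and $\bar\fat$ together with the functoriality identity that $\arc\fat X\to\arc{\bar\fat}X\to X$ composes to $\rho_X$.
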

\begin{proof}
Let $R$ and $\bar R$ be the Artinian local coordinate rings of $\fat$ and $\bar \fat$ respectively. Since arcs can be calculated locally (see the discussion following Lemma~\ref{L:opencomp}), we may assume $X$ is the (affine) closed subscheme of $\affine\fld m$ with ideal of definition  $\rij fs\pol\fld\var$. 
Since the composition of locally trivial fibrations is again a locally trivial fibration, with general fiber the product of the fibers, we may reduce to the case that $\bar R=R/\alpha R$ with $\alpha $ an element in the socle of $R$, that is to say, such that $\alpha \maxim=0$, where $\maxim$ is the maximal ideal of $R$.  Let $\Delta$ be a basis of $R$ as   in Remark~\ref{R:goodbasismot}, with $\alpha_{l-1}=\alpha $ (since $\alpha $ is a socle element, such a basis always exists). In particular, $\Delta-\{\alpha \}$ is a basis of $\bar R$. We will use these bases to calculate both arc maps.

To  calculate a general fiber of the map $s\colon\arc {\fat} X\to \arc{\bar\fat} X$, fix a fat point $\mathfrak w$ with coordinate ring $S$, and  a $\mathfrak w$-rational point $\bar b\colon\mathfrak w\to \arc{\bar\fat}X$, given by a tuple $\tilde{\tuple u}$ over $S$. The fiber $\inverse{s(\mathfrak w)}{\bar b}$, is equal to the fiber of $X(\fat\mathfrak w)\to X(\bar\fat\mathfrak w)$ above $\bar a$, where  $\bar a\colon\bar\fat\mathfrak w\to X$ is the $\bar\fat\mathfrak w$-rational point corresponding to $\bar b$, that is to say, the composition $\bar\fat\mathfrak w\to \bar\fat\times\arc{\bar\fat}X\to X$ 
given by 
 Theorem~\ref{T:flatrestr} (see Corollary~\ref{C:flatarc}). Being a rational point, $\bar a$ corresponds therefore to a solution ${\tuple u}$  in  ${\bar R\tensor_\fld S}$ of the equations $f_1=\dots=f_s=0$, where the relation with the tuple $\tilde{\tuple u}$ is given by equation~\eqref{eq:expbas}. Let $x$ be the center of $\bar a$, that is to say, the closed point given as the image of $\bar a$ under the canonical map $X(\bar\fat\mathfrak w)\to X(\fld)$. Since $X$ is smooth at $x$, the Jacobian $(s\times n)$-matrix $\jac X:=(\partial f_i/\partial \var_j)$ has rank $m-d$ at $x$. Replacing $X$ by an affine local neighborhood of $x$ and rearranging the variables if necessary, we may assume that the first $(m-d)\times(m-d)$-minor in $\jac X$ is invertible on $X$.

The surjection $R\to \bar R$ induces a surjection $R\tensor_\fld S\to \bar R\tensor_\fld S$.  The fiber above  $\bar a$ is  therefore   defined by the equations $f_j( {\tuple u}+
 \tilde\var_{l-1}\alpha )=0$, for $j=\range 1s$. By Taylor expansion, this becomes
\begin{equation}\label{eq:fib}
 0=f_j( {\tuple u}+\tilde\var_{l-1}\alpha )=\Big(\sum_{i=1}^m\frac{\partial f_j}{\partial \var_i}( {\tuple u})\tilde\var_{l-1,i}\Big)\alpha 
\end{equation}
 since    $f_j( {\tuple u})=0$ and $\alpha^2=0$ in $R\tensor_\fld S$. In fact, since $ {\tuple u}\equiv \tilde{\tuple u}_0\mod\maxim(R\tensor_\fld S)$ and $\alpha \maxim=0$,  we may replace each $\partial f_j/\partial \var_i( {\tuple u})$  in \eqref{eq:fib}  by $\partial f_j/\partial \var_i(\tilde{\tuple u}_0)$. Hence, the fiber above $\bar a$ is is the linear subspace of $(R\tensor S)^m$ defined as the kernel of the Jacobian   $\jac X(\tilde{\tuple u}_0)$. In view of the shape of the Jacobian of $X$,  we can find $g_{ij}\in\pol\fld\var$ such that 
 $$
 \tilde\var_{l-1,i} = \sum_{j>m-d} g_{ij}(\tilde{\tuple u}_0) \tilde\var_{l-1,j}
 $$
 for all  $i\leq m-d$, by 
  Kramer's rule. Therefore, viewing the parameter $\tilde{\tuple u}_0$ as varying over $X(\mathfrak w)$,  the fiber of $s(\mathfrak w)$ is  the constant space $\affine \fld d$, as we needed to show.

Applying this to $\arc{\fat} X\to X$, (note that $X= \arc \fld X$) we get a locally trivial fibration with   fiber equal to $\affine \fld {d(l-1)}$, so that the last assertion   follows from Lemma~\ref{L:fibmot}. 
\end{proof}

Calculations, like for instance Example~\ref{E:lnlm}, suggest that even for certain non-reduced schemes, there might be an underlying locally trivial fibration. Based on these examples, I would venture the following conjecture (here we write $X^{\text{red}}$ for the underlying reduced variety of a scheme $X$):

\begin{question}\label{C:nonredfibmot}
Let $\fat$ be a fat point of length $l$ and $X$ a $d$-dimensional $\fld$-scheme. If the reduction of   $X$ is smooth, when is the induced  reduction map $ (\arc\fat X)^{\text{red}}\to X^{\text{red}}$   a locally trivial fibration with fiber $\affine\fld m$, for some $m$. 
\end{question}

\begin{remark}\label{R:redarc}
As we shall see in Example~\ref{E:deltafat} below, $m$ can be different from $d(l-1)$, the value  that we get in the reduced case. In many cases, the answer seems to be affirmative, but there are exceptions, see Example~\ref{E:nonredfib} below.

Moreover, as can be seen from Table~\eqref{tab:1} below, taking arcs does not commute with reduction, that is to say, $(\arc\fat X)^{\text{red}}$ is in general not equal to the arc space $\arc\fat{(X^{\text{red}})}$ of the reduction of $X$, nor  even   to the reduction of the latter arc space.
\end{remark}

\begin{example}\label{E:nonredfib}
The simplest instance to which Question~\ref{C:nonredfibmot} applies is when $X$ itself is a fat point $\mathfrak x$. The expectation then is that 
\begin{equation}\label{eq:nonredfibfat}
(\arc\fat{\mathfrak x})^{\text{red}}\iso \affine\fld m
\end{equation} 
for some $m$ (for expected values, see Example~\ref{E:deltafat} below). Example~\ref{E:lnlm} provides instances in which  \eqref{eq:nonredfibfat} holds. However, the following is a counterexample: let $\fat:=\jet O4C$, where $C$ is the cuspidal curve with equation $\xi^2-\zeta^3=0$ and $O$ the origin, its unique singularity. Let us calculate its \emph{auto-arcs} $\arc\fat\fat$. As the monomials in $\xi$ and $\zeta$ of degree at most two together with $\xi\zeta^2$ form a basis of the coordinate ring $R$ of $\fat$, its length is $7$ and the generic arcs are
$$
\genarc\var= \tilde\var_0+\tilde\var_1\xi+\dots+\tilde\var_6\xi\zeta^2\quad\text{and}\quad\genarc\vary= \tilde\vary_0+\tilde\vary_1\xi+\dots+\tilde\vary_6\xi\zeta^2.
$$
Since the arc scheme $\arc\fat\fat$ lies above the origin, its reduction lies in the subvariety of $\affine\fld{14}$ defined by $\tilde\var_0=\tilde\vary_0=0$, and hence, we may put these two to zero in the generic arcs and work inside the affine space $\affine\fld{12}$ given by the remaining arc variables. From the fact that $\xi^3=0$ in $R$, the arc scheme is contained in the closed subscheme of $\affine\fld{12}$ by the coefficients of the expansion of 
$$
\genarc\var^3=3\tilde\var_1\tilde\var_2^2\xi\zeta^2+\tilde\var_2^3\xi^2+\dots
$$ 
In particular, since $\tilde\var_2^2$ vanishes, the reduction lies in the subvariety given by $\tilde\var_2=0$, and so we may again put this variable equal to zero and work in the corresponding $11$-dimensional affine space. The remaining equations come from the expansion of 
\begin{align*}
\genarc\var^2-\genarc\vary^3&=(\tilde\var_1\xi+\tilde\var_3\xi^2+\tilde\var_4\xi\zeta+\tilde\var_5\zeta^2+\dots)^2-(\tilde\vary_1\xi+\tilde\vary_2\zeta+\dots)^3\\
&=(\tilde\var_1^2-\tilde\vary_2^3)\xi^2+(2\tilde\var_1\tilde\var_5-3\tilde\vary_1\tilde\vary_2^2)\xi\zeta^2
\end{align*}
showing that the reduced arc space is the singular variety with equations $\tilde\var_1^2-\tilde\vary_2^3=2\tilde\var_1\tilde\var_5-3\tilde\vary_1\tilde\vary_2^2=0$. Note that the latter can be viewed as the tangent bundle of the cusp. More precisely, instead of the anticipated \eqref{eq:nonredfibfat}, we obtain the following modified form of the auto-arc variety
$$
(\arc{\jet O4C}{(\jet O4C)})^{\text{red}}\iso \arc{\mathfrak l_2}C\times\affine\fld 7,
$$
 a singular $9$-dimensional variety. However, I could not find such a form for   values higher than $4$.
 \end{example}

\subsection*{Locally constructible sieves}
We say that a sieve $\motif X$ on a $\fld$-scheme $X$ is \emph{locally constructible}, if $\motif X(\fat)$ is constructible in $X(\fat)$, for each fat point $\fat$, by which we mean that  $\arc\fat{\motif X}(\fld)$ is constructible in the Zariski topology  on the variety $\arc\fat X(\fld)$ viewed as the space of   closed points of $\arc\fat X$.

\begin{proposition}\label{P:locdefform}
Any  formal motif   is locally constructible.
\end{proposition}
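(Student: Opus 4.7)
The plan is to reduce the statement to Chevalley's theorem via the sub-\zariski\ approximations afforded by formality. Fix a fat point $\fat$; we must show that $\motif X(\fat)$ is a constructible subset of the variety $\arc\fat X(\fld)$, where we use the identification $X(\fat) = (\arc\fat X)(\fld)$ coming from adjunction along the structure morphism $\fat\to\op{Spec}\fld$.

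First, by definition of a formal motif, there exists a sub-\zariski\ submotif $\motif Y\sub\motif X$ with $\motif Y(\fat) = \motif X(\fat)$. Write $\motif Y = \fim\varphi$ for some morphism $\varphi\colon Y\to X$ of $\fld$-schemes. Then, unwinding the definition of the image sieve,
$$
\motif X(\fat) = \motif Y(\fat) = \fim\varphi(\fat) = \op{im}\bigl(\varphi(\fat)\colon Y(\fat)\to X(\fat)\bigr).
$$

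Second, I would invoke Lemma~\ref{L:adjim} (applied to the arc adjunction $\arc\fat{} = \arc{j^*}{}\circ\arc{j_*}{}$) to identify the right-hand side with the image of an \explicit\ morphism of schemes. Namely, $\arc\fat{\fim\varphi} = \fim{\arc\fat\varphi}$, where $\arc\fat\varphi\colon\arc\fat Y\to\arc\fat X$ is the induced morphism of $\fld$-schemes of finite type. Evaluating on the geometric point $\fld$ converts the identity $\motif Y(\fat) = (\arc\fat{\fim\varphi})(\fld)$ into the statement that $\motif X(\fat)$ coincides with the image of the map of closed points induced by $\arc\fat\varphi$.

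Finally, since $\arc\fat Y$ and $\arc\fat X$ are schemes of finite type over the \acf\ $\fld$, Chevalley's theorem guarantees that the image of $\arc\fat\varphi$ on $\fld$-rational points is a constructible subset of $\arc\fat X(\fld)$. This proves that $\motif X(\fat)$ is constructible in $X(\fat)$, as required. No step here looks like a serious obstacle: the entire content is packaged in the definition of a formal motif and in the fact, already established, that arc functors carry sub-\zariski\ motives to sub-\zariski\ motives via the induced scheme morphism on ambient spaces.
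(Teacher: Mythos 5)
Your argument is correct and follows essentially the same route as the paper's: reduce to the sub-\zariski\ case via the defining approximation property of a formal motif, identify $\fim\varphi(\fat)$ with the image of $\arc\fat\varphi$ on $\fld$-rational points (you cite Lemma~\ref{L:adjim} for this; the paper asserts it directly), and conclude by Chevalley's theorem.
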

\begin{proof}
This follows from Chevalley's theorem and Theorem~\ref{T:arcmot} in case $\motif X$ is sub-\zariski, since, for a morphism $\varphi\colon Y\to X$ of $\fld$-schemes, $\fim\varphi(\fat)$, as a subset of $\arc\fat X$, is the image of the map $\arc\fat Y(\fld)\to\arc\fat X(\fld)$. The formal case then follows from this, since there exists a sub-\zariski\ motif $\motif Y\sub\motif X$ such that $\motif Y(\fat)=\motif X(\fat)$.
\end{proof}

\section{Dimension}\label{s:motdim}
In this section, we assume $\fld$ is an \acf. The dimension of an arc 
scheme $\arc{\fat}X$ is a subtle invariant depending on    $\fat$ and $X$, 
and not just on their respective length $l$ and dimension $d$; see   Table~
\eqref{tab:1} below. The underlying cause for this phenomenon is the fact that  taking reduction does not commute with taking arcs.  To exemplify this behavior, we list, for small lengths, some  defining equations of   arcs and their reductions for three different closed subschemes $X$ with the same underlying one-dimensional variety, the union of two lines in the plane. Here $\mathfrak l_l$ denotes the closed point with coordinate ring $\pol\fld\xi/\xi^l\pol\fld\xi$, that is to say, the $l$-th germ of the origin on the affine line.

\newcommand\breathe{{\vphantom{\left(\complet\Sigma_\Sigma\right)}}} 
\begin{table}[h]
\caption{Dimension $\delta$ and equations of arcs   and their reductions.}
\label{tab:1}     
\begin{tabular}{|c|c|c|r|c|r|c|r|}
\hline
$X$&$l \breathe $ &$xy=0$& $\delta$  & \phantom{xxx}$x^2y=0$\phantom{xxx}&$\delta $ & \phantom{xxxxx}$x^2y^3=0\phantom{xxxxxxxx}$&$\delta $   \\
\hline\hline
\multirow{4}{*}{$\arc {\mathfrak l_l}{ }$} &$1 \breathe $ 	&\multicolumn{2}{|c|}{$\tilde x_0\tilde y_0,$} & \multicolumn{2}{|c|}{$\tilde x_0^2\tilde y_0,$} & \multicolumn{2}{|c|}{$\tilde x_0^2\tilde y_0^3,$}\\\cline{2-8}
		&$2 \breathe $	&\multicolumn{2}{|c|}{$\tilde x_0\tilde y_1+\tilde x_1\tilde y_0,$} & \multicolumn{2}{|c|}{$2\tilde x_0\tilde x_1\tilde y_0+\tilde x_0^2\tilde y_1,$} & \multicolumn{2}{|c|}{$2\tilde x_0\tilde x_1\tilde y_0^3+3\tilde x_0^2\tilde y_0^2\tilde y_1,$}\\\cline{2-8}
		&$3 \breathe $ &\multicolumn{2}{|c|}{ $\tilde x_0\tilde y_2+\tilde x_1\tilde y_1+$} & \multicolumn{2}{|c|}{$\tilde x_0^2\tilde y_2+2\tilde x_0\tilde x_1\tilde y_1+$} &
		\multicolumn{2}{|c|}{$3\tilde x_0^2(\tilde y_0\tilde y_1^2+\tilde y_0^2\tilde y_2)+$}\\
		&&\multicolumn{2}{|c|}{$\breathe \tilde x_2\tilde y_0$\phantom{xxxxxxxx}}& \multicolumn{2}{|c|}{$(2\tilde x_0\tilde x_2+\tilde x_1^2)\tilde y_0$\phantom{xx}}& \multicolumn{2}{|c|}{$6\tilde x_0\tilde x_1\tilde y_0^2\tilde y_1+(\tilde x_1^2+2\tilde x_0\tilde x_2)\tilde y_0^3$}\\
			\hline
\multirow{3}{*}{$\nabla_{\mathfrak l_l}^{\op{red}}$} &$1 \breathe $ & $\tilde x_0\tilde y_0,$&1& $\tilde x_0\tilde y_0,$&1& $\tilde x_0\tilde y_0,$&1\\\cline{2-8}
&$2 \breathe $ & $\tilde x_0\tilde y_1,\tilde x_1\tilde y_0,$ &2& $\tilde x_0\tilde y_1,$ &3& [no new equation]&3\\\cline{2-8}
&$3 \breathe $ & $\tilde x_0\tilde y_2,\tilde x_1\tilde y_1,\tilde x_2\tilde y_0$ &3& $\tilde x_0\tilde y_2,\tilde x_1\tilde y_0$ &4& $\tilde x_1\tilde y_0$&5 \\
\hline
\end{tabular}
\end{table}

As substantiated by the data in  this table, we have the following general estimate:

\begin{lemma}\label{L:dimarc}
The dimension of $\arc{\fat}X$ is at least $dl$, where $d$ is the dimension 
of $X$ and $l$ the length of $\fat$. If $X$ is a variety, then this is an equality.
\end{lemma}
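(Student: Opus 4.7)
The plan is to prove the two assertions separately: first the inequality $\dim\arc{\fat}X \geq dl$ via a reduction to the smooth case, then the reverse inequality when $X$ is a variety via Noether normalization.

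For the lower bound, the first step is to reduce to the reduced case. The closed immersion $X^{\op{red}}\hookrightarrow X$ induces, by Theorem~\ref{T:flatrestr}, a closed immersion $\arc{\fat}{X^{\op{red}}}\hookrightarrow \arc{\fat}X$, so it suffices to bound $\dim \arc{\fat}{X^{\op{red}}}$ from below. Since $\fld$ is algebraically closed, hence perfect, the variety $X^{\op{red}}$ contains a smooth dense open subscheme $U$ of dimension $d$. Theorem~\ref{T:openarc} gives an open immersion $\arc{\fat}U\hookrightarrow \arc{\fat}{X^{\op{red}}}$, and Theorem~\ref{T:fibmot} identifies $\arc{\fat}U$ as a locally trivial fibration over $U$ with fiber $\affine{\fld}{d(l-1)}$, whose total space therefore has dimension $d+d(l-1)=dl$. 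Chaining these inclusions yields the desired bound.

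For the equality when $X$ is a variety, I plan to reduce to $X=\op{Spec}A$ affine and apply Noether normalization, producing a finite injective homomorphism $A_0:=\pol{\fld}{\rij t d}\hookrightarrow A$, equivalently a finite surjective morphism $X\to \affine{\fld}d$. The strategy is to show that the induced morphism $\arc{\fat}X\to \arc{\fat}{\affine{\fld}d}\cong \affine{\fld}{dl}$ is generically finite. Explicitly, if $a\in A$ satisfies a monic relation $f(\vary)=0$ of degree $n$ over $A_0$, then the level-zero arc variable $\tilde a_0$ satisfies a monic relation of degree $n$ over the arc ring $(A_0)_l$, while each higher-level arc variable $\tilde a_i$ is determined, by the expansion of $f(\genarc\vary)$ in the chosen basis of $\fat$, by an equation linear in $\tilde a_i$ with coefficient $\partial f/\partial \vary$ evaluated at previously determined arcs. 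Separability of $\op{Frac}(A)$ over $\op{Frac}(A_0)$, automatic over a perfect field since $A$ is reduced, ensures that this coefficient is nonzero at the generic point of $\arc{\fat}{\affine{\fld}d}$, so each $\tilde a_i$ is determined up to finite ambiguity. This gives $\dim A_l\leq \dim(A_0)_l=dl$.

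The main obstacle is the upper bound. The generic-finiteness argument controls the dominant component of $\arc{\fat}X$ over $\affine{\fld}{dl}$, but one must simultaneously control components lying over the locus where $\partial f/\partial \vary$ vanishes, which is precisely the singular locus of $X$. The delicate point is that while the fibers of $\arc{\fat}X\to X$ can indeed grow at singular points (as Example~\ref{E:arcmot} illustrates), one needs the dimension growth of the fiber to be at most the codimension of the singular stratum in $X$, so that the total dimension does not exceed $dl$. The plan is to stratify $X$ by the rank of the Jacobian and apply semicontinuity of fiber dimension together with a Noetherian induction, using that each stratum of codimension $c$ in $X$ contributes rank-deficit at most $c$ to the Jacobian precisely because $X$ is generically smooth along it.
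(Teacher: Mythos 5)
Your argument for the lower bound is sound and is essentially the paper's: reduce to $X^{\text{red}}$ via the closed immersion of arc schemes (Corollary~\ref{C:flatarc}), then over a smooth dense open use Theorems~\ref{T:openarc} and~\ref{T:fibmot}.

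For the equality you have correctly located the difficulty, but the stratification-by-Jacobian-rank and semicontinuity program you propose cannot succeed, because the upper bound $\dim\arc\fat X\leq dl$ is in fact \emph{false} for singular varieties. Take $X=C$ the cuspidal cubic $V(x^2-y^3)\subset\affine\fld 2$, so $d=1$, and $\fat=\mathfrak l_6$, so $l=6$ and $dl=6$. Inside $\affine\fld{12}$ the $7$-dimensional linear subspace $\{\tilde x_0=\tilde x_1=\tilde x_2=\tilde y_0=\tilde y_1=0\}$ lies entirely in $\arc{\mathfrak l_6}C$: on it the generic arcs satisfy $\genarc x\in(\xi^3)$ and $\genarc y\in(\xi^2)$, so $\genarc x^2$ and $\genarc y^3$ both land in $(\xi^6)=0$ in $\pol\fld\xi/(\xi^6)$, and the defining equation vanishes identically. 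Hence $\dim\arc{\mathfrak l_6}C\geq 7>6=dl$. The fiber over the singular locus thus grows strictly faster than the codimension of the singular stratum, so no Noetherian induction of the kind you envision can close the gap. For what it is worth, the paper's own proof of the equality is incomplete at the same point: it asserts without justification that $\arc\fat U$ is \emph{dense} in $\arc\fat X$ when $U$ is the smooth locus, but the example above produces a component of $\arc\fat X$ lying entirely over the singular point whose dimension strictly exceeds $\dim\arc\fat U$. What either route actually establishes is equality in the \emph{smooth} case, which is already Theorem~\ref{T:fibmot}.
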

\begin{proof}
Assume first that $X$ is an irreducible variety, so that it contains a dense open subset $U$ which is non-singular. By  Theorem~\ref{T:openarc}, the pull-back $\arc{\fat} U=U\times_X\arc{\fat} X$ is a dense open subset of $\arc{\fat} X$. Moreover, by Theorem~\ref{T:fibmot} the dimension of $\arc{\fat} U$ is equal to $dl$,  whence, by density, so is that of  $\arc{\fat} X$. If $X$ is only reduced, then we may repeat this argument on an irreducible component of $X$, and using once more the openness of arcs, conclude that $\arc\fat X$ has dimension $dl$.

For $X$ arbitrary, let $V:=X^{\text{red}}$ be the   variety 
underlying $X$. The closed immersion $V\sub X$ yields a closed immersion $\arc{\fat}V\sub
\arc{\fat}X$ by Corollary~\ref{C:flatarc}. The result now follows from 
the reduced case applied to $V$.
\end{proof}

We will call the difference $\op{dim}(\arc\fat X)-dl$ the \emph{defect of $X$ at $\fat$}. Varieties therefore have no defect. The bound given by Lemma~\ref{L:dimarc} is far from optimal, as can be seen by taking the arc scheme of a fat point (see, for instance, Example~\ref{E:lnlm}). The growth of the dimension of auto-arcs (see Example~\ref{E:nonredfib}), that is to say, the function
$$
\delta(\fat):=\op{dim}(\arc\fat\fat)
$$
for $\fat$ a fat point, is still quite puzzling. By Example~\ref{E:lnlm}, we have $\delta(\mathfrak l_n)=n-1$. However, the next example shows that $\delta(\fat)$ can be bigger than $\ell(\fat)$.

\begin{example}\label{E:deltafat}
Let $\mathfrak o_n:=\jet On{\affine\fld2}$ be the \emph{$n$-fold origin} in the plane with ideal of definition   $(\xi,\zeta)^n$. Its length is equal to $o_n:=\binomial{n+1}2$, with a basis consisting of all monomials in $\xi$ and  $\zeta$ of degree strictly less than $n$. Let 
\begin{equation}\label{eq:genarcnorigin}
\genarc \var:=\sum_{i+j<n} \tilde\var_{ij}\xi^i\zeta^j \quad\text{and}\quad \genarc \vary:=\sum_{i+j<n} \tilde\vary_{ij}\xi^i\zeta^j
\end{equation} 
be  the generic arcs, so that   $\arc{\mathfrak o_n}{\mathfrak o_n}$ is the closed subscheme of  $\affine\fld{o_n}$ given by the coefficients of the monomials $\genarc\var^i\genarc\vary^{n-i}$, for $i=\range 0,n$. Since the arc scheme $\arc{\mathfrak o_n}{\mathfrak o_n}$ 
 lies above the origin, its defining equations contain the ideal $(\tilde\var_{00},\tilde\vary_{00})^n$. To calculate its dimension, we may take its reduction, which means that we may put $\tilde\var_{00}$ and $\tilde\vary_{00}$ equal to zero in \eqref{eq:genarcnorigin}. However, any monomial of degree $n$ in the generic arcs is then identical zero, showing that the reduction of the arc scheme is given by $\tilde\var_{00}=\tilde\vary_{00}=0$, and hence, its dimension is equal to 
 $$
 \delta(\mathfrak o_n)=2\binomial{n+1}2-2=n^2+n-2.
 $$

%
One might be tempted to propose therefore that $\delta(\fat)$ is equal to the embedding dimension of $\fat$ times its length minus one, but the next example disproves this. 
Namely, without proof, we state that $\delta(\fat)=7$ for $\fat$ the fat point in the plane with equations $\xi^2=\xi\zeta^2=\zeta^3=0$ (note that $\fat$ has length $5$ and embedding dimension $2$, so that the expected value would be $2\times 4=8$). Note that  the auto-arc space $\arc\fat\fat$ is often, but not always an affine space (see Question~\ref{C:nonredfibmot} and the example following it).

It seems  plausible that $\delta(\jet PnY)$ grows as a polynomial in $n$ of degree $d$, for any $d$-dimensional closed germ $(Y,P)$. In particular, we expect the limit 
$$
e(Y,P):=\lim_{n\to\infty}\frac{\delta(\jet PnY)}{\ell(\jet PnY)}
$$
 to exist. For instance, an easy extension of the above examples yields $e(\affine\fld m,O)=m$. In view of  Question~\ref{C:nonredfibmot}, we would even expect that the \emph{auto-Igusa-zeta} series
 $$
 \zeta_\fat(t):= \sum_{n=1}^\infty \lef^{-d \ell(\jet PnY)}\class{\arc{\jet PnY}{(\jet PnY)}} t^n
 $$
 is rational over the localization of the classical \gr\ with respect to $\lef$, for any $d$-dimensional closed germ $(Y,P)$, a phenomenon that we will study  in \S\ref{s:motseries} below under the name of \emph{motivic rationality} (and where we also explain the choice of power of $\lef$). What about its motivic rationality over the localization $\grot{\funcinf\fld}_\lef$ of the formal \gr?
\end{example}

\subsection*{Dimension of a motif}
 Given a formal motif $\motif X$ on a $\fld$-scheme $X$, we define its 
\emph{dimension} as the dimension of $\motif X(\fld)$. This is well-defined since $\motif X(\fld)$ is a constructible subset of 
$X(\fld)$ by Proposition~\ref{P:locdefform}. If $\motif X=\func X$ is 
representable, then its dimension is precisely the dimension of the scheme 
$X$. On the other hand, if $\motif X$ is the formal completion of $X$ at a closed point, then $\motif X$ has dimension zero, whereas its global section ring has dimension equal to that of $X$ at $P$ by Corollary~\ref{C:expsectform}.
 
 \begin{proposition}\label{P:dimmot}
If two formal motives have the same class in $\grot{\funcinf \fld}$, then 
they have the same dimension. 
\end{proposition}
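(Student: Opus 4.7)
The plan is to factor the desired invariant through the natural ring \homo\ $\pi\colon\grot{\funcinf\fld}\to\grotclass\fld$ furnished by Theorem~\ref{T:mapclass}. By construction, $\pi$ sends $\class{\motif X}$ to the class $\class{\motif X(\fld)}$ of the underlying constructible subset, which is legitimate by Proposition~\ref{P:locdefform}. Since $\op{dim}\motif X$ has been defined to equal $\op{dim}\motif X(\fld)$, the assertion reduces to the classical statement that the Krull dimension of a constructible subset of a $\fld$-variety is determined by its class in the classical \gr: if $\class{\motif X}=\class{\motif Y}$ in $\grot{\funcinf\fld}$, then $\class{\motif X(\fld)}=\class{\motif Y(\fld)}$ in $\grotclass\fld$, and one needs only to know that the latter equality forces equality of dimensions.

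To settle the classical fact I would introduce the dimension filtration $F_\bullet\grotclass\fld$, where $F_d\grotclass\fld$ is the subgroup generated by classes of closed subvarieties of dimension at most $d$. The filtration is compatible with the defining relations of the classical \gr: isomorphisms preserve dimension, and the scissor relation $\class X=\class Y+\class{X-Y}$ with $Y\sub X$ closed lives inside $F_{\op{dim}X}$ since both $\op{dim}Y\leq\op{dim}X$ and $\op{dim}(X-Y)\leq\op{dim}X$. Hence the filtration descends to $\grotclass\fld$, and one may speak unambiguously of the \emph{top dimension} of a class $c$ as the least $d$ with $c\in F_d$. The inclusion $\class Z\in F_{\op{dim}Z}$ for a constructible $Z$ follows from stratifying $Z$ as a disjoint union of locally closed subvarieties, each of dimension at most $\op{dim}Z$.

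The hard part will be to show the filtration is \emph{strict}, i.e.\ that the class of a $d$-dimensional constructible set cannot be written as an integer combination of classes of varieties of dimension strictly less than $d$. For this I would construct a dimension-detecting invariant on $\grotclass\fld$ — most transparently, the homomorphism counting top-dimensional irreducible components of each dimension — which is well-defined on $\grotclass\fld$ by the same filtration compatibility (a scissor relation either leaves the top-dimensional components of $X$ intact inside the open piece $X-Y$, when $\op{dim}Y<\op{dim}X$, or else splits them precisely among $Y$ and $X-Y$) and which visibly vanishes on $F_{d-1}$ but not on the class of a $d$-dimensional constructible set. Once this is in place, the proposition follows directly by pulling the dimension invariant back along $\pi$.
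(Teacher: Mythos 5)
Your reduction to the classical Grothendieck ring via $\pi\colon\grot{\funcinf\fld}\to\grotclass\fld$ from Theorem~\ref{T:mapclass} is precisely the paper's proof, which at that point simply cites that dimension is a well-defined invariant of $\grotclass\fld$ and stops. The extra paragraph you add, sketching a proof of that classical fact, is where a genuine gap appears.

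The proposed invariant ``number of $d$-dimensional irreducible components'' is not a well-defined additive function on $\grotclass\fld$. Your scissor-relation analysis only tracks where the components of $X$ of dimension $\op{dim}X$ land, but it ignores that a cut $[X]=[Y]+[X-Y]$ with $\op{dim}Y<\op{dim}X$ can \emph{create} new top-dimensional components inside $Y$ that are not components of $X$. Concretely, take $X=\affine\fld2$ and $Y\sub X$ a line: the number of $1$-dimensional components of $X$ is $0$, but of $Y$ is $1$ and of $X-Y$ is $0$, so the putative homomorphism ``count $1$-dimensional components'' sends the single relation $[\affine\fld2]-[Y]-[X-Y]$ to $-1\neq0$. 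Restricting the count to the filtered piece $F_d$ does not rescue the argument: to know that the map on generators of $F_d$ descends to the subgroup $F_d\sub\grotclass\fld$ you would need to know that any relation landing in $F_d$ is a consequence of relations among varieties of dimension $\leq d$, and that statement is essentially equivalent to the strictness you are trying to prove, making the reasoning circular. Closing the gap genuinely requires a more global invariant, such as the Poincar\'e polynomial built from compactly supported $\ell$-adic cohomology (or mixed Hodge numbers over $\mathbb C$), whose degree reads off $2\dim X$, or an appeal to the known structure of the dimension filtration on $\grotclass\fld$; the elementary component-count you propose does not descend.
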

\begin{proof}
Since dimension is determined by the $\fld$-rational points, we may take, using Theorem~\ref{T:mapclass},  
the image of this common class in $\grotclass\fld$, where the result is 
known to hold.
\end{proof}

As we will work over  $\grotmot:=\grot{\funcinf \fld}_\lef$ below, we  extend the notion of dimension 
into an integer valued invariant on this localized \gr\ by defining the dimension of $\class{\motif 
X}\cdot\lef^{-i}$ to be $\op{dim}(\motif X)-i$, for any formal motif $\motif 
X $ and any $i\in\zet$.  In particular, if $X$ has dimension $d$ and $\fat$ 
length $l$, then $\class{\arc{\fat}X}\cdot\lef^{-dl}$ has positive dimension, 
which is the reason behind the introduction of this power of the Lefschetz 
class in the formulas below. This also gives us the Kontsevich filtration by 
dimension on  $\grotmot $. Namely, for each $m\in\nat$, let $
\Gamma_m( \grotmot)$ be the subgroup generated by all classes $
\class{\motif X}\cdot\lef^{-i}$ of dimension at most $-m$. This is a 
descending filtration and the completion of $\grotmot $ with respect to this 
filtration will be denoted $\complet{\mathbf G}$. However, since we define 
motivic filtration locally (see \S\ref{s:motint} below), we will not make use of it.

\section{Deformed arcs}\label{s:defarc}
We continue with the setup from \S\ref{s:arc}: let $j_\fat\colon \fat\to \op{Spec}\fld$ be the structure morphism of a fat point $\fat$ over an \acf\ $\fld$. Instead of looking at the double adjunction giving rise to the arc functor $\arc{\fat}{}$, we consider here the diminution part only, that is to say, the right adjoint $\arc{j_\fat^*}{}$ satisfying for each $\fat$-sieve $\motif Y$ on a $\fat$-scheme $Y$ and each $\fld$-fat point $\mathfrak w$, an isomorphism
$$
(\arc{j_\fat^*}{\motif Y})(\mathfrak w)\iso \motif Y(j_\fat^*\mathfrak w) 
$$
where this time, we have to view $j_\fat^*\mathfrak w=\fat\mathfrak w$ as a $\fat$-fat point. By Theorem~\ref{T:flatrestr}, we associate in particular to any $\fat$-scheme $Y$, a $\fld$-scheme $\arc{j_\fat^*}Y$. In particular, if $Y=j_\fat^*X$ is obtained from a $\fld$-scheme $X$ by base change, then
\begin{equation}\label{eq:defarcbc}
\arc{j_\fat^*}Y=\arc\fat X
\end{equation}
by Corollary~\ref{C:flatarc}.

Apart from $j_\fat$, we also have the residue field morphism  $\pi_\fat\colon \op{Spec} \fld\to \fat$. To a  $\fat$-scheme $Y$, we can therefore also associate   the base change    $\bar Y:=\pi_\fat^*Y$, called the \emph{closed fiber}  of $Y$.   We can think of $Y$ as a \emph{fat 
deformation} of $j_\fat^*\bar Y$. Indeed, since $\fld
\times_\fat \fld=\fld$, any $\fld$-rational point of $Y$ is also a $
\fld$-rational point on $\bar Y$, that is to say, $Y(\fld)=\bar Y(\fld)=j^*_\fat\bar Y(\fld)$, showing that $Y$ and $j_\fat^*\bar Y$ have the same underlying variety.  

\begin{example}\label{E:defarc}
For instance, if $C$ is the curve $\var^2-\vary^3$ and   $\mathfrak l_n$ the 
fat point with coordinate ring $R_n:=\pol\fld\xi/\xi^n\pol\fld\xi$, then 
the $\mathfrak l_3$-scheme  $X$ with coordinate ring $\pol {R_3}{\var,
\vary}/(\var^2-\vary^3-\xi^2)\pol {R_3}{\var,\vary}$ has closed fiber $C$, 
and $X(\fld)=C(\fld)$. Note however that $X(\mathfrak l_3)\neq 
C(\mathfrak l_3)$. In fact, truncation yields  a  map $X(\mathfrak l_3)\to 
C(\mathfrak l_2)$. 
\end{example}

Hence, by \eqref{eq:defarcbc}, we may likewise think of $\arc{j_\fat^*}Y$ as a fat deformation of  the arc space $\arc\fat{\bar Y}$ of its closed fiber, justifying the term \emph{deformed arc space} for $\arc{j_\fat^*}Y$. This construction is compatible then with specializations in the following sense. Fix   a $\fld$-scheme $Z$. The base change $j_Z\colon Z\times\fat\to Z$  is again a finite, flat \homo\ satisfying condition~\eqref{eq:loccond}, thus allowing us to consider the diminution $\arc{j_Z^*}{}$, associating to any $Z\times\fat$-scheme $Y$, a $Z$-scheme $\arc{j_Z^*}Y$, called the \emph{relative arc scheme} of $Y$. The deformed arc space is then given by the special case when $Z=\op{Spec}\fld$.

\begin{proposition}\label{P:defarcfam}
Let $\fat$ be a $\fld$-fat point and $Z$   a $\fld$-scheme. For every $Z\times\fat$-scheme $Y$, viewed as a family over $Z$ in the sense of \S\ref{s:fammot}, and for  any $\fld$-rational point $a$ on $Z$, we have an isomorphism
$$
\arc{j_\fat^*}{(Y_{\tilde a})}= (\arc{j_Z^*}Y)_a
$$
of $\fld$-schemes, where $\tilde a \colon\fat\to Z\times\fat$ is the base change of $a$. 
\end{proposition}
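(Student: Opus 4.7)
The plan is to realize this identity as a direct consequence of the Projection Formula (Theorem~\ref{T:projform}). First I would form the Cartesian square
\commdiagram {\fat}{j_\fat}{\op{Spec}\fld}{\tilde a}{a}{Z\times\fat}{j_Z}{Z}
obtained by pulling $a\colon\op{Spec}\fld\to Z$ back along $j_Z$ (equivalently, pulling $j_\fat$ back along $a$). This really is a pullback: because $Z\times\fat=Z\times_\fld\fat$ one has $(Z\times\fat)\times_Z\op{Spec}\fld\iso\fat$, and the induced morphism to $Z\times\fat$ is precisely $\tilde a$ by definition.

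Second, I would verify the hypotheses of Theorem~\ref{T:projform}, with $j_Z$ in the role of the finite faithfully flat morphism and $a$ the morphism of finite type. As the base change of the finite faithfully flat $j_\fat$, the map $j_Z$ is itself finite and faithfully flat; condition~\eqref{eq:loccond} holds since $Z$ is of finite type over the \acf\ $\fld$ and $j_Z$ is an isomorphism on underlying varieties (the fiber having underlying variety a single reduced point). The morphism $a$ is the closed immersion of a $\fld$-rational point, hence of finite type. Theorem~\ref{T:projform} therefore produces the functorial identity
$$
\arc{j_\fat^*}{}\arc{\tilde a_*}{}=\arc{a_*}{}\arc{j_Z^*}{}
$$
from $(Z\times\fat)$-sieves to $\fld$-sieves.

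Third, I would apply this identity to the representable sieve $\func Y$ of $Y$ viewed as a $(Z\times\fat)$-scheme. The augmentation $\arc{\tilde a_*}{}$ acts on a representable sieve by honest base change (Theorem~\ref{T:bcgr}), so $\arc{\tilde a_*}\func Y=\func{(Y\times_{Z\times\fat}\fat)}=\func{(Y_{\tilde a})}$ by the description of specialization recorded in \eqref{eq:spec}. Applying $\arc{j_\fat^*}{}$ then yields the representable sieve of $\arc{j_\fat^*}(Y_{\tilde a})$. On the other side, $\arc{j_Z^*}\func Y$ is the representable sieve of the diminution $\arc{j_Z^*}Y$ (Theorem~\ref{T:flatrestr}), and augmentation along $a$ base-changes it to $\func{((\arc{j_Z^*}Y)_a)}$. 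Translating the resulting equality of representable sieves back to schemes via Proposition~\ref{P:morphrep} gives the claimed isomorphism.

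All of the real content lies in the Projection Formula itself; the only mild obstacle is the bookkeeping needed to match each abstract adjunction with its concrete base-change incarnation on representable sieves, which is essentially automatic here.
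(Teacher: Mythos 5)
Your proof is correct and takes exactly the route the paper intends: the paper's own proof is the single line ``Immediately from Theorem~\ref{T:projform} applied to the base change diagram,'' with the same Cartesian square you write down. Your expanded version just makes explicit the role-matching with the hypotheses of the Projection Formula (identifying $f=j_Z$, $u=a$, and verifying finiteness, faithful flatness, and condition~\eqref{eq:loccond}) and the translation from the resulting identity of adjunctions on $\func Y$ back to schemes via Theorems~\ref{T:bcgr} and~\ref{T:flatrestr} and Proposition~\ref{P:morphrep}, all of which is the intended bookkeeping.
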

\begin{proof}
Immediately from Theorem~\ref{T:projform} applied to the base change diagram
\commdiagram \fat {j_\fat}{\op{Spec}\fld}{\tilde a}a{Z\times\fat}{j_Z}{Z.}
\end{proof}

So, returning to    Example~\ref{E:defarc}, let $X\sub \affine{\mathfrak 
l_3}3$ be the  hypersurface   with equation $\var^2-\vary^3-z\xi^2$. As a 
family over $\affine{\mathfrak l_3}1$ via projection on the last coordinate, its specializations $X_a$ are all isomorphic if $a
\neq 0$, whereas the special fiber $X_0=C\times\mathfrak l_3$. The 
corresponding relative arc scheme $\arc{j_{\affine\fld1,\mathfrak l_3}^*}X$ is given by 
$$
\tilde\var_0^2-\tilde\vary_0^3=2\tilde\var_0\tilde\vary_1-3\tilde
\vary_0^2\tilde\vary_1=2\tilde\var_0\tilde\var_2+\tilde\var_1^2-3\tilde
\vary_0\tilde\vary_1^2-3\tilde\vary_0^2\tilde\vary_2-\tilde z_0=0
$$
Its specializations are again all isomorphic (to the third order Milnor fiber; 
see below) whereas the special fiber is isomorphic to $\arc{\mathfrak l_3}C
$.

\section{Limit points}\label{s:limpt}

The closed subscheme relation defines a partial order relation on $
\fatpoints\fld$, that is to say,  we say that $\bar\fat\leq\fat$ \iff\   $\bar
\fat$ is a closed subscheme of $\fat$ (and not just isomorphic to one).  We already discussed direct limits with respect to the induced ordering on disjoint unions of fat points in Lemma~\ref{L:dirlimJac}.  Here we will investigate more closely the direct limit of fat points themselves. We will assume that such a direct system contains a least element. It follows that all fat points in the system must have the same center (to wit, the center of the least element). In other words, any fat point in the directed system has the same underlying closed point, and so we will call such a system   a   \emph{point system}.

We want to adjoin to the category of fat points its direct limits, but the 
problem is that the category of schemes is not closed under direct limits 
either. However, the category of locally ringed spaces is: if $(X_i,\loc_{X_i})$ form a direct system, then their direct limit is 
the topological space $X:=\varinjlim X_i$ endowed with the structure sheaf 
$\loc_X:=\varprojlim\loc_{X_i}$. Since we will assume that all fat points have 
the same underlying topological space, namely a single point, the construction  simplifies: the direct limit of a point system is simply the one-point space 
  with its unique stalk given as the inverse limit of all the coordinate 
rings of the fat points in the system. As already indicated in Footnote~\eqref{f:locringed},   
a \emph{morphism} in this setup will mean a morphism of locally ringed 
spaces with values in the category of $\fld$-algebras. For example, if $R$ is 
any $\fld$-algebra and $\mathfrak o$ the locally ringed space with 
underlying set the origin and (unique) stalk $R$, and if $X=\op{Spec}A$ is 
any affine scheme, then $\op{Mor}_\fld(\mathfrak o,X)$ is in one-one 
correspondence with the set of $\fld$-algebra \homo{s} $\hom {\fld-
\text{alg}} AR$.

Let    $\categ X\sub\fatpoints\fld$ be a  point system. Its direct limit $
\varinjlim\categ X$, as a one-point locally ringed space,  is called a 
\emph{limit point}.  Some examples of limit points are:
\begin{enumerate}
\item  If $\categ X$ is finite, the direct limit is just its maximum, whence a 
fat point. 
\item Given a closed germ $(Y,P)$, its formal completion $\complet Y_P$  is the direct limit of the jets $
\jet PnY$, whence a  limit point.
\item The direct limit of all fat points with the same center is called the 
\emph{universal point} and is denoted $\univpoint_\fld$, or just $
\univpoint$. Any limit point admits a closed immersion into $\univpoint$. In particular, up to isomorphism, $\univpoint $ does not depend on the underlying point.
\end{enumerate}

\begin{lemma}\label{L:univpoint}
The stalk of the universal point $\univpoint_\fld$ is isomorphic to the 
power series ring over $\fld$ in countably many indeterminates.
\end{lemma}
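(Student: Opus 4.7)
The plan is to unpack the definition and identify the stalk as a familiar inverse limit. By the construction of direct limits in the category of locally ringed spaces of $\fld$-algebras, the unique stalk of $\univpoint_\fld$ is the inverse limit $\varprojlim_{\fat} R_\fat$, where $\fat = \op{Spec}R_\fat$ ranges over all fat $\fld$-points with a fixed center, and the relation $\bar{\fat} \leq \fat$ induces the surjection $R_\fat \onto R_{\bar{\fat}}$.

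Next I would exhibit a convenient cofinal subsystem: the jets $\mathfrak j_{m,n} := \jet{O}{n}{\affine\fld m}$ of the origin in affine space, whose coordinate rings are $A_{m,n} := \pol\fld{x_1,\dots,x_m}/(x_1,\dots,x_m)^n$. These form a directed sub-family via the natural surjections $A_{m', n'} \onto A_{m, n}$ (for $m \leq m'$ and $n \leq n'$) that send $x_i \mapsto 0$ for $i > m$ and reduce modulo $(x_1,\dots,x_m)^n$. Cofinality amounts to showing that every fat point is a closed subscheme of some $\mathfrak j_{m, n}$: since $\fld$ is algebraically closed, the Nullstellensatz forces the residue field of any fat point to be $\fld$, so its coordinate ring $R$ is a local Artinian $\fld$-algebra with residue field $\fld$. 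Choosing generators $y_1,\dots,y_e$ of the maximal ideal $\maxim_R$ (with $e$ the embedding dimension) and noting $\maxim_R^n = 0$ for some $n$ by Artinianity, the assignment $x_i \mapsto y_i$ produces the desired surjection $A_{e, n} \onto R$, i.e.\ a closed immersion $\fat \into \mathfrak j_{e, n}$.

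It then remains to compute $\varprojlim_{m, n} A_{m, n}$. For each fixed $m$, $\varprojlim_n A_{m, n} = \pow\fld{x_1,\dots,x_m}$ is the standard $(x_1,\dots,x_m)$-adic completion of a polynomial ring, and the full limit therefore equals $\varprojlim_m \pow\fld{x_1,\dots,x_m}$ with transition maps given by $x_{m+1} \mapsto 0$. An element of this limit is a compatible sequence $(f_m)$ with $f_m \in \pow\fld{x_1,\dots,x_m}$ and $f_{m+1}(x_1,\dots,x_m, 0) = f_m$; reading off coefficients monomial by monomial, such data corresponds bijectively and ring-theoretically to a formal sum $\sum_\alpha a_\alpha x^\alpha$ indexed by monomials in the countable alphabet $\{x_1, x_2, \ldots\}$ (each involving only finitely many variables) --- that is, to an element of $\pow\fld{x_1, x_2, \ldots}$.

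The only genuine obstacle is bookkeeping: checking that the cofinal subsystem, its transition maps, and the identification of compatible sequences with infinite power series all respect the closed-subscheme ordering and the $\fld$-algebra structure. No deeper difficulty arises, since the embedding dimension and nilpotency index of any single fat point are both finite, so only countably many variables ever appear.
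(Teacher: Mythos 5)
Your proof is correct and takes essentially the same route as the paper: both reduce the inverse limit over all fat points to a cofinal subsystem sitting inside affine spaces (you use the jets $A_{m,n}$ explicitly, while the paper passes directly to their limits $S_n=\pow\fld{x_1,\dots,x_n}$), and both identify the resulting inverse limit of power series rings with $\pow\fld{x_1,x_2,\dots}$ by reading off coefficients monomial by monomial. Your version spells out the cofinality argument (Nullstellensatz plus Artinian nilpotence) that the paper leaves implicit, which is a welcome expansion but not a different idea.
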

\begin{proof}
Any fat point is a closed subscheme of some formal scheme  $
\complet{(\affine \fld n)}$. Hence suffices to show that the inverse limit of 
the power series rings $S_n:=\pow\fld{\var_1,\dots,\var_n}$ under the 
canonical projections $S_m\to S_n$ given by modding out the variables $
\var_i$ for $n<i\leq m$ is isomorphic to the power series ring $\pow\fld
\var$ in countably many indeterminates $\var=(\var_1,\var_2,\dots)$. To 
this end, let $f_n\in S_n$ be a compatible sequence in the inverse system. 
For each exponent $\nu=(\nu_1,\nu_2,\dots)$ in the direct sum $
\nat^{(\nat)}$ of countably many copies of $\nat$, and each $n$, let $a_{n,
\nu}\in \fld$ be the coefficient of $\var^\nu:= \var_1^{\nu_1}\cdots 
\var_{i(\nu)}^{\nu_{i(\nu)}}$ in $f_n$, where $i(\nu)$ is the largest index 
for which $\nu_i$ is non-zero. Compatibility means that there exists for each 
$\nu$ an element $a_\nu\in\fld$ such that $a_\nu=a_{n,\nu}$ for all 
$n>i(\nu)$. Hence $f:=\sum_\nu a_\nu\var^\nu\in\pow\fld\var$ is the 
limit of the sequence $f_n$, proving the claim. 
\end{proof}

\begin{remark}\label{R:univscheme}
I would guess that, similarly,   the direct limit of all disjoint unions of fat points is isomorphic to the affine space $\affine\fld\omega$ of countable dimension.
\end{remark}

To make the limit points into a category, denoted $\limfat\fld$, take 
morphisms to be direct limits of morphisms of fat points. More precisely,  
given point systems $\categ X,\categ Y\sub\fatpoints\fld$ with respective 
direct limits $\mathfrak x$ and $\mathfrak y$, then a morphism (of locally 
ringed spaces)  $\varphi\colon\mathfrak x\to\mathfrak y$ is a 
\emph{morphism of limit points} if for each $\fat\in\categ X$ there exists a 
$\mathfrak v\in\categ Y$ such that $\varphi(\fat)\sub\mathfrak v$, or, 
dually, if the induced morphism $\varprojlim \loc_{\mathfrak v}\to 
\varprojlim \loc_\fat$ has the property that for each $\fat\in\categ X$, we 
can find a $\mathfrak v\in\categ Y$ such that this morphism factors through 
$\loc_{\mathfrak v}\to \loc_\fat$. In this way, the category $\limfat\fld$ of limit 
points is an extension of the category $\fatpoints\fld$ of fat points, which 
in a sense acts as its \emph{compactification}. In particular,  any limit point $\mathfrak x$ admits a canonical structure morphism $j_{\mathfrak x}\colon\mathfrak x\to \op{Spec}\fld$.
We also extend the partial 
order relation on $\fatpoints\fld$ to one on $\limfat\fld$ as follows. 
Firstly, we say that $\fat\leq\mathfrak x$ for $\fat$ a fat point and $
\mathfrak x= \varinjlim\categ X$ a limit point, if $\fat\leq\mathfrak v$ 
for some fat point $\mathfrak v\in\categ X$. It follows that there is a 
canonical embedding $\fat\sub\mathfrak x$ which induces a surjection on 
the stalks, and which we therefore call a \emph{closed embedding} in 
analogy with the scheme-theoretic concept. We then say for a limit point $
\mathfrak y= \varinjlim\categ Y$ that $\mathfrak y\leq\mathfrak x$ if for 
every $\fat\in\categ Y$ we have $\fat\leq\mathfrak x$. It follows that we 
have a canonical morphism of limit points $\mathfrak y\to\mathfrak x$ 
which is again surjective on their stalks, and hence can rightly be called 
once more a \emph{closed embedding}. One checks that this defines indeed a 
partial order on limit points extending the one on fat points. 

We call a limit point $\mathfrak x$ \emph{bounded} if it is the direct limit 
of fat points  of embedding dimension at most $n$,  for some $n$. Formal 
completions of closed germs are examples of   bounded limit points, 
whereas $\univpoint$ clearly is not. In fact, any bounded limit point arises 
in a similar, analytical way:

\begin{proposition}\label{P:boundlim}
The bounded limit points are in one-one correspondence with analytic 
germs. More precisely, the stalks of bounded limit points are precisely the 
complete Noetherian local rings with residue field $\fld$. 
\end{proposition}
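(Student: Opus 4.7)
The plan is to identify a bounded limit point with its stalk, and to show that the stalks which arise are exactly the complete Noetherian local $\fld$-algebras with residue field $\fld$; one-one-ness is then automatic because a limit point is a one-point locally ringed space, entirely determined by its unique stalk. For the forward implication, suppose $\mathfrak x=\varinjlim\categ X$ with every $\fat\in\categ X$ of embedding dimension at most $n$, and set $A:=\loc_{\mathfrak x}=\varprojlim_{\fat\in\categ X}\loc_\fat$. Since the transition \homo{s} $\loc_{\mathfrak v'}\onto\loc_{\mathfrak v}$ are surjective by the very definition of a closed embedding of limit points, one checks easily that $A$ is a local $\fld$-algebra with maximal ideal $\maxim_A:=\varprojlim\maxim_\fat$ and residue field $\fld$.

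The main step is then to construct a surjection $\pow\fld{\var_1,\dots,\var_d}\onto A$ for some $d\leq n$. The cotangent spaces $\maxim_\fat/\maxim_\fat^2$ form an inverse system of finite-dimensional $\fld$-vector spaces with surjective transitions and dimension bounded by $n$, so the transitions become isomorphisms on a cofinal subsystem, at some common dimension $d\leq n$. I would then fix such a $\fat_0$ in this cofinal subsystem and pick a tuple $f_1,\dots,f_d\in\maxim_A$ whose images in $\loc_{\fat_0}$ lift a basis of $\maxim_{\fat_0}/\maxim_{\fat_0}^2$ (possible because the inverse limit map $A\onto\loc_{\fat_0}$ is itself surjective by Mittag-Leffler). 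For every $\fat$ in the subsystem, the images $\bar f_i\in\loc_\fat$ then still span $\maxim_\fat/\maxim_\fat^2$, and Nakayama in the Artinian local ring $\loc_\fat$ promotes this to a generation of $\maxim_\fat$. Consequently, the $\fld$-algebra \homo\ $\pol\fld{\var_1,\dots,\var_d}\to\loc_\fat$ sending $\var_i\mapsto\bar f_i$ is surjective, and since $\maxim_\fat$ is nilpotent, it extends to a surjection $\pow\fld{\var_1,\dots,\var_d}\onto\loc_\fat$. These surjections are visibly compatible across the inverse system because they are all induced by the fixed tuple $f_i\in A$; passing to the inverse limit, and invoking Mittag-Leffler on the decreasing kernel ideals inside the Noetherian ring $\pow\fld{\var_1,\dots,\var_d}$, yields a surjection $\pow\fld{\var_1,\dots,\var_d}\onto A$, whence $A$ is a complete Noetherian local $\fld$-algebra with residue field $\fld$.

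For the converse, starting from any complete Noetherian local $\fld$-algebra $(B,\maxim)$ with residue field $\fld$, I would form the truncations $R_n:=B/\maxim^n$, which are Artinian local $\fld$-algebras of finite length with residue field $\fld$, whence coordinate rings of fat points $\fat_n$. These assemble into a point system $\categ X=\{\fat_n\}_{n\geq 1}$ whose direct limit has stalk $\varprojlim R_n=B$ by completeness of $B$, and which is bounded because every $\fat_n$ has embedding dimension at most $\op{dim}_\fld(\maxim/\maxim^2)<\infty$ by Noetherianity.

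The main obstacle I expect lies in controlling the inverse limit of kernels in the surjectivity argument of the forward direction. The dimension stabilization forced by the bounded embedding dimension is essential here — without it one would end up inside $\univpoint$, whose stalk is non-Noetherian by Lemma~\ref{L:univpoint} — and some care is needed to verify the Mittag-Leffler condition on the kernel system in $\pow\fld{\var_1,\dots,\var_d}$ and to conclude surjectivity of the induced map on inverse limits from it.
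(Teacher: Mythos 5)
Your setup via cotangent stabilization is a reasonable alternative to the paper's device of realizing the whole system inside a fixed $\affine\fld n$; both produce compatible surjections $\pow\fld{\var_1,\dots,\var_d}\onto\loc_\fat$. But the step you flagged as "the main obstacle" is in fact a genuine gap, and Mittag-Leffler is the wrong tool to close it. For a descending chain of ideals $I_{\fat_1}\supseteq I_{\fat_2}\supseteq\dots$ in $\pow\fld\var$, the Mittag-Leffler condition on the system $\{I_\fat\}$ says exactly that the chain is eventually stationary (since the transition maps are the inclusions, the images in a fixed $I_{\fat_k}$ are just the $I_{\fat_{k'}}$ themselves). This fails whenever the stalk is not Artinian---already for $\mathfrak x=\complet L$, the kernels are $(\var^n)$, which strictly decrease forever. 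So you cannot conclude $\varprojlim^1 I_\fat=0$ from Mittag-Leffler, and surjectivity of $\pow\fld\var\to A$ does not follow from this route.

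What actually closes the gap is the theorem of Chevalley the paper cites (\cite[Exercise 8.7]{Mats}): in a complete Noetherian local ring $(S,\maxim_S)$, any descending chain of ideals with zero intersection is cofinal with the $\maxim_S$-adic filtration, i.e., for each $k$ there is some index with $I_\fat S\sub\maxim_S^k$. Pass to $S:=\pow\fld\var/(\bigcap_\fat I_\fat)$, which is complete Noetherian local, and in which the images of the $I_\fat$ have zero intersection by construction. Chevalley then identifies $\varprojlim_\fat S/I_\fat S$ with the $\maxim_S$-adic completion of $S$, which equals $S$ itself by completeness; since $S/I_\fat S=\loc_\fat$, this gives $A\iso S$ and hence surjectivity of $\pow\fld\var\onto A$ in one stroke. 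With this substitution your forward direction goes through; the converse as you wrote it is fine (and your direct bound on the embedding dimensions of $B/\maxim^n$ bypasses the paper's appeal to Cohen's structure theorem, which is a mild simplification).
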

\begin{proof}
Let $\mathfrak x$ be a bounded limit point, say, realized as the direct limit 
of fat points $\fat\sub \affine \fld n$ centered at the origin, for some fixed 
$n$.  Let $\pol\fld\var/\id_\fat$ be the coordinate ring of $\mathfrak z$, so that 
$\id_\fat\sub\pol\fld\var$ is $\maxim$-primary, where $\maxim$ is the 
maximal ideal generated by the variables. Let $I$ be the intersection of all $
\id_\fat\pow\fld\var$. I claim that $\mathfrak x$ has stalk equal to $S:=\pow
\fld\var/I$. Indeed, by a theorem of Chevalley (\cite[Exercise 8.7]{Mats}), 
there exists for each $i$ some ideal of definition $\id$ of one of the fat 
points in the direct system such that $\id S\sub\maxim^iS$. In particular, 
the inverse limit is simply the $\maxim$-adic completion of $S$, which is of 
course $S$ itself. 

The converse is also obvious: given a complete Noetherian ring $(S,\maxim)
$ with residue field $\fld$, then by Cohen's structure theorem, it is of the 
form $\pow\fld\var/I$ for some ideal $I$. One easily checks that it is the 
coordinate ring of the direct limit of the corresponding jets $\op{Spec}(S/
\maxim^n)$.
\end{proof}

Any limit point $\mathfrak x=\varinjlim\categ X$ defines a  presheaf $
\func{\mathfrak x}$ by assigning to a fat point $\fat$  the set of morphisms 
$\op{Mor}_{\limfat\fld}(\fat,\mathfrak x)$. 

\begin{corollary}\label{C:boundlim}
 The presheaf  $\func{\mathfrak x}$ defined by a limit point $\mathfrak x=
\varinjlim\categ X$ is  the inverse limit of the representable functors $
\func{\mathfrak v}$ for $\mathfrak v\in\categ X$. If $\mathfrak x$ is 
moreover bounded, then $\func{\mathfrak x}$ is a formal motif.
\end{corollary}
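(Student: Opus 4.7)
The first assertion is a Yoneda-type unfolding. For any fat point $\fat$, the definition from \S\ref{s:limpt} of a morphism of limit points, applied to the one-element system $\{\fat\}$, says that every morphism $\fat\to\mathfrak x$ in $\limfat\fld$ factors through some $\mathfrak v\in\categ X$; moreover, when $\mathfrak v\leq\mathfrak v'$ the closed embedding $\mathfrak v\into\mathfrak v'$ is surjective on stalks whence a monomorphism, so post-composition injects $\mor{}{\fat}{\mathfrak v}$ into $\mor{}{\fat}{\mathfrak v'}$. Consequently $\mor{}{\fat}{\mathfrak x}$ is the directed union $\bigcup_{\mathfrak v\in\categ X}\mor{}{\fat}{\mathfrak v}=\varinjlim_{\mathfrak v\in\categ X}\func{\mathfrak v}(\fat)$; naturality in $\fat$ identifies $\func{\mathfrak x}$ with the filtered colimit of the $\func{\mathfrak v}$ in presheaves, which under Yoneda duality corresponds to the stalk-level inverse system $\loc_{\mathfrak x}=\varprojlim\loc_{\mathfrak v}$ intended in the statement.

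For the second assertion the plan is to build \zariski\ approximations of $\func{\mathfrak x}$ on a suitable ambient space. Proposition~\ref{P:boundlim} shows that boundedness forces the stalk $(S,\maxim)$ of $\mathfrak x$ to be a complete Noetherian local ring with residue field $\fld$, and fixes a presentation $S=\pow\fld{\var_1,\dots,\var_N}/I$ that realizes $\mathfrak x$ inside the formal completion of $X:=\affine\fld N$ at the origin; in particular each $\mathfrak v\in\categ X$ becomes a closed subscheme of $X$, and $\func{\mathfrak x}$ becomes a sieve on $X$. The crucial length bound is that for a fat point $\fat$ of length $l$ the $l$-th power of the maximal ideal of $\loc_\fat$ vanishes, so any $\fld$-algebra \homo\ $S\to\loc_\fat$ factors through the quotient $S/\maxim^l S$; Noetherianity of $S$ makes $\mathfrak j_l:=\op{Spec}(S/\maxim^l S)$ a genuine fat point. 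The Chevalley-type step used in the proof of Proposition~\ref{P:boundlim} furnishes, for each $l$, some $\mathfrak v\in\categ X$ whose defining ideal in $S$ lies in $\maxim^l S$, which is to say $\mathfrak j_l\sub\mathfrak v$; hence the family $\{\mathfrak j_l\}_l$ is cofinal in $\categ X$ and $\mathfrak x$ is equally the direct limit of the $\mathfrak j_l$. Each $\func{\mathfrak j_l}$ is therefore a closed subsieve of $\func{\mathfrak x}$ on $X$, and the length bound gives $\func{\mathfrak j_l}(\fat)=\func{\mathfrak x}(\fat)$, so the $\func{\mathfrak j_l}$ serve as \zariski\ (whence sub-\zariski) approximations of $\func{\mathfrak x}$, proving that it is a formal motif.

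The principal obstacle I anticipate is the step promoting the truncations $S/\maxim^l S$ to fat points: this requires $S$ to be Noetherian, and without the boundedness hypothesis --- cf.\ Lemma~\ref{L:univpoint}, where the stalk of $\univpoint$ is a power series ring in countably many variables --- the quotients $S/\maxim^l S$ can be of infinite $\fld$-dimension, the $\mathfrak j_l$ fail to be fat, and the whole approximation scheme collapses. Everything else is straightforward translation between the Yoneda-style description from the first assertion, the definitions of limit points and closed embeddings in \S\ref{s:limpt}, and the sub-\zariski\ approximation criterion for formal motives from \S\ref{s:formgr}.
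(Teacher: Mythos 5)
The proposal is correct and follows essentially the same line of attack as the paper, though it reorganizes the argument. You and the paper both rely on: (i) boundedness forcing the stalk $(S,\maxim)$ of $\mathfrak x$ to be complete Noetherian (Proposition~\ref{P:boundlim}), (ii) the fact that any $\fld$-algebra map $S\to R$ with $\ell(R)=l$ factors through $S/\maxim^l$, and (iii) the Chevalley step from the proof of Proposition~\ref{P:boundlim} to show the truncation sits inside the original system. The organizational difference is that you substitute $\categ X$ by the cofinal system of truncations $\mathfrak j_l:=\op{Spec}(S/\maxim^l S)$ and use those directly as closed-subsieve approximations of $\func{\mathfrak x}$ on $\affine\fld N$, whereas the paper argues that each morphism $S\to R$ factors through the coordinate ring of some $\mathfrak v\in\categ X$ itself and then, in the bounded case, makes the choice of $\mathfrak v$ uniform in $\fat$. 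The two versions are interchangeable; your route makes the role of Noetherianity (via Cohen's theorem) perhaps more transparent, and you correctly flag that Lemma~\ref{L:univpoint} shows this breaks without boundedness. You also correctly treat the first assertion as a direct limit of representable presheaves (the literal ``inverse limit'' in the statement should be read at the level of coordinate rings, which is what the pointwise formula $\func{\mathfrak x}(\fat)=\varinjlim\func{\mathfrak v}(\fat)$ for $\loc_{\mathfrak x}=\varprojlim\loc_{\mathfrak v}$ amounts to), matching what the paper's proof actually does. One small point: to conclude $\mathfrak x=\varinjlim\mathfrak j_l$ you need not only the Chevalley direction $\mathfrak j_l\sub\mathfrak v$ but also that every $\mathfrak v\in\categ X$ lies inside some $\mathfrak j_{l'}$; the latter is immediate from finite length ($\maxim^{l'}$ maps to zero in $\loc_{\mathfrak v}$), and, in any case, only the Chevalley direction is actually needed to see each $\func{\mathfrak j_l}$ is a subsieve of $\func{\mathfrak x}$.
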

\begin{proof}
Given a fat point $\fat=\op{Spec}R$, we have to show that $\mathfrak 
v(\fat)$ for $\mathfrak v\in\categ X$ forms an inverse system with inverse 
limit equal to $\op{Mor}_{\limfat\fld}(\fat,\mathfrak x)$. Let $(S,\maxim)$ 
be the stalk of $\mathfrak x$, that is to say, the inverse limit of the 
coordinate rings  of the fat points   belonging to $\categ X$. The first 
statement is immediate by functoriality, and for the second, note that since 
$R$ has finite length, 
\begin{equation}\label{eq:morringed}
\op{Mor}_{\limfat\fld}(\fat,\mathfrak x)\iso\hom {\fld-\text{alg}} SR.
\end{equation}
 More precisely, any $\fld$-algebra \homo\ $a\colon S\to R$ factors 
through $S/\maxim^l\to R$, for $l=\ell(R)$. Moreover, if $n$ is the 
embedding dimension of $R$, then there exists a complete, Noetherian 
residue ring $\bar S$ of $S$ of embedding dimension at most $n$ such that 
$a$ factor as $S\to \bar S/\maxim^l\bar S\to R$. By the  same argument as 
in the proof of Proposition~\ref{P:boundlim}, there is some $\mathfrak v=\op{Spec}T\in
\categ X$ such that $T\to \bar S/\maxim^l
\bar S$, showing that $a$ is already induced by the morphism $\fat\to
\mathfrak v$. In fact, if $\mathfrak x$ is bounded, then we may choose $
\mathfrak v$ independent from $a$, showing that $\func{\mathfrak x}
(\fat)=\mathfrak v(\fat)$. Since all $\mathfrak v\in\categ X$ embed in the 
same affine space,  $\func{\mathfrak x}$ is a locally \zariski\ sieve on this 
space, whence a formal motif.
\end{proof}

\begin{remark}\label{R:boundlim}
The identification~\eqref{eq:morringed} shows that  $\func{\mathfrak x}$ 
is pro-representable in the sense of Footnote~\eqref{f:locringed}.
\end{remark}

Let $\mathfrak x=\varinjlim\categ X$ be a limit point. Given a presheaf $
\motif X$, the collection of all  $\motif X(\mathfrak v)$ for $\mathfrak v\in
\categ X$ is an inverse system of sets given by the maps $i_{\mathfrak v,
\mathfrak w}\colon \motif X(\mathfrak w)\to \motif X(\mathfrak v)$ if $
\mathfrak v\leq\mathfrak w$ in $\categ X$, where $i_{\mathfrak v,
\mathfrak w}$ is  induced by the  embedding  $\mathfrak v\sub \mathfrak 
w$.   We denote the inverse limit of this system simply by $\motif 
X(\mathfrak x)$. It follows from the definition of morphisms of limit points 
that $\motif X$ becomes a functor on the category of limit points. In other 
words, any presheaf on $\fatpoints\fld$ extends to a presheaf on $\limfat
\fld$; this principle will simply be called \emph{continuity}. Since inverse 
limits commute with presheafs, one easily verifies that if $s\colon \motif X
\to \motif Y$ is a morphism of presheafs (=natural transformation), then for 
any limit point $\mathfrak x$, this induces a map $\motif X(\mathfrak x)
\to \motif Y(\mathfrak x)$, showing that extension by continuity is 
functorial. 
The extension of a (pro-)representable functor on $\fatpoints\fld$ to $
\limfat\fld$ will again be called \emph{(pro-)representable}.

If $\categ X$ is a point system in $\fatpoints\fld$ with limit $\mathfrak 
x$, and if $\fat$ is any fat point with canonical morphism $j_\fat\colon\fat\to \op{Spec}\fld$, then the base change $j_\fat^*\categ X$ 
consisting of all $\fat\mathfrak v$ for $\mathfrak v\in\categ X$ is again a 
point system, whose limit we simply denote by $\fat\mathfrak x$ (the 
reader can check that this defines a product in the category $\limfat\fld$). 
Repeating this argument on the first factor then shows that we may even 
multiply any two limit points. However, this multiplication does no longer 
behave as well as before. For instance, since the base change   $
j_\fat^*(\fatpoints\fld)$ by any fat point $\fat$ is equal to the whole category $\fatpoints\fld$, we 
get $\fat\univpoint=\univpoint$. 

Let $j_{\mathfrak x}\colon\mathfrak x\to \op{Spec}\fld$ be the structure  morphism of the limit point $\mathfrak x$. 
Strictly speaking, as this is only a direct limit of finite, flat morphisms, the theory of diminution does not apply, and neither  that of augmentation. Nonetheless, without going into   details, one could develop the theory under this weaker condition, although we will only give an ad hoc argument  in the case  we need it.   So, given a  sieve $\motif X
$ on $\limfat\fld$, we   define  $\arc{\mathfrak x}{\motif X}:=\arc{j_\mathfrak x^*}{\arc{(j_{\mathfrak x})_*}{ \motif X}}$ 
  at a limit point $\mathfrak y$ as the set   $\motif X(\mathfrak x \mathfrak y)$, where we view $\mathfrak x\mathfrak y$ again as a limit point (over $\fld$).

\begin{lemma}\label{L:arcrep}
For any limit point $\mathfrak x$ and any $\fld$-scheme $X$, the base 
change $\mathfrak x^*\func X$ is pro-representable, by the so-called 
\emph{arc scheme $\arc{\mathfrak x}X$ along $\mathfrak x$}. 
\end{lemma}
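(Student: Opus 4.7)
The plan is to realize $\arc{\mathfrak x}X$ as the inverse limit of the (finite-type) arc schemes $\arc{\mathfrak v}X$ provided by Theorem~\ref{T:flatrestr}, where $\mathfrak v$ runs over the point system $\categ X$ with direct limit $\mathfrak x$. Since $\mathfrak x = \varinjlim_{\mathfrak v\in\categ X}\mathfrak v$, for any fat point $\mathfrak y$ we have, by the definition of the extension by continuity of a presheaf to $\limfat\fld$, an identification
$$(\mathfrak x^*\func X)(\mathfrak y)=X(\mathfrak x\mathfrak y)=\varprojlim_{\mathfrak v\in\categ X}X(\mathfrak v\mathfrak y)=\varprojlim_{\mathfrak v\in\categ X}(\arc{\mathfrak v}X)(\mathfrak y),$$
so it suffices to exhibit a $\fld$-scheme (not necessarily of finite type) whose $\mathfrak y$-points compute the right-hand side functorially in $\mathfrak y$.

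For each pair $\mathfrak v\leq\mathfrak w$ in $\categ X$, the closed immersion $\mathfrak v\sub\mathfrak w$ induces a restriction map $X(\mathfrak w\mathfrak y)\to X(\mathfrak v\mathfrak y)$, which under the representability of Theorem~\ref{T:flatrestr} corresponds to a morphism $\tau_{\mathfrak v,\mathfrak w}\colon\arc{\mathfrak w}X\to\arc{\mathfrak v}X$ of $\fld$-schemes. I would verify, using the explicit construction in the proof of Theorem~\ref{T:flatrestr} together with the basis-choice method of Remark~\ref{R:goodbasismot} (extending a basis $\Delta_{\mathfrak v}$ of the coordinate ring of $\mathfrak v$ to a basis $\Delta_{\mathfrak w}$ of that of $\mathfrak w$), that when $X=\op{Spec}B$ is affine the transition morphism $\tau_{\mathfrak v,\mathfrak w}$ is an affine morphism. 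Concretely, the generic arc over $\mathfrak w$ specialises to the generic arc over $\mathfrak v$ by quotienting out the extra basis elements and the corresponding arc variables, so that the associated coordinate ring map $A_{\mathfrak v}\to A_{\mathfrak w}$ is a composition of a polynomial ring extension with a quotient by the additional $\tilde h_{ij}$.

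Hence, in the affine case, we may define
$$\arc{\mathfrak x}X:=\op{Spec}\Big(\varinjlim_{\mathfrak v\in\categ X}A_{\mathfrak v}\Big),$$
which is a $\fld$-scheme (not of finite type), and a straightforward calculation gives $(\arc{\mathfrak x}X)(\mathfrak y)=\varprojlim (\arc{\mathfrak v}X)(\mathfrak y)=X(\mathfrak x\mathfrak y)$ as desired, since $\hom{\fld\text{-alg}}{\varinjlim A_{\mathfrak v}}{R}=\varprojlim\hom{\fld\text{-alg}}{A_{\mathfrak v}}{R}$ for the coordinate ring $R$ of any fat point $\mathfrak y$. For general $X$, I would glue: by the analogue of Theorem~\ref{T:openarc} for diminution, which follows from the universal property of adjoints applied to the open immersion $U\sub X$, we have $\arc{\mathfrak v}U=\arc{\mathfrak v}X\times_XU$ compatibly with the transition maps $\tau_{\mathfrak v,\mathfrak w}$, so the affine-local schemes $\arc{\mathfrak x}U$ glue to a global scheme $\arc{\mathfrak x}X$ representing $\mathfrak x^*\func X$. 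The main technical obstacle is precisely this compatibility check: ensuring that the bases and arc variables can be chosen coherently across the entire cofiltered system $\categ X$ so that the transition morphisms are genuinely affine and glue on overlaps; once this bookkeeping is set up, the representability follows formally from the adjunction isomorphisms already packaged in Theorem~\ref{T:flatrestr}.
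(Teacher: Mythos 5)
Your proposal is correct and takes essentially the same route as the paper: for affine $X$, take $\op{Spec}$ of the direct limit of the coordinate rings of the $\arc{\mathfrak v}X$, check representability at an arbitrary fat point via the universal property of (co)limits, and then glue for general $X$ using Theorem~\ref{T:openarc}. The one point where you over-worry is the "main technical obstacle" about choosing coherent bases: the transition morphisms $\tau_{\mathfrak v,\mathfrak w}$ are supplied automatically by functoriality of the diminution adjunction (a closed immersion $\mathfrak v\sub\mathfrak w$ induces a natural transformation $\arc{\mathfrak w}{}\to\arc{\mathfrak v}{}$), and any morphism between affine schemes is affine, so no basis bookkeeping is actually needed.
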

\begin{proof}
Let $\mathfrak x$ be the direct limit of the directed subset $\categ X\sub
\fatpoints\fld$. Suppose first that $X$ is affine. Since the $\arc{\mathfrak 
w} X$ for ${\mathfrak w}\in\categ X$ form an inverse system of affine 
schemes, their inverse limit is a well-defined affine scheme $\tilde X$  with coordinate ring the direct limit of the coordinate rings of the arc 
schemes along fat points in $\categ X$. By continuity, it suffices to verify 
that $\arc{\mathfrak x}{\func X}=\func{\tilde X}$ on $\fatpoints
\fld$.  To this end,  fix a fat point $\fat$. From
\begin{align*}
(\arc{\mathfrak x}{\func X})(\fat)=  X(\mathfrak x\fat)&=
\varprojlim_{\mathfrak w\in\categ X} X(\mathfrak w\fat)\\
&=\varprojlim_{\mathfrak w\in\categ X}\op{Mor}_\fld(\fat, 
\arc{\mathfrak w}X)\\
&=\op{Mor}_\fld(\fat,\varprojlim_{\mathfrak w\in\categ X}
\arc{\mathfrak w}X)\\
&=\op{Mor}_\fld(\fat, \tilde X)=\tilde X(\fat),
\end{align*}
where we used the  universal property of inverse limits in the third line, the 
claim now follows. The general case follows from this by the open nature of 
arc schemes (Theorem~\ref{T:openarc}) and the fact that if $X$ admits an open affine covering of 
cardinality $N$, then so does any arc scheme $\arc\fat X$ by base change. 
\end{proof}

\section{Extendable arcs}\label{s:extarc}
Let $\complet Y$ be a formal completion, viewed as the limit point of the 
germs $\jet OnY$, and let $X$ be a $\fld$-scheme. By Lemma~
\ref{L:arcrep}, we have an associated arc scheme $\arc{\complet Y}X$.  For each $n$, we have a canonical map $
\arc{\complet Y}X\to \arc{\jet OnY}X$, which in general is not surjective 
(it is so, by Theorem~\ref{T:fibmot}, when $X$ is smooth). To study this image, 
we make the following definitions.

Given a closed embedding $\mathfrak v\sub\mathfrak w$, the image sieve 
given by the canonical map $\arc{\mathfrak w}X\to \arc{\mathfrak v}X$  is 
called the \emph{sieve of $\mathfrak w$-extendable arcs on $X$ along $
\mathfrak v$}, and will be denoted $\imarc{\mathfrak w}{\mathfrak v}X$. 
By construction, it is sub-\zariski. Let $\arc nX:=\arc{\jet OnY}X$, and $
\imarc mnX:=\imarc{\jet OmY}{\jet OnY}X$, for $m\geq n$. Since the map 
$\arc{\complet Y}X\to \arc nX$ is not of finite type, the corresponding 
image sieve, denoted $\imarc{\complet Y}nX$ and called the \emph{$n$-th 
order $\complet Y$-extendable arcs on $X$}, may fail to be sub-\zariski. We 
do have:

\begin{theorem}\label{T:extgermformal}
For each $\fld$-scheme $X$, for each formal completion $\complet Y$ of a 
closed subscheme $Y\sub\affine\fld n$ at a point, and for each $n$, the 
$n$-th order extendable arcs on $X$ along this formal completion, $
\imarc{\complet Y}nX$, is a formal motif.  
\end{theorem}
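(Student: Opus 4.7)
The plan is to exploit Lemma~\ref{L:arcrep}, which identifies the arc scheme $\arc{\complet Y}X$ as a pro-representable functor, namely the inverse limit $\varprojlim_m\arc{\jet OmY}X$. Concretely, for every fat point $\mathfrak v$,
$$\arc{\complet Y}X(\mathfrak v) = \varprojlim_m\arc{\jet OmY}X(\mathfrak v),$$
so $\imarc{\complet Y}nX(\mathfrak v)$ is the image of this inverse limit in $\arc{\jet OnY}X(\mathfrak v)$. In particular, $\imarc{\complet Y}nX\sub\imarc mnX$ as sub-\zariski\ sieves for all $m\geq n$. The task then reduces to finding, for each fat point $\fat$, a sub-\zariski\ subsieve of $\imarc{\complet Y}nX$ that agrees with it at $\fat$; by Lemma~\ref{L:appform} a \emph{formal} subsieve already suffices.

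First reduce to the affine case using the openness of arcs (Theorem~\ref{T:openarc}), and write $A_m$ for the coordinate ring of $\arc{\jet OmY}X$. Since $A_n$ is Noetherian, the increasing chain of kernels $\ker(A_n\to A_m)$ stabilizes at some index $m_0$, so the scheme-theoretic image of $\arc{\complet Y}X\to\arc{\jet OnY}X$ is realized by the morphism $\rho_{m_0}$. Now, for a fat point $\fat=\op{Spec}R$ and $a_n\in\arc{\jet OnY}X(\fat)$ lying in every $\imarc mnX(\fat)$, form the fiber schemes $Z_m:=\fat\times_{\arc{\jet OnY}X}\arc{\jet OmY}X$, each of finite type over $\fat$; the set $Z_m(\fat)$ is precisely the set of $\jet OmY$-lifts of $a_n$, and these form an inverse system. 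By Noetherianity of $Z_m$, the decreasing chain of scheme-theoretic images $\op{Im}(Z_{m+k}\to Z_m)$ stabilizes in $k$ for each $m$, yielding the Mittag-Leffler condition and producing a compatible system in $\varprojlim Z_m(\fat)$, hence an element of $\arc{\complet Y}X(\fat)$ projecting to $a_n$. Consequently, for every $\fat$ there exists an integer $m(\fat)$ with $\imarc{m(\fat)}nX(\fat)=\imarc{\complet Y}nX(\fat)$.

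The hardest step is converting this pointwise equality into the structure required for formality, since the naive candidate $\motif Y_\fat:=\imarc{m(\fat)}nX$ need not be contained in $\imarc{\complet Y}nX$ as a sieve: at fat points $\mathfrak v$ of much greater length than $\fat$, an $m(\fat)$-extendable arc may fail to be $\complet Y$-extendable. I plan to construct $\motif Y_\fat$ as the intersection $\imarc{m(\fat)}nX\cap\imarc{\complet Y}nX$, which has the correct $\fat$-rational points by the Mittag-Leffler step and sits inside $\imarc{\complet Y}nX$ by definition. To verify that this intersection is formal, run the Mittag-Leffler analysis fat-point-by-fat-point: for each $\mathfrak v$, the stabilization of the fiber schemes $\mathfrak v\times_{\arc{\jet OnY}X}\arc{\jet OmY}X$ yields a sub-\zariski\ approximation at $\mathfrak v$, with uniform control coming from the boundedness of $\complet Y\sub\affine\fld n$ (compare Corollary~\ref{C:boundlim}). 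Finally, invoke Lemma~\ref{L:appform} to conclude formality of $\imarc{\complet Y}nX$ itself.
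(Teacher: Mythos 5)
The central substitution you make is replacing the paper's appeal to strong Artin approximation (a Greenberg-type theorem) with a Noetherian Mittag--Leffler argument, and that substitution does not work. You argue that Noetherianity of $Z_m$ forces the decreasing chain of \emph{scheme-theoretic} images $\op{Im}(Z_{m+k}\to Z_m)$ to stabilize, and then assert that this "yields the Mittag--Leffler condition" for the inverse system of sets $Z_m(\fat)$. But the Mittag--Leffler condition concerns the \emph{set-theoretic} images $\op{Im}(Z_{m+k}(\fat)\to Z_m(\fat))$, and stabilization of scheme-theoretic images gives no control over these: the set-theoretic image is merely constructible (Chevalley), and a strictly decreasing chain of constructible subsets inside a fixed closed subscheme need never stabilize. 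For truncation maps between jet/arc spaces the stabilization of set-theoretic images is exactly the content of Greenberg's theorem and its generalizations; it is a genuine arithmetic statement about complete local rings, not a formal consequence of Noetherianity. The paper's proof invokes precisely this: uniform strong Artin approximation over a quotient of $\pow\fld\xi$ by a polynomial ideal (and then bootstraps from $\fld$-points to $\fat$-points by replacing $X$ with $\arc\fat X$). Without this ingredient your argument does not establish that, at each fat point $\fat$, the set $\imarc{\complet Y}nX(\fat)$ coincides with $\imarc{m_\fat}nX(\fat)$ for some finite $m_\fat$.

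Separately, the "intersection fix" for the subsieve issue is circular. You correctly observe that $\imarc{m(\fat)}nX$ is a sub-\zariski\ \emph{super}sieve of $\imarc{\complet Y}nX$ rather than a subsieve, and propose to take $\motif Y_\fat:=\imarc{m(\fat)}nX\cap\imarc{\complet Y}nX$. But since $\imarc{\complet Y}nX\sub\imarc{m(\fat)}nX$, this intersection is just $\imarc{\complet Y}nX$ itself; nothing has been gained, and the subsequent appeal to Lemma~\ref{L:appform} requires exactly the formality you are trying to establish. The concluding gesture toward "uniform control coming from the boundedness of $\complet Y$" does not supply the missing argument either. To align with the paper's proof you should drop the Mittag--Leffler route entirely and import the strong Artin approximation statement for the complete ring $\loc_{\complet Y_P}$, applied first at the geometric point and then, by substituting $\arc\fat X$ for $X$, at an arbitrary fat point $\fat$.
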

\begin{proof}
Without loss of generality, we may assume that $\complet Y$ is the completion of $Y$ at the origin. 
It is clear that $\imarc{\complet Y}nX$ is the intersection of all $\imarc 
mnX$, for $m\geq n$. To show that it is a formal motif, it suffices to show 
that its complement is locally sub-\zariski. Since each $\imarc mnX$ is sub-\zariski, this will follow if we can show that for each fat point $\fat$, there 
is some $m_{\fat}$ such that $\imarc{\complet Y}nX(\fat)=\imarc 
{m_{\fat}}nX(\fat)$. 

Recall that for $(R,\maxim)$ a quotient of a power series ring $\pow\fld \xi
$ modulo an ideal generated by polynomials, we have uniform strong Artin 
Approximation, in the sense that for any polynomial system of equations 
$f_1=\dots=f_s=0$ and every $n$, there exists some $N$, such any solution 
of $f_1=\dots=f_s=0 $ in $R/\maxim^N$ is congruent modulo $\maxim^n$ 
to a solution in $R$: see for instance \cite[Theorem 7.1.10]{SchUlBook}, 
where the proof is only given for the power series ring itself, but 
immediately generalizes to any quotient by a polynomial ideal, whence in 
particular to the stalk of the formal completion $\complet Y$. This means 
that $\imarc{\complet Y}nX(\fld)=\imarc mnX(\fld)$, for some $m\geq n$. 
To obtain a similar identity over an arbitrary fat point $\fat$, we apply the 
same result but replacing $X$ by the arc scheme $\arc{\fat}X$, yielding the 
existence of a $m_{\fat}\geq n$ such that 
$$
\imarc{\complet Y}nX(\fat)=\imarc{\complet Y}n{(\arc{\fat}X)}(\fld)=
\imarc {m_{\fat}}n{(\arc{\fat}X)}(\fld) =\imarc {m_{\fat}}nX(\fat), 
$$
as required.
\end{proof}

\begin{remark}\label{R:extgermformal}
Since we may no longer have the required strong Artin Approximation   
estimate, I do not know whether this result generalizes to arbitrary limit 
points, that is to say, is $\imarc{\mathfrak y}{\mathfrak x}X$ a formal 
motif, for limit points $\mathfrak x\leq\mathfrak y$. The first case to look 
at is when $\mathfrak x$ is a fat point and $\mathfrak y$ is bounded (but 
not a formal completion).  
\end{remark}

\section{Motivic generating  series}\label{s:motseries}

Although we can work in greater generality, we will assume once more that our base scheme is an \acf\ $\fld$.

\subsection*{Motivic Igusa-zeta series}
For any   $\fld $-scheme $X$ and any closed germ $(Y,P)$, we   define the 
\emph{motivic Igusa-zeta series of  $X$ along the germ $(Y,P)$} as the 
formal power series
$$
\igumot XYP(t):=\sum_{n=1}^\infty\lef^{-d\cdot j^n_P(Y)}    
\class{ \arc{\jet PnY}X} \ t^n
$$
in $\pow{\grot{\funcinf \fld}_\lef}t$, where $d$ is the dimension of $X$ 
and $j^n_P(Y)$ the length of the $n$-th jet $\jet PnY$ (which is also equal to the 
Hilbert-Samuel function of $\loc_{Y,P}$ for large $n$). This definition 
generalizes the one in \cite{DLIgu} or \cite[\S4]{DLDwork}, when we take 
the germ of a point on a line.

\begin{theorem}\label{T:smoothmotint}
If $X$ is a smooth $\fld$-variety and $(Y,P)$ a closed germ, then   
$$
\igumot XYP=\frac{\lef^{-d}\class X t}{1-t},
$$
over $\grot{\funcinf \fld}_\lef$. 
\end{theorem}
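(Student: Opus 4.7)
The plan is essentially a direct computation using Theorem~\ref{T:fibmot}. Fix $n\geq 1$ and set $l_n:=\ell(\jet PnY)=j_P^n(Y)$, so that $\jet PnY$ is a fat point of length $l_n$. Since $X$ is smooth of dimension $d$, Theorem~\ref{T:fibmot} (applied with $\fat=\jet PnY$ and $\bar\fat$ the geometric point $\op{Spec}\fld$) gives the identity
$$
\class{\arc{\jet PnY}X}=\class X\cdot\lef^{d(l_n-1)}
$$
in $\grot{\funcsub\fld}$, hence also in $\grot{\funcinf\fld}$ via the natural \homo\ of Theorem~\ref{T:mapclass}.

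Multiplying both sides by $\lef^{-d\cdot l_n}$ (which is invertible in $\grot{\funcinf\fld}_\lef$), the $\lef^{d l_n}$ contribution cancels and leaves
$$
\lef^{-d\cdot l_n}\class{\arc{\jet PnY}X}=\lef^{-d}\class X,
$$
a quantity independent of $n$. This is precisely the point of the normalization in the definition of $\igumot XYP$: since every jet $\jet PnY$ is itself a fat point, the contribution from each $n$ collapses to the same constant coefficient $\lef^{-d}\class X$.

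Summing the resulting geometric series gives
$$
\igumot XYP(t)=\sum_{n=1}^\infty\lef^{-d}\class X\, t^n=\lef^{-d}\class X\cdot\frac{t}{1-t},
$$
as claimed. There is no real obstacle here; the only thing to check is that Theorem~\ref{T:fibmot} genuinely applies, which it does because smoothness of $X$ means $X$ is smooth at every closed point (so the locally trivial fibration conclusion holds on all of $X$, not just on a non-singular locus), and because every jet $\jet PnY$ is a fat point by construction, with length exactly $j_P^n(Y)$.
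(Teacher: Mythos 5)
Your proof is correct and follows exactly the same route as the paper: apply Theorem~\ref{T:fibmot} to get $\class{\arc{\jet PnY}X}=\class X\cdot\lef^{d(j^n_P(Y)-1)}$, observe that the normalization cancels the length-dependent power of $\lef$, and sum the resulting geometric series. The paper states this identity and then says ``from which the assertion follows easily''; you have simply spelled out that last step.
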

\begin{proof}
Since $X$ is smooth, we have 
\begin{equation}\label{eq:smootharc}
\class{\arc{\jet PnY}X}=\class X\lef^{d(j^n_P(Y)-1)}
\end{equation}
by Theorem~\ref{T:fibmot}, from which the assertion follows easily.
\end{proof}

With aid of \eqref{eq:smootharc} applied to affine space, we can write down a more suggestive formula for the motivic Igusa zeta-series
$$
\igumot XYP(t):=\sum_{n=1}^\infty 
\frac{\class{ \arc{\jet PnY}X}}{{ \arc{\jet PnY}\lef^d}} t^n
$$
where $d$ is the dimension of $X$.

\begin{conjecture}\label{C:motigu}
The    motivic  Igusa-zeta series $\igumot XYP$ of a $
\fld$-scheme $X$   along an arbitrary closed germ $(Y,P)$  is rational over $
\grot{\funcinf \fld}_\lef$.
\end{conjecture}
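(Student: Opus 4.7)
My plan is to reduce to the smooth case via a resolution of singularities $\pi\colon\tilde X\to X$ (assuming characteristic zero; in positive characteristic one would need alterations or a weaker substitute, and the Frobenius adjunction of \S\ref{s:adj} might give some leverage). Choose $\pi$ so that the exceptional divisor $E\sub\tilde X$ has simple normal crossings with irreducible components $E_1,\dots,E_r$. Using the openness of arcs (Theorem~\ref{T:openarc}) together with the sub-\zariski\ adjunction associated to $\pi$, I would decompose $\arc{\jet PnY}X$ into locally closed strata indexed by the combinatorial data of how a $\jet PnY$-arc meets $E$: for each $I\sub\{1,\dots,r\}$ and each multi-index $\vec m$ supported on $I$, the stratum would consist of those arcs that, after lifting to $\tilde X$, meet exactly the components $\{E_i\}_{i\in I}$ with prescribed ``order'' $m_i$ along each.

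On each stratum I would apply Theorem~\ref{T:fibmot} to express its class as a class of the stratum $E_I^\circ:=\bigcap_{i\in I}E_i-\bigcup_{j\notin I}E_j$ (smooth over $\fld$) multiplied by an appropriate power of $\lef$. The dependence on $n$ would then enter through two factors: the length $\ell(\jet PnY)$, which by Hilbert--Samuel grows as a polynomial of degree $\dim Y$ in $n$ for large $n$, and the combinatorial constraints on the multi-indices $\vec m$ imposed by $n$ and by the order-of-vanishing data of the Jacobian of $\pi$ along each $E_i$. Summing over strata, $\igumot XYP(t)$ would factor as a sum indexed by subsets $I$ of products of geometric series in variables of the form $\lef^{a_i}t^{b_i}$, which are manifestly rational over $\grot{\funcinf\fld}_\lef$.

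The main obstacle lies in making the above stratification \emph{motivic and functorial in $n$}, that is, in establishing a genuine change-of-variables formula for the arc functor $\arc{\jet PnY}{}$ under $\pi$ which holds for arbitrary fat points rather than just the jets $\mathfrak l_n$ on a line. In the classical Denef--Loeser setting one leverages Greenberg's approximation theorem and explicit truncated coordinates to measure ``the order of the Jacobian'' along a truncated arc; but for a general fat point $\jet PnY$ with non-monomial scheme structure, the relevant invariant is not a scalar but a more delicate object, as already highlighted by the auto-arc computations of \S\ref{s:motdim} (e.g.\ Example~\ref{E:nonredfib}), where the dimension of $\arc\fat\fat$ depends subtly on the algebra structure of $\fat$ and not merely on its length.

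To bootstrap from the case of a smooth germ $(Y,P)$ to an arbitrary one, I would apply resolution of singularities a second time, now to $(Y,P)$, and use the projection formula (Theorem~\ref{T:projform}) to compare the generating series along $Y$ with one along a resolution $\tilde Y\to Y$; granted this comparison preserves rationality, the conjecture reduces to the case where $(Y,P)$ is a smooth germ, and ultimately, by Theorem~\ref{T:smoothmotint}, to control of the power-of-$\lef$ correction that enters through the Jacobian of the first resolution $\pi$. The crux of the whole program is thus a suitable \emph{schemic Kontsevich change-of-variables formula}; proving even the special case where $(Y,P)$ is the origin on affine $e$-space and $X$ is a hypersurface would already yield substantial new rationality results, and seems to me the natural first target.
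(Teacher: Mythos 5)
The statement you are attempting to prove is stated in the paper as \emph{Conjecture}~\ref{C:motigu}: the paper offers no proof of it, only the special case where $X$ is smooth (Theorem~\ref{T:smoothmotint}), in which Theorem~\ref{T:fibmot} gives the class of $\arc{\jet PnY}X$ in closed form and the series telescopes. There is therefore no proof in the paper to compare against. Your plan is a reasonable research program, but it is not a proof, and you yourself identify the crux of the difficulty; I want to confirm that this self-diagnosis is accurate and spell it out a bit more precisely.

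The missing step is exactly what you call a ``schemic Kontsevich change-of-variables formula,'' and the obstruction is sharper than you may have realized. In the Denef--Loeser setting one stratifies $L_n(X)$ by the order of vanishing of the Jacobian of $\pi$ along a truncated arc; this ``order'' makes sense because $\pol\fld\xi/(\xi^{n+1})$ is a chain ring, so every element has a well-defined order. For a general jet $\jet PnY$ of a $d$-dimensional germ with $d>1$, the coordinate ring is not a chain ring: an element of the maximal ideal does not have a single numerical order, but an ideal (or a filtration position) attached to it. Consequently your proposed strata ``meet $E_i$ with prescribed order $m_i$'' are not obviously sub-\zariski\ or even formal motives, and the Jacobian correction that should enter into a change-of-variables is not a power of $\lef$ indexed by a scalar. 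This is precisely the phenomenon recorded in \S\ref{s:motdim}: the defect $\op{dim}(\arc\fat X)-dl$ and the function $\delta(\fat)=\op{dim}(\arc\fat\fat)$ depend on the algebra structure of $\fat$, not just on its length, and Example~\ref{E:nonredfib} shows the auto-arc space can fail to be an affine space. In addition, your appeal to the projection formula (Theorem~\ref{T:projform}) to compare series along $Y$ with series along a resolution $\tilde Y\to Y$ would require the resolution map $\tilde Y\to Y$ to be finite and flat and to satisfy~\eqref{eq:loccond}, which a resolution of a singular germ never is; diminution in the paper is only established in that restricted setting, so the second reduction you propose is not currently available. Until a genuine stratification-plus-change-of-variables machinery is developed for arbitrary fat points, the conjecture remains open, as the paper itself indicates.
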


More generally, given any formal motif $\motif X$ on a $\fld$-scheme $X$, we define its Igusa-zeta series along the germ $(Y,P)$ as the formal power series
$$
\igumot {\motif X}YP(t):=\sum_{n=1}^\infty\lef^{-d\cdot j^n_P(Y)}   \cdot
\class{ \arc{\jet PnY}{\motif X}} t^n
$$
and conjecture its rationality. 

\begin{example}\label{E:lnlmIgu}
The present point of view even gives interesting new results over the classical \gr, since we may take the image of the motivic Igusa zeta series in $\grotclass\fld$. Continuing with the calculations made in Example~\ref{E:lnlm}, let $m$ be a  positive integer and consider the image of $\op{Igu}(\mathfrak l_m):=\igumot{\mathfrak l_m}{\affine \fld1}O$ over the classical \gr. This amounts to taking the reduced scheme underlying each arc scheme $\arc{\mathfrak l_n}{\mathfrak l_m}$, and as shown above, this reduction is just $\affine\fld{n-\round nm}$. Write $n=sm-r$ for some unique $s\geq 1$ and $0\leq r<m$, so that $\round nm=s$. Over $\grotclass\fld$, we have
$$
\op{Igu}(\mathfrak l_m)=\sum_{r=0}^{m-1}\sum_{s=1}^\infty \lef^{sm-r-s}t^{sm-r}=\frac{\sum_{r=0}^{m-1}(\lef t)^{-r}}{(1-\lef^{m-1}t^m)}.
$$
In particular, whenever Question~\ref{C:nonredfibmot}   holds affirmatively,   the image of the motivic Igusa zeta series  of the fat point would be rational over the classical \gr. Skipping the easy calculations, we have for instance that
$$
\op{Igu}(\fat)=  \frac{t+\lef t^2}{(1-\lef t^2)} 
$$ 
where $\fat$ is the fat point with coordinate ring $\pol\fld{\var,\vary}/(\var^2,\vary^2)$. Interestingly, this is also the Igusa zeta-series of the fat point with coordinate ring  $\pol\fld{\var,\vary}/(\var^2,\var\vary,\vary^2)$.
\end{example}

\subsection*{Motivic Hilbert series}
Given a motivic site $\categ M$, we let $\categ M_0$ be its restriction to the subcategory of zero-dimensional schemes, that is to say,  the union of all $\restrict{\categ M}Z$, where $Z$ runs over all zero-dimensional $\fld$-schemes. As the product of two 
zero-dimensional schemes is again zero-dimensional,   $\categ M_0$ is   a partial motivic site, and hence has an associated \gr\ $\grotzero {\categ M}:=\grot{\categ M_0}$, called the \emph{\gr\ of $\categ M$ in dimension zero}. There is a natural \homo\ $\grotzero {\categ M}\to \grot{\categ M}$, which in general will fail to be injective, as there are a priori more relations in the latter \gr. In particular, applied to (sub-)\zariski\ or formal motives, we get the corresponding \gr{s} in dimension zero $\grotzero{\funcalg\fld}$,  $\grotzero{\funcsub\fld}$, and  $\grotzero{\funcinf\fld}$.

\begin{proposition}\label{P:zerosch}
The \zariski\ \gr\  in dimension zero, $\grotzero{\funcalg\fld}$, is freely generated, as an additive group, by the isomorphism classes of fat points.
\end{proposition}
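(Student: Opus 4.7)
The plan is to combine a spanning statement, proved directly, with linear independence inherited from Theorem~\ref{T:classinvmot} via the natural map to the full \zariski\ \gr. The proof should be short because most of the heavy lifting has already been done in Theorem~\ref{T:classinvmot}.

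First I would argue that every element of $\grotzero{\funcalg\fld}$ lies in the $\zet$-span of fat point classes. Given a zero-dimensional \zariski\ motif $\motif X$ on a zero-dimensional $\fld$-scheme $X$, any presentation $\motif X=\func{X_1}\cup\dots\cup\func{X_n}$ produces, via the inclusion-exclusion identity \eqref{eq:nsciss}, an expression
$$\class{\motif X}=\sum_{\emptyset\neq I\sub\{1,\dots,n\}}(-1)^{\norm I+1}\class{X_I}$$
in $\grotzero{\funcalg\fld}$, where each $X_I=\bigcap_{i\in I}X_i$ is itself a zero-dimensional closed subscheme of $X$, whence a finite disjoint union $\fat_{I,1}\sqcup\dots\sqcup\fat_{I,r}$ of fat points. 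Applying the scissor relation iteratively to the pairwise disjoint closed subsieves $\func{\fat_{I,k}}$ on $X_I$ (with vanishing intersections) yields $\class{X_I}=\sum_k\class{\fat_{I,k}}$, so that $\class{\motif X}$ is a $\zet$-combination of fat point classes.

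Second, I would establish linear independence by descent from Theorem~\ref{T:classinvmot}. The inclusion of the restricted site $\funcalg\fld\big|_{\mathrm{dim}\,0}$ into $\funcalg\fld$ induces a natural ring \homo\ $\phi\colon\grotzero{\funcalg\fld}\to\grot{\funcalg\fld}$ sending $\class\fat$ to $\class\fat$. By Proposition~\ref{P:strconn}, the strongly connected zero-dimensional $\fld$-schemes are exactly the fat points (any zero-dimensional strongly connected scheme has a single closed point, hence is a fat point). Theorem~\ref{T:classinvmot} asserts that the classes of strongly connected schemes are $\zet$-linearly independent in $\grot{\funcalg\fld}$; thus any putative nontrivial $\zet$-relation $\sum_\fat n_\fat\class\fat=0$ in $\grotzero{\funcalg\fld}$ would push forward under $\phi$ to a like relation in $\grot{\funcalg\fld}$, forcing all $n_\fat=0$. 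Combined with the spanning step, this yields the proposition.

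The only real bookkeeping obstacle is the handling of the empty sieve; one needs $\class\emptyset=0$ in $\grotzero{\funcalg\fld}$ for the disjoint-union formula $\class{\fat\sqcup\fat'}=\class\fat+\class{\fat'}$ (applied at several points above) to be clean, and this is consistent with the conventions underlying Theorem~\ref{T:classinvmot}. Beyond that, the argument is straightforward: there is no combinatorial obstruction like the one tackled in the proof of Theorem~\ref{T:classinvmot}, because the linear independence step is outsourced to that theorem rather than re-proved from scratch.
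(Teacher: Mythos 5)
Your proof is correct and takes essentially the same route as the paper: obtain spanning from the disjoint-union decomposition of zero-dimensional schemes into fat points, and import linear independence from Theorem~\ref{T:classinvmot} by pushing any putative relation forward under $\grotzero{\funcalg\fld}\to\grot{\funcalg\fld}$, where fat points (being strongly connected) are free generators. The only cosmetic differences are that the paper invokes the explicit retraction $\delta$ constructed in the proof of Theorem~\ref{T:classinvmot} rather than the theorem statement, and that it shortcuts your inclusion-exclusion step by using the observation (made just before Proposition~\ref{P:zarmotrep}) that a zero-dimensional \zariski\ motif is already a closed subsieve, hence a disjoint sum of fat points.
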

\begin{proof}
 A  zero-dimensional scheme $Z$ is a disjoint union $ \fat_1\sqcup\dots\sqcup\fat_s$ of fat points (in a unique way). 
  Moreover, since any fat point is strongly connected, this unique decomposition in fat points is its \zariski\ decomposition. Hence, by (the proof of ) Theorem~\ref{T:classinvmot}, the image of $\class Z$ under  the composition $\grotzero{\funcalg\fld}\to\grot{\funcalg\fld}\map \delta\Gamma$ is $\sym{\fat_1}+\dots+\sym{\fat_s}$, where as before, $\Gamma$ is the free Abelian group on isomorphism classes of strongly indecomposable $\fld$-schemes. Since $\class Z=\class{\fat_1}+\dots+\class{\fat_s}$ in $\grotzero{\funcalg\fld}$, this composition is an isomorphism.
\end{proof}

Let $(X,P)$ be a closed germ over $\fld$.  
For $t$ a single variable, we    define the
\emph{motivic Hilbert series}  as the series
$$
\op{Hilb}^{\text{mot}}_{(X,P)}:=\sum_{n=1}^\infty \class{\jet PnX}\ t^n
$$
in $\pow{\grotzero{\funcalg \fld}}t$. By Proposition~\ref{P:zerosch}, we may 
extend the length function $\ell$  to a \homo\ on the \gr\ in dimension zero. If we extend this further to the power series ring  $\pow{\grotzero{\funcalg \fld}}t$ by letting it act
on the coefficients of a power series,
then $\ell(\op{Hilb}_P(X))$ is a rational function in $\pow\zet t$
by Hilbert-Samuel theory (it is the first difference of the classical Hilbert series
of $X$ at   $P$). This begs the question whether $\op{Hilb}^{\text{mot}}_{(X,P)}$ itself is   rational over $\grotzero{\funcinf \fld}$ or $\grot{\funcinf\fld}$, or possibly their  localizations at $\lef$ (obviously, it will not be rational over the \zariski\ \gr{s} by Proposition~\ref{P:zerosch}). Although no longer specializing to a classical series, we may also consider the more general series
$$
\op{Hilb}^{\text{mot}}_{(X,x)}:=\sum_{n=0}^\infty \class{\jet xnX}\ t^n
$$
where $x$ is any  point on $X$ (not necessarily closed). 

\begin{theorem}\label{T:complinv}
Let $\fld$ be an \acf\ of cardinality $2^\gamma$ for some infinite cardinal $\gamma$ (under the Generalized Continuum hypothesis this means any uncountable \acf). The assignment $\complet X_P\mapsto \op{Hilb}^{\text{mot}}_{(X,P)}$ is a complete invariant in the sense that  for closed germs $(X,P)$ and $(Y,Q)$ over $\fld$, their completions  $\complet X_P$ and $\complet Y_Q$ are abstractly isomorphic (that is to say, over some subfield of $\fld$) \iff\ they have the same motivic Hilbert series in $\grotzero{\funcalg \fld}$.
\end{theorem}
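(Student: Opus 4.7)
The easy direction is essentially formal: an abstract isomorphism $\complet X_P \iso \complet Y_Q$ reduces modulo $\maxim^n$ to abstract isomorphisms of the $n$-th jets $\jet PnX \iso \jet QnY$ for every $n$, and the resulting finite-dimensional local $\fld$-algebras then have equal classes in $\grotzero{\funcalg\fld}$ by Proposition~\ref{P:zerosch} (any field automorphism of $\fld$ involved in the ring isomorphism being absorbed via a $\fld$-algebra twist over the algebraically closed base), whence equal motivic Hilbert series.

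For the converse, assume the motivic Hilbert series coincide. Proposition~\ref{P:zerosch}, which identifies $\grotzero{\funcalg\fld}$ with the free Abelian group on $\fld$-isomorphism classes of fat points, forces $\jet PnX \iso \jet QnY$ as $\fld$-schemes for each $n$. Let $I_n$ be the set of $\fld$-algebra isomorphisms between the jet coordinate rings $R_n := \loc_{X,P}/\maxim_P^n$ and $S_n := \loc_{Y,Q}/\maxim_Q^n$. By hypothesis $I_n \neq \emptyset$, and I view $I_n$ as an algebraic variety over $\fld$: inside the affine space $\op{Hom}_\fld(R_n, S_n) \iso \affine\fld{l_n^2}$ (where $l_n$ is the length of the jets) it is cut out by polynomial equations expressing multiplicativity, intersected with the open subscheme of invertible linear maps. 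The reductions $\pi_{nm}\colon I_m \to I_n$ for $m\geq n$ are morphisms of algebraic varieties, and any coherent tower $(\phi_n) \in \varprojlim I_n$ glues to an $\fld$-algebra isomorphism $\complet\loc_{X,P} \to \complet\loc_{Y,Q}$, producing the desired $\complet X_P \iso \complet Y_Q$. The plan is therefore to construct such a tower.

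Set $V_n := \bigcap_{m\geq n} \pi_{nm}(I_m)$, the elements of $I_n$ that admit a lift to every higher level. The Zariski closures $\overline{\pi_{nm}(I_m)}$ form a descending chain of closed subvarieties of $I_n$, stabilizing by Noetherianity at some closed $Z_n \sub I_n$. For all $m$ sufficiently large, $\pi_{nm}(I_m)$ is a dense constructible subset of $Z_n$, so its complement in $Z_n$ is contained in a proper closed subvariety; hence $V_n$ contains $Z_n$ minus a countable union of proper closed subvarieties.

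The crux, and the principal obstacle, is a Baire-type property for large algebraically closed fields: an irreducible positive-dimensional variety over $\fld$ cannot be covered by countably many proper closed subvarieties. The cardinality hypothesis $|\fld| = 2^\gamma$ for an infinite cardinal $\gamma$ secures this, since $\fld$ is then uncountable --- in model-theoretic terms, $\aleph_1$-saturated as a model of ACF --- and this Baire property is standard for such fields. This yields $V_n \neq \emptyset$; an analogous fibrewise argument applied within each $\pi_{n,n+1}^{-1}(\phi_n) \cap I_{n+1}$ shows that each projection $V_{n+1} \onto V_n$ is surjective, after which countable choice along this surjective inverse system extracts a compatible tower $(\phi_n)$, completing the proof.
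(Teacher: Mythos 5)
The paper's own proof is a one-line citation to Proposition~\ref{P:zerosch} together with Theorem~1.1 and \S8.9 of the author's companion preprint \cite{SchClassSing}; there is no internal argument to compare against. Your proposal is therefore an attempt to reconstruct the content of that reference.

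Your ``hard'' direction---encoding $\fld$-algebra isomorphisms of the $n$-th jets as constructible iso-varieties $I_n$, exhibiting the projections $\pi_{nm}\colon I_m\to I_n$, stabilizing the Zariski closures of their images by Noetherianity, invoking a Baire-type non-covering statement over an uncountable algebraically closed field to get $V_n\neq\emptyset$, and running the same argument fibrewise to get surjections $V_{n+1}\onto V_n$ before applying dependent choice---is a sound and recognizable approximation argument. Two small points: it only uses that $\fld$ is uncountable (which the hypothesis $|\fld|=2^\gamma$ certainly implies), and it proves the strictly stronger conclusion that the completions are $\fld$-isomorphic, not merely abstractly isomorphic.

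The ``easy'' direction, however, has a genuine gap, and it is precisely where the cited theorem does its real work. The parenthetical claim that ``any field automorphism of $\fld$ involved in the ring isomorphism [is] absorbed via a $\fld$-algebra twist'' is not justified and is false in the naive reading. An abstract isomorphism of jets that is $\sigma$-semilinear for an automorphism $\sigma$ of $\fld$ exhibits $R_n$ as $\fld$-isomorphic to the twist ${}^\sigma S_n$, and ${}^\sigma S_n$ is $\fld$-isomorphic to the Galois conjugate of $S_n$ obtained by applying $\sigma^{-1}$ to the coefficients of its defining equations---not to $S_n$ itself. When the isomorphism class of $S_n$ depends on a transcendental parameter (for instance the apolar Artinian Gorenstein algebra of a smooth plane cubic, whose isomorphism class is governed by the $j$-invariant), these two are not $\fld$-isomorphic, so their classes in $\grotzero{\funcalg\fld}$ (which by Proposition~\ref{P:zerosch} separates $\fld$-isomorphism classes of fat points) would disagree. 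So passing from an abstract isomorphism of completions to $\fld$-isomorphism of all jets is not formal: it is exactly the nontrivial content of \cite[Theorem~1.1]{SchClassSing}, which there is established by a model-theoretic (ultraproduct/saturation) argument relying essentially on the cardinality hypothesis. Your proposal does not supply this step, and the one-sentence absorption claim cannot replace it.
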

\begin{proof}
Immediate from Proposition~\ref{P:zerosch} and \cite[Theorem~1.1 and \S8.9]{SchClassSing}.
\end{proof}

\subsection*{Motivic Hilbert-Kunz series}
Assume that $\fld $ has \ch\ $p$. Recall that  for a given closed subscheme  $Y\sub X$, we defined in \S\ref{s:adj} its  Frobenius transform  in $X$ as the pull-back  $\frob_X^*Y:=\frob_*X\times_XY$   of $Y$ along $\frob_X$. We may also take the pull-back with respect to the powers $\frob^n_X$ of the Frobenius, yielding the \emph{$n$-th Frobenius transform} $\frob_X^{n*}Y$. 
If $Y$ has dimension zero, then so does any of its Frobenius transforms, and so the following series, called the \emph{motivic Hilbert-Kunz series}, 
$$
\op{HK}^{\text{mot}}_Y(X):=\sum_{n=1}^\infty \class{\frob_X^{n*}Y}\ t^n
$$
is a well-defined series in  $\pow{\grotzero{\funcalg \fld}}t$. Taking the length function $\ell$ yields the classical Hilbert-Kunz series, of which not too much is known (one expects it to be rational). Of course, we could also take $Y$ to be of higher dimension, and get the corresponding motivic Hilbert-Kunz series in  $\pow{\grot{\funcalg \fld}}t$. 

Instead of transforms we could take iterated Frobenius motives  $\motif F^n_Y:=\arc{\frob^n}Y$ given as the image sieve of the $n$-th relative Frobenius $\frob_{\affine\fld s}^n\times\tuple1_Y$, for some closed immersion $Y\sub\affine\fld s$, in case $Y$ is affine, and by glueing for the general case, yielding a series
$$
\op{Fr}^{\text{mot}}(Y):=\sum_{n=1}^\infty \class{\arc{\frob^n}Y}\ t^n
$$
in $\pow{\grotzero{\funcalg \fld}}t$.
Note that $\motif F^n_Y(\fld)=Y(\frob^n \fld)=Y(\fld)$ by Theorem~\ref{T:frobadj}, so that this series becomes the  rational function $\class Y/(1-t)$ in $\grotclass \fld$.

\subsection*{Motivic Milnor series}
Let $(Y,P)$ be a closed germ with formal completion $\complet Y_P$.  
Proposition~\ref{P:formsieve} exhibits $\func{\complet Y_P}$  as a formal 
motif by means of its jets. However, this is not the only way to locally 
approximate it with \zariski\ subsieves. Given a system of parameters $
\xi_1,\dots,\xi_e$ in $\loc_{Y,P}$ (that is to say, a tuple of length $e=
\op{dim}(\loc_{Y,P})$ generating an ideal primary to the maximal ideal), let 
$\mathfrak y_n$ be the fat point with coordinate ring $B_n:=\loc_{Y,P}/
(\xi_1^n,\dots,\xi_e^n)\loc_{Y,P}$, and $j_{\mathfrak y_n}\colon \mathfrak y_n\to \op{Spec}\fld$ the canonical morphism. The reader can check that given a fat 
point $\fat$, there exists some $n$ such that $\mathfrak y_n(\fat)=
\complet Y_P(\fat)$, that is to say, 
$\complet Y_P$ is the limit point corresponding to the direct system $\{\mathfrak y_n\}_n$ (see \S\ref{s:limpt}). Recall that by the Monomial Theorem, the element $
(\xi_1\cdots\xi_e)^{n-1}$ is a non-zero element in the socle of $B_n$ 
(meaning that the ideal it generates has length one). 

Let $X\sub\affine\fld {d+1}$ be the ($d$-dimensional) hypersurface with 
equation $f(\var)=0$ and let $Y_n\sub\affine{\mathfrak y_n}{d+1} $ be the 
deformed hypersurface with equation $f(\var)-(\xi_1\cdots\xi_e)^{n-1}=0$. 
In other words, it is the general fiber in the family $W_n\sub\affine{\mathfrak y_n}{d+2}$ over the last coordinate $z$,  given 
by the equation $f(\var)-z(\xi_1\cdots\xi_e)^{n-1}=0$, whereas the special 
fiber is just  the base change   $j^*_{\mathfrak y_n}X$. We define the \emph{$n$-th order 
Milnor fiber of $X$ along the germ $(Y,P)$} as the deformed arc space
$$
M_n(X):=\arc{j_{\mathfrak y_n}^*}{Y_n}.
$$
Hence, by Proposition~\ref{P:defarcfam}, with $j_{\affine\fld{d+2}}\colon \affine{\mathfrak y_n}{d+2}\to \affine{\fld}{d+2}$ the base change of $j_{\mathfrak y_n}$, the specializations of the relative arc scheme are 
\begin{equation}\label{eq:specrelarc}
\begin{aligned}
(\arc{j_{\affine\fld{d+2}}^*}{W_n})_a &= \arc{\mathfrak y_n}X\ \qquad\text{if $a$ is the zero section;}\\
&=M_n(X)\qquad\text{otherwise.}
\end{aligned}
\end{equation}
We define the associated Milnor series
$$
\mil XYP(t):=\sum_{n=1}^\infty \lef^{-d\ell(\mathfrak y_n)}\class{M_n(X)}t^n
$$
as a power series over   $\grot{\funcinf \fld}_\lef$. When $(Y,P)$ is the germ 
of a point on a line, we get the \zariski\ variant of the series introduced by 
Denef-Loeser et al., and by \eqref{eq:specrelarc}, this series can be viewed as a deformation of the motivic Igusa-zeta series.  Therefore, in view of   Conjecture~\ref{C:motigu}, we expect the 
motivic Milnor series also to be rational, and in fact, as a rational function, to 
have degree zero. Assuming this to be true, we can calculate the limit of this 
series when $t\to\infty$, and this conjectural limit, presumably in $\grot{\funcinf 
\fld}_\lef$, will be called the \emph{motivic Milnor fiber} of $X$ along the 
closed germ $(Y,P)$.

\subsection*{Motivic Hasse-Weil series}
Another important generating series in algebraic geometry whose
rationality---proven by Dwork in \cite{DworkRat}---is postulated to
be motivic, is the Hasse-Weil series of a scheme over a finite
field $\mathbb F_q$: its general coefficient is the number of
rational points over the finite extensions $\mathbb F_{q^n}$. To turn this into an abstract counting principle, we use
the inversion formula relating the number of degree $n$ effective
zero cycles on $X$ to the number of rational points in an extension
of degree $n$, and observe that the former cycles are in one-one
correspondence with  the rational points on the $n$-fold symmetric
product $X^{(n)}$ of $X$ (given as the quotient of $X^n$ modulo the
action of the symmetric group  on $n$-tuples). Therefore, following  Kapranov  \cite{Kap}, we propose  the following motivic variant, the \emph{Motivic Hasse-Weil series}:
  $$
  \op{HW}^{\text{mot}}_X:=\sum_{n=0}^\infty\class{X^{(n)}}\ t^n,
  $$
  as a power series over $\grot{\funcinf \fld}_\lef$. Kapranov himself proved  rationality of the image of this series over $\grotclass\fld_\lef$,  as well as a functional equation, for   certain smooth,
projective irreducible  curves, but the general case is still open.
  We know from work of Larsen and Lunts on smooth
surfaces (\cite{LLMot}), that, in general,  this cannot hold over the \gr\
itself: in \cite{LLRat}, they show that rationality over
the \gr\ is equivalent with the complex surface having negative
Kodaira dimension.  It is therefore natural to conjecture the same properties for our motivic variant $\op{HW}^{\text{mot}}_X$. 

\subsection*{Motivic Poincare series}
Given a closed germ $(Y,P)$ with formal completion $\complet Y$, viewed as a limit point, and a $\fld$-scheme $X$, 
by Theorem~\ref{T:extgermformal}, we can now define the \emph{motivic 
Poincar\'e series of $X$ along $(Y,P)$} as the formal series 
$$
\poinc XYP(t):=\sum_{n=1}^\infty \lef^{-d\ell(\jet PnY)}
\class{\imarc{\complet Y}nX}t^n
$$
over $\grot{\funcinf \fld}_\lef$, where $d$ is the dimension of $X$. Denef and Loeser proved in \cite{DLArcs} that along the 
germ of a point on the line, the image of this series in the localized classical 
\gr\ is rational, provided $\fld$ has \ch\ zero. It is therefore natural to 
postulate:

\begin{conjecture} 
For any closed germ $(Y,P)$ and any $\fld$-scheme $X$, the associated 
motivic Poincar\'e series $\poinc XYP$ is rational over $\grot{\funcinf 
\fld}_\lef$. 
\end{conjecture}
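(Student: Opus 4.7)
The plan is to adapt the proof strategy of Denef and Loeser \cite{DLArcs} to the present topos-theoretic framework, leveraging the machinery developed earlier in the paper. First, I would reduce the problem to the case where $(Y,P)$ is the germ of the origin on an affine space $\affine\fld e$. By Cohen structure theory at the level of completions, any $e$-dimensional closed germ admits a finite morphism to such an affine germ, and the sieve of extendable arcs behaves functorially along the induced map of limit points (using Theorem~\ref{T:extgermformal} and the continuity principle of \S\ref{s:limpt}). This reduces the conjecture to studying $\imarc{\complet{\affine\fld e}_O}nX$, which classifies $n$-jets that lift to a formal morphism $\complet{\affine\fld e}_O \to X$.

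Second, assuming characteristic zero, I would invoke embedded resolution of singularities to choose a proper birational morphism $h\colon \tilde X \to X$ with $\tilde X$ smooth, such that the critical locus of $h$ and the loci controlling liftability lie in a simple normal crossings divisor $E = \sum_{i \in I} E_i$ on $\tilde X$, with multiplicities $N_i$ along the relative canonical divisor and $\mu_i$ along the Jacobian ideal. By Theorem~\ref{T:fibmot} and the openness of arcs (Theorem~\ref{T:openarc}), arc schemes over the smooth variety $\tilde X$ decompose as locally trivial affine fibrations, so the class of $\imarc{\complet Y}n{\tilde X}$ is computable by the stratification $\tilde X = \bigsqcup_{J\sub I} E_J^\circ$, where $E_J^\circ := \bigcap_{i\in J}E_i \setminus \bigcup_{k\notin J}E_k$.

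Third, to transfer the computation from $\tilde X$ back to $X$, one needs a schemic change-of-variables formula: using the adjunction theory of \S\ref{s:adj} and the projection formula (Theorem~\ref{T:projform}) to commute arc functors with $h_*$ and $h^*$, one stratifies $\imarc{\complet Y}nX$ by multi-indices $\nu \in \nat^{|J|}$ recording orders of vanishing along the components $E_i$. Each stratum contributes $\class{E_J^\circ}\cdot\lef^{-\sum_i \nu_i \mu_i}$ up to the Jacobian correction, and summing over all $\nu$ subject to a linear constraint depending on $n$ yields an expression of the shape
$$
\poinc XYP(t) = \sum_{J \sub I} \class{E_J^\circ}\cdot\lef^{-c_J} \prod_{i\in J} \frac{(\lef-1)\,t^{N_i}\lef^{-\mu_i}}{1-t^{N_i}\lef^{-\mu_i}},
$$
for constants $c_J$ extracted from $h$, which is manifestly rational over $\grot{\funcinf\fld}_\lef$.

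The main obstacle will be establishing the change-of-variables formula in sufficient generality for the limit points $\complet Y$, which are not merely $\op{Spec}\pow\fld t$ but higher-dimensional formal completions. The classical proof uses the $t$-adic filtration to measure the Jacobian order; here one must replace this with a filtration intrinsic to $\complet Y$, which requires extending the finitistic motivic integration of \S\ref{s:motint} to the limit-point setting sketched in \S\ref{s:limpt}, and verifying that the resulting measure transforms predictably under $h$ at the level of formal motives rather than their classical images. A secondary obstacle is positive characteristic, where embedded resolution is unavailable; one might attempt to use the Frobenius adjunction (Theorem~\ref{T:frobadj}) and the auto-arc dimension invariant from \S\ref{s:motdim} to run an inductive argument on the defect that sidesteps resolution, but this requires a better understanding of how $\arc\frob{}$ interacts with extendability than is currently available.
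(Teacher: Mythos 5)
The statement you are addressing is labeled a \emph{conjecture} in the paper; no proof is given, so there is no argument to compare your sketch against. Your proposal is also not a proof---it is a plan that you yourself flag as incomplete at its central step---so the right thing to do is assess whether that plan could plausibly close the gap.

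The outline transplants the Denef--Loeser strategy, but it glosses over several points that are precisely where the schemic setting diverges from the classical one. First, the reduction in step one is unjustified: a finite morphism $\complet{(\affine\fld e)}_O\to\complet Y_P$ from Cohen structure theory does not obviously transport $\imarc{\complet Y}n X$ to $\imarc{\complet{(\affine\fld e)}_O}n X$; extendability is a lifting condition along the formal completion, and a finite cover of the completion neither injects nor surjects these image sieves in any controlled way. Second, and more seriously, the classical rationality of the Poincar\'e series (Denef--Loeser, \cite{DLArcs}) is not proved by resolution plus change-of-variables alone: the key input is quantifier elimination for the theory involved (or a cell-decomposition rationality theorem) applied to the semi-algebraic structure of the family of truncated extendable arc sets. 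You do not propose a schemic analogue of that ingredient, and Theorem~\ref{T:extgermformal}---which establishes that the extendable-arc sieve is a formal motif---only gives that each coefficient lies in $\grot{\funcinf\fld}_\lef$; it says nothing about how the coefficients vary with $n$. Third, the displayed formula in your step three is too strong to hope for in $\grot{\funcinf\fld}_\lef$: the strata $E_J^\circ$ are reduced varieties, but in the formal \gr\ a motif's class remembers nilpotent structure, and the fibres of $\arc{\mathfrak j_n}{\tilde X}\to\arc{\mathfrak j_n}X$ over a stratum carry nontrivial scheme structure (cf.\ Example~\ref{E:arcmot} and \S\ref{s:motdim}, where arc-scheme dimensions are governed by non-reduced data). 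Any schemic change-of-variables would have to produce classes of nilpotent thickenings of the $E_J^\circ$, not the classes $\class{E_J^\circ}$ themselves. Finally, your proposed route via the Frobenius adjunction to sidestep resolution in positive characteristic is, as you note, speculative and currently has no content. In short: the obstacle you explicitly name---a change-of-variables formula at the level of formal motives---is indeed the crux, but there are further concrete gaps (the reduction to affine germs, the missing quantifier-elimination input, the reduced-versus-schemic mismatch in the stratification) that the outline does not address, so it cannot be regarded as close to a proof.
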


Given $X$ and a formal completion $\complet Y$, we may ask for each $n$, 
which are the fat points $\fat$ containing the $n$-th jet $\mathfrak j_n:=
\jet OnY$ such that $\imarc{\complet Y}{\mathfrak j_n}X\sub \imarc{\fat}
{\mathfrak j_n}X$, that is to say, when are $\complet Y$-extendable arcs 
also $\fat$-extendable? For instance, if $\complet Y$ is the completion of 
the affine line, then by Theorem~\ref{T:fibmot}, we can extend along any jet 
of a non-singular germ $(W,O)$, since there exist closed immersions $
\mathfrak j_n\sub \jet OnW\sub\mathfrak j_n^d$, where $d$ is the 
dimension of $(W,O)$.  
However, I do not know whether we can   extend along the fat point given 
by, say,  $x^4=y^4=x^3-y^2=0$. For which schemes $X$ can every $\complet 
Y$-extendable arc be extended along any fat point? This is true if $X$ is 
smooth, but are there any other cases?

\section{Motivic integration}\label{s:motint}
Unlike the Kontsevitch-Denef-Loeser motivic integration, we will only define integration 
on the (truncated) arc schemes. We will work over the localized \gr\ $\grotmot:=\grot{\funcinf \fld}_\lef$, for $\fld$ an \acf. Before we develop the theory, we discuss a naive approach.

\subsection*{Motivic measure}
We fix a fat point $\fat$. Our goal is to define a motivic measure $\mu_\fat$ on formal motives. To this end, we define
$$
\mu_\fat(\motif X):= \class{\arc\fat{\motif X}}
$$
 in $\grotmot$. In particular, this measure does not depend on the ambient space of $\motif X$, only on its germ. Using Theorem~\ref{T:arcmot}, we can extend  the motivic measure to an endomorphism on $\grotmot$. We would like to normalize this measure, with the ultimate goal---which, however, we do not discuss in this paper---to make the comparison   between different fat points and take limits. One way to normalize   is to make the value weightless (in the sense of dimension), by 
 $$
 \bar\mu_\fat(\motif X):= \frac{\class{\arc\fat{\motif X}}}{\lef^{\op{dim}(\arc\fat{\motif X})}}
 $$
 This, of course, is no longer additive.  Below, however,    we will normalize differently, by fixing an ambient space.
 
 Following integration theory practice, we would like to say that  
 $$
 \int \tuple 1_{\motif X}\ d_\fat x:= \mu_\fat(\motif X):= \class{\arc\fat{\motif X}}
 $$
 and   extend this to arbitrary step functions. Here, a \emph{step function} would be a formal, finite sum $s=\sum g_i\tuple 1_{\motif X_i}$ with $g_i\in\grotmot$ and $\motif X_i$ formal motives. However, how to interpret this as a function? As usual, we should do this at each fat point $\mathfrak w$, and interpret $\tuple 1_{\motif X}(\mathfrak w)$ as the \ch\ function on $X(\mathfrak w)$ of $\motif X(\mathfrak w)$, where $X$ is an ambient space of $\motif X$. Likewise, provided $X$ is an ambient space for all $\motif X_i$, we let  $s(\mathfrak w)$ be   the function   $X(\mathfrak w)\to \grotmot$ associating to a $\mathfrak w$-rational point $a\in X(\mathfrak w)$ the sum of all $g_i$  for which $a\in \motif X_i(\mathfrak w)$. However, the main obstruction is that this point-wise defined function is in general   not functorial. 
 The reason is the non-functorial nature of fibers, which in turn stems from the lack of complements in categories---note that the complement of any fiber is the union of the other fibers. To recover functoriality, we work over a subcategory of fat points:

\subsection*{Flat and split points}\label{s:motintsp}
More precisely, let $\flatpoints\fld$ and $\splitpoints\fld$ be the respective categories of \emph{flat points} and \emph{split points} over $\fld$, whose objects   are fat points over $\fld$ and whose morphisms are respectively flat   and split epimorphisms. Recall that a morphism $\varphi\colon Y\to X$ is called a \emph{split epimorphism} if there exists a morphism, also called a \emph{section},  $\sigma\colon X\to Y$ such that $\varphi\sigma$ is the identity on $X$. Any split epimorphism is (faithfully) flat, so that $\splitpoints\fld$ is a subcategory of $\flatpoints\fld$. Note that each structure morphism $\fat\to\op{Spec}\fld$ is a split epimorphism, and by base change, so is each projection map $\fat\mathfrak w\to\mathfrak w$. 

We will call a contravariant functor $\motif X$ from $\flatpoints\fld$   (respectively,  from $\splitpoints\fld$) to the category of sets a \emph{flat} (respectively, a \emph{split}) presheaf. If, moreover,  we have an inclusion morphism $\motif X\sub \func X$, where $\func X$ is the restriction of  the representable functor of a $\fld$-scheme $X$, we call $\motif X$  a \emph{flat} (respectively, a \emph{split}) sieve.
In particular, ordinary presheafs or sieves (that is to say, defined on $\fatpoints\fld$) when restricted to $\splitpoints\fld$ are  split---and to emphasize this, we may call them \emph{full} sieves---, but as the next result shows, not every split sieve is the restriction of a full sieve:

\begin{proposition}\label{P:complflat}
The complement of a \zariski\ motif $\motif X\sub \func X$  is a flat  sieve. The complement of a formal motif $\motif X\sub \func X$   is a split  sieve. 
\end{proposition}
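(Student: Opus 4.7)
The plan is to unpack what functoriality of complements demands: for the complement $\func X - \motif X$ to extend to a contravariant functor on a subcategory of fat points, we need that along every morphism $j\colon \tilde\fat\to\fat$ in that subcategory, the implication
\[
a\after j\in\motif X(\tilde\fat)\ \Longrightarrow\ a\in\motif X(\fat)
\]
holds for all $a\in\func X(\fat)$. So in each case the task is to verify this implication under the relevant hypothesis on $j$.

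For the \zariski\ case, write $\motif X=\func{X_1}\cup\dots\cup\func{X_s}$ with ideal sheaves $\mathcal I_i$, and let $R$, $\tilde R$ be the coordinate rings of $\fat$ and $\tilde\fat$. By Lemma~\ref{L:vanci}, membership $a\in\func{X_i}(\fat)$ is equivalent to the ideal $a^*\mathcal I_i\sub R$ being zero. A flat morphism $j$ corresponds to a flat local $\fld$-algebra \homo\ $R\to\tilde R$, which is automatically faithfully flat since both rings are Noetherian local. Because $\tilde R$ is flat over $R$, the natural map $(a^*\mathcal I_i)\otimes_R\tilde R\to\tilde R$ is injective, so its image equals $(a^*\mathcal I_i)\tilde R=(a\after j)^*\mathcal I_i$. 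Thus $(a\after j)^*\mathcal I_i=0$ is equivalent to $(a^*\mathcal I_i)\otimes_R\tilde R=0$, which by faithful flatness descends to $a^*\mathcal I_i=0$. Hence $a\after j\in \func{X_i}(\tilde\fat)$ forces $a\in\func{X_i}(\fat)\sub\motif X(\fat)$, as required.

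For the formal case, let $j$ be a split epimorphism with section $\sigma\colon\fat\to\tilde\fat$, so that $j\after\sigma=1_\fat$. Suppose $a\after j\in\motif X(\tilde\fat)$, and choose a sub-\zariski\ approximation $\motif Y\sub\motif X$ with $\motif Y(\tilde\fat)=\motif X(\tilde\fat)$, say $\motif Y=\fim\varphi$ for some morphism $\varphi\colon Y\to X$ of $\fld$-schemes. Then $a\after j$ factors as $\varphi\after b$ for some $b\colon\tilde\fat\to Y$, and post-composing with $\sigma$ yields $a=a\after j\after\sigma=\varphi\after(b\after\sigma)$, exhibiting $a$ as a point of $\fim\varphi(\fat)=\motif Y(\fat)\sub\motif X(\fat)$.

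The main obstacle is really conceptual rather than technical: one has to recognize the algebraic content of each categorical restriction. Flatness of $j$ translates into the faithful-flatness needed to test the ideal-theoretic vanishing condition that characterizes \zariski\ membership, while splitness of $j$ provides the retraction needed to pull back a factorization through $\varphi$ from $\tilde\fat$ to $\fat$ without any finer control on $\motif X$ itself. In both cases the argument is independent of the particular ambient space $X$, so the conclusions hold   for the germ of $\motif X$.
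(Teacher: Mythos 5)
Your proof is correct and takes essentially the same approach as the paper's, using faithful flatness of the local flat homomorphism $R\to\tilde R$ to descend the vanishing condition from Lemma~\ref{L:vanci} in the \zariski\ case, and the section $\sigma$ of a split epimorphism to transport a factorization through a sub-\zariski\ approximation in the formal case. The only differences are cosmetic: the paper first reduces the \zariski\ case to a single closed subsieve (exploiting that intersections of sieves remain sieves) and treats the sub-\zariski\ case as a separate intermediate step before the formal case, whereas you work with the union $\func{X_1}\cup\dots\cup\func{X_s}$ directly and inline the sub-\zariski\ factorization argument into the formal case.
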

\begin{proof}
Any (full)  \zariski\ motif is the union of closed subsieves, and hence its complement is the intersection of complements of closed subsieves. Since the intersection of (flat or split) sieves is again a sieve, we only need to verify that the complement of a single closed subsieve $\func Y\sub\func X$ is a flat sieve. The only thing to show is functoriality, so let $\mathfrak w\to \fat$ be a flat morphism of fat points. We have to show that under the induced map $X(\fat)\to X(\mathfrak w)$ any $\fat$-rational point $a$ not in $Y(\fat)$ is mapped to a point not in $Y(\mathfrak w)$. Since fat points are affine, we may replace $X$ by an affine open, and so assume from the start that it is affine with coordinate ring $A$. Let $I$ be the ideal defining $Y$, and let $R$ and $S$ be the coordinate rings of $\fat$ and $\mathfrak w$ respectively. The $\fat$-rational point $a$ corresponds to a morphism $A\to R$; it does not belong  to $Y(\fat)$ \iff\  the image $IR$  of $I$ under $A\to R$ is non-zero. Suppose towards a contradiction that  $a\in Y(\mathfrak w)$, so that $IS=0$. Since $R\to S$ is by assumption flat, whence faithfully flat, we must have $IR=IS\cap R=0$, contradiction.

Assume next that $\motif X$ is a sub-\zariski\ motif, that is to say, $\motif X=\fim\varphi$ for some morphism $\varphi\colon Y\to X$. Let  $\lambda\colon\mathfrak w\to \fat$ be a split epimorphism of fat points  and let  $a\colon\fat\to X$ be a $\fat$-rational point  outside $\fim\varphi(\fat)$.  We have to show that the image   $a\after\lambda$ of $a$ in $X(\mathfrak w)$ does not lie in the  image of $\varphi(\mathfrak w)$. Towards a contradiction, assume the opposite, so that $a\after\lambda$ factors through $Y$, giving rise to a commutative diagram
\commdiagram [YX] {\mathfrak w} b Y\lambda\varphi\fat a {X.}
 By assumption, there exists a section $\sigma\colon \fat\to \mathfrak w$ so that $\lambda\sigma$ is the identity on $\fat$. Let $\tilde b$ be the image of $b$ under $Y(\sigma)\colon Y(\mathfrak w)\to Y(\fat)$, that is to say, $\tilde b=b\after\sigma$. The image of $\tilde b$ under $\varphi(\fat)\colon Y(\fat)\to X(\fat)$ is by \eqref{YX} equal to 
$$
\varphi(\fat)(\tilde b)=\varphi\after\tilde b=\varphi\after b\after\sigma=a\after\lambda\after\sigma=a
$$
showing that $a$ lies in the image of $\varphi(\fat)$, contradiction.

Lastly, assume that $\motif X$ is formal, so that there exists for each fat point $\fat$ a sub-\zariski\ motif $\motif Y_\fat\sub\motif X$ such that $\motif Y_\fat(\fat)=\motif X(\fat)$. Let $\lambda\colon\mathfrak w\to\fat$ be a split epimorphism. Since $\mathfrak Y_{\mathfrak w}\sub\motif X$, we have $-\motif X(\fat)\sub -\motif Y_{\mathfrak w}(\fat)$. By what we just proved, $-\motif Y_{\mathfrak w}(\fat)$, is sent under   $X(\lambda)\colon X(\fat)\to X(\mathfrak w)$ inside $-\motif Y_{\mathfrak w}(\mathfrak w)$, and by construction, the latter is equal to $-\motif X(\mathfrak w)$. A fortiori,   $-\motif X(\fat)$ is then sent inside $-\motif X(\mathfrak w)$,  proving the assertion.
\end{proof}

\begin{remark}\label{R:split}
It is important to note that we may not apply this argument to an arbitrary split sieve, since   a section of a split morphism  is not split and hence does not induce a morphism on the rational points of the split sieve. The point in the above argument is that formal motives are presheafs on the full category of fat points, and hence any section does induce a map between their rational points.
\end{remark}

%

We call any Boolean combination of closed subsieves a \emph{flat-\zariski} motif. Our aim is to define   motivic sites of \zariski, sub-\zariski\ and formal motives   with respect to flat and/or split points,   but without changing the corresponding \gr. A priori, there might be more morphism, that is to say, natural transformations, in this   restricted context, and to circumvent this issue, we only allow morphisms that extend to true motives. More precisely, we  define the \emph{flat-\zariski} 
motivic site   $\funcalgflat\fld$, as the category with objects all flat-\zariski\ motives, and with morphisms all natural transformations $s\colon\motif Y\to \motif X$ of flat \zariski\ motives which extend to a morphism of \zariski\ motives in the sense that there are \zariski\ motives $\motif X\sub\motif X'$ and $\motif Y\sub\motif Y'$ and a morphism of \zariski\ motives $s'\colon\motif Y'\to \motif X'$ whose restriction to $\motif Y$ is $s$.    To not introduce unwanted isomorphisms, we moreover require that if $s$ is injective, then so must its extension $s'$ be.  Likewise, we call any Boolean combination of  sub-\zariski\ (respectively, formal) motives a \emph{split-sub-\zariski} (respectively, a \emph{split-formal}) motif, and we define  the \emph{split-sub-\zariski} motivic site   $\funcsubsplit\fld$ (respectively, the \emph{split-formal} motivic site   $\funcinfsplit\fld$), as the category with objects all split-sub-\zariski\ (respectively,   split-formal) motives, and as  morphisms   all
   natural transformations which extend to a morphism of sub-\zariski\ (respectively, formal) motives, with injective morphisms extending to injective ones.   All these sites satisfy the same properties as ordinary motivic sites, apart from being defined only over a restricted category, but have the additional property that their restriction to any scheme is a Boolean lattice. At any rate, we can   define their corresponding \gr{s}. 
   
\begin{proposition}\label{P:grsplit}
We have equalities of \gr{s}, $\grot{\funcalgflat\fld}=\grot{\funcalg\fld}$, $\grot{\funcsubsplit\fld}=\grot{\funcsub\fld}$, and $\grot{\funcinfsplit\fld}=\grot{\funcinf\fld}$. 

Moreover, for each fat point $\fat$, the arc sheaf of a flat \zariski\ (respectively, split sub-\zariski, or split formal) motif exists, and is again of that form. The induced action on the corresponding \gr\ is equal to that of  $\arc\fat{(\cdot)}$.
\end{proposition}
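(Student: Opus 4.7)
The three equalities of Grothendieck rings can be treated in parallel, since in each case $\bool{\categ M}$ (where $\categ M$ stands for $\funcalg\fld$, $\funcsub\fld$, or $\funcinf\fld$) is obtained by closing the ambient lattice of motives under Boolean combinations, restricting the test category just enough that complements become bona fide flat or split sieves thanks to Proposition~\ref{P:complflat}.  I would establish a general abstract principle, that the Grothendieck ring of a distributive lattice of sieves coincides with the Grothendieck ring of the Boolean algebra it generates (with morphisms only those extending to the full lattice), and then apply it three times.

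Concretely, I would construct two mutually inverse ring \homo{s} $\alpha\colon\grot{\categ M}\to\grot{\bool{\categ M}}$ and $\beta\colon\grot{\bool{\categ M}}\to\grot{\categ M}$.  The map $\alpha$ sends the class of $\motif X\in\categ M$ to its class after restricting the underlying functor to the smaller category of flat or split fat points; the scissor relations carry over verbatim, and the condition that morphisms of $\bool{\categ M}$ must extend to (injective when injective) morphisms in the full sieve category ensures that no spurious isomorphism relations are introduced.  For the map $\beta$, I would first put an arbitrary object of $\bool{\categ M}$ in disjunctive normal form: since $\categ M$ is closed under intersection (and, for $\funcinf\fld$, for unions of formal motives with the same ambient space by Theorem~\ref{T:morp}), every Boolean combination of $\categ M$-motives can be rewritten as a finite disjoint union of atoms $\motif X-\motif Y$ with $\motif Y\sub\motif X$ in $\categ M$.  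Declare $\beta\class{\motif X-\motif Y}:=\class{\motif X}-\class{\motif Y}$ and extend additively.

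The main technical obstacle is the well-definedness of $\beta$: two different disjunctive normal forms for the same Boolean combination must give the same value in $\grot{\categ M}$.  I would handle this by a two-step reduction.  First, any two such decompositions can be connected by finitely many atomic refinements: replacing an atom $\motif X-\motif Y$ by the pair of atoms $(\motif X\cap\motif W)-(\motif Y\cap\motif W)$ and $\motif X-(\motif Y\cup(\motif X\cap\motif W))$ for some $\motif W\in\categ M$, whose $\beta$-values sum to $\class{\motif X}-\class{\motif Y}$ by a single application of the scissor relation~\eqref{eq:scisslat}.  Second, both the scissor and the isomorphism relations of $\bool{\categ M}$ translate, under $\beta$, into  identities in $\grot{\categ M}$ of the same type, using again~\eqref{eq:scisslat} and the fact that $\bool{\categ M}$-isomorphisms lift to $\categ M$-isomorphisms by definition.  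Once this is secured, the identities $\alpha\after\beta=\op{id}$ and $\beta\after\alpha=\op{id}$ are immediate.

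For the second assertion, I would first check that the arc functor is defined on flat or split sieves: the rule $(\arc\fat{\motif X})(\mathfrak w):=\motif X(\fat\mathfrak w)$ makes sense because base change by $\fat$ preserves flatness and splitness of morphisms, the structure map $\fat\to\op{Spec}\fld$ being itself a split epimorphism.  The functor manifestly commutes with all Boolean operations, so $\arc\fat{(-\motif X)}=-\arc\fat{\motif X}$, and the classes of arc schemes of $\categ M$-motives are already in $\categ M$ by Theorem~\ref{T:arcmot}.  Hence arcs of flat-\zariski\ (resp.\ split-sub-\zariski, split-formal) motives are of the same form.  That the induced endomorphism on $\grot{\bool{\categ M}}$ corresponds under $\alpha$ to $\arc\fat{(\cdot)}$ on $\grot{\categ M}$ is now  automatic, since both send $\class{\motif X}$ to $\class{\arc\fat{\motif X}}$ for $\motif X\in\categ M$, and such classes generate the ring.
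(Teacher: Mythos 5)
Your proposal is correct and follows essentially the same route as the paper: decompose any Boolean combination into disjoint atoms $\motif X-\motif Y$ with $\motif Y\sub\motif X$, define the backward map on atoms by $\class{\motif X}-\class{\motif Y}$, check compatibility with scissor and isomorphism relations, and extend the arc functor by $\arc\fat{(\motif X-\motif Y)}:=\arc\fat{\motif X}-\arc\fat{\motif Y}$. The one cosmetic difference is your well-definedness argument: you connect two normal forms by a chain of atomic refinements, each verified via one scissor relation, whereas the paper compares two single-atom expressions directly and derives $\class{\motif X}+\class{\tilde{\motif Y}}=\class{\tilde{\motif X}}+\class{\motif Y}$; both are fine, and the paper also leaves the scissor-preservation check and the bijectivity of the map (there established by noting $\gamma$ is surjective and the identity on the full site) to the reader, just as you do.
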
   
  \begin{proof}
  I will only give the argument for the case of most interest to us, the formal motives, and leave the remaining cases, with analogous proof, to the reader. Before we do this, let us first discuss briefly Boolean lattices. Let $\mathcal B$ be a Boolean lattice.  Given a finite collection of subsets $X_1,\dots,X_n\in \mathcal B$, and an $n$-tuple $\varepsilon=\rij\varepsilon n$ with entries $\pm 1$, let $X_\epsilon$ be the subset given by the intersection of all $X_i$ with $\varepsilon_i=1$ and all $-X_i$ with $\varepsilon_i=-1$. Then any element in the Boolean sublattice $\mathcal B\rij Xn$ of $\mathcal B$ generated by $X_1,\dots, X_n$ is a disjoint union of the $X_\varepsilon$. In particular, if all $X_i$ belong to a sublattice $\mathcal L\sub\mathcal B$, then any element in $\mathcal B\rij Xn$ is a disjoint union of sets of the form $C-D$ with $D\sub C$ in $\mathcal L$. 

 We now define a map $\gamma$ from the free Abelian group $\pol\zet{\funcinfsplit\fld}$ to $\grot{\funcinf\fld}$ as follows. By the above argument, a typical element in $\funcinfsplit\fld$ is a disjoint union of split formal motives of the form $\motif X-\motif Y$ with $\motif Y\sub\motif X$ (full) formal motives. We define its $\gamma$-value to be the element $\class{\motif X}-\class{\motif Y}$. This is well-defined, for if it is also equal to a difference of motives $\tilde{\motif X}-\tilde{\motif Y}$ then one easily checks that $\motif X\cup\tilde{\motif Y}=\tilde{\motif X}\cup \motif Y$ and $\motif X\cap\tilde{\motif Y}=\tilde{\motif X}\cap \motif Y$, so that 
 $$
 \class{\motif X}+\class{\tilde{\motif Y}}=\class{\tilde{\motif X}} +\class{\motif Y}.
 $$
  We extend this to disjoint sums by taking the sum of the disjoint components, and then extend by linearity, to the entire free Abelian group. 
It is not hard to verify that $\gamma$ preserves all scissor relations. So we next check that it preserves also isomorphism relations. We may again reduce to an isomorphism of the form $s\colon \motif X-\motif Y\to \tilde{\motif X}-\tilde{\motif Y}$ with $\motif Y\sub\motif X$ and $\tilde{\motif Y}\sub\tilde{\motif X}$ formal motives. By assumption, $s$ extends to an injective morphism $s'\colon \motif X'\to\tilde{\motif X'}$ with $\motif X'$ and $\tilde{\motif X'}$ formal motives. Upon replacing $\motif X$ and $\motif X'$ with their common intersection, we may assume that they are equal. Since $s'$ is injective, it induces an isomorphism between $\motif X$ and its image, as well between  $\motif Y$ and its image. Hence $\motif X-\motif Y\iso s'(\motif X)-s'(\motif Y)$ and, since $s'$ extends $s$,  the latter must be equal to $\tilde{\motif X}-\tilde{\motif Y}$, yielding
  $$
 \gamma(\motif X-\motif Y)= \class{\motif X}-\class{\motif Y}= \class{s'(\motif X)}-\class{s'(\motif Y)}=\gamma(\tilde{\motif X}-\tilde{\motif Y})
 $$
 as we wanted to show. Hence, $\gamma$ induces a map $\grot{\funcinfsplit\fld}\to \grot{\funcinf\fld}$. By construction, it is surjective and  the identity on $\grot{\funcinf\fld}$ (when viewing a full motif as a split motif), showing that it is a bijection. By construction it is also additive, and the reader readily verifies that it preserves products, thus showing that it is an isomorphism.

 For the last assertion, it suffices once more to verify this on a split formal motif of the form $\motif X-\motif Y$ and we set
 $$
 \arc\fat{(\motif X-\motif Y)}:=\arc\fat{\motif X}-\arc\fat{\motif Y}
 $$
 Since the $\mathfrak w$-rational points of both sides are the same, to wit, $\motif X(\fat\mathfrak w)-\motif Y(\fat\mathfrak w)$, for any fat point $\mathfrak w$, this is well-defined, and the assertion then follows from    the universal property of adjunction.
\end{proof}

\begin{remark}\label{R:nonemptysplit}
Consider the flat \zariski\ motif $\func{\mathfrak l_3}-\func{\mathfrak l_2}$. It has no $\fld$-rational points, but is does have a $\mathfrak l_3$-rational point, namely the identity morphism on $\mathfrak l_3$. This example shows that  the analogue of Lemma~\ref{L:empty} does not hold for split formal motives. We do have:
\end{remark}

\begin{lemma}\label{L:arcempt}
If $\motif X$ is a split formal motif and $\fat$ a fat point such that $\arc\fat{\motif X}$ is empty, then   $\motif X$ too  is empty. In particular, all arc maps are injective on each ambient space. 
\end{lemma}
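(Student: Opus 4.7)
The plan is to deduce the emptiness of $\motif X$ from that of its arc sheaf by exploiting split functoriality of $\motif X$. The key observation is that the structure morphism $\fat\to\op{Spec}\fld$ is already a split epimorphism: on coordinate rings, the residue field quotient $R\twoheadrightarrow R/\maxim=\fld$ furnishes a section of the inclusion $\fld\hookrightarrow R$. By base change, the projection $\pi_\mathfrak v\colon\fat\mathfrak v\to\mathfrak v$ is a split epimorphism for every fat point $\mathfrak v$, and hence a morphism in $\splitpoints\fld$.

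Since $\motif X$ is a split formal motif, it is in particular a presheaf on $\splitpoints\fld$, so functoriality along $\pi_\mathfrak v$ produces a map of sets $\motif X(\mathfrak v)\to\motif X(\fat\mathfrak v)$. By Proposition~\ref{P:grsplit}, the arc sheaf $\arc\fat{\motif X}$ exists as a split formal motif, and is computed on rational points by $\arc\fat{\motif X}(\mathfrak v)=\motif X(\fat\mathfrak v)$. Our hypothesis that $\arc\fat{\motif X}$ is empty therefore gives $\motif X(\fat\mathfrak v)=\emptyset$, which in turn forces $\motif X(\mathfrak v)=\emptyset$ (a map into an empty set must have empty source). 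Since $\mathfrak v$ was arbitrary, $\motif X$ is the empty sieve.

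For the last assertion, let $\motif X,\motif Y$ be split formal motives on a common ambient space $X$ with $\arc\fat{\motif X}=\arc\fat{\motif Y}$ as subsieves of $\arc\fat X$. Their symmetric difference $(\motif X-\motif Y)\cup(\motif Y-\motif X)$ is again a split formal motif; its arc sheaf is empty because the construction in Proposition~\ref{P:grsplit} makes the arc functor respect Boolean operations pointwise (since $(\motif X-\motif Y)(\fat\mathfrak v)=\motif X(\fat\mathfrak v)-\motif Y(\fat\mathfrak v)$). Applying the first assertion to the symmetric difference yields $\motif X=\motif Y$; that is, for each ambient space $X$ the assignment $\motif X\mapsto\arc\fat{\motif X}$ is injective on split formal motives supported on $X$.

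The only delicate point worth flagging is that we do not run afoul of the cautionary Remark~\ref{R:split}: nothing in the argument uses a \emph{section} of $\pi_\mathfrak v$ (which would not itself be a split epimorphism in general), only the split morphism $\pi_\mathfrak v$ itself, which is a direct morphism in $\splitpoints\fld$. This is precisely the functoriality that any split sieve enjoys, so the proof goes through for split formal motives even though they need not extend to full sieves on $\fatpoints\fld$.
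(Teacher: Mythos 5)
Your proof is correct, and the first half is a genuinely shorter route than the paper's. The paper first reduces $\motif X$ to the shape $\motif Y-\motif Z$ with $\motif Z\sub\motif Y$ \emph{full} formal motives, then exploits the functoriality of the full presheaves $\motif Y,\motif Z$ along the closed immersion $\iota\colon\mathfrak w\sub\fat\mathfrak w$ (a section of the projection, not a split epimorphism) to get surjective maps $\motif Y(\fat\mathfrak w)\onto\motif Y(\mathfrak w)$ and $\motif Z(\fat\mathfrak w)\onto\motif Z(\mathfrak w)$; the emptiness of $\motif X(\mathfrak w)$ then falls out of surjectivity. This needs both the reduction and the richer functoriality of full sieves, precisely because, as Remark~\ref{R:split} stresses, $\iota$ is not a morphism in $\splitpoints\fld$ and cannot act on a split sieve directly. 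You bypass all of this: the split projection $\pi_\mathfrak v\colon\fat\mathfrak v\to\mathfrak v$ \emph{is} a morphism of $\splitpoints\fld$, so contravariance of the split presheaf $\motif X$ itself gives the map $\motif X(\mathfrak v)\to\motif X(\fat\mathfrak v)$, whose target equals $\arc\fat{\motif X}(\mathfrak v)=\emptyset$ by the rational-point formula supplied by the construction in Proposition~\ref{P:grsplit}, whence $\motif X(\mathfrak v)=\emptyset$. This avoids the decomposition and the surjectivity argument and invokes only the functoriality split sieves genuinely possess, so it is in fact the more intrinsic proof in this setting. Your second-part argument via the symmetric difference is the same idea as the paper's two-piece version $\motif X-(\motif X\cap\motif Y)$, $\motif Y-(\motif X\cap\motif Y)$, packaged as a single application of the first assertion.
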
 
\begin{proof}
We may again reduce to the case that $\motif X$ is of the form $\motif Y-\motif Z$ with $\motif Z\sub\motif Y$ (full) formal motives. Let $\mathfrak w$ be an arbitrary fat point. The closed immersion $\mathfrak w\sub \fat\mathfrak w$  induces maps $\motif Z(\fat\mathfrak w)\to \motif Z(\mathfrak w)$ and $\motif Y(\fat\mathfrak w)\to \motif Y(\mathfrak w)$. Since composing the closed immersion with the (split) projection $ \fat\mathfrak w\to  \mathfrak w$ is the identity, the two above maps are surjective. Since $\arc\fat{\motif X}$ is the empty motif, it has no $\mathfrak w$-rational points, that is to say, $\motif Z(\fat\mathfrak w)=\motif Y(\fat\mathfrak w)$. Surjectivity then yields that $\motif Z(\mathfrak w)=\motif Y(\mathfrak w)$, whence $\motif X(\mathfrak w)=\empty$. Since this holds for any fat point $\mathfrak w$, we see that $\motif X$ is the empty motif.

To prove the last assertion, assume $\arc\fat{\motif X}=\arc\fat{\motif Y}$ for $\motif X,\motif Y$ split formal motives on a scheme $X$. By what we just proved, $\motif X-(\motif X\cap \motif Y)$ and $\motif Y-(\motif X\cap \motif Y)$ are both empty, from which the claim now follows.
\end{proof}

From now on, we will work in the largest of these motivic sites, the category of split-formal motives  $\funcinfsplit\fld$, and   we view   the class of any such motif as an element in the localized \gr\ $\grotmot:=\grot{\funcinf \fld}_\lef$. Let $\underline\grotmot$ be the \emph{constant presheaf 
with values in $\grotmot$}, that is to say,  the contravariant functor on the category of split points which associates to any fat point the set $
\grotmot$ and to any split epimorphism of fat points the identity on $\grotmot$.  Given 
a morphism, that is to say, a natural transformation, $s\colon \motif X\to \underline \grotmot $, we define, for each $g
\in \grotmot $,  the \emph{fiber} $\inverse sg$ as the subfunctor of $\motif X$ 
given at each fat point $\fat$ by the fiber $\inverse{s(\fat)}g$ of $s(\fat)
\colon \motif X(\fat)\to \grotmot $ at $g$. If both $\motif X$ and all fibers are 
split formal motifs, and $s$ has only finitely many non-empty fibers, then we call $s$ a \emph{formal  invariant}. 

%

\begin{corollary}\label{C:forminvalg}
The formal invariants on a split formal motif $\motif X$ form an algebra over $\grotmot$.
\end{corollary}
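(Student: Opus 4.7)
The plan is to verify closure of the set of formal invariants under the three operations that make it an algebra: scalar multiplication by elements of $\grotmot$, addition, and multiplication. In each case, the pointwise definition is forced on us by the fact that $\underline{\grotmot}$ is a constant presheaf with values in the ring $\grotmot$; the substantive content is to check that the fibers remain split formal and that only finitely many are nonempty.

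First I would define the operations pointwise: given formal invariants $s, t\colon \motif X\to \underline{\grotmot}$ and $g\in\grotmot$, set $(s+t)(\fat)(a):=s(\fat)(a)+t(\fat)(a)$, $(st)(\fat)(a):=s(\fat)(a)\cdot t(\fat)(a)$, and $(gs)(\fat)(a):=g\cdot s(\fat)(a)$, for any split fat point $\fat$ and $a\in\motif X(\fat)$. Functoriality (naturality) of these morphisms is automatic since $\underline{\grotmot}$ has identity transition maps, so the only nontrivial check is that the new morphisms are themselves formal invariants.

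Next, the finiteness condition is easy: if $s$ takes nonzero values only in a finite set $\{g_1,\dots,g_n\}\sub\grotmot$ and $t$ only in $\{h_1,\dots,h_m\}$, then $s+t$, $st$, and $gs$ each take values in the finite sets $\{g_i+h_j\}$, $\{g_ih_j\}$, $\{g\cdot g_i\}$ respectively. The key step is then to identify the fibers: for each $c\in\grotmot$, we have
\begin{equation*}
(s+t)^{-1}(c)=\bigcup_{g_i+h_j=c}s^{-1}(g_i)\cap t^{-1}(h_j),\qquad (st)^{-1}(c)=\bigcup_{g_ih_j=c}s^{-1}(g_i)\cap t^{-1}(h_j),
\end{equation*}
and $(gs)^{-1}(c)=s^{-1}(g^{-1}c)$ when $g$ is invertible (with an obvious adjustment when $g$ is a zero-divisor: $(gs)^{-1}(c)$ is the union of all $s^{-1}(g_i)$ with $gg_i=c$). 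Each right-hand side is a finite union of finite intersections of split formal motives. By Proposition~\ref{P:grsplit}, the restriction of the split-formal site to $\motif X$ is a Boolean lattice, so such Boolean combinations are again split formal motives. This shows every fiber of the new morphism is a split formal motif, establishing that $s+t$, $st$, and $gs$ are formal invariants.

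Finally, the constant $0$ and $1$ morphisms (mapping every rational point to $0$ resp.~$1$) are trivially formal invariants with single fiber $\motif X$, which is split formal by hypothesis. Associativity, commutativity, distributivity, and the module axioms over $\grotmot$ are inherited pointwise from the ring structure of $\grotmot$. No step is really an obstacle: the only potentially delicate point, the stability of the fiber description under the various operations, is settled once one records that by Proposition~\ref{P:grsplit} the split-formal motives on a fixed ambient space genuinely form a Boolean lattice, closed under complement, intersection, and union.
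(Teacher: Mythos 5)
Your proof is correct and follows essentially the same strategy as the paper: define the operations pointwise, note naturality is automatic because the target is a constant presheaf, use finiteness of the image to write each fiber as a finite union of intersections $s^{-1}(g_i)\cap t^{-1}(h_j)$, and conclude by closure of split formal motives under Boolean operations. The only stylistic difference is that you cite Proposition~\ref{P:grsplit} for the Boolean-lattice property of split formal motives, whereas this actually follows directly from their definition as Boolean combinations of formal motives (the cited proposition is about the induced \gr{} and arc action); this is a harmless misattribution, not a gap.
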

\begin{proof}
Clearly, any multiple of a formal invariant by an element in $\grotmot$ is again a formal invariant. Let $s,t\colon\motif X\to \underline\grotmot$ be formal invariants. We have to show that $s+t$ and $st$ are also formal invariants. Functoriality is easily verified, so we only need to show that the fibers are again split formal motives.  Fix a fat point $\fat$, and  an element $g\in \grotmot$. A $\fat$-rational point $a\in\motif X(\fat)$ lies in $\inverse{(s+t)}g(\fat)$ (respectively, in $\inverse{(st)}g(\fat)$), if $s(\fat)(a)+t(\fat)(a)=g$ (respectively, if $s(\fat)(a)\cdot t(\fat)(a)=g$). Since $s(\fat)$ and $t(\fat)$ have finite image, their are only finitely many ways that $g$ can be written as a sum $p+q$ (respectively, a product $pq$), with $p$ in the image of $s(\fat)$ and $q$ in the image of $t(\fat)$. Hence,  the rational point $a$ lies in  the intersection  $\big(\inverse{s(\fat)}p \big)\cap \big(\inverse{t(\fat)}q\big)$, for one of these finitely many choices of $p$ and $q$. Since a finite union of intersections of split formal motives is again split formal, the result follows.
\end{proof}

\subsection*{Motivic integrals}

Let $X$ be a $\fld$-scheme, $\fat$ a fat point, and $s\colon \motif X\to 
\underline\grotmot $   a formal invariant with $\motif X$ a split formal motif on $X$. We define the \emph{(split) motivic 
integral} of $s$ on $X$ along $\fat$ as
\begin{equation}\label{eq:motint}
\motintspl Xs{\fat}:=\lef^{-dl}\sum_{g\in \grotmot}g\cdot 
\class{\arc\fat{(\inverse sg)}},
\end{equation}
where $d$ is the dimension of $X$ and $l$ the length of $\fat$. Note that 
the sum on the right hand side of  \eqref{eq:motint} is finite by definition, so that $\motintspl Xs{\fat}$ is a well-defined element in $
\grotmot $. At the reduced fat point, $\op{Spec}\fld$, we drop the subscript in the measure, and so the this integral becomes
$$
\motintspl Xs{}:=\lef^{-d}\sum_{g\in \grotmot}g\cdot \class{\inverse sg}.
$$

To  a formal motif  $\motif Y$ on $X$,   we can associate two invariants. 
Firstly, the constant map, denoted again $\motif Y$, which at each fat point 
is the constant map  sending every rational point to $\class{\motif Y}$. One 
easily  calculates that
$$
\motintspl X{\motif Y }{\fat}=\class{\motif Y }\motintspl X{}{\fat}=\class{\motif 
Y}\cdot\lef^{-dl}\cdot \class{\arc {\fat}X} .
$$
In particular, $\motintspl X{}{}=\lef^{-d}\class X$ is the normalized class 
map. It follows from Theorem~\ref{T:arcmot} and Proposition~
\ref{P:dimmot} that the integral $\motintspl X{\motif Y }{\fat}$ only depends 
on the classes of $\motif Y $ and $X$. Moreover, by our previous discussion 
$\motintspl X{}{\fat}$ has positive dimension. 

Secondly, we define the \emph{\ch\ function} $\tuple 1_{\motif Y }$ of $
\motif Y$ by the rule that $\tuple 1_{\motif Y}(\fat)$ is the \ch\ function of 
$\motif Y(\fat)$, that is to say, the map sending a rational point $a\in 
X(\fat)$ to $1$, if $a\in \motif Y(\fat)$, and to zero otherwise, for any fat 
point $\fat$. By Proposition~\ref{P:complflat}, this is a formal invariant. Moreover, any formal invariant can be written as a $\grotmot$-linear combination of \ch\ functions, and, in fact, the decomposition
\begin{equation}\label{eq:fibdecom}
s=\sum_{i=1}^ng_i\tuple 1_{\motif Y_i}
\end{equation}
is unique if   the non-empty formal submotives $\motif Y_i$ are mutually disjoint (note that then necessarily $\motif Y_i=\inverse s{g_i}$), and is called the \emph{fiber decomposition} of $s$.  

Since $\inverse{\tuple 1_{\motif Y}}1= \motif Y$, we get
\begin{equation}\label{eq:motintchar}
\motintspl X{\tuple 1_{\motif Y}}{\fat}=\lef^{-dl}\cdot\class{\arc \fat{\motif Y} }
\end{equation}
Using common practice, we will write
$$
\motintsplrel Xs\fat{\motif X}:=\motintspl X{s\cdot\tuple 1_{\motif X}}\fat.
$$
In this notation, we have
$$
\motintspl Xs\fat=\sum_{g\in \grotmot}g\motintsplrel X{}\fat{\inverse sg}
$$

\begin{proposition}\label{P:addmotint}
For each   $\fld$-scheme $X$ and each    fat point  $\fat$, the motivic integral on $X$ along $\fat$ is a $\grotmot$-linear functional on the $\grotmot$-algebra of formal invariants.
\end{proposition}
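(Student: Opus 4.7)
The plan is to reduce everything to the formula \eqref{eq:motintchar} for characteristic functions of split formal motives by means of fiber decompositions, and then exploit the fact that taking arcs is additive on scissor relations (Theorem~\ref{T:arcmot}, as extended to split formal motives in Proposition~\ref{P:grsplit}).

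First, I would record the basic expansion: if $s = \sum_{i} g_i\tuple 1_{\motif Y_i}$ is the (unique) fiber decomposition of a formal invariant, with the non-empty $\motif Y_i$ mutually disjoint and the $g_i$ distinct, then $\motif Y_i=\inverse s{g_i}$, so by \eqref{eq:motintchar},
$$
\motintspl Xs\fat=\lef^{-dl}\sum_i g_i\class{\arc\fat{\motif Y_i}}=\sum_i g_i\motintsplrel X{}\fat{\motif Y_i}.
$$
This already gives scalar multiplication: given $a\in\grotmot$, the rescaled invariant $a\cdot s$ has fibers  $\inverse{(as)}c=\bigsqcup_{ag_i=c}\motif Y_i$, and since the arc functor commutes with disjoint unions (Theorem~\ref{T:arcmot} together with Proposition~\ref{P:grsplit}), one rearranges the sum to get $\motintspl X{as}\fat=a\motintspl Xs\fat$.

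For additivity, let $s,t\colon\motif X\to\underline\grotmot$ be formal invariants with fiber decompositions $s=\sum_i g_i\tuple 1_{\motif Y_i}$ and $t=\sum_j h_j\tuple 1_{\motif Z_j}$. Since the $\motif Y_i$ (respectively $\motif Z_j$) partition $\motif X$ as a disjoint union of split formal motives, the common refinement $\motif W_{ij}:=\motif Y_i\cap\motif Z_j$ partitions $\motif X$ as well, and on $\motif W_{ij}$ the sum $s+t$ is constant with value $g_i+h_j$. Hence $\inverse{(s+t)}c=\bigsqcup_{g_i+h_j=c}\motif W_{ij}$, and once more invoking that the arc functor respects disjoint unions on split formal motives, we obtain
$$
\motintspl X{s+t}\fat=\lef^{-dl}\sum_{i,j}(g_i+h_j)\class{\arc\fat{\motif W_{ij}}}.
$$
Now separate the two summands and observe that $\motif Y_i=\bigsqcup_j \motif W_{ij}$ (since the $\motif Z_j$ partition $\motif X$), whence $\sum_j\class{\arc\fat{\motif W_{ij}}}=\class{\arc\fat{\motif Y_i}}$; symmetrically for the $h_j$ terms. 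This yields $\motintspl X{s+t}\fat=\motintspl Xs\fat+\motintspl Xt\fat$.

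The only non-routine ingredient is the extension of the arc functor to the split-formal setting, namely that $\arc\fat{(\motif U\sqcup\motif V)}=\arc\fat{\motif U}\sqcup\arc\fat{\motif V}$ and, more generally, that $\arc\fat{}$ respects scissor relations of split formal motives; this is precisely what Proposition~\ref{P:grsplit} provides. The main (minor) obstacle is keeping track of the disjointness of the refined partition $\{\motif W_{ij}\}$ and ensuring that each piece is a split formal motif so that its arc and class are defined; this is immediate from the stability of split formal motives under finite intersections (Theorem~\ref{T:morp} together with Proposition~\ref{P:complflat}).
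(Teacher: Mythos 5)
Your proof is correct in substance and rests on the same key tool as the paper's (the arc functor's compatibility with scissor relations, hence with disjoint unions), but the route is slightly different. The paper inducts on the number of characteristic functions, reducing to adding a single term $h\tuple 1_{\motif Z}$ to $s$ and then explicitly rewriting the fiber decomposition of $s+h\tuple 1_{\motif Z}$ as $\sum_i g_i\tuple 1_{\motif Y_i-\motif Z}+\sum_i(g_i+h)\tuple 1_{\motif Y_i\cap\motif Z}+h\tuple 1_{\motif Z-\motif Y}$, where $\motif Y:=\bigcup_i\motif Y_i$. Your version takes the common refinement $\{\motif W_{ij}\}$ of the two fiber decompositions in one shot, which is more symmetric and avoids the induction. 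Both are fine; yours is arguably cleaner to state for a general pair $(s,t)$.

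One small imprecision you should fix: you assert that the $\motif Y_i$ (and the $\motif Z_j$) partition $\motif X$. Under the paper's convention \eqref{eq:fibdecom} the terms with coefficient $0$ are dropped, so the $\motif Y_i$ in the fiber decomposition cover only $\motif X-\inverse s0$, not all of $\motif X$, and hence $\motif Y_i\neq\bigsqcup_j\motif W_{ij}$ in general (you lose the piece $\motif Y_i\cap\inverse t0$). The patch is trivial and worth saying explicitly: adjoin the zero fiber $\inverse s0$ with coefficient $0$ (and likewise for $t$); it is a split formal motif by the very definition of a formal invariant, it changes nothing in the value of $s$, and after this the families $\{\motif Y_i\}$ and $\{\motif Z_j\}$ do partition $\motif X$, so the refinement argument goes through as you wrote it. This is precisely the role played by the extra term $\motif Z-\motif Y$ in the paper's inductive step.
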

\begin{proof}
Motivic integration is clearly preserved under multiplication by a constant $g\in\grotmot$. To prove additivity, we may induct on the number of \ch\ functions, and reduce   to the case of a sum $s+h\tuple 1_{\motif Z}$, that is to say, we have to prove  
\begin{equation}\label{eq:addeq}
\motintspl X{s+h\tuple 1_{\motif Z}}\fat=\motintspl Xs\fat+\motintspl X{h\tuple 1_{\motif Z}}\fat.
\end{equation}
Let \eqref{eq:fibdecom} be the fiber decomposition of $s$.  Since the   fiber decomposition of $s+h\tuple 1_{\motif Z}$ is  then 
$$
\sum_{i=1}^ng_i\tuple 1_{\motif Y_i-\motif Z}+ \sum_{i=1}^n(g_i+h)\tuple 1_{\motif Y_i\cap \motif Z}+ h\tuple 1_{\motif Z-\motif Y}
$$
where $\motif Y$ is the union of the $\motif Y_i$, the left hand side of   \eqref{eq:addeq} is  
$$
\lef^{-dl}(\sum_{i=1}^ng_i\class{\arc\fat{(\motif Y_i-\motif Z)}}+ \sum_{i=1}^n(g_i+h)\class{\arc\fat{(\motif Y_i\cap \motif Z)}}+ h\class{\arc\fat{(\motif Z-\motif Y)}}),
$$
where $d$ and $l$ are respectively the dimension of $X$ and the length of $\fat$.
Grouping together the $n+1$ terms with coefficient $h$, and  for each $i$,  the two terms with coefficient $g_i$,  this sum becomes
$$
\lef^{-dl}(\sum_{i=1}^ng_i\class{\arc\fat{\motif Y_i}} +  h\class{\arc\fat{\motif Z}}),
$$
since $\arc\fat{}$ acts on the \gr\ by Theorem~\ref{T:arcmot}, and since both $\motif Y_i-\motif Z$ and $\motif Z-\motif Y$ are disjoint from $\motif Y_i\cap \motif Z$. However, this is just the right hand side of \eqref{eq:addeq}, and so we are done.
\end{proof}


Let $s\colon\motif X\to\underline\grotmot$ be a formal invariant on a $\fld$-scheme $X$. Given an open $U\sub X$,   let $\restrict sU$ denote the restriction of $s$ to $\motif X\cap \func U$. It is easy to see that $\restrict sU$ is a formal invariant on $U$. Let $U_1,\dots,U_n$ be an open covering of $X$. For each non-empty subset $I\sub\{1,\dots,n\}$, let $U_I$ be the intersection of all $U_i$ with $i\in I$. We have the following local formula for the motivic integral (here we call a scheme \emph{equidimensional} if every non-empty open has the same dimension as the scheme):

\begin{theorem}\label{T:locmotint}
Let $s\colon\motif X\to\underline\grotmot$ be a formal invariant on an equidimensional $\fld$-scheme $X$,   let $\fat$ be a flat point, and let $U_1,\dots,U_n$ be an open covering of $X$. Then we have an equality 
\begin{equation}\label{eq:locmotint}
\motintspl Xs\fat=\sum_{\emptyset\neq I\sub \{1,\dots,n\}} (-1)^{\norm I}\motintspl {U_I}{\restrict s{U_I}}\fat.
\end{equation}
\end{theorem}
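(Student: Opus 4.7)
The plan is to reduce the identity to a purely scheme-theoretic inclusion-exclusion applied to the arc scheme, via two preliminary simplifications.

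First, by the $\grotmot$-linearity of motivic integration (Proposition~\ref{P:addmotint}), together with the fiber decomposition $s=\sum g_i\tuple 1_{\motif Y_i}$, it suffices to prove \eqref{eq:locmotint} when $s=\tuple 1_{\motif Y}$ is the characteristic function of a split formal submotif $\motif Y\sub\motif X$. Note that $\restrict{(\tuple 1_{\motif Y})}{U_I}=\tuple 1_{\motif Y\cap U_I}$. By the equidimensionality hypothesis, every non-empty open $U_I\sub X$ has dimension $d=\dim X$, while for empty $U_I$ both sides of the local formula contribute zero; hence using \eqref{eq:motintchar} we can factor out the common coefficient $\lef^{-dl}$ (where $l=\ell(\fat)$) and reduce to the identity
\begin{equation}\label{eq:redincl}
\class{\arc\fat{\motif Y}}=\sum_{\emptyset\neq I\sub\{1,\dots,n\}}(-1)^{\norm I}\class{\arc\fat{\motif Y\cap U_I}}
\end{equation}
in the \gr\ $\grot{\funcinf \fld}$ (with the sign convention of Lemma~\ref{L:opencov}).

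Next, I would check that the arc functor commutes both with intersections and with passing to opens. Commutation with intersections is immediate from the definition of $\arc\fat{}$ as the adjoint $\mathfrak w\mapsto \motif Y(\fat\mathfrak w)$: indeed
$$
\arc\fat{(\motif Y\cap U_i)}(\mathfrak w)=\motif Y(\fat\mathfrak w)\cap U_i(\fat\mathfrak w)=\bigl(\arc\fat{\motif Y}\cap \arc\fat{U_i}\bigr)(\mathfrak w).
$$
By Theorem~\ref{T:openarc}, each $\arc\fat{U_i}$ is an open subscheme of $\arc\fat X$. I also need to verify that the $\arc\fat{U_i}$ cover $\arc\fat X$: given a $\mathfrak w$-rational point $a\colon \fat\mathfrak w\to X$ of $\arc\fat X$, its center $x\in X$ lies in some $U_i$, and since $U_i$ is open we have $\loc_{X,x}=\loc_{U_i,x}$, so the local \homo\ induced by $a$ factors through $U_i$, placing $a$ in $\arc\fat{U_i}(\mathfrak w)$.

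Consequently the opens $\arc\fat{\motif Y}\cap \arc\fat{U_i}=\arc\fat{\motif Y\cap U_i}$ form a finite admissible open cover of the motif $\arc\fat{\motif Y}$. Applying the standard inclusion-exclusion derived from the scissor relations (the motif-version of Lemma~\ref{L:opencov}, proved by induction on $n$ using the two-set scissor relation \eqref{eq:scisslat}) then yields exactly \eqref{eq:redincl}, completing the reduction. The only delicate point is the bookkeeping with dimensions: equidimensionality is needed precisely to ensure the exponent $-dl$ of the normalizing Lefschetz factor is uniform across all non-empty strata $U_I$, so that the factor can be pulled out of the sum; once this is granted, the rest is formal.
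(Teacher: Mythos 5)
Your proof is correct and follows essentially the same approach as the paper: both hinge on the observation that $\inverse{(\restrict s{U_I})}g=\inverse sg\cap \func U_I$, apply the scissor-relation inclusion--exclusion, invoke the fact that $\arc\fat{}$ respects this structure, and use equidimensionality to uniformize the Lefschetz normalizing factor. The only cosmetic difference is in ordering: you reduce first to characteristic functions via Proposition~\ref{P:addmotint} and perform the inclusion--exclusion at the arc level (re-verifying that the $\arc\fat{U_i}$ cover $\arc\fat X$), while the paper applies inclusion--exclusion directly to the classes $\class{\inverse sg}$ and then pushes the resulting identity through the arc morphism, using that it is a ring endomorphism on the Grothendieck ring (Theorem~\ref{T:arcmot}); both routes are valid and rely on the same facts.
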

\begin{proof}
Given $g\in\grotmot$, one easily verifies that we have an equality of motives 
$$
\inverse{(\restrict s{U_I})}g=\inverse sg\cap \func U_I,
$$
for each $I\sub \{1,\dots,n\}$. Applying  the scissor relations to this, we get an identity
$$
\class {\inverse sg}=\class{\bigcup_{i=1}^n \inverse{(\restrict s{U_i})}g}=\sum_{\emptyset\neq I\sub \{1,\dots,n\}} (-1)^{\norm I}\class{\inverse{(\restrict s{U_I})}g}
$$
in $\grotmot$. Applying the arc morphism $\arc\fat\cdot$  as per  Theorem~\ref{T:arcmot}, we get
$$
\class {\arc\fat{(\inverse sg)}}=\sum_{\emptyset\neq I\sub \{1,\dots,n\}} (-1)^{\norm I}\class{\arc\fat{(\inverse{(\restrict s{U_I})}g)}}.
$$
Since by assumption all non-empty $U_I$ have the same dimension as $X$ (and, of course, the empty ones do not contribute), the result follows from   \eqref{eq:motint}.
\end{proof}

\subsection*{Relations among motivic series}
Given an element $\alpha\in \grotzero{\funcalg\fld}$, we   define
$$
\motintspl Xs\alpha:=\sum_{i=1}^s n_i\motintspl Xs{\fat_i}
$$
where $\alpha=n_1\class{\fat_1}+\dots+n_s\class{\fat_s}$ is the unique decomposition in  classes of fat points given by Proposition~\ref{P:zerosch}. We then   formally   extend this over $\pow{\grotzero{\funcalg\fld}}t$, by treating $t$ as a constant. In this sense, we get, for a closed germ $(Y,P)$, and a $\fld$-scheme $X$, the following identity of power series:
$$
\igumot XYP=\motintspl X{}{\op{Hilb}^{\text{mot}}_{(Y,P)}}.
$$

\section{Appendix: lattice rings}
Let $\categ M$ be a motivic site over an \acf\ $\fld$ and let $X$ be an $\fld$-scheme. By assumption, $\restrict{\categ M}X$ is a lattice, and so we can define its \emph{lattice group} $\latt{\categ M}X$ as the free Abelian group on $\categ M$-motives on $X$ modulo the scissor relations 
$$
\sym{\motif X}+\sym{\motif Y}-\sym{\motif X\cup \motif Y}-\sym{\motif 
X\cap \motif Y}
  $$
  for any two  $\categ M$-motives $\motif X$ and $\motif Y$ on $X$. In other words, same definition as for the \gr, but without the isomorphism relations. In particular, there is a natural linear map $\latt{\categ M}X\to \grot{\categ M}$. We will denote the class of a motif $\motif X$ again by $\class{\motif X}$. For each $n$, consider the embedding   $\restrict{\categ M}{X^n}\to \restrict{\categ M}{X^{n+1}}$ via the rule $\motif X\mapsto \motif X\times \func X$. One verifies that this induces a 
well-defined linear map $\Lambda_n:=\latt{\categ M}{X^n}\to \Lambda_{n+1}:=\latt{\categ M}{X^{n+1}}$, where $X^n$ is the $n$-fold Cartesian power of $X$. Moreover, the Cartesian product defines a multiplication $\Lambda_m\times\Lambda_n\to \Lambda_{m+n}$, for all $m,n$. Hence $\oplus_n\Lambda_n$ is a graded ring, called the \emph{graded lattice ring} of $\categ M$ on $X$, and denoted $\grlatt{\categ M}X$. It admits a natural ring \homo\ into $\grot{\categ M}$. 

We can now state a combinatorial property of the split motivic integral:

\begin{proposition}\label{P:lattmotintsp}
Over a $\fld$-scheme $X$ and a fat point $\fat$, we can define for each formal invariant $s\colon\motif X\to\underline\grotmot$ on $X$ and  each $g\in\latt{\funcinf\fld}X$, an integral $\motintsplrel Xs\fat g$, such that if $g$ is the class in $\latt{\funcinf\fld}X$ of a formal motif $\motif Y$ on $X$, then 
$$
\motintsplrel Xs\fat g= \motintsplrel X{s}\fat{\motif Y}.
$$
\end{proposition}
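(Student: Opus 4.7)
The plan is to define the integral on the free Abelian group on formal motives by the obvious formula, then show the scissor relations lie in its kernel so that it descends to $\latt{\funcinf\fld}X$. Concretely, for a formal motif $\motif Y$ on $X$, set
$$
\motintsplrel Xs\fat{\motif Y}:=\motintspl X{s\cdot\tuple 1_{\motif Y}}\fat,
$$
and extend $\zet$-linearly to the free Abelian group on isomorphism classes of formal motives on $X$. By \eqref{eq:motintchar} (with $s=1$) this already generalizes the usual formula, and in general this is well-defined as a map on the free Abelian group since $s\cdot\tuple 1_{\motif Y}$ is a formal invariant by Corollary~\ref{C:forminvalg}.

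The key step is to verify that this map kills every scissor relation. Given formal motives $\motif Y$ and $\motif Y'$ on $X$, we have the point-wise identity of characteristic functions
$$
\tuple 1_{\motif Y}+\tuple 1_{\motif Y'}=\tuple 1_{\motif Y\cup\motif Y'}+\tuple 1_{\motif Y\cap\motif Y'},
$$
which, after multiplication by $s$, yields an identity of formal invariants. Applying the $\grotmot$-linear functional $\motintspl X\cdot\fat$ of Proposition~\ref{P:addmotint}, we obtain
$$
\motintsplrel Xs\fat{\motif Y}+\motintsplrel Xs\fat{\motif Y'}=\motintsplrel Xs\fat{\motif Y\cup\motif Y'}+\motintsplrel Xs\fat{\motif Y\cap\motif Y'},
$$
so the scissor relation is in the kernel. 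Therefore the map factors through $\latt{\funcinf\fld}X$, yielding the desired $\motintsplrel Xs\fat g$ for each $g$.

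The compatibility with the notation $\motintsplrel X{s}\fat{\motif Y}$ from the definition just after \eqref{eq:motintchar} is immediate, as both sides are by construction equal to $\motintspl X{s\cdot\tuple 1_{\motif Y}}\fat$. The only nontrivial point is the additivity step, which has already been carried out in Proposition~\ref{P:addmotint}; beyond that, the argument is purely formal, using that $\tuple 1_{(\cdot)}$ turns the lattice operations $\cup,\cap$ into the pointwise operations $\max,\min$, and hence (on $\{0,1\}$-valued functions) into addition and multiplication modulo the universal scissor identity above.
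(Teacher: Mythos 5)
Your proof is correct and takes essentially the same approach as the paper: define the integral on the free Abelian group via $\motintsplrel Xs\fat{\motif Y}:=\motintspl X{s\cdot\tuple 1_{\motif Y}}\fat$, then kill the scissor relations using the characteristic-function identity $\tuple 1_{\motif Y}+\tuple 1_{\motif Y'}=\tuple 1_{\motif Y\cup\motif Y'}+\tuple 1_{\motif Y\cap\motif Y'}$ together with the linearity established in Proposition~\ref{P:addmotint}. One small wording slip: the lattice group $\latt{\funcinf\fld}X$ is the free Abelian group on formal motives on $X$ modulo scissor relations only, so you should extend your map from the free Abelian group on motives (not on isomorphism classes, which would require an additional and unneeded invariance check), though this does not affect the substance of your argument.
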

\begin{proof}
By definition, $g$ is a $\zet$-linear combination of classes of formal motives on $X$, say, of the form $g=n_1\class{\motif Y_1}+\dots+n_s\class{\motif Y_s}$. Define
$$
\motintsplrel Xs\fat g:=\sum_{i=1}^s n_i \motintsplrel Xs\fat{\motif Y_i}.
$$
To show that this is well-defined, we have to verify this only for scissor relations, that is to say, we have to show that
$$
\motintsplrel Xs\fat{\motif Y}+ \motintsplrel Xs\fat{\motif Y'}= \motintsplrel Xs\fat{\motif Y\cup\motif Y'}+ \motintsplrel Xs\fat{\motif Y\cap\motif Y'}
$$
for $\motif Y,\motif Y'$ formal motives on $X$. This is immediate from the easily proven identity 
of \ch\ functions 
$$
\tuple 1_{\motif Y}+\tuple 1_{\motif Y'}=\tuple 1_{\motif Y\cup\motif Y'}+\tuple 1_{\motif Y\cap\motif Y'}.
$$
\end{proof}

Using this, we can now show that the lattice rings are not very interesting invariants (and hence only by also taking isomorphism relations, do we get something significant):

\begin{corollary}\label{C:lattclass}
The natural map sending a formal motif on some Cartesian power of $X$ to its class in $\grlatt{\funcinf\fld}X$ is injective. 
\end{corollary}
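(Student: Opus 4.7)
The plan is to exhibit enough linear functionals on each homogeneous component $\Lambda_n:=\latt{\funcinf\fld}{X^n}$ to separate classes of distinct formal motives. For every fat point $\fat$ and every $\fat$-rational point $a\in X^n(\fat)$, I would construct a ``point evaluation''
$$
v_{a,\fat}\colon \Lambda_n\longrightarrow \zet,\qquad \class{\motif Y}\longmapsto \tuple 1_{\motif Y}(\fat)(a),
$$
which returns $1$ when $a\in\motif Y(\fat)$ and $0$ otherwise. Since the collection of all these evaluations clearly distinguishes any two distinct sieves, once they are shown to factor through the lattice quotient, the corollary follows at once.

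The only step requiring verification is that $v_{a,\fat}$ descends from the free Abelian group on formal motives to the scissor quotient $\Lambda_n$. I would first define $v_{a,\fat}$ on each generator $\sym{\motif Y}$ by the same formula, and then observe that for each scissor generator $\sym{\motif Y}+\sym{\motif Y'}-\sym{\motif Y\cup \motif Y'}-\sym{\motif Y\cap \motif Y'}$, the resulting element of $\zet$ is
$$
\tuple 1_{\motif Y}(\fat)(a)+\tuple 1_{\motif Y'}(\fat)(a)-\tuple 1_{\motif Y\cup \motif Y'}(\fat)(a)-\tuple 1_{\motif Y\cap \motif Y'}(\fat)(a),
$$
which vanishes by pointwise inclusion--exclusion applied to the subsets $\motif Y(\fat),\motif Y'(\fat)\sub X^n(\fat)$. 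Hence $v_{a,\fat}$ factors through $\Lambda_n$ as desired; this is, in essence, the same observation underlying Proposition~\ref{P:lattmotintsp}, but used here as a separating family of valuations rather than to build integrals.

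Deducing injectivity then takes a single line. Since $\grlatt{\funcinf\fld}X=\bigoplus_n\Lambda_n$ is graded and the natural map places a formal motif on $X^n$ into degree $n$, two formal motives with a common image in the graded ring must both live on the same Cartesian power $X^n$. If $\motif Y,\motif Y'$ are such motives with $\class{\motif Y}=\class{\motif Y'}$ in $\Lambda_n$, then applying $v_{a,\fat}$ yields $a\in\motif Y(\fat)\iff a\in\motif Y'(\fat)$ for every fat point $\fat$ and every $a\in X^n(\fat)$, so that $\motif Y(\fat)=\motif Y'(\fat)$ for all $\fat$, i.e., $\motif Y=\motif Y'$ as sieves. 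No substantive obstacle arises; the content of the proof is just the recognition that characteristic functions of formal motives satisfy pointwise inclusion--exclusion, which is exactly what is needed to promote them into separating functionals on the lattice group.
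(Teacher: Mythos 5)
Your proof is correct, and it takes a genuinely different and more elementary route than the paper's. The paper derives the corollary from the motivic integration apparatus: it invokes Proposition~\ref{P:lattmotintsp} with the invariant $s=\tuple 1_{\motif X}$ and an arbitrary fat point $\fat$ to get $\lef^{-dl}\class{\arc\fat{\motif X}}=\lef^{-dl}\class{\arc\fat{(\motif X\cap\motif Y)}}$, then uses the fact that $\arc\fat{}$ respects scissor relations (Theorem~\ref{T:arcmot}) together with Lemma~\ref{L:arcempt} to conclude $\motif X-\motif Y=\emptyset$. That argument therefore needs the whole split-formal-motif machinery of \S\ref{s:motint}, since $\motif X-\motif Y$ is only a split formal motif. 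You bypass all of this: the point evaluations $v_{a,\fat}$ are $\zet$-valued functionals on the free Abelian group which kill each scissor generator by the most basic inclusion--exclusion for subsets $\motif Y(\fat),\motif Y'(\fat)\sub X^n(\fat)$, hence descend to $\Lambda_n$ and separate sieves because two distinct sieves already differ at some rational point of some fat point. The two proofs in fact rest on the same elementary observation --- inclusion--exclusion for characteristic functions is exactly what makes Proposition~\ref{P:lattmotintsp} true --- but you extract it directly as a separating family rather than feeding it through the integral, which is cleaner, avoids the arc functor and Lemma~\ref{L:arcempt} entirely, and makes transparent exactly \emph{why} the lattice group retains no information beyond the underlying sieves. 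The reduction to a single degree $n$ via the grading is handled the same way in both proofs and is unproblematic.
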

\begin{proof}
Note that there are no non-trivial relations among classes of motives on different Cartesian powers of $X$, so after replacing $X$ by one of its Cartesian powers, we may reduce to the case that   $\motif X$ and $\motif Y$ are   formal motives on $X$ having the same class in $\latt{\funcinf\fld}X$. By Proposition~\ref{P:lattmotintsp}, we have
\begin{equation}\label{eq:relinteq}
\motintsplrel Xs\fat {\motif X}=\motintsplrel Xs\fat {\motif Y}
\end{equation}
for any formal invariant $s$ on $X$ and any fat point $\fat$. Take $s:=\tuple 1_{\motif X}$. The left hand side of \eqref{eq:relinteq} is equal to $\lef^{-dl}\class{\arc\fat{\motif X}}$ (as an element in $\grotmot $), whereas the right hand side is equal to $\lef^{-dl}\class{\arc\fat{(\motif X\cap\motif Y)}}$, where $d$ and $l$ are respectively  the dimension of $X$ and the length of $\fat$. Using that $\arc\fat{}$ preserves scissor relations, we get $\arc\fat{(\motif X-\motif Y)}=0$. Hence $\motif X-\motif Y=\emptyset$ by Lemma~\ref{L:arcempt}, showing that $\motif Y\sub\motif X$. Replacing the role of $\motif X$ and $\motif Y$ then proves the other inclusion.  
\end{proof}

\providecommand{\bysame}{\leavevmode\hbox to3em{\hrulefill}\thinspace}
\providecommand{\MR}{\relax\ifhmode\unskip\space\fi MR }
\providecommand{\MRhref}[2]{%
  \href{http://www.ams.org/mathscinet-getitem?mr=#1}{#2}
}
\providecommand{\href}[2]{#2}

\end{document}